\newcommand{\euscr}[1]{\EuScript{#1}}
\newcommand{\C}{\mathbb{C}} 
\newcommand{\R}{\mathbb{R}}
\newcommand{\Z}{\mathbb{Z}} 
\DeclareMathOperator{\modmod}{/\!/}
\DeclareMathSymbol{\mhyphen}{\mathord}{AMSa}{"39}
\numberwithin{equation}{section} 
\newtheorem{letterthm}{Theorem}
\newtheorem{thm}{Theorem}[section]
\newtheorem{corollary}[thm]{Corollary}
\newtheorem{lemma}[thm]{Lemma}
\newtheorem{proposition}[thm]{Proposition}
\newtheorem*{thm*}{Theorem}
\newtheorem*{cor*}{Corollary}
\newtheorem*{lem*}{Lemma}
\newtheorem*{prop*}{Proposition}
\theoremstyle{definition}
\newtheorem{definition}[thm]{Definition}
\newtheorem{example}[thm]{Example}
\newtheorem{remark}[thm]{Remark}
\numberwithin{thm}{section}
\numberwithin{equation}{section}
\numberwithin{figure}{section}
\definecolor{orange'}{HTML}{D55E00}
\begin{document}

\title{On the ring of cooperations for real hermitian K-theory}
\author{Jackson Morris}
\address{Department of Mathematics, University of Washington, Seattle, Washington}
\email{\href{mailto:jackmann@uw.edu}{jackmann@uw.edu}}

\subjclass[2020]{14F42, 55Q10, 55T15}

\begin{abstract}
	Let $\textup{kq}$ denote the very effective cover of the motivic hermitian K-theory spectrum. We analyze the ring of cooperations $\pi^\mathbb{R}_{**}(\textup{kq} \otimes \textup{kq})$ in the stable motivic homotopy category $\text{SH}(\mathbb{R})$, giving a full description in terms of Brown--Gitler comodules. To do this, we decompose the $\textup{E}_2$-page of the motivic Adams spectral sequence and show that it must collapse. The description of the $\textup{E}_2$-page is accomplished by a series of algebraic Atiyah--Hirzebruch spectral sequences which converge to the summands of the $\textup{E}_2$-page. Along the way, we prove a splitting result for the very effective symplectic K-theory ksp over any base field of characteristic not two.
\end{abstract}

\maketitle

\begin{center}
    \includegraphics[scale=.4]{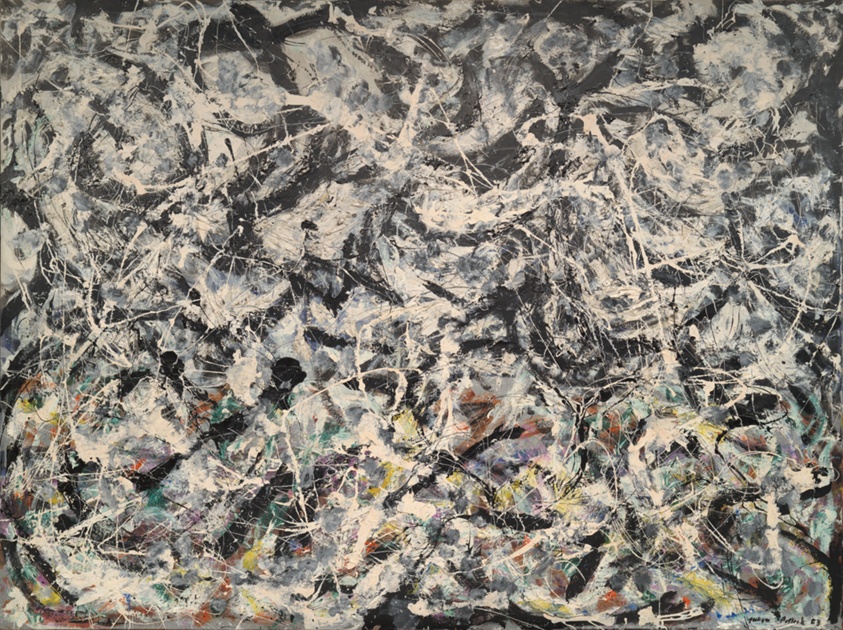}\\
    
    \emph{Greyed Rainbow}, Jackson Pollock (1953)
\end{center}
\vspace*{\fill}

\newpage
\tableofcontents

\section{Introduction}
\subsection{Motivation}
In stable homotopy theory, one of the most powerful tools we have for computing the homotopy groups of spheres is the Adams spectral sequence. For a ring spectrum E, the E-Adams spectral sequence (E-$\textbf{ASS}(\mathbb{S})$) has signature
\[\textup{E}_1^{s, f} = \pi_{s+f}(\text{E} \otimes \overline{\text{E}}^{\otimes f})\implies \pi_{s}(\mathbb{S}_\text{E}^\wedge),\]
where $\overline{\text{E}}$ is the cofiber of the unit map $\mathbb{S} \to \text{E}$. Under mild assumptions on E, this spectral sequence strongly converges to the homotopy groups of $\mathbb{S}_\text{E}^\wedge$, the E-nilpotent completion of the sphere spectrum \cite{Rav86}. To begin to compute with the E-$\textbf{ASS}(\mathbb{S})$, one must understand the smash powers $\text{E}^{\otimes f}$. 

One may specialize from desiring to understand the entirety of $\pi_*(\mathbb{S})$ to desiring to understand a particular piece of $\pi_*(\mathbb{S})$. This is one of the payoffs of chromatic homotopy theory, which organizes the elements of the stable homotopy groups of spheres into $v_n$-periodic layers. The $v_0$-periodic elements of $\pi_*(\mathbb{S})$ are those which survive rationalization, hence the elements in degree 0. The next interesting layer of elements in the stable stems are those which are $v_1$-periodic, and so one may attempt to determine the $v_1$-torsion free component of $\pi_*(\mathbb{S})$ by some $\text{E-}\textbf{ASS}(\mathbb{S})$.

Let bo denote the connective cover of the real topological K-theory spectrum KO. This spectrum is well-enough behaved that one may begin to study its associated Adams spectral sequence. Mahowald and others have studied the bo-$\textbf{ASS}(\mathbb{S})$, known as the bo-resolution, at the prime 2 extensively \cite{Mah81, LM87, BBBCX}. Analysis of this spectral sequence has lead to, among many things, the $v_1$-torsion free component of $\pi_*(\mathbb{S})$. Key to Mahowald's analysis is a spectrum level splitting of $\text{bo} \otimes \text{bo}$ as the following wedge sum:
\[\text{bo} \otimes \text{bo} \simeq \bigoplus_{k \geq 0}\Sigma^{4k}\text{bo} \otimes \textup{H}\mathbb{Z}_k^{cl}.\]
The spectra $\textup{H}\mathbb{Z}_k^{cl}$ are finite complexes known as the integral Brown--Gitler spectra \cite{BG73, GJM86}.

In stable motivic homotopy theory, a primary object of investigation is the homotopy ring of the motivic sphere spectrum. This ring has rich connections with the Grothendieck-Witt ring of symmetric bilinear forms \cite{Mor12}, Milnor K-theory \cite{bachmannburklundxu}, and hermitian K-theory \cite{RSOfirst, RSOsecond}. Similar to the classical case, one of the most powerful and well-studied tools is the motivic Adams spectral sequence. Working in $\text{SH}(F)$ for a field $F$, for a motivic ring spectrum E, there is an E-motivic Adams spectral sequence (E-\textbf{mASS}$^F(\mathbb{S})$) which can be used to compute these homotopy groups that, under mild conditions, converges strongly to the homotopy groups of the E-nilpotent completion $\mathbb{S}_\text{E}^\wedge$ \cite{DImASS, HKO-convergencemASS}. This takes the form
\[\textup{E}_1^{s, f,w} = \pi^F_{s+f,w}(\textup{E} \otimes \overline{\textup{E}}^{\otimes f})\implies \pi_{s,w}^F(\mathbb{S}_\textup{E}^\wedge).\]
To begin to compute with the E-\textbf{mASS}$^F(\mathbb{S})$, one must  understand the smash powers $\textup{E}^{\otimes f}$. Similar to the classical case, one may organize the motivic stable homotopy groups of spheres into $v_n$-periodic layers and look for a particular $\text{E-}\textbf{mASS}^F(\mathbb{S})$ which is useful for detecting $v_n$-periodicity.

Let $\textup{kq}$ denote the very effective cover of the hermitian K-theory spectrum KQ \cite{ARO20} (see \cref{section ksp} for details on the very effective slice filtration). In \cite{CQ21}, Culver--Quigley analyze the $\textup{kq}$-\textbf{mASS}$^\mathbb{C}(\mathbb{S})$ at the prime 2 in $\C$-motivic homotopy theory. This spectral sequence is called the $\textup{kq}$-resolution and has signature
\[\textup{E}_1^{s, f,w} = \pi^\mathbb{C}_{s+f,w}(\textup{kq} \otimes \overline{\textup{kq}}^{\otimes f})\implies \pi_{s,w}^\mathbb{C}(\mathbb{S}_{\textup{kq}}^\wedge).\]Their analysis of this spectral sequence leads to, among many things, the $v_1$-torsion free component of $\pi^\mathbb{C}_{**}(\mathbb{S})$. Toward computing the $\textup{E}_1$-page, Culver--Quigley compute the ring of cooperations $\pi^\mathbb{C}_{**}(\textup{kq} \otimes \textup{kq})$ by the $\textup{H}\mathbb{F}_2\text{-}\textbf{mASS}^\mathbb{C}(\text{kq} \otimes \text{kq})$ which takes the form
\[\textup{E}_2^{s,f,w} = \text{Ext}^{s,f,w}_{\euscr{A}_\mathbb{C}^\vee}\left(\mathbb{M}_2^\mathbb{C}, \textup{H}_{**}(\textup{kq} \otimes \textup{kq})\right) \implies \pi_{s,w}^\mathbb{C}(\textup{kq} \otimes \textup{kq}).\]
Here, $\euscr{A}^\vee_\mathbb{C}$ denotes the $\mathbb{C}$-motivic dual Steenrod algebra, $\text{H}_{**}(-)$ denotes mod-2 motivic homology, and $\mathbb{M}_2^\mathbb{C}$ denotes the mod-2 motivic homology of a point. We implicitly 2-complete the target of this spectral sequence, as we will throughout this paper, to ensure convergence.

\subsection{Main results}
The goal of this paper is to begin the study of the $\textup{kq}$-resolution in $\mathbb{R}$-motivic homotopy theory. As a step in this direction, we compute the ring of cooperations $\pi_{**}^\mathbb{R}(\textup{kq} \otimes \textup{kq})$ as a module over $\pi_{**}^\mathbb{R}(\text{kq}).$ Throughout the rest of this paper, let $\textbf{mASS}^F(X)$ denote the $F$-motivic $\text{H}\mathbb{F}_2$-based motivic Adams spectral sequence, where $\text{H}\mathbb{F}_2$ is the motivic Eilenberg-MacLane spectrum representing mod-2 motivic cohomology.

\begin{letterthm}[\cref{e2 mass}, \cref{main}]
\label{thmA}
    The $\textup{\textbf{mASS}}^{\mathbb{R}}(\textup{kq} \otimes \textup{kq})$ has signature
    \[\textup{E}_2^{s,f,w} = \bigoplus_{k \geq 0}\Sigma^{4k, 2k}\textup{Ext}_{\euscr{A}(1)^\vee}^{s,f,w}(\mathbb{M}_2^{\mathbb{R}}, B_0^{\mathbb{R}}(k)) \implies \pi_{s,w}^{\mathbb{R}}(\textup{kq} \otimes \textup{kq})\]
    and collapses on the $\textup{E}_2$-page.
    Here $B_0^{\mathbb{R}}(k)$ denotes the $k^{th}$ integral motivic Brown--Gitler comodule. We describe the $\textup{E}_2=\textup{E}_\infty$-page, modulo $v_1$-torsion, as a module over $\pi_{**}^{\mathbb{R}}\textup{(kq)}.$
\end{letterthm}

The reader familiar with  Mahowald's analysis of the bo-resolution \cite{Mah81} may be inclined to believe that this result may be proven in a similar fashion. This is not the case. Mahowald's arguments make great use of integral Brown--Gitler spectra. However, there is currently no known construction of motivic integral Brown--Gitler spectra except in very particular cases (see \cref{HZ1 construction}, \cref{motivic BG spectra}), and so another method of analysis must be employed.
As is the case over $\mathbb{C}$, we generally follow the strategy employed by \cite{BOSS19}, in which they compute the classical ring of cooperations for $\textup{tmf}$ using algebraic methods. We outline this strategy below.
\begin{itemize}
    \item[(1)] First, we introduce the integral motivic Brown--Gitler comodules $B_0^\mathbb{R}(k)$ and show that there are isomorphisms of $\euscr{A}(1)^\vee_\mathbb{R}$-comodules:
    \[\textup{H}_{**}(\textup{kq}) \cong(\euscr{A} \modmod \euscr{A}(1))_\mathbb{R}^\vee \cong \bigoplus_{k \geq 0} \Sigma^{4k, 2k}B_0^\mathbb{R}(k).\]
    \item[(2)] Next, we produce short exact sequences relating the integral Brown--Gitler comodules which allow for induction.
    \item[(3)] Last, we make base-case computations in Ext using algebraic Atiyah--Hirzebruch spectral sequences (\textbf{aAHSS}) and induct to complete the proof of the theorem.
\end{itemize}
Steps (1) and (2) are direct consequences of \cite[Section 3]{CQ21}. Step (3) is the most difficult and occupies most of this paper. This is because the algebra $\text{Ext}^{***}_{\euscr{A}(1)^\vee_\mathbb{R}}(\mathbb{M}_2^\mathbb{R}, \mathbb{M}_2^\mathbb{R})$ has more elaborate structure than its classical and $\mathbb{C}$-motivic counterparts. Indeed, when presenting the charts for our spectral sequence computations, we find it best to follow the strategy of Guillou--Hill--Isaksen--Ravenel \cite{GHIRkoc2} and organize each page by coweight $cw = stem - weight$. There is an  element $\tau^4 \in \text{Ext}^{0, 0, -4}_{\mathcal{A}(1)^\vee_\mathbb{R}}(\mathbb{M}_2^\mathbb{R}, \mathbb{M}_2^\mathbb{R})$ which gives a periodicity allowing us to organize our data into 4 sets of charts. These computation are manageable with careful bookkeeping.

As an application, we compute the $n$-line of the $\text{E}_1$-page of the $\text{kq}$-resolution over $\mathbb{R}$.

\begin{letterthm}[\Cref{prop:n-lineE2}, \Cref{thm:n-lineDifs}]
\label{thm:B}
    The $\textup{\textbf{mASS}}^{\mathbb{R}}(\textup{kq} \otimes \overline{\textup{kq}}^{\otimes n})$ has signature
    \[\textup{E}_2^{s,f,w} = \bigoplus_{K \in \euscr{K}_n} \Sigma^{4|K|, 2|K|}\textup{Ext}_{\euscr{A}(1)^\vee_\mathbb{R}}^{s,f,w}(\mathbb{M}_2^{\mathbb{R}}, B_0^{\mathbb{R}}(K)) \implies \pi_{s, w}^{\mathbb{R}}(\textup{kq} \otimes \overline{\textup{kq}}^{\otimes n}),\]
    and collapses on the $\textup{E}_2$-page, where $\euscr{K}_n = \{K = (k_1, \dots, k_n): k_j \geq 1 \textup{ for all } j\}$, $|K| = \sum_{j=1}^nk_j$, and $B_0^{\mathbb{R}}(K) = \bigotimes_{j=1}^nB_0^{\mathbb{R}}(k_j).$
\end{letterthm}

Our method of computation describes the $\text{E}_1$-page of the kq-resolution as a module over $\pi_{**}^\mathbb{R}(\text{kq})$ (reviewed in \Cref{R-background and notation}). We imagine that this structure will be useful in working with the kq-resolution in the style of \cite{BBBCX}.

Through our inductive procedure, we naturally obtain the following result, where ksp denotes the very effective cover of $\Sigma^{4,2}\textup{KQ}$ (see also \cref{section ksp}).
\begin{letterthm}[\cref{R-ksp}]
\label{thm-ksp b0(1)}
    The $\textup{\textbf{mASS}}^{\mathbb{R}}(\textup{ksp})$ has signature
    \[\textup{E}_2^{s,f,w} = \textup{Ext}^{s,f,w}_{\euscr{A}(1)^\vee_\mathbb{R}}(\mathbb{M}_2^\mathbb{R}, B_0^\mathbb{R}(1)) \implies \pi_{s,w}^\mathbb{R}(\textup{ksp}),\]
    and collapses on the $\textup{E}_2$-page.
\end{letterthm}
This is an easy consequence of a splitting result which we show in a more general setting. 
\begin{letterthm}[\cref{ksp hz1}]
\label{thm-hz1 ksp} 
    Let $F$ be any field with 2 invertible. Then there is an equivalence of motivic spectra
    \[\textup{ksp} \simeq \textup{H}\mathbb{Z}_1^F \otimes \textup{kq},\]
    where $\textup{H}\mathbb{Z}_1^F$ denotes the first integral $F$-motivic Brown--Gitler spectrum \textup{(see \cref{HZ1 construction})}.
\end{letterthm}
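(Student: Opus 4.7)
The plan is to apply the universal property of the very effective cover $\textup{ksp} \to \Sigma^{4,2}\textup{KQ}$. Both $\textup{kq}$ (by \cite{ARO20}) and $\textup{H}\mathbb{Z}_1^F$ (by its construction in \cref{HZ1 construction} as a finite cell spectrum on non-negatively shifted spheres) are very effective, and the very effective subcategory of $\text{SH}(F)$ is closed under $\otimes$, so $\textup{H}\mathbb{Z}_1^F \otimes \textup{kq}$ is very effective. Hence any map $\textup{H}\mathbb{Z}_1^F \otimes \textup{kq} \to \Sigma^{4,2}\textup{KQ}$ lifts uniquely through $\textup{ksp}$, and the task reduces to producing such a map and verifying the lift $\phi : \textup{H}\mathbb{Z}_1^F \otimes \textup{kq} \to \textup{ksp}$ is an equivalence.

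To construct the map, I would use the top cell of $\textup{H}\mathbb{Z}_1^F$ in bidegree $(4,2)$ together with the $\textup{KQ}$-module structure. Specifically, the cofiber sequence defining $\textup{H}\mathbb{Z}_1^F$ exhibits a canonical class in $\textup{KQ}^{4,2}(\textup{H}\mathbb{Z}_1^F)$ arising from the top cell, which can be extracted by exhibiting the vanishing of the relevant attaching map on $\textup{KQ}$-cohomology. Composing with the unit $\mathbb{S} \to \textup{KQ}$ and smashing with $\textup{kq}$, and then contracting via the $\textup{KQ}$-action $\textup{kq} \otimes \textup{KQ} \to \textup{KQ}$, gives the desired map $\textup{H}\mathbb{Z}_1^F \otimes \textup{kq} \to \Sigma^{4,2}\textup{KQ}$, whose unique lift is $\phi$.

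To show $\phi$ is an equivalence, I would check that it induces an isomorphism on mod-$2$ motivic homology as $\euscr{A}_F^\vee$-comodules. A K\"unneth argument identifies $\textup{H}_{**}(\textup{H}\mathbb{Z}_1^F \otimes \textup{kq})$ with $\textup{H}_{**}(\textup{H}\mathbb{Z}_1^F) \otimes_{\mathbb{M}_2^F} \textup{H}_{**}(\textup{kq})$. Combining the decomposition $\textup{H}_{**}(\textup{kq}) \cong \bigoplus_{k \geq 0} \Sigma^{4k,2k}B_0^F(k)$ from step (1) of the paper's strategy with the defining property of $\textup{H}\mathbb{Z}_1^F$, whose homology realizes the integral Brown--Gitler comodule piece, shows this matches the homology of $\textup{ksp}$ as the $k=1$ shifted Brown--Gitler summand. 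Since both sides are $2$-complete and very effective, convergence of the $\textup{H}\mathbb{F}_2$-motivic Adams spectral sequence upgrades this homology equivalence to a spectrum-level equivalence.

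The main obstacle is the construction of the map in step two: unlike the classical setting, where $\textup{bsp} \simeq \textup{bo} \otimes \textup{H}\mathbb{Z}_1^{cl}$ is a corollary of Mahowald's spectrum-level splitting of $\textup{bo} \otimes \textup{bo}$, no motivic analogue of that splitting is available over arbitrary fields---this is exactly the difficulty the introduction flags as motivation for a fully algebraic approach. Consequently, the map must be built directly and its identification on Brown--Gitler summands in motivic homology verified by hand; the requirement that the argument be uniform across all fields $F$ with $2$ invertible precludes the usual reductions to algebraically closed or characteristic-zero cases.
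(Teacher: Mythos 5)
Your architecture (lift a map $\textup{H}\mathbb{Z}_1^F \otimes \textup{kq} \to \Sigma^{4,2}\textup{KQ}$ through the very effective cover, then check homology) differs from the paper's, which never touches $\textup{KQ}$ at this stage: the paper builds a map $\iota\colon \textup{H}\mathbb{Z}_1^F \to \textup{ksp}$ directly, using the fact that $\textup{Sq}^2$ and $\textup{Sq}^1\textup{Sq}^2$ act nontrivially on the bottom class of $\textup{H}^{**}(\textup{ksp})$, and then takes the composite $\textup{H}\mathbb{Z}_1^F \otimes \textup{kq} \xrightarrow{\iota \otimes 1} \textup{ksp} \otimes \textup{kq} \xrightarrow{\mu'} \textup{ksp}$ with the $\textup{kq}$-module action. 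But your proposal has a genuine gap at its crux: to verify that your $\phi$ is a homology isomorphism you must know $\textup{H}_{**}(\textup{ksp})$, and you never establish it; worse, you misidentify it. The homology of $\textup{ksp}$ is \emph{not} ``the $k=1$ shifted Brown--Gitler summand'' $\Sigma^{4,2}B_0^F(1)$ of $\textup{H}_{**}(\textup{kq})$ (a free $\mathbb{M}_2^F$-module of rank $3$); it is $(\euscr{A} \modmod \euscr{A}\langle \textup{Sq}^1, \textup{Sq}^5\rangle)_F^\vee \cong B_0^F(1) \otimes_{\mathbb{M}_2^F} (\euscr{A}\modmod\euscr{A}(1))_F^\vee$, which is infinitely generated. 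Establishing this is exactly the content of \cref{ksp homology}, which the paper proves by running the very effective slice tower of $\textup{KQ}$: Bachmann's slice identifications give the cofiber sequence $\Sigma^{8,4}\textup{kq} \to \Sigma^{4,2}\textup{ksp} \to \Sigma^{4,2}\textup{H}\mathbb{Z}$, and the induced long exact sequence in homology pins down $\textup{H}_{**}(\textup{ksp})$. Without this input (or an equivalent one), neither your argument nor the paper's closes; you cannot substitute for it by citing the Brown--Gitler decomposition of $\textup{H}_{**}(\textup{kq})$, and you also cannot conclude merely from an abstract isomorphism of homologies --- you must check that $\phi_*$ itself realizes it, which in the paper is transparent because $\phi_*$ is the restriction of the multiplication of $\euscr{A}_F^\vee$.

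A second, smaller gap is the construction of the class in $\textup{KQ}^{4,2}(\textup{H}\mathbb{Z}_1^F)$. The cells of $\textup{H}\mathbb{Z}_1^F$ sit in bidegrees $(0,0)$, $(2,1)$, $(3,1)$ (not $(4,2)$), with the middle cell attached to the bottom one by $\eta$. Since $\eta$ is not nilpotent on $\textup{KQ}$ (it inverts to Witt theory), the vanishing of the obstruction $\eta^*(1)$ in $\pi_{-3,-1}\textup{KQ}$ over an arbitrary field with $2$ invertible is a real claim that needs an argument, not just the degree-reason vanishing one gets classically from $\pi_5\textup{KO}=0$. The paper avoids this entirely by mapping into $\textup{ksp}$ cell by cell against its known cohomology. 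If you want to salvage your route, you should import \cref{ksp homology} as stated, replace the ``$k=1$ summand'' identification by the tensor-product description above, and either carry out the $\textup{KQ}$-obstruction computation or fall back on the paper's construction of the map.
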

We prove this result by analyzing the very effective slice tower for KQ similar to how one would analyze the Whitehead tower for the classical spectrum KO.

\subsection{Future directions}
We next outline several directions in which we will extend the results of this work.

\subsubsection*{The \textup{kq}-resolution}
\hfill

This paper initiates the study of the $\textup{kq}$-resolution in $\text{SH}(\mathbb{R})$. In particular, we calculate the $\textup{E}_1$-page. While many of the differentials in this spectral sequence can be determined using the base change functor
\[-\otimes \mathbb{C}: \text{SH}(\mathbb{R}) \to \text{SH}(\mathbb{C}),\]
there is $\rho$-periodic data which must be determined using the $C_2$-equivariant Betti realization functor
\[\text{Be}^{C_2}:\text{SH}(\mathbb{R}) \to \text{Sp}^{C_2}\]
along with information involving the $\text{ko}_{C_2}$-resolution. We plan to study this in future work. Additionally, the $\textup{kq}$-resolution is intricately related to $v_1$-periodic homotopy. The work of Belmont--Isaksen--Kong \cite{belmontisaksenkong-v1R} determines a version of $v_1$-periodic homotopy theory in $\pi_{**}^\mathbb{R}\mathbb{S}$ by computing the homotopy groups $\pi_{**}^\mathbb{R}(\textup{L})$, where L sits in a cofiber sequence
\[\textup{L} \to \textup{kq} \xrightarrow{\psi^3-1}\textup{kq}.\]
It will be interesting to compare the homotopy groups $\pi_{**}^\mathbb{R}(\textup{L})$ with the $v_1$-periodic homotopy detected by the $\textup{kq}$-resolution (see also \cref{motivic j}).

\subsubsection*{Connections with $C_2$-equivariant homotopy theory}
\hfill

In $C_2$-equivariant homotopy theory, Li--Petersen--Tatum have produced $C_2$-equivariant analogues of the classical integral Brown--Gitler spectra \cite{LiPetTat25}. These are finite spectra $\textup{H}\mathbb{Z}_k^{C_2}$ such that, as comodules over the the sub-Hopf algebra $\euscr{A}(1)^\vee_{C_2}$ of the $C_2$-equivariant dual Steenrod algebra, there is an isomorphism
\[\textup{H}_{\star}(\textup{H}\mathbb{Z}_k^{C_2}) \cong B_0^{C_2}(k),\]
where $B_0^{C_2}(k)$ is the $k^{th}$ $C_2$-equivariant integral Brown--Gitler comodule. It was recently shown \cite{LPT-kuRsplitting} that there is a splitting of $\textup{ku}_\mathbb{R}$-module spectra
\[\textup{ku}_\mathbb{R} \otimes \textup{ku}_\mathbb{R} \simeq \bigoplus_{k \geq 0}\Sigma^{\rho k}\textup{ku}_\mathbb{R} \otimes \textup{H}\mathbb{Z}_k^{C_2}.\]
One can ask for a similar splitting of the $C_2$-spectrum $\textup{ko}_{C_2}$. In particular, it is conjectured that there is a splitting of $\textup{ko}_{C_2}$-module spectra
\[\textup{ko}_{C_2} \otimes \textup{ko}_{C_2} \simeq \bigoplus_{k \geq 0}\Sigma^{2 \rho k}\textup{ko}_{C_2} \otimes \textup{H}\mathbb{Z}_k^{C_2}.\]
One way to show that such a splitting exists is to construct maps at the level of homology
\[\theta_k:\Sigma^{2 \rho k}\textup{H}_\star (\textup{H}\mathbb{Z}_k^{C_2}) \to \textup{H}_\star (\textup{ko}_{C_2})\]
and show that they survive the $\textup{ko}_{C_2}$-relative Adams spectral sequence
\[\textup{E}_2 = \text{Ext}^{V, f}_{\euscr{A}(1)^\vee_{C_2}}\left(\textup{H}_\star(\Sigma^{2\rho k} \textup{H}\mathbb{Z}_k^{C_2}), \textup{H}_\star (\textup{ko}_{C_2})\right) \implies [\Sigma^{2\rho k}\textup{ko}_{C_2} \otimes \textup{H}\mathbb{Z}_k^{C_2}, \textup{ko}_{C_2} \otimes \textup{ko}_{C_2}]^{\textup{ko}_{C_2}}_{V}.\]
Due to the isomorphism \cite{GHIRkoc2}
\[\text{Ext}^{\star, *}_{\euscr{A}(1)^\vee_{C_2}}(\mathbb{M}_2^{\text{C}_2}, \mathbb{M}_2^{\text{C}_2}) \cong \text{Ext}_{\euscr{A}(1)^\vee_{\mathbb{R}}}^{***}(\mathbb{M}_2^{\mathbb{R}}, \mathbb{M}_2^{\mathbb{R}}) \oplus \text{Ext}^{***}_{\euscr{A}(1)^\vee_\mathbb{R}}(NC, \mathbb{M}_2^{\mathbb{R}}),\]
where $NC$ dentoes the ``negative cone", we expect that the Ext computations in this paper will be valuable in solving this problem. These ideas will be further investigated in future work, joint with Petersen and Tatum.

\subsubsection*{The ring of cooperations over general bases}
\hfill

One can also ask about the ring of cooperations over more general base schemes. A necessary input to using the methods of \Cref{inductive process section} to compute $\pi_{**}(\text{kq} \otimes \text{kq})$ is a computation of the algebra $\text{Ext}_{\euscr{A}(1)^\vee}^{***}(\mathbb{M}_2, \mathbb{M}_2)$. In \cite{Kylingkqfinite}, Kylling determined $\text{Ext}^{***}_{\euscr{A}(1)^\vee_{\mathbb{F}_q}}(\mathbb{M}_2^{\mathbb{F}_q}, \mathbb{M}_2^{\mathbb{F}_q})$ for $\text{char}(\mathbb{F}_q) \neq 2$, which allowed us to follow the program layed out in this paper and compute the ring of cooperations $\pi_{**}^{\mathbb{F}_q}(\textup{kq} \otimes \textup{kq})$ \cite{finitekqcoop}. In forthcoming work with Petersen and Tatum, we compute both $\text{Ext}_{\euscr{A}(1)_F^\vee}^{***}(\mathbb{M}_2^F, \mathbb{M}_2^F)$ and $\pi_{**}^{F}(\text{kq} \otimes \text{kq})$ when $F = \mathbb{Q}$ and $F=\mathbb{Q}_p$ for $p$ any prime \cite{MorPetTat}.

The case of finite fields is particularly interesting when paired with our computation and the existing $\mathbb{C}$-motivic computation. A remarkable result of Bachmann--\O stv\ae r \cite{BO22} gives a useful pullback square that allows us to make a deeper statement. Namely, if $\textup{E} \in \text{SH}(\mathbb{Z}[1/2])^{\text{cell}}$, then there is a pullback square
    \[\begin{tikzcd}
	{\text{map}_{\mathbb{Z}[1/2]}(\mathbb{S}, \textup{E})_2^\wedge} & {\text{map}_\mathbb{R}(\mathbb{S}, \textup{E})_2^\wedge} \\
	{\text{map}_{\mathbb{F}_3}(\mathbb{S}, \textup{E})_2^\wedge} & {\text{map}_\mathbb{C}(\mathbb{S}, \textup{E})_2^\wedge}
	\arrow[from=1-1, to=1-2]
	\arrow[from=1-1, to=2-1]
	\arrow["\lrcorner"{anchor=center, pos=0.125}, draw=none, from=1-1, to=2-2]
	\arrow[from=1-2, to=2-2]
	\arrow[from=2-1, to=2-2]
    \end{tikzcd}\]
where $\text{map}_F(\mathbb{S}, \textup{E})$ denotes the (ordinary) spectrum of maps between $\mathbb{S}$ and $\textup{E}$ in $\text{SH}(F)$. The computation of $\pi_{**}^F(\textup{kq} \otimes \textup{kq})$ for $F = \mathbb{C}, \mathbb{R}, \mathbb{F}_3$ combined with this square gives a long exact sequence which allows one to access the arithmetic invariant $\pi_{**}^{\mathbb{Z}[1/2]}(\textup{kq} \otimes \textup{kq})$. This bypasses the fact that, although we understand the dual motivic Steenrod algebra $\euscr{A}^\vee_{\mathbb{Z}[1/2]}$ (see \cite[Theorem 11.24]{Spitzweck-HZ-overdedekind}, also \cite[Section 4.3]{dundasostvaer-integralmotivicsteenrod}), there is currently no description of $\text{Ext}^{***}_{\euscr{A}(1)_{\mathbb{Z}[1/2]}^\vee}(\mathbb{M}_2^{\mathbb{Z}[1/2]}, \mathbb{M}_2^{\mathbb{Z}[1/2]})$.
\subsection{Organization}
\cref{part1} of this paper presents background and recalls the work of Culver--Quigley in \cite{CQ21}. In \cref{section 2}, we recall relevant facts about the stable motivic homotopy category $\text{SH}(F)$, the kq-resolution, and motivic Brown--Gitler comodules. In \cref{section ksp}, we prove a novel result about symplectic K-theory. In \cref{inductive process section}, we outline a process for computing the ring of cooperations $\pi_{**}^F(\text{kq} \otimes \text{kq})$. Much of the material of these sections is contained \cite[Section 3]{CQ21}, but we phrase things in slightly more generality, which will be useful to a broader audience. In \cref{section 3}, we review the computation of $\pi_{**}^\mathbb{C}(\text{kq} \otimes \text{kq})$ as a warm up for the main contents of our work.

\cref{part 2} of this paper contains the computation of the ring of cooperations $\pi_{**}^\mathbb{R}(\text{kq} \otimes \text{kq})$ and applications. In \cref{section 4}, we describe the algebra $\text{Ext}^{***}_{\euscr{A}(1)^\vee_\mathbb{R}}(\mathbb{M}_2^\mathbb{R},\mathbb{M}_2^\mathbb{R})$ and some modules which will frequently appear. In \cref{section R-aAHSS}, we compute $\text{Ext}^{***}_{\euscr{A}(1)^\vee_\mathbb{R}}\left(\mathbb{M}_2^\mathbb{R}, B_0^\mathbb{R}(1)^{\otimes i}\right)$ by a series of algebraic Atiyah--Hirzebruch spectral sequences. In \cref{section-Rcoop}, we compute the ring of cooperations $\pi_{**}^\mathbb{R}(\text{kq} \otimes \text{kq})$ and apply our results to describe the $\textup{E}_1$-page of the kq-resolution.

In \cref{charts}, we present a set of charts representing the modules described in \cref{section 4}.

\subsection{Notation and Conventions}
\label{notation section}
We use the following notation throughout.
\begin{itemize}
    \item $\textup{H}\mathbb{F}_2$ is the motivic Eilenberg-Mac Lane spectrum representing mod-2 motivic cohomology.
    \item $\textup{H}_{**}(X)$ is the mod-2 motivic homology of $X$.
    \item $\pi_{**}^F(X)$ are the bigraded motivic homotopy groups of $X$. These are suitably completed so that the motivic Adams spectral sequence always converges.
    \item $\mathbb{M}_2^F$ is the mod-2 motivic homology of a point.
    \item $\euscr{A}^\vee_F$ denotes the $F$-motivic dual motivic Steenrod algebra. 
    \item $\euscr{A}(n)^\vee_F$ denotes the dual of the subalgebra of the $F$-motivic Steenrod algebra generated by $\text{Sq}^1, \text{Sq}^2, \dots, \text{Sq}^{2^n}$. 
    \item KQ denotes the motivic spectrum representing hermitian K-theory, kq denotes its very effective cover, and ksp denotes the very effective cover of $\Sigma^{4,2}\text{KQ}$.
    \item $\text{Ext}^{***}_B(\mathbb{M}_2^F, X)$ will be abbreviated by $\text{Ext}^{***}_B(X)$.
    \item We grade Ext groups as $(s, f, w)$, where $s$ is the stem, $f$ is the Adams filtration, and $w$ is the motivic weight. We also let $cw = s-w$ denote the coweight, sometimes referred to in the literature as the Milnor-Witt degree.
    \item All charts are in Adams grading, meaning $(s, f)$, with motivic weight suppressed.
    \item All motivic spectra are implicitly 2-complete unless explictly stated otherwise.
    \item For E a motivic ring spectrum, we let $\text{E-}\textbf{mASS}^F(X)$ denote the motivic Adams spectral sequence based on E and converging to $\pi_{**}^F(X_\text{E}^\wedge)$. When $\text{E} = \text{H}\mathbb{F}_2$, we denote this spectral sequence by $\textbf{mASS}^F(X).$
    \item For $M$ an $\euscr{A}(1)^\vee_F$-comodule, we let $\textbf{aAHSS}(M)$ denote the algebraic Atiyah--Hirzebruch spectral sequence detailed in \cref{aAHSS generic}.
    \item We employ two algebraic shift functors, denoted $\Sigma^{p,q}(-)$ and $(-)\langle n \rangle$. For $M$ a module over $\text{Ext}$ and $x \in M$ an element of degree $(s,f,w)$, we have 
    \[|\Sigma^{p,q}x| = (s+p, f, w+q)\] 
    and 
    \[|x \langle n \rangle| = (s, f+n, w).\]
\end{itemize}

\subsection{Acknowledgments}
The work presented here constitutes a part of the author's thesis. The author thanks their advisors, Kyle Ormsby and John Palmieri, for their indispensable wisdom and valuable advice. The author also thanks J.D. Quigley for his generosity in sharing insight into this problem, and thanks Madeline Borowski, Sarah Petersen, Jay Reiter, and Alex Waugh for helpful discussions throughout this project. Finally, the author thanks Noah Wisdom for providing valuable feedback on an early draft, MJ Lenderman for inspiration \cite{ManningFireworks}, and an anonymous referee for helpful suggestions which clarified many technical points.

\part{Preliminaries}
\label{part1}
\section{Stable motivic homotopy theory}
\label{section 2}
In this section we work over an arbitrary field $F$ of characteristic different from 2 such that $\text{vcd}_2(F) < \infty$. We recall the $\textup{kq}$-resolution then review the dual motivic Steenrod algebra and motivic Brown--Gitler comodules.

\subsection{The $\textup{kq}$-resolution}
\label{2.1}
Originally constructed in \cite{Hor05} over fields with 2 invertible, and more recently, over more general base schemes in \cite{CalHarNar25}, there is an $(8, 4)$-periodic motivic $\mathbb{E}_\infty$-ring spectrum $\textup{KQ} \in \text{SH}(F)$ known as the \emph{hermitian \textup{K}-theory} spectrum. This spectrum represents hermitian K-theory in that there is an isomorphism \cite[Theorem 8.18]{CalHarNar25}
\[\pi^F_{s, w}(\textup{KQ}) = \pi_{s-2w}(\textup{GW}^\text{s}(F)),\]
where $\text{GW}^\text{s}(F)$ is the ordinary spectrum representing the symmetric hermitian K-theory of $F$.
In particular, $\pi_{8n, 4n}^F\textup{KQ} = \textup{GW}(F)$, the Grothendieck--Witt ring of quadratic forms over $F$, and the unit map $\mathbb{S} \to \text{KQ}$ induces Morel's isomorphism \cite{RSOfirst}
\[\pi^F_{-n, -n}\mathbb{S} \cong \textup{K}^{\textup{MW}}_n(F),\]
where $\text{K}^{\text{MW}}_n(F)$ denotes the $n^{th}$ Milnor K-theory of $F$.

Let $\textup{kq}$ denote the very effective cover of $\textup{KQ}$ \cite{ARO20} (see \cref{section ksp} for more details on the very effective filtration), which we will call the \emph{very effective hermitian \textup{K}-theory} spectrum.
There is a canonical Adams tower associated to the unit map $\mathbb{S} \to \textup{kq}$:
\[\begin{tikzcd}
	{\mathbb{S}} & {\Sigma^{-1, 0}\overline{\textup{kq}}} & {\Sigma^{-2, 0}\overline{\textup{kq}} \otimes \overline{\textup{kq}}} & \cdots \\
	\textup{kq} & {\Sigma^{-1, 0}\overline{\textup{kq}} \otimes \textup{kq}} & {\Sigma^{-2, 0}\overline{\textup{kq}} \otimes \overline{\textup{kq}} \otimes \textup{kq}}
	\arrow[from=1-1, to=2-1]
	\arrow[from=1-2, to=1-1]
	\arrow[from=1-2, to=2-2]
	\arrow[from=1-3, to=1-2]
	\arrow[from=1-3, to=2-3]
	\arrow[from=1-4, to=1-3]
\end{tikzcd}\]
Applying $\pi^F_{**}(-)$ yields the motivic Adams spectral sequence kq-$\textbf{mASS}^F(\mathbb{S})$ known as the \emph{$\textup{kq}$-resolution}. Its properties were first studied by Culver--Quigley \cite{CQ21}. We remark that we are working in the 2-complete category.
\begin{thm}[{\cite[Thm 2.1]{CQ21}}]
\label{kqres}
    The $\textup{kq}$-resolution is a strongly convergent spectral sequence of the form
    \[\textup{E}_1^{s, f, w} = \pi^F_{s+f, w}(\textup{kq} \otimes \overline{\textup{kq}}^{\otimes f}) \implies \pi^F_{s, w}\mathbb{S}_2^\wedge.\]
    The $d_r$-differentials have the form
    \[d_r:\textup{E}_r^{s,f,w} \to \textup{E}_r^{s-1, f+r, w}.\]
\end{thm}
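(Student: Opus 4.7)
The plan is to realize the \textup{kq}-resolution as an instance of the standard $\text{E}$-based Adams spectral sequence internal to $\text{SH}(F)$ (following the template of \cite[Chapter 2]{Rav86} with $\text{E} = \textup{kq}$), and then invoke motivic convergence theorems to establish strong convergence.

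First I would construct the Adams tower explicitly. Let $\overline{\textup{kq}}$ denote the fiber of the unit map $\mathbb{S} \to \textup{kq}$, so that there is a cofiber sequence $\overline{\textup{kq}} \to \mathbb{S} \to \textup{kq}$. Smashing this sequence with $\overline{\textup{kq}}^{\otimes f}$ for each $f \geq 0$ and splicing yields the tower displayed in the excerpt, whose associated graded pieces are $\textup{kq} \otimes \overline{\textup{kq}}^{\otimes f}$ (suitably suspended to normalize Adams filtration). Applying $\pi^F_{**}(-)$ produces an exact couple whose derived spectral sequence has the stated $E_1$-page
\[E_1^{s,f,w} = \pi^F_{s+f,w}(\textup{kq} \otimes \overline{\textup{kq}}^{\otimes f}).\]
The differentials $d_r : E_r^{s,f,w} \to E_r^{s-1,\, f+r,\, w}$ are a formal consequence of how the $(s,f,w)$-grading interacts with the connecting maps of the tower: each connecting map increases filtration by one, decreases the topological dimension by one, and preserves the motivic weight.

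The main obstacle is strong convergence to $\pi^F_{s,w}\mathbb{S}_2^\wedge$. For this I would appeal to the motivic Adams convergence results of Hu--Kriz--Ormsby \cite{HKO-convergencemASS} and Dugger--Isaksen \cite{DImASS}, which identify the abutment with $\pi^F_{**}(\mathbb{S}_{\textup{kq}}^\wedge)$ provided $\textup{kq}$ satisfies appropriate connectivity and flatness hypotheses; these are met because $\textup{kq}$ is very effective and we have assumed $\textup{vcd}_2(F) < \infty$. The remaining step is to identify the $\textup{kq}$-nilpotent completion of the sphere with its $2$-completion $\mathbb{S}_2^\wedge$. To do this, I would exhibit $\textup{H}\mathbb{F}_2$ as a retract of a $\textup{kq}$-module (via the composite $\textup{kq} \to \textup{kgl} \to \textup{H}\mathbb{Z} \to \textup{H}\mathbb{F}_2$ and known effective cover splittings), so that $\textup{kq}$-completion refines $\textup{H}\mathbb{F}_2$-completion; the latter is known under our standing hypotheses on $F$ to coincide with 2-completion on the motivic sphere, completing the identification of the target.
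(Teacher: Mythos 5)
Your construction of the tower, the identification of the $E_1$-page, and the bidegrees of the differentials are all fine, but the convergence step has a genuine gap. The theorems of \cite{HKO-convergencemASS} and \cite{DImASS} that you invoke are convergence results specifically for the $\textup{H}\mathbb{F}_2$-based motivic Adams spectral sequence; they say nothing about an arbitrary $\textup{E}$-based Adams spectral sequence, and in particular do not apply to the $\textup{kq}$-based one. Worse, the ``flatness hypothesis'' you claim is satisfied actually fails: as is remarked later in the paper, $\textup{kq}$ is \emph{not} flat in the sense of Adams (which is precisely why its $E_1$-page must be analyzed by hand rather than identified with a cobar complex). Flatness is irrelevant to convergence, but citing it as a hypothesis that is ``met'' indicates that the wrong convergence theorem is being applied. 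The argument that actually works --- the one underlying \cite[Theorem 2.1]{CQ21} and sketched in the remark immediately following the theorem --- is a connectivity argument in Morel's homotopy $t$-structure: the unit $\mathbb{S} \to \textup{kq}$ induces an isomorphism on $\Pi_0^F$, so $\overline{\textup{kq}}$ is $1$-connected and $\textup{kq} \otimes \overline{\textup{kq}}^{\otimes f}$ is $f$-connected. The connectivity of the stages of the Adams tower therefore tends to infinity, which forces the homotopy inverse limit of the tower to vanish; this yields strong convergence and simultaneously identifies the abutment.

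Your identification of the target is likewise incomplete. Exhibiting $\textup{H}\mathbb{F}_2$ as a $\textup{kq}$-algebra (no retract is needed --- the truncations $\textup{kq} \to \textup{H}\mathbb{Z} \to \textup{H}\mathbb{F}_2$ are ring maps) only produces a natural comparison map $\mathbb{S}_{\textup{kq}}^\wedge \to \mathbb{S}_{\textup{H}\mathbb{F}_2}^\wedge$; it does not show that this map is an equivalence, and that is the entire content of the claim. The missing input is again the connectivity statement above: the vanishing of the inverse limit of the tower gives $\mathbb{S}_{\textup{kq}}^\wedge \simeq \mathbb{S}$ integrally, hence $\simeq \mathbb{S}_2^\wedge$ in the $2$-complete category in which the paper works (the motivic analogue of Bousfield's identification $\mathbb{S}_{\textup{bo}}^\wedge \simeq \mathbb{S}_2^\wedge$ in \cite{Bousfield-localization}). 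As written, your two steps establish a conditionally convergent spectral sequence with abutment $\pi_{**}^F\mathbb{S}_{\textup{kq}}^\wedge$ together with a map to $\pi_{**}^F\mathbb{S}_2^\wedge$, which falls short of the stated theorem.
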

\begin{remark}
    We can also look at the $\textup{kq}$-resolution in the integral, uncompleted setting. The unit map $\mathbb{S} \to \textup{kq}$ gives an isomorphism 
    \[\Pi_0^F(\textup{kq}) \cong \Pi_0^F(\mathbb{S}),\]
    where $\Pi_n^F(X) := \bigoplus_{k \in \mathbb{Z}}\pi_{n+k, k}^F(X)$ denotes the $k$-th coweight stem (also known as the $k$-th Milnor--Witt stem). This implies that $\textup{kq} \otimes \overline{\textup{kq}}^{\otimes n}$ is $n$-connected in Morel's homotopy $t$-structure. In fact, this is enough to show that $\mathbb{S}_{\textup{kq}}^\wedge \simeq \mathbb{S}$. 
    
    However, we will compute the $\textup{E}_1$-page of the kq-resolution by a motivic Adams spectral sequence based on the Eilenberg--Mac Lane spectrum $\textup{H}\mathbb{F}_2$. More precisely, we study te $\textbf{{mASS}}^F(\text{kq} \otimes \text{kq})$. This spectral sequence computes the homotopy groups of the $(2,\eta)$-completion of $\textup{kq} \otimes \textup{kq}$. When $F$ is a field of finite virtual 2-cohomological dimension, then this coincides with the 2-completion \cite[Theorem 1]{HKO-convergencemASS}. These conventions  force us to work in the 2-complete category (see also \cite[Section 2.2]{CQ21}).
\end{remark}

Let bo denote the connective cover of the classical real K-theory spectrum KO. The $\textup{kq}$-resolution is the motivic analogue of the bo-resolution in the following way. The $\textup{E}_1$-page of the bo-resolution is of the form
\[\textup{E}_1^{s, f} = \pi_{s+f}(\textup{bo} \otimes \overline{\textup{bo}}^{\otimes f}) \implies \pi_s(\mathbb{S}_2^\wedge),\]
where we may identify $\mathbb{S}_{\text{bo}}^\wedge \simeq \mathbb{S}_2^\wedge$ in the 2-complete setting \cite{Bousfield-localization}. There is a \textit{complex Betti realization functor}
\[\text{Be}^\mathbb{C}:\text{SH}(\mathbb{C}) \to \text{Sp},\]
induced by the analytic topology on a scheme $X \in \text{Sm}_\mathbb{C}$ after taking $\mathbb{C}$-points. Moreover, if $F$ admits an embedding into $\mathbb{C}$, then there is complex realization $\text{Be}^\mathbb{C}:\text{SH}(F) \to \text{Sp}$ defined in the same way.
The following result lets us compare between $\textup{kq}$ and bo:
\begin{thm}[{\cite[Lemma 2.13]{ARO20}}]
\label{C-betti}
    If $F$ admits a complex embedding, then there is an equivalence of spectra
    \[\textup{Be}^\mathbb{C}(\textup{kq})  \simeq \textup{bo}.\]
\end{thm}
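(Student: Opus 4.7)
The plan is to decompose the equivalence into two steps: first, identify the complex Betti realization of the Hermitian K-theory spectrum $\textup{KQ}$ with topological real K-theory $\textup{KO}$, and second, show that $\textup{Be}^{\mathbb{C}}$ is compatible with the passage to the very effective cover, so that it carries $\textup{kq} \to \textup{KQ}$ to the connective cover $\textup{bo} \to \textup{KO}$.

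For the first step, since $F$ admits an embedding $F \hookrightarrow \mathbb{C}$, base change along $\Spec \mathbb{C} \to \Spec F$ together with the compatibility $\textup{Be}^{\mathbb{C}} \circ (-\otimes_F \mathbb{C}) \simeq \textup{Be}^{\mathbb{C}}$ reduces us to $F = \mathbb{C}$. Over $\mathbb{C}$, one models $\textup{KQ}$ via its Grothendieck--Witt-theoretic incarnation as the group completion of the symmetric monoidal groupoid of non-degenerate symmetric bilinear bundles, and identifies the analytic realization of the algebraic stack $BO_n$ with the topological classifying space $BO_n$. Group completion then matches on both sides, yielding $\textup{Be}^{\mathbb{C}}(\textup{KQ}) \simeq \textup{KO}$.

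For the second step, I would use that $\textup{Be}^{\mathbb{C}}$ is a symmetric monoidal left adjoint, and that on a generator $\Sigma^{p,q}_+ X$ of $\text{SH}(F)^{\textup{veff}}$ (with $p \ge q \ge 0$ and $X$ smooth over $F$) it produces a connective topological spectrum. Hence $\textup{Be}^{\mathbb{C}}$ restricts to a functor $\text{SH}(F)^{\textup{veff}} \to \text{Sp}_{\ge 0}$, which forces $\textup{Be}^{\mathbb{C}}(\textup{kq})$ to be connective and the realized map $\textup{Be}^{\mathbb{C}}(\textup{kq}) \to \textup{Be}^{\mathbb{C}}(\textup{KQ}) \simeq \textup{KO}$ to factor canonically through the connective cover $\textup{bo} \to \textup{KO}$. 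It remains to check that this canonical map $\textup{Be}^{\mathbb{C}}(\textup{kq}) \to \textup{bo}$ is an equivalence.

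The main obstacle will be this last step: ensuring that the realization of the cofiber $\textup{cofib}(\textup{kq} \to \textup{KQ})$ is concentrated in sufficiently negative degrees in $\text{Sp}$, so that the canonical factorization is forced to be an equivalence. One way to attack this is to use the very effective slice tower of $\textup{KQ}$, whose slices are built from motivic Eilenberg--Mac Lane spectra with known mod-2 homology, and to match them stage by stage under $\textup{Be}^{\mathbb{C}}$ with the Postnikov tower of $\textup{KO}$. Alternatively, once connectivity is known, it suffices to compare homotopy groups in each nonnegative degree; the isomorphism follows from the agreement of $\pi_n \textup{kq}$ with $\pi_n \textup{bo}$ in low degrees (via Morel/Hornbostel) and Bott periodicity on both sides, concluding that $\textup{Be}^{\mathbb{C}}(\textup{kq}) \simeq \textup{bo}$.
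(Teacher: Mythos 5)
This statement is not proved in the paper at all: it is imported verbatim from \cite[Lemma 2.13]{ARO20}, so there is no internal argument to compare yours against. Judged on its own terms, your outline reconstructs the standard proof from the literature correctly in its first two steps: the identification $\textup{Be}^{\mathbb{C}}(\textup{KQ})\simeq\textup{KO}$ (reduction to $F=\mathbb{C}$ and realization of the Grothendieck--Witt model), and the observation that $\textup{Be}^{\mathbb{C}}$, being a colimit-preserving symmetric monoidal functor sending the generators $\Sigma^{n,0}\Sigma^\infty_+X$ of $\text{SH}(F)^{\textup{veff}}$ to connective spectra, carries very effective spectra to connective spectra, forcing a factorization $\textup{Be}^{\mathbb{C}}(\textup{kq})\to\textup{bo}\to\textup{KO}$.

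The weak point is the second of your two suggested ways to close the argument. Saying that ``the isomorphism follows from the agreement of $\pi_n\textup{kq}$ with $\pi_n\textup{bo}$'' conflates the motivic homotopy groups $\pi_{n,0}^F(\textup{kq})$ with the topological homotopy groups $\pi_n\textup{Be}^{\mathbb{C}}(\textup{kq})$; Betti realization does not commute with taking homotopy groups, so knowledge of the former does not compute the latter, and this route is circular as stated. Your first route is the one that actually works: realize the very effective slice tower of $\textup{KQ}$, using Bachmann's identification of the slices $\tilde{\textup{s}}_n\textup{KQ}$ (the same input used in \cref{ksp homology}), note that motivic Eilenberg--Mac Lane slices realize to topological Eilenberg--Mac Lane spectra in the correct degrees, and match the realized tower against the Postnikov tower of $\textup{KO}$; one must also say a word about convergence of the realized tower, which follows from the increasing connectivity of the realized stages. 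With that step made explicit, the argument is complete and agrees with the proof given in \cite{ARO20}.
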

Betti realization is symmetric monoidal and exact \cite{HelOrm-Galois}, giving an immediate corollary:
\begin{corollary}[{\cite[Cor 2.7]{CQ21}}]
\label{C-betti kq res}
    Betti realization sends the $\textup{kq}$-resolution to the $\textup{bo}$-resolution. In particular, there is a multiplicative map of spectral sequences:
    \[\textup{E}_r^{s, f, *} \to \textup{E}_r^{s, f}.\]
\end{corollary}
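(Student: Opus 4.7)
The plan is to leverage the formal properties of $\text{Be}^\mathbb{C}$ — symmetric monoidality and exactness — together with \cref{C-betti} identifying $\text{Be}^\mathbb{C}(\textup{kq}) \simeq \textup{bo}$, and simply trace through the construction of the Adams tower displayed just before \cref{kqres}.

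First I would observe that Betti realization sends the unit map $\mathbb{S} \to \textup{kq}$ in $\text{SH}(F)$ to the unit map $\mathbb{S} \to \textup{bo}$ in $\text{Sp}$, since $\text{Be}^\mathbb{C}$ is a symmetric monoidal functor (hence preserves units) and $\text{Be}^\mathbb{C}(\textup{kq}) \simeq \textup{bo}$ by \cref{C-betti}. Since $\text{Be}^\mathbb{C}$ is exact, it preserves cofiber sequences, so the cofiber $\overline{\textup{kq}}$ of $\mathbb{S} \to \textup{kq}$ is sent to the cofiber $\overline{\textup{bo}}$ of $\mathbb{S} \to \textup{bo}$. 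Symmetric monoidality then gives equivalences $\text{Be}^\mathbb{C}(\overline{\textup{kq}}^{\otimes f} \otimes \textup{kq}) \simeq \overline{\textup{bo}}^{\otimes f} \otimes \textup{bo}$ and $\text{Be}^\mathbb{C}(\overline{\textup{kq}}^{\otimes f}) \simeq \overline{\textup{bo}}^{\otimes f}$ for every $f \geq 0$, compatibly with the structure maps of the Adams tower.

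Next I would apply $\pi_{**}^F$ to the $\textup{kq}$-Adams tower and $\pi_*$ to the $\textup{bo}$-Adams tower. Because $\text{Be}^\mathbb{C}$ induces a map of bigraded homotopy groups that collapses the motivic weight (forgetting $w$), the induced maps on exact couples assemble to a map of spectral sequences
\[E_r^{s,f,w}(\textup{kq}\text{-}\mathbf{mASS}(\mathbb{S})) \longrightarrow E_r^{s,f}(\textup{bo}\text{-}\text{ASS}(\mathbb{S}))\]
of the required bidegree, with the weight $w$ summed over (or equivalently, projected away). Multiplicativity on each page follows formally from the fact that both spectral sequences arise from filtered ring spectra, and $\text{Be}^\mathbb{C}$ being symmetric monoidal preserves the pairings on the smash products $\overline{\textup{kq}}^{\otimes f}$ and $\overline{\textup{bo}}^{\otimes f}$.

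The only mildly subtle point — not really an obstacle — is bookkeeping the grading: convergence issues are already handled by the $2$-completeness conventions in \cref{kqres}, and the realization at each finite filtration stage is an equivalence of spectra, so compatibility of the $E_1$-pages is immediate. I would therefore spend most of the write-up simply recording the diagram chase that identifies the realized Adams tower with the $\textup{bo}$-Adams tower and invoking the standard fact that a map of filtered spectra induces a multiplicative map on associated spectral sequences.
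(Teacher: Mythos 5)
Your proposal is correct and is essentially the paper's argument: the paper derives this corollary immediately from the symmetric monoidality and exactness of $\text{Be}^\mathbb{C}$ together with \cref{C-betti}, exactly as you do by realizing the Adams tower term by term. The only detail worth being careful about in a full write-up is that the induced map on $E_1$-pages $\pi_{s+f,w}^F(\textup{kq}\otimes\overline{\textup{kq}}^{\otimes f}) \to \pi_{s+f}(\textup{bo}\otimes\overline{\textup{bo}}^{\otimes f})$ is only a map (not an isomorphism), even though the realization of each stage of the tower is an equivalence of spectra.
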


Just as in the classical case, the spectrum $\textup{kq}$ is not flat in the sense of Adams. Thus the $\textup{E}_1$-page does not compute the Hopf algebroid cohomology $\text{Ext}^{***}_{\pi^F_{**}(\textup{kq}\otimes \textup{kq})}(\pi^F_{**}(\textup{kq}), \pi^F_{**}(\textup{kq}))$, and so we must analyze the $\textup{E}_1$-page of the $\textup{kq}$-resolution.

\begin{remark}
\label{betti remark r vs c}
    There is a complex Betti realization functor in the case of $F = \mathbb{R}$ to which we may apply \cref{C-betti}. It is important to note that there is an obvious \textit{real Betti realization}
    \[\text{Be}^\mathbb{R}:\text{SH}(\mathbb{R}) \to \text{Sp}\]
    which is induced by the real analytic topology on a scheme $X \in \text{Sm}_\mathbb{R}$ after taking $\mathbb{R}$-points.
    These functors behave very differently.  For example, consider $\text{kq} \in \text{SH}(\mathbb{R})$. It was shown in \cite[Proposition 4.1]{Bann-realbetti} that $\pi_*(\text{Be}^\mathbb{R}(\text{kq})) \cong \mathbb{Z}[x]$, where $x \in \pi_4(\text{Be}^\mathbb{R}(\text{kq}))$. In particular, 
    $\text{Be}^\mathbb{C}(\text{kq}) \not\simeq \text{Be}^\mathbb{R}(\text{kq}).$
\end{remark}
\subsection{The dual motivic Steenrod algebra} 
\label{section motivic steenrod algebra}
Let $\mathbb{M}_2^F$ denote the \emph{mod-2 motivic homology of a point}. There is an isomorphism due to Voevodsky \cite{Voemotiviccohomology}:
\[\mathbb{M}_2^F = (\text{K}^\text{M}_*(F)/2)[\tau],\]
where $\text{K}^\text{M}_*(F)$ denotes the Milnor K-theory of $F$ and $|\tau|=(0, -1)$.
Recall the \emph{dual motivic Steenrod algebra}:
\begin{thm}[{\cite[Section 12]{Voereduced}}]
\label{dual motivic steenrod}
     The dual motivic Steenrod algebra $\euscr{A}^\vee_F = \pi_{**}^F(\textup{H}\mathbb{F}_2 \otimes \textup{H}\mathbb{F}_2)$ is given by
    \[\euscr{A}^\vee_F = \mathbb{M}_2^F[\overline{\xi}_1, \overline{\xi}_2, \hdots, \overline{\tau}_0, \overline{\tau}_1, \hdots]/(\overline{\tau}_i^2 = \rho\overline{\tau}_{i+1} + \rho\overline{\tau}_0\overline{\xi}_{i+1} + \tau \overline{\xi}_{i+1}),\]
    where $|\overline{\xi}_i|=(2^{i+1}-2, 2^i-1)$ and $|\overline{\tau}_i| = (2^{i+1}-1, 2^i-1)$. There are structure maps 
    \[\eta_L, \eta_R:\mathbb{M}_2^F \to \euscr{A}^\vee_F, \quad\Delta:\euscr{A}^\vee_F \to\euscr{A}^\vee_F \otimes_{\mathbb{M}_2^F}A_F^\vee\]
    which are determined by the following formulae:
    \[\begin{array}{ll}
    \eta_L(\rho) = \rho &   \eta_L(\tau) = \tau; \\
    \eta_R(\rho)=\rho &   \eta_R(\tau)=\tau+\overline{\tau}_0\rho; \\
    \Delta(\overline{\xi}_k) = \sum_{i+j=k}\overline{\xi}_i \otimes \overline{\xi}_j^{2^i}, & \Delta(\overline{\tau}_i) = \sum_{i+j=k}\overline{\tau}_i \otimes \overline{\xi}_j^{2^i} + 1 \otimes \overline{\tau}_k.
    \end{array}\]
\end{thm}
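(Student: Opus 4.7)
The plan is to follow Voevodsky's original strategy via the geometric classifying space of $\mu_2$. First, I would construct the mod-$2$ motivic Steenrod operations as natural transformations of mod-$2$ motivic cohomology. The key input is the total power operation built from a symmetric product map into $B_{\textup{gm}}\mu_2$: the coefficients in the expansion of $P(x)$ with respect to the generators of $\textup{H}^{**}(B_{\textup{gm}}\mu_2;\mathbb{F}_2)$ over $\mathbb{M}_2^F$ define the even operations $\textup{Sq}^{2i}$, while the odd operations $\textup{Sq}^{2i+1}$ arise as their Bocksteins. I would then verify the motivic Cartan and Adem relations and establish an admissible monomial basis for $\euscr{A}_F$ as a free $\mathbb{M}_2^F$-module, with bidegrees $|\textup{Sq}^{2i}|=(2i,i)$ and $|\textup{Sq}^{2i+1}|=(2i+1,i)$.

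Passing to the dual via $\euscr{A}^\vee_F = \pi_{**}^F(\textup{H}\mathbb{F}_2 \otimes \textup{H}\mathbb{F}_2)$, I would introduce $\overline{\xi}_i$ as the dual of the admissible monomial corresponding to iterated squaring and $\overline{\tau}_i$ as the dual of the Milnor primitive $Q_i$. The bidegrees $|\overline{\xi}_i|=(2^{i+1}-2, 2^i-1)$ and $|\overline{\tau}_i| = (2^{i+1}-1, 2^i-1)$ follow directly from those of the corresponding Steenrod operations. Freeness of $\euscr{A}^\vee_F$ as an $\mathbb{M}_2^F$-module (modulo the squared relation below) is then the statement dual to the admissible basis. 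The coproduct formulas
\[\Delta(\overline{\xi}_k) = \sum_{i+j=k}\overline{\xi}_i \otimes \overline{\xi}_j^{2^i}, \qquad \Delta(\overline{\tau}_k) = \sum_{i+j=k}\overline{\tau}_i \otimes \overline{\xi}_j^{2^i} + 1 \otimes \overline{\tau}_k\]
are derived by dualizing the motivic Cartan formula exactly as in the classical Milnor computation.

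The main obstacle, and the feature that distinguishes the motivic case from the classical, is deriving the exotic squared relation $\overline{\tau}_i^2 = \rho\overline{\tau}_{i+1} + \rho\overline{\tau}_0\overline{\xi}_{i+1} + \tau\overline{\xi}_{i+1}$ together with the nontrivial right unit $\eta_R(\tau) = \tau + \rho\overline{\tau}_0$. Both corrections trace back to the ring structure of $\textup{H}^{**}(B_{\textup{gm}}\mu_2;\mathbb{F}_2)$ over $\mathbb{M}_2^F$, which is not polynomial but satisfies a relation of the form $u^2 = \tau v + \rho u$ coming from the fact that the squaring map on a one-dimensional class is not zero over a general base. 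I would compute $\eta_R(\tau)$ by pushing forward a weight-one mod-$2$ class and tracking the image in $\euscr{A}^\vee_F$, where the $\rho$ correction surfaces from this universal squared relation. The quadratic relation on $\overline{\tau}_i$ would then be obtained by pairing both sides against the admissible basis of $\euscr{A}_F$ and evaluating on the universal test object $B_{\textup{gm}}\mu_2$; the $\rho$ and $\tau$ contributions appear explicitly through this nonpolynomial structure, producing all three terms simultaneously. Consistency with the coproduct and with $\eta_L, \eta_R$ gives a final sanity check that no further relations are needed.
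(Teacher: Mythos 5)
This statement is not proved in the paper at all: it is quoted verbatim from Voevodsky's work on reduced power operations (with the extension to general base fields due to later work of Hoyois--Kelly--{\O}stv{\ae}r and Spitzweck), so there is no internal argument to compare against. Your sketch is an accurate outline of the standard proof from that literature: the total power operation on $B_{\mathrm{gm}}\mu_2$ producing the $\mathrm{Sq}^{2i}$ with the odd operations as Bocksteins, the admissible/Milnor basis giving freeness over $\mathbb{M}_2^F$, Milnor-style dualization of the Cartan formula for the coproducts, and, crucially, the non-polynomial relation $u^2 = \tau v + \rho u$ in $\mathrm{H}^{**}(B_{\mathrm{gm}}\mu_2)$ forcing both $\eta_R(\tau) = \tau + \rho\overline{\tau}_0$ and the quadratic relation on the $\overline{\tau}_i$ via compatibility of the coaction with that relation. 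The only caveat worth flagging is one of generality rather than strategy: Voevodsky's Section 12 argument is carried out in characteristic $0$, and the statement as used in the paper (arbitrary $F$ with $2$ invertible and $\mathrm{vcd}_2(F)<\infty$) requires the positive-characteristic and Dedekind-base extensions, though the proof architecture you describe is unchanged there.
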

We will also encounter various subalgebras and quotient algebras of $\euscr{A}^\vee_F$.

\begin{definition}
    For $n \geq 0$, let $\euscr{A}(n)^\vee_F$ be the quotient algebra
    \[\euscr{A}(n)_F^\vee \cong \euscr{A}^\vee_F/(\overline{\xi}_1^{2^n}, \overline{\xi}_2^{2^{n-1}}, \dots, \overline{\xi}^2_n, \overline{\xi}_{n+1}, \overline{\xi}_{n+2,} \dots, \overline{\tau}_{n+1}, \overline{\tau}_{n+2}, \dots).\]
    There is a sub-Hopf algebra of the motivic Steenrod algebra
    \[\euscr{A}(n)_F = \langle \text{Sq}^1, \text{Sq}^2, \hdots, \text{Sq}^{2^n} \rangle;\]
    let $(\euscr{A}\modmod \euscr{A}(n))^\vee_F$ be the subalgebra of $\euscr{A}^\vee_F$ given by
    \[(\euscr{A} \modmod \euscr{A}(n))^\vee_F = \mathbb{M}_2^F[\overline{\xi}_1^{2^n}, \overline{\xi}_2^{2^{n-1}}, \hdots, \overline{\tau}_{n+1}, \hdots]/(\overline{\tau}_i^2 = \rho \overline{\tau}_{i+1}+\rho \overline{\tau_0}\overline{\xi}_{i+1} + \tau \overline{\xi}_{i+1}).\]
\end{definition}

\begin{remark}
\label{algebroid}
    An important distinction between $\euscr{A}^\vee_\mathbb{R}$ and $\euscr{A}^\vee_\mathbb{C}$ is that while the latter is a Hopf algebra, the former is a Hopf algebroid \cite[Appendix A]{Rav86}. There is a canonical class $\rho \in \text{K}^\text{M}_1(F)/2$ representing $-1$, giving rise to a class in $\mathbb{M}_2^F$. If $-1$ is a square in $F$, then this class is trivial in motivic homology and $\euscr{A}^\vee_F$ is an honest Hopf algebra. If $-1$ is not a square, then there is an action of the motivic Steenrod algebra:
    \[\text{Sq}^1(\tau) = \rho.\]
    In particular, upon dualizing we see that $\mathbb{M}_2^F$ is not central in $\euscr{A}_F^\vee$, implying that $(\mathbb{M}_2^F, \euscr{A}_F^\vee)$ is a Hopf algebroid.
\end{remark}

As in the classical case, one can express the comodule algebra $(\euscr{A} \modmod \euscr{A}(n))_F^\vee$ as a cotensor product:
\[(\euscr{A} \modmod \euscr{A}(n))^\vee_F = \euscr{A}_F^\vee \Box_{\euscr{A}(n)^\vee_F}\mathbb{M}_2^F.\]
These $\euscr{A}^\vee$-comodule algebras come up naturally in our work as the motivic homology of certain motivic spectra. For instance, we have that $\textup{H}_{**}(\textup{H}\mathbb{Z}) \cong (\euscr{A} \modmod \euscr{A}(0))^\vee_F$. We also have the following:

\begin{thm}[{\cite[Remark 2.14]{ARO20}}]
\label{kq homology}
    There is an isomorphism of $\euscr{A}^\vee_F$-comodule algebras
    \[\textup{H}_{**}(\textup{kq}) \cong (\euscr{A} \modmod \euscr{A}(1))_F^\vee = \mathbb{M}_2^F[\overline{\xi}_1^2, \overline{\xi}_2, \hdots, \overline{\tau}_2, \overline{\tau}_3, \hdots]/(\overline{\tau}_i^2 = \rho \overline{\tau}_{i+1}+\rho \overline{\tau_0}\overline{\xi}_{i+1} + \tau \overline{\xi}_{i+1})   
    .\]
\end{thm}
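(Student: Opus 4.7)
The plan is to produce a comparison map $\varphi: \textup{H}_{**}(\textup{kq}) \to (\euscr{A} \modmod \euscr{A}(1))_F^\vee$ of $\euscr{A}_F^\vee$-comodule algebras from the unit map $\textup{kq} \to \textup{H}\mathbb{F}_2$, and then verify it is an isomorphism by means of the very effective slice tower for $\textup{KQ}$ computed in \cite{ARO20}.

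First I would construct $\varphi$ by dualizing the cohomology map induced by the unit. In cohomology this gives an $\euscr{A}_F$-module map $\euscr{A}_F \to \textup{H}^{**}(\textup{kq})$; the task is to show that this factors through the quotient $\euscr{A}_F \modmod \euscr{A}(1)_F$, i.e.\ that $\textup{Sq}^1$ and $\textup{Sq}^2$ annihilate the fundamental class $u$. The vanishing of $\textup{Sq}^1 u$ follows from the factorization $\textup{kq} \to \textup{H}\mathbb{Z} \to \textup{H}\mathbb{F}_2$ through the zeroth very effective slice. The vanishing of $\textup{Sq}^2 u \in \textup{H}^{2,1}(\textup{kq})$ is the substantive input: in the classical case this is Stong's computation for $\textup{bo}$, and motivically it is enforced by the very effective connectivity of $\textup{kq}$ together with the explicit description of the bottom very effective slices of $\textup{KQ}$ in \cite{ARO20}, which leave no room in bidegree $(2,1)$ for a nonzero image. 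Dualizing yields $\varphi$; it is automatically a map of comodule algebras because $\textup{kq} \to \textup{H}\mathbb{F}_2$ is a ring map.

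To see $\varphi$ is an isomorphism, I would apply $\textup{H}_{**}(-)$ to the very effective slice tower of $\textup{kq}$, whose associated graded is a wedge of specific bigraded shifts of $\textup{H}\mathbb{Z}$ and $\textup{H}\mathbb{F}_2$ by \cite{ARO20}. The $E_1$-page of the resulting spectral sequence assembles, after bookkeeping by bidegree, into exactly the polynomial generators $\overline{\xi}_1^2, \overline{\xi}_2, \dots, \overline{\tau}_2, \overline{\tau}_3, \dots$ appearing on the right-hand side. The spectral sequence must collapse at $E_1$ because $\varphi$ already exhibits a candidate splitting whose image realizes all generators in the correct bidegrees. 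The algebra relations $\overline{\tau}_i^2 = \rho\overline{\tau}_{i+1} + \rho\overline{\tau}_0\overline{\xi}_{i+1} + \tau\overline{\xi}_{i+1}$ are inherited from the ambient Hopf algebroid $\euscr{A}_F^\vee$ under the inclusion $(\euscr{A} \modmod \euscr{A}(1))_F^\vee \hookrightarrow \euscr{A}_F^\vee$, so no independent multiplicative extension needs to be solved.

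The main obstacle is the vanishing of $\textup{Sq}^2 u$: this is the content that genuinely distinguishes $\textup{kq}$ from the very effective cover of $\KGL$ and forces the $\euscr{A}(1)$-quotient rather than merely an $\euscr{A}(0)$-quotient. Over $F = \mathbb{R}$, where $\rho \ne 0$, additional care is needed, since a priori $\textup{Sq}^2 u$ could be $\rho$-divisible in a nontrivial way; ruling this out requires the explicit slice structure of \cite{ARO20}, which simultaneously enforces the collapse of the slice spectral sequence and the absence of unexpected $\rho$-extensions in the abutment.
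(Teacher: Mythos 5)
First, note that the paper does not actually prove this statement: it is imported wholesale as \cite[Remark 2.14]{ARO20}, so any argument you give is necessarily "different from the paper's." Your outline is the standard one (the motivic analogue of Stong's computation for $\textup{bo}$), and its overall architecture --- factor the unit-induced map $\euscr{A}_F \to \textup{H}^{**}(\textup{kq})$ through $(\euscr{A}\modmod\euscr{A}(1))_F$ by killing $\textup{Sq}^1 u$ and $\textup{Sq}^2 u$, then count using the very effective slice tower --- is indeed how this is established in the literature.

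There is, however, a genuine gap at the step you yourself flag as the crux. The claim that the slice description "leaves no room in bidegree $(2,1)$" is false at the level where you would need it: the $E_1$-page of the (co)homological very effective slice spectral sequence is \emph{not} zero in bidegree $(2,1)$, because $\tilde{\textup{s}}_0\textup{KQ}\simeq \textup{H}\mathbb{Z}$ contributes $\textup{H}^{2,1}(\textup{H}\mathbb{Z}) = (\euscr{A}\modmod\euscr{A}(0))^{2,1} = \mathbb{M}_2^F\{\textup{Sq}^2 u_{\mathbb{Z}}\}\neq 0$, and $u_{\mathbb{Z}}$ pulls back to $u$. So $\textup{Sq}^2u=0$ is not a connectivity statement; it requires identifying the attaching map $\tilde{\textup{s}}_0\textup{KQ}\to \Sigma^{1,0}\tilde{\textup{s}}_1\textup{KQ}$ (the first "$k$-invariant" of the very effective tower) as a lift of $\textup{Sq}^2$, which is precisely the differential that kills $\textup{Sq}^2u_{\mathbb{Z}}$ in the abutment. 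That identification is the substantive content of \cite[Remark 2.14]{ARO20} and of Bachmann's slice computation, and it cannot be replaced by a vanishing-for-degree-reasons argument. Two smaller issues: (i) your collapse argument is mildly circular --- you invoke the image of $\varphi$ realizing "all generators in the correct bidegrees" to force degeneration, but injectivity of $\varphi$ is part of what is being proved; the clean version compares $\mathbb{M}_2^F$-module Poincar\'e series of $\euscr{A}\modmod\euscr{A}(1)$ with the $E_1$-page and concludes both collapse and bijectivity simultaneously; (ii) since $\textup{kq}$ is not a finite spectrum, passing between $\textup{H}^{**}(\textup{kq})$ and the $\mathbb{M}_2^F$-linear dual of $\textup{H}_{**}(\textup{kq})$ requires a finite-type/completion argument that should be made explicit, especially over $F=\mathbb{R}$ where $\mathbb{M}_2^F$ is not a field.
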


We investigate now the $\euscr{A}^\vee_F$-comodule structure on the homology $\textup{H}_{**}(\textup{kq})$. As $(\euscr{A} \modmod \euscr{A}(1))^\vee_F \subseteq \euscr{A}_F^\vee$ is a subcoalgebra, the coaction comes from the restriction of the comultiplication on $\euscr{A}^\vee_F$:
\[\psi:\textup{H}_{**}(\textup{kq}) \to \euscr{A}_F^\vee \otimes_{\mathbb{M}_2^F} \textup{H}_{**}(\textup{kq}).\]
Later, we will need to understand the $\euscr{A}(1)^\vee_F$-comodule structure on the homology $\textup{H}_{**}(\textup{kq})$. Observe that by definition, $\euscr{A}(1)_F^\vee$ takes the form
\[\euscr{A}(1)^\vee_F = \mathbb{M}_2^F[\overline{\xi}_1, \overline{\tau}_0, \overline{\tau}_1]/(\overline{\tau}_0^2 = \rho \overline{\tau}_1 + \rho \overline{\tau}_0\overline{\xi}_1+\tau\overline{\xi}_1, \overline{\tau}_1^2, \overline{\xi}_1^2).\]
Since $\euscr{A}(1)^\vee_F$ is a quotient of $\euscr{A}_F^\vee$, we can deduce that the coaction is given by projection of the usual coaction of $\euscr{A}^\vee_F$ on $\textup{H}_{**}(\textup{kq})$:
\[\psi: \textup{H}_{**}(\textup{kq}) \to \euscr{A}^\vee_F \otimes_{\mathbb{M}_2^F}\textup{H}_{**}(\textup{kq}) \to \euscr{A}(1)^\vee_F \otimes_{\mathbb{M}_2^F} \textup{H}_{**}(\textup{kq}).\]
Moreover, since $\textup{kq}$ is a ring spectrum, its homology is a comodule algebra. Thus it suffices to determine the coaction on the generators. The following is immediate from this discussion.
\begin{proposition}[{\cite[Corollary 3.6]{CQ21}}]
\label{a(1) coaction}
    The $\euscr{A}(1)^\vee_F$-coaction on $\textup{H}_{**}(\textup{kq})$ is determined by the following formulae:
    \[\begin{array}{lr}
    \psi(\overline{\xi}_1^2) = 1 \otimes \overline{\xi}_1^2; &\\
    \psi(\overline{\xi}_i) = 1 \otimes \overline{\xi}_i + \overline{\xi}_1 \otimes \overline{\xi}_{i-1}^2 & i \geq 2;\\
    \psi(\overline{\tau}_i) = 1 \otimes \overline{\tau}_i + \overline{\tau}_0 \otimes \overline{\xi}_i + \overline{\tau}_1 \otimes \overline{\xi}_{i-1}^2 &  i \geq 2.\end{array}\]
\end{proposition}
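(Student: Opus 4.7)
The plan is to leverage the factorization already noted in the paragraph preceding the statement: the $\euscr{A}(1)^\vee_F$-coaction on $\textup{H}_{**}(\textup{kq})$ is the composite
\[
\psi \colon \textup{H}_{**}(\textup{kq}) \hookrightarrow \euscr{A}^\vee_F \xrightarrow{\Delta} \euscr{A}^\vee_F \otimes_{\mathbb{M}_2^F} \euscr{A}^\vee_F \xrightarrow{q \otimes \mathrm{id}} \euscr{A}(1)^\vee_F \otimes_{\mathbb{M}_2^F} \textup{H}_{**}(\textup{kq}),
\]
where $q \colon \euscr{A}^\vee_F \twoheadrightarrow \euscr{A}(1)^\vee_F$ is the canonical quotient. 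Because $\textup{kq}$ is a motivic ring spectrum, $\psi$ is an algebra map, so it suffices to evaluate $\psi$ on the algebra generators $\overline{\xi}_1^2$, $\overline{\xi}_i$ for $i \geq 2$, and $\overline{\tau}_i$ for $i \geq 2$ described in \cref{kq homology}, and then read off the surviving terms after applying $q$.

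For $\overline{\xi}_1^2$, I would use that $\Delta$ is multiplicative with $\Delta(\overline{\xi}_1) = \overline{\xi}_1 \otimes 1 + 1 \otimes \overline{\xi}_1$, so squaring in characteristic $2$ gives $\Delta(\overline{\xi}_1^2) = \overline{\xi}_1^2 \otimes 1 + 1 \otimes \overline{\xi}_1^2$. The first summand vanishes under $q \otimes \mathrm{id}$ since $\overline{\xi}_1^2 = 0$ in $\euscr{A}(1)^\vee_F$, leaving the claimed $1 \otimes \overline{\xi}_1^2$.

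For $\overline{\xi}_i$ and $\overline{\tau}_i$ with $i \geq 2$, I would substitute the Milnor-style coproducts from \cref{dual motivic steenrod},
\[
\Delta(\overline{\xi}_i) = \sum_{j+k=i} \overline{\xi}_j \otimes \overline{\xi}_k^{2^j}, \qquad \Delta(\overline{\tau}_i) = 1 \otimes \overline{\tau}_i + \sum_{j+k=i} \overline{\tau}_j \otimes \overline{\xi}_k^{2^j},
\]
and apply $q$ to the left tensorand. Since $q$ kills $\overline{\xi}_j$ for $j \geq 2$, $\overline{\tau}_j$ for $j \geq 2$, and $\overline{\xi}_1^2$, the only values of $j$ that contribute are $j = 0$ and $j = 1$; note that the $j = i$ term in $\Delta(\overline{\xi}_i)$ also vanishes as $i \geq 2$. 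Reading off $j=0$ gives $1 \otimes \overline{\xi}_i$ and $\overline{\tau}_0 \otimes \overline{\xi}_i$, while $j=1$ gives $\overline{\xi}_1 \otimes \overline{\xi}_{i-1}^2$ and $\overline{\tau}_1 \otimes \overline{\xi}_{i-1}^2$, which together with the fixed $1 \otimes \overline{\tau}_i$ term yield the stated formulae.

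There is no substantive obstacle here: once the projection factorization is established, the proof is essentially a bookkeeping exercise unwinding the Milnor coproduct modulo the defining relations of $\euscr{A}(1)^\vee_F$. The only minor points to verify are that each right-tensor element $\overline{\xi}_i$, $\overline{\xi}_{i-1}^2$, $\overline{\tau}_i$ appearing in the answer genuinely lies in $\textup{H}_{**}(\textup{kq}) = (\euscr{A} \modmod \euscr{A}(1))_F^\vee$, which is immediate from \cref{kq homology}, and that the $\rho$-linear relation $\overline{\tau}_0^2 = \rho \overline{\tau}_1 + \rho \overline{\tau}_0 \overline{\xi}_1 + \tau \overline{\xi}_1$ plays no role, since $\overline{\tau}_0$ is never squared in the course of the computation.
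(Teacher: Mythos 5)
Your argument is correct and is exactly the route the paper takes: the proposition is asserted to be immediate from the preceding discussion, which establishes precisely the factorization $\psi = (q \otimes \mathrm{id}) \circ \Delta$ restricted to $(\euscr{A} \modmod \euscr{A}(1))^\vee_F$ together with multiplicativity, and your computation just makes the bookkeeping explicit. Note that your $j=1$ term correctly produces $\overline{\xi}_1 \otimes \overline{\xi}_{i-1}^2$, whereas the statement as printed reads $\overline{\xi}_i \otimes \overline{\xi}_{i-1}^2$ (which would vanish in $\euscr{A}(1)^\vee_F$ for $i \geq 2$); this is a typo in the statement, not a flaw in your proof.
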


The following result will be useful in the sequel.
\begin{proposition}
\label{Kunneth}
    There is a K\"unneth isomorphism of $\euscr{A}_F^\vee$-comodule algebras
    \[\textup{H}_{**}(\textup{kq} \otimes \textup{kq}) \cong \textup{H}_{**}(\textup{kq}) \otimes_{\mathbb{M}_2^F}\textup{H}_{**}(\textup{kq}).\]
\end{proposition}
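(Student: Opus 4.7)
The plan is to deduce this from a standard motivic Künneth theorem once the flatness of $\textup{H}_{**}(\textup{kq})$ over $\mathbb{M}_2^F$ is in place, and then to promote the resulting isomorphism of $\mathbb{M}_2^F$-modules to an isomorphism of $\euscr{A}^\vee_F$-comodule algebras using formal properties of the smash product of motivic ring spectra.

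First I would use the explicit presentation of $\textup{H}_{**}(\textup{kq})$ given in \cref{kq homology} to exhibit an $\mathbb{M}_2^F$-basis. The relations $\overline{\tau}_i^2 = \rho\overline{\tau}_{i+1} + \rho\overline{\tau}_0\overline{\xi}_{i+1} + \tau\overline{\xi}_{i+1}$ allow any monomial to be rewritten as an $\mathbb{M}_2^F$-linear combination of monomials of the form
\[\overline{\xi}_1^{2a_1}\,\overline{\xi}_2^{a_2}\,\overline{\xi}_3^{a_3}\cdots \,\overline{\tau}_2^{\epsilon_2}\,\overline{\tau}_3^{\epsilon_3}\cdots\]
with $a_i \in \N$ and $\epsilon_i \in \{0,1\}$, all but finitely many zero. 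These monomials are $\mathbb{M}_2^F$-linearly independent (one can check this, for example, by comparing with the Milnor basis of the dual Steenrod subalgebra), hence form a basis. In particular $\textup{H}_{**}(\textup{kq})$ is free, hence flat, over $\mathbb{M}_2^F$.

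Once flatness is established, the standard motivic Künneth spectral sequence for mod-$2$ motivic homology collapses, giving the natural external product map
\[\textup{H}_{**}(\textup{kq}) \otimes_{\mathbb{M}_2^F} \textup{H}_{**}(\textup{kq}) \longrightarrow \textup{H}_{**}(\textup{kq} \otimes \textup{kq})\]
as an isomorphism of bigraded $\mathbb{M}_2^F$-modules. To upgrade this to an isomorphism of $\euscr{A}^\vee_F$-comodule algebras, I would argue that the external product is a map of comodule algebras: the multiplicative structure is inherited from the ring spectrum structure on $\textup{kq}$ together with the lax symmetric monoidality of $\textup{H}\mathbb{F}_2$-homology, and the comodule structure on the smash product is the tensor of the two factor coactions by the monoidality of the $\textup{H}\mathbb{F}_2$-smash pairing $\textup{H}\mathbb{F}_2 \otimes (X \otimes Y) \simeq (\textup{H}\mathbb{F}_2 \otimes X) \otimes_{\textup{H}\mathbb{F}_2}(\textup{H}\mathbb{F}_2 \otimes Y)$.

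The only real content is freeness of $\textup{H}_{**}(\textup{kq})$ over $\mathbb{M}_2^F$; after that, the rest is formal. The main subtlety to be careful about is that the relations defining $(\euscr{A}\modmod\euscr{A}(1))^\vee_F$ are not homogeneous in the $\overline{\tau}_i$'s (the right-hand side reduces $\overline{\tau}_i^2$ to strictly simpler monomials), so one must verify that the proposed monomial basis does span and is linearly independent; this is where I expect the bookkeeping to be the most delicate, but no deep input is needed.
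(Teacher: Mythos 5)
Your proof is correct and follows essentially the same route as the paper: both arguments reduce to showing that $\textup{H}_{**}(\textup{kq}) \cong (\euscr{A}\modmod\euscr{A}(1))^\vee_F$ is free over $\mathbb{M}_2^F$ and then invoking the collapse of the motivic K\"unneth spectral sequence. The paper obtains freeness by dualizing the fact that $(\euscr{A}\modmod\euscr{A}(1))_F$ is free over the cohomology of a point (since $\mathrm{Sq}^1$ and $\mathrm{Sq}^2$ are Milnor basis elements), which is the same Milnor-basis input you fall back on to verify linear independence of your monomial basis.
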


\begin{proof}
    It was shown by Voevodsky \cite{Voereduced} that the motivic Steenrod algebra $\euscr{A}_F$ is free as a module over the cohomology of a point. One basis for this algebra is the motivic Milnor basis \cite{Voereduced,Kylingmilnor}. Since both $\text{Sq}^1$ and $\text{Sq}^2$ are elements of this basis, the quotient $\euscr{A}\modmod \euscr{A}(1)_F \cong \euscr{A}_F \otimes_{\euscr{A}(1)_F}\mathbb{M}_2^F$ is also free over the cohomology of a point. Consider the following K\"unneth spectral sequence \cite{DI05}:
    \[\textup{E}_2 = \text{Tor}^{\mathbb{M}_2^F}(\textup{H}_{**}(\textup{kq}), \textup{H}_{**}(\textup{kq})) \implies \textup{H}_{**}(\textup{kq} \otimes \textup{kq}). \]
    Since $\textup{H}_{**}(\textup{kq}) \cong (\euscr{A}\modmod \euscr{A}(1))_F^\vee$ is the $\mathbb{M}_2^F$-linear dual of a finitely-generated free module, it is also free. Thus the higher Tor vanishes and the spectral sequence collapses with no extensions, giving the result.
\end{proof} 

\subsection{Motivic Brown--Gitler comodules}
\label{motivic bg}
We next introduce motivic Brown--Gitler comodules. We define the \textit{Mahowald weight filtration} on $\euscr{A}^\vee_F$ by setting 
\[wt(\overline{\xi}_i)=wt(\overline{\tau}_i)=2^i, \quad wt(\tau)=wt(\rho)=0\] and letting $wt(xy)=wt(x)+wt(y)$. This naturally extends to the subalgebras $(\euscr{A} \modmod \euscr{A}(n))_F^\vee$. The \textit{$F$-motivic Brown--Gitler comodule} $B_n^F(k)$ is the $\euscr{A}(n)^\vee_F$-comodule
\[B_n^F(k) = \langle x \in (\euscr{A}\modmod \euscr{A}(n))_F^\vee : wt(x) \leq 2^{n+1}k\rangle.\]
\begin{remark}
    One can also define Brown--Gitler comodules using the same weight filtration on the subalgebras $(\euscr{A} \modmod \euscr{E}(n))_F^\vee$, leading to a different family of comodules. These families of Brown--Gitler comodules were used by the author, Petersen and Tatum in \cite{MorPetTat-BPGL1}, particularly in the $n=0$ case. As we will only be working with the case defined above, and since $\euscr{A}(0) \cong \euscr{E}(0)$, there should be no ambiguity in the notation.
\end{remark}
We will specialize to the following two cases throughout our work.
\begin{definition}[{compare with \cite[Def. 3.10]{CQ21}}]
The \textit{$k$-th integral motivic Brown--Gitler comodule} is 
\[B_0^F(k) := \langle x \in (\euscr{A} \modmod \euscr{A}(0))_F^\vee : wt(x) \leq 2k\rangle.\]
The \textit{$k$-th $\textup{kq}$ motivic Brown--Gitler comodule} is
\[B^F_1(k):=\langle x \in (\euscr{A} \modmod \euscr{A}(1))^\vee_F: wt(x) \leq 4k\rangle .\]
\end{definition}
We remark that in \cite{CQ21}, different notation is used for the Brown--Gitler comodules.

\begin{example}
    The motivic Brown--Gitler comodule $B_0^F(0)$ is given by
    \[B_0^F(0) = \langle x \in (\euscr{A} \modmod \euscr{A}(0))_F^\vee: wt(x) \leq 0 \rangle \cong \mathbb{M}_2^F,\]
    with trivial $\euscr{A}(1)^\vee_F$-comodule structure. 
\end{example}
\begin{example} 
\label{HZ1 construction}
    The motivic Brown--Gitler comodule $B_0^F(1)$ is given by
    \[B_0^F(1) = \langle x \in (\euscr{A} \modmod \euscr{A}(0))^\vee_F : wt(x) \leq 2\rangle = \mathbb{M}_2^F\{1, \bar{\xi}_1, \bar{\tau}_1\},\]
    where the $\euscr{A}(1)_F^\vee$-comodule structure is given by 
    \[\psi(\bar{\xi}_1) = 1 \otimes \bar{\xi}_1+\bar{\xi}_1 \otimes 1, \quad \psi(\bar{\tau}_1) = 1 \otimes \bar{\tau}_1 + \bar{\tau}_0 \otimes \bar{\xi}_1.\]
    In other words, $B_0^F(1)$ is presented as the algebraic cell complex in \Cref{B_0(1) diagram}.
\end{example}
\begin{figure}[h]
    \begin{center}
    \begin{tikzpicture}
    \node[shape = circle, draw=black, scale=.75] (0) at (0,0) {}; 
    \node[shape = circle, draw=black, scale=.75] (2) at (0,2) {};
    \node[shape = circle, draw=black, scale=.75] (3) at (0,3) {};

    \path (0) edge[color=blue, thick, bend right =50] (2);
    \draw (0.75,1) node{$h_1$};
    \path (2) edge[thick]  (3);
    \draw (0.75,2.5) node{$h_0$};
    \end{tikzpicture}  
    \end{center}
    \caption{The motivic Brown--Gitler comodule $B_0^F(1)$}
    \label{B_0(1) diagram}
\end{figure}
In fact, we can realize $B_0^F(1)$ as the mod-2 motivic cohomology of a spectrum \cite[Example 3.12]{CQ21}. There is a geometric classifying space \cite[Section 4]{MV99} $B\mu_2$ of the affine group scheme $\mu_2$ of second roots of unity. Take $L_2$ to be the simplicial 4-skeleton of $B\mu_2$. The inclusion of the $(3,2)$ cell of $L_2$ gives a map
 \[S^{3,2} \hookrightarrow L_2.\]
Let $X$ be the cofiber; then $\textup{H}\mathbb{Z}^F_1:=\Sigma^{4,2}F(X, \mathbb{S})$ is a finite complex realizing $B_0^F(1)$ in homology.

\begin{remark}
\label{motivic BG spectra}
    In classical topology there exist integral and bo Brown--Gitler spectra \cite{Shi83, GJM86}. These are finite spectra $\textup{H}\mathbb{Z}_k$ and $\text{bo}_k$, respectively, with the property that 
    \[\text{H}_*(\textup{H}\mathbb{Z}_k) \cong B^{\text{cl}}_0(k), \quad \text{H}_*(\text{bo}_k) \cong B_1^{\text{cl}}(k).\] 
    However, while $C_2$-equivariant analogues exist for integral Brown--Gitler spectra \cite{LiPetTat25}, there are currently no constructions of any motivic Brown--Gitler spectra. One possible construction could come from the motivic lambda algebra of Balderamma--Culver--Quigley \cite{BalCulQui25} and using the classical work of Goerss--Jones--Mahowald \cite{GJM86} as inspiration. An alternative construction, at least over $\mathbb{C}$, is in using the theory of filtered spectra in \cite{GIKR22}. We will explore the construction of motivic Brown--Gitler spectra in future work.
\end{remark}
The $\euscr{A}(1)_F^\vee$-coaction on $(\euscr{A} \modmod \euscr{A}(1))^\vee_F$ preserves the Mahowald weight, as can be seen by our formulae in \cref{a(1) coaction}. This gives the following:
\begin{proposition}
\label{preserves weight}
    The motivic integral Brown--Gitler comodules $B_0^F(k)$ are $\euscr{A}(1)^\vee_F$-subcomodules of $(\euscr{A} \modmod \euscr{A}(0))^\vee_F$.
\end{proposition}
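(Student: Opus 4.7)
The plan is to reduce the statement to a weight-preservation check on the generators of $(\euscr{A} \modmod \euscr{A}(0))^\vee_F$. The coaction $\psi : (\euscr{A} \modmod \euscr{A}(0))^\vee_F \to \euscr{A}(1)^\vee_F \otimes_{\mathbb{M}_2^F} (\euscr{A} \modmod \euscr{A}(0))^\vee_F$ is obtained, exactly as for $\textup{H}_{**}(\textup{kq})$, by restricting the comultiplication on $\euscr{A}^\vee_F$ and then projecting the left tensor factor along the quotient $\euscr{A}^\vee_F \twoheadrightarrow \euscr{A}(1)^\vee_F$. Since both the comultiplication and the projection are algebra maps, $\psi$ is multiplicative, and since the Mahowald weight is itself multiplicative, it suffices to check that the right-hand tensor factor of $\psi(z)$ has weight equal to $wt(z)$ whenever $z$ is one of the algebra generators $\bar{\xi}_i$ or $\bar{\tau}_i$.

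First, I would read off the coaction on generators. In $\euscr{A}(1)^\vee_F$ we have $\bar{\xi}_1^2 = 0$ and $\bar{\xi}_i = \bar{\tau}_i = 0$ for $i \geq 2$, so from \cref{dual motivic steenrod} the projected coformulas are
\[
\psi(\bar{\xi}_1) = 1 \otimes \bar{\xi}_1 + \bar{\xi}_1 \otimes 1, \qquad
\psi(\bar{\xi}_i) = 1 \otimes \bar{\xi}_i + \bar{\xi}_1 \otimes \bar{\xi}_{i-1}^2 \quad (i \geq 2),
\]
\[
\psi(\bar{\tau}_i) = 1 \otimes \bar{\tau}_i + \bar{\tau}_0 \otimes \bar{\xi}_i + \bar{\tau}_1 \otimes \bar{\xi}_{i-1}^2 \quad (i \geq 2),
\]
with $\psi(\bar{\tau}_1) = 1 \otimes \bar{\tau}_1 + \bar{\tau}_0 \otimes \bar{\xi}_1 + \bar{\tau}_1 \otimes 1$. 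In every summand, the right-hand factor has weight $2^i$, which equals $wt(\bar{\xi}_i) = wt(\bar{\tau}_i) = 2^i$.

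Next, I would promote this to a statement about arbitrary monomials. Writing any $x \in (\euscr{A} \modmod \euscr{A}(0))^\vee_F$ as an $\mathbb{M}_2^F$-linear combination of monomials in the $\bar{\xi}_i$'s and $\bar{\tau}_i$'s, multiplicativity of $\psi$ together with multiplicativity of $wt$ gives that $\psi(x) = \sum a_j \otimes b_j$ with each $b_j$ a monomial satisfying $wt(b_j) \leq wt(x)$ (equality term-by-term, though the inequality is all we need after collecting like terms with $\mathbb{M}_2^F$-coefficients, since $wt(\tau) = wt(\rho) = 0$).

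Finally, if $x \in B_0^F(k)$, so $wt(x) \leq 2k$, the computation above forces every right-hand factor $b_j$ to have $wt(b_j) \leq 2k$, i.e.\ $b_j \in B_0^F(k)$. Therefore $\psi(x) \in \euscr{A}(1)^\vee_F \otimes_{\mathbb{M}_2^F} B_0^F(k)$, which is precisely the statement that $B_0^F(k)$ is an $\euscr{A}(1)^\vee_F$-subcomodule of $(\euscr{A} \modmod \euscr{A}(0))^\vee_F$. There is no real obstacle here; the only thing to keep straight is the bookkeeping convention that ``weight of the coaction'' refers to the weight of the right-hand (codomain) tensor factor, which is the natural convention given that the left-hand factor lies in $\euscr{A}(1)^\vee_F$ and is not constrained by the Brown--Gitler weight filtration.
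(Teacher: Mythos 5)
Your proof is correct and follows essentially the same route as the paper, which simply observes that the projected $\euscr{A}(1)^\vee_F$-coaction formulas preserve the Mahowald weight of the right-hand tensor factor; you have just spelled out the generator-level computation (for $(\euscr{A}\modmod\euscr{A}(0))^\vee_F$ rather than $(\euscr{A}\modmod\euscr{A}(1))^\vee_F$) and the multiplicativity bookkeeping explicitly. The only cosmetic slip is the claim that \emph{every} summand has right-hand factor of weight exactly $2^i$ (the $\bar{\xi}_1\otimes 1$ and $\bar{\tau}_1\otimes 1$ terms have weight $0$), but as you note only the inequality $wt(b_j)\leq wt(x)$ is needed, so the argument stands.
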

A key property of the integral motivic Brown--Gitler comodules is that they give rise to a splitting of the homology $\textup{H}_{**}(\textup{kq})$. We refer the reader to \cite{CQ21} for a proof.

\begin{thm} [{\cite[Theorem 3.20]{CQ21}}]
\label{kq brown gitler homology}
    There is an isomorphism of $\euscr{A}(1)_F^\vee$-comodules
    \[(\euscr{A}\modmod \euscr{A}(1))_F^\vee \cong \bigoplus_{k \geq 0}\Sigma^{4k, 2k}B_0^F(k).\]
\end{thm}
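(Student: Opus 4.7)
The plan is to exhibit the decomposition directly from the Mahowald weight grading on $(\euscr{A}\modmod\euscr{A}(1))^\vee_F$. Extending $wt(\overline{\xi}_i)=wt(\overline{\tau}_i)=2^i$ and $wt(\tau)=wt(\rho)=0$ multiplicatively, every generator of $(\euscr{A}\modmod\euscr{A}(1))^\vee_F$ has weight divisible by four, so the algebra breaks up as an $\mathbb{M}_2^F$-module into $(\euscr{A}\modmod\euscr{A}(1))^\vee_F=\bigoplus_{k\geq 0}G_k$, where $G_k$ is the span of the monomials of weight exactly $4k$.

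The first step is to verify that each $G_k$ is an $\euscr{A}(1)^\vee_F$-subcomodule. The coaction formulas in \cref{a(1) coaction} have the strong property that every term $a\otimes m$ appearing in $\psi(x)$ for a generator $x$ satisfies $wt(m)=wt(x)$ on the nose; for instance $wt(\overline{\xi}_i)=wt(\overline{\xi}_{i-1}^2)=2^i$, and likewise for each term in the $\overline{\tau}_i$ formula. Since $\psi$ is an algebra map and Mahowald weight is additive under multiplication, this identity persists for all monomials. Hence $\psi(G_k)\subseteq\euscr{A}(1)^\vee_F\otimes_{\mathbb{M}_2^F}G_k$, so the direct sum decomposition is one of comodules.

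Next, I would produce an $\euscr{A}(1)^\vee_F$-comodule isomorphism $\Phi_k\colon\Sigma^{4k,2k}B_0^F(k)\to G_k$ by a combinatorial reindexing of monomial bases. Given $\overline{\xi}_1^{b_1}\overline{\xi}_2^{b_2}\cdots\overline{\tau}_1^{\delta_1}\overline{\tau}_2^{\delta_2}\cdots$ of weight $2j\leq 2k$ in $B_0^F(k)$, send it to $\overline{\xi}_1^{2(k-j)}\overline{\xi}_2^{b_1}\overline{\xi}_3^{b_2}\cdots\overline{\tau}_2^{\delta_1}\overline{\tau}_3^{\delta_2}\cdots\in G_k$; the inverse strips off the leading $\overline{\xi}_1^{2(k-j)}$ factor and decrements each remaining $\overline{\xi}$- and $\overline{\tau}$-index by one. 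A short bidegree check using $|\overline{\xi}_i|=(2^{i+1}-2,2^i-1)$ and $|\overline{\tau}_i|=(2^{i+1}-1,2^i-1)$ confirms that $\Phi_k$ realizes the shift $(4k,2k)$ and is a bijection of $\mathbb{M}_2^F$-bases.

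Finally, I would verify comodule compatibility by comparing \cref{a(1) coaction} with the analogous formulas for the coaction of $\euscr{A}(1)^\vee_F$ on $B_0^F(k)\subseteq(\euscr{A}\modmod\euscr{A}(0))^\vee_F$ guaranteed by \cref{preserves weight}. On generators the match is immediate: $\Phi_k^{-1}$ carries $\psi(\overline{\xi}_i)=1\otimes\overline{\xi}_i+\overline{\xi}_1\otimes\overline{\xi}_{i-1}^2$ to $\psi(\overline{\xi}_{i-1})$ under the correspondence $\overline{\xi}_i\leftrightarrow\overline{\xi}_{i-1}$, $\overline{\tau}_i\leftrightarrow\overline{\tau}_{i-1}$, and $\overline{\xi}_1^2\leftrightarrow 1$, and the analogous match holds for the $\overline{\tau}$ generators. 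The main obstacle is the combinatorial bookkeeping needed to extend this match from generators to products: one must check that the leading $\overline{\xi}_1^{2(k-j)}$ factors redistribute correctly when the two multiplicative coactions unfold on an arbitrary monomial. This is essentially the computation of \cite[Theorem 3.20]{CQ21}, and the argument transports verbatim over any base field $F$ since the coaction formulas are base-field independent.
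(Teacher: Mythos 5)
The paper offers no argument of its own here---it simply cites \cite[Theorem 3.20]{CQ21}---and your proposal reconstructs exactly the weight-decomposition-plus-index-shift proof given there: split $(\euscr{A}\modmod\euscr{A}(1))^\vee_F$ into its Mahowald-weight-homogeneous pieces, which are honest subcomodules because the coaction of \cref{a(1) coaction} preserves weight on the nose on the generators $\overline{\xi}_1^2$, $\overline{\xi}_{i}$ and $\overline{\tau}_{i}$ for $i\geq 2$, and identify the weight-$4k$ piece with $\Sigma^{4k,2k}B_0^F(k)$ by padding with $\overline{\xi}_1^{2(k-j)}$ and shifting indices. Your bidegree and generator-level coaction checks are correct; the one point worth keeping explicit in the final bookkeeping is that on the $(\euscr{A}\modmod\euscr{A}(0))^\vee_F$ side the coaction only preserves the weight \emph{filtration} rather than the grading (e.g.\ the term $\overline{\xi}_1\otimes 1$ in $\psi(\overline{\xi}_1)$ drops weight), and it is precisely these weight-dropping terms that the variable padding exponent must absorb.
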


The following result will also be useful.
\begin{proposition}
\label{kq bar homology}
    There is an isomorphism of $\euscr{A}(1)^\vee_F$-comodules:
    \[\textup{H}_{**}(\overline{\textup{kq}}) \cong \bigoplus_{k \geq 1}\Sigma^{4k, 2k}B_0^F(k).\]
\end{proposition}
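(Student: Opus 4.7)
The plan is to obtain the decomposition from the cofiber sequence defining $\overline{\textup{kq}}$ together with the splitting of $\textup{H}_{**}(\textup{kq})$ from \cref{kq brown gitler homology}. Concretely, apply mod-2 motivic homology to the cofiber sequence
\[\mathbb{S} \longrightarrow \textup{kq} \longrightarrow \overline{\textup{kq}}\]
to obtain a long exact sequence of $\euscr{A}(1)^\vee_F$-comodules. The unit map $\mathbb{S} \to \textup{kq}$ induces in homology the inclusion $\mathbb{M}_2^F \hookrightarrow \textup{H}_{**}(\textup{kq})$ sending $1$ to the unit of the comodule algebra; this is split (in $\mathbb{M}_2^F$-modules) by the counit, so the long exact sequence collapses to a short exact sequence
\[0 \longrightarrow \mathbb{M}_2^F \longrightarrow \textup{H}_{**}(\textup{kq}) \longrightarrow \textup{H}_{**}(\overline{\textup{kq}}) \longrightarrow 0\]
of $\euscr{A}(1)^\vee_F$-comodules.

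Next, I would identify the image of the unit with the $k=0$ summand in \cref{kq brown gitler homology}. By definition, $B_0^F(0) = \langle x \in (\euscr{A} \modmod \euscr{A}(0))^\vee_F : wt(x) \leq 0 \rangle \cong \mathbb{M}_2^F$, and under the isomorphism $\textup{H}_{**}(\textup{kq}) \cong (\euscr{A} \modmod \euscr{A}(1))^\vee_F$ the image of the unit is exactly the elements of Mahowald weight zero, namely the $\mathbb{M}_2^F$ sitting in weight $0$. Since \cref{kq brown gitler homology} provides an $\euscr{A}(1)^\vee_F$-comodule splitting
\[\textup{H}_{**}(\textup{kq}) \cong B_0^F(0) \oplus \bigoplus_{k \geq 1}\Sigma^{4k,2k}B_0^F(k),\]
and the first summand is precisely the image of the unit, the quotient by $\mathbb{M}_2^F$ is isomorphic as $\euscr{A}(1)^\vee_F$-comodules to $\bigoplus_{k \geq 1}\Sigma^{4k,2k}B_0^F(k)$.

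The one thing to be careful about is that the identification of the image of the unit with the $B_0^F(0)$ summand is genuinely compatible with the splitting of \cref{kq brown gitler homology}, not merely abstractly isomorphic. This follows because the splitting is constructed via the Mahowald weight filtration (which is compatible with the $\euscr{A}(1)^\vee_F$-coaction by \cref{preserves weight}), and the unit of $\textup{H}_{**}(\textup{kq})$ has Mahowald weight $0$ and so is distinguished by the filtration. There is no genuine obstacle here; the argument is formal once one has \cref{kq brown gitler homology} in hand.
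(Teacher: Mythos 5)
Your proposal is correct and follows essentially the same route as the paper: apply homology to the defining cofiber sequence, observe that the unit induces the inclusion of the bottom summand $B_0^F(0)$ so the long exact sequence degenerates, and conclude that $\textup{H}_{**}(\overline{\textup{kq}})$ is the complementary sum $\bigoplus_{k \geq 1}\Sigma^{4k,2k}B_0^F(k)$. Your extra remark verifying that the image of the unit is genuinely the $k=0$ summand of the splitting (via the Mahowald weight filtration) is a reasonable point of care that the paper leaves implicit.
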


\begin{proof}
    The defining cofiber sequence
    \[\mathbb{S} \to \textup{kq} \to \overline{\textup{kq}}\]
    gives a long exact sequence in homology of the form
    \[ \cdots \to \textup{H}_{**}(\mathbb{S}) \to \textup{H}_{**}(\textup{kq}) \to \textup{H}_{**}(\overline{\textup{kq}}) \to \text{H}_{*-1, *}(\mathbb{S}) \to \cdots .\]
    We have isomorphisms 
    \[\textup{H}_{**}(\mathbb{S}) \cong \mathbb{M}_2^F \cong B_0^F(0)
    \quad \text{and}\quad \textup{H}_{**}(\textup{kq})\cong \bigoplus_{k \geq 0}\Sigma^{4k, 2k}B_0^F(k).\]
    The unit map $\mathbb{S} \to \text{kq}$ induces an inclusion of the bottom summand $B_0^F(0) \hookrightarrow \bigoplus_{k \geq 0}B_0^F(k)$, so by the long exact sequence, $\textup{H}_{**}(\overline{\textup{kq}})$ is trivial in those degrees. The long exact sequence also gives an isomorphism between $\textup{H}_{**}(\textup{kq})$ and $\textup{H}_{**}(\overline{\textup{kq}})$ outside of these degrees since $\textup{H}_{**}(\mathbb{S})$ vanishes, giving the result.
\end{proof}

We close this section by recalling useful short exact sequences of $\euscr{A}(1)^\vee_F$-comodules relating motivic Brown--Gitler comodules. We refer the reader to \cite{CQ21} for a proof.

\begin{proposition}[{\cite[Lemma 3.21]{CQ21}}]
\label{ses bg}
    There are short exact sequences of $\euscr{A}(1)^\vee_F$-comodules:
    \[0 \to \Sigma^{4k, 2k}B_0^F(k) \to B_0^F(2k) \to B_1^F(k-1) \otimes_{\mathbb{M}_2^F} (\euscr{A}(1) \modmod \euscr{A}(0))_F^\vee \to 0,\]
    \[0 \to \Sigma^{4k, 2k}B_0^F(k) \otimes_{\mathbb{M}_2^F} B_0^F(1) \to B_0^F(2k+1) \to B_1^F(k-1) \otimes_{\mathbb{M}_2^F} (\euscr{A}(1) \modmod \euscr{A}(0))_F^\vee \to 0.\]
\end{proposition}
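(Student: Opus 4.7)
The plan is to mimic the $\mathbb{C}$-motivic argument of \cite[Lemma 3.21]{CQ21}, working more generally over $F$. The fundamental ingredient is the factorization of $(\euscr{A}\modmod \euscr{A}(1))^\vee_F$-modules
\[(\euscr{A}\modmod \euscr{A}(0))_F^\vee \cong (\euscr{A}\modmod \euscr{A}(1))_F^\vee \otimes_{\mathbb{M}_2^F} (\euscr{A}(1)\modmod \euscr{A}(0))_F^\vee\]
coming from multiplication, together with a careful analysis of how the Mahowald weight interacts with this decomposition.

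First, I would record the Mahowald weights. The subalgebra $(\euscr{A}\modmod \euscr{A}(1))_F^\vee$ is generated by $\overline{\xi}_1^2$ together with $\overline{\xi}_i$ and $\overline{\tau}_i$ for $i \geq 2$; each of these generators has Mahowald weight divisible by $4$, so every monomial has weight in $4\mathbb{Z}_{\geq 0}$. The other factor $(\euscr{A}(1)\modmod \euscr{A}(0))_F^\vee$ has the $\mathbb{M}_2^F$-basis $\{1, \overline{\xi}_1, \overline{\tau}_1, \overline{\xi}_1\overline{\tau}_1\}$ of weights $\{0, 2, 2, 4\}$. Writing a typical element of the factorization as $xy$ with $x \in (\euscr{A}\modmod \euscr{A}(1))_F^\vee$ and $y$ in this basis, the weight constraint $wt(x)+wt(y)\leq 4k$ (respectively $\leq 4k+2$) splits according to $wt(y)$; using $wt(x)\in 4\mathbb{Z}$ rounds the remaining bound on $x$ down to the nearest multiple of $4$. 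This produces $\mathbb{M}_2^F$-module decompositions
\[B_0^F(2k) \cong B_1^F(k)\cdot 1 \oplus B_1^F(k-1)\otimes_{\mathbb{M}_2^F}\mathbb{M}_2^F\{\overline{\xi}_1,\overline{\tau}_1,\overline{\xi}_1\overline{\tau}_1\},\]
\[B_0^F(2k+1) \cong B_1^F(k)\otimes_{\mathbb{M}_2^F}\mathbb{M}_2^F\{1,\overline{\xi}_1,\overline{\tau}_1\} \oplus B_1^F(k-1)\cdot \overline{\xi}_1\overline{\tau}_1.\]

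Next I would invoke \cref{kq brown gitler homology}, which provides an isomorphism $(\euscr{A}\modmod \euscr{A}(1))_F^\vee \cong \bigoplus_j \Sigma^{4j,2j}B_0^F(j)$ of $\euscr{A}(1)^\vee_F$-comodules, and truncate at weight $\leq 4k$ to get
\[B_1^F(k) \cong B_1^F(k-1) \oplus \Sigma^{4k, 2k}B_0^F(k)\]
as $\euscr{A}(1)^\vee_F$-comodules. Substituting and regrouping---recognizing $\mathbb{M}_2^F\{1,\overline{\xi}_1,\overline{\tau}_1\}=B_0^F(1)$ and $\mathbb{M}_2^F\{1,\overline{\xi}_1,\overline{\tau}_1,\overline{\xi}_1\overline{\tau}_1\}=(\euscr{A}(1)\modmod \euscr{A}(0))_F^\vee$---yields
\[B_0^F(2k) \cong \Sigma^{4k,2k}B_0^F(k) \oplus B_1^F(k-1)\otimes_{\mathbb{M}_2^F}(\euscr{A}(1)\modmod \euscr{A}(0))_F^\vee,\]
\[B_0^F(2k+1) \cong \Sigma^{4k,2k}B_0^F(k)\otimes_{\mathbb{M}_2^F}B_0^F(1) \oplus B_1^F(k-1)\otimes_{\mathbb{M}_2^F}(\euscr{A}(1)\modmod \euscr{A}(0))_F^\vee,\]
so reading off the inclusion of the first summand and projection onto the second gives the advertised short exact sequences.

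The main obstacle is promoting these $\mathbb{M}_2^F$-module statements to short exact sequences of $\euscr{A}(1)^\vee_F$-comodules. The injection of the first summand is a composition of comodule maps by \cref{kq brown gitler homology} together with the subalgebra inclusion $(\euscr{A}\modmod \euscr{A}(1))_F^\vee \hookrightarrow (\euscr{A}\modmod \euscr{A}(0))_F^\vee$, and its image is a subcomodule by \cref{preserves weight}. The quotient therefore inherits an $\euscr{A}(1)^\vee_F$-comodule structure, and the multiplication map from $B_1^F(k-1)\otimes_{\mathbb{M}_2^F}(\euscr{A}(1)\modmod \euscr{A}(0))_F^\vee$ into this quotient is a comodule map (since $(\euscr{A}\modmod \euscr{A}(0))_F^\vee$ is a comodule algebra) that is an $\mathbb{M}_2^F$-module isomorphism by the above dimension-counting argument, hence a comodule isomorphism.
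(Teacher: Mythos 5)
Your proposal is correct and follows essentially the same route as the proof of \cite[Lemma 3.21]{CQ21}, to which the paper defers: the module-level factorization $(\euscr{A}\modmod\euscr{A}(0))_F^\vee \cong (\euscr{A}\modmod\euscr{A}(1))_F^\vee \otimes_{\mathbb{M}_2^F}(\euscr{A}(1)\modmod\euscr{A}(0))_F^\vee$, the Mahowald-weight bookkeeping (using that $(\euscr{A}\modmod\euscr{A}(1))_F^\vee$ is concentrated in weights divisible by $4$), and the weight-homogeneous splitting of \cref{kq brown gitler homology} to identify kernel and cokernel as comodules. The only point worth making explicit is that the truncation $B_1^F(k)\cong B_1^F(k-1)\oplus\Sigma^{4k,2k}B_0^F(k)$ uses that the isomorphism of \cref{kq brown gitler homology} places the $j$-th summand in Mahowald weight exactly $4j$, which holds for the construction in \cite{CQ21} but is not stated in the theorem as quoted.
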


\begin{remark}
    The short exact sequences in \cref{ses bg} are motivic analogues of classical short exact sequences of Brown--Gitler comodules. These arise by applying mod-2 homology to cofiber sequences relating integral and bo Brown--Gitler spectra \cite{Pearson-koBG}. One would imagine the same to be true for the conjectural motivic Brown--Gitler spectra.
\end{remark}

\section{A splitting of symplectic K-theory}
\label{section ksp}
In this section, we record a motivic analogue of a classical splitting result. Let $\text{bsp} \simeq \tau_{\geq 0}\Sigma^4\text{KO}$ denote the connective cover of $\Sigma^4\text{KO}$. Then there is an equivalence of spectra \cite[Theorem 1.5]{MahMil76}:
\[\text{bsp} \simeq \text{bo} \otimes \text{H}\mathbb{Z}_1,\]
where $\text{H}\mathbb{Z}_1$ denotes the first integral Brown--Gitler spectrum.
We prove an analogous splitting for the motivic symplectic K-theory spectrum ksp by using the very effective slice filtration.

\subsection{Very effective slice filtration}
The very effective slice filtration was introduced by Spitzweck--\O stv\ae r in \cite[Section 5]{SpitzweckOstvaer-twistedKtheory} and further studied by Bachmann in \cite{Bachmann-generalized}. We follow the treatments given in \cite[Section 1]{Bann-realbetti} and \cite[Section 13]{BachmannHoyois-Norms}.

The category of \textit{very effective motivic spectra}, denoted $\text{SH}(F)^{\text{veff}}$, is the full subcategory of $\text{SH}(F)$ generated under colimits by $\Sigma^{n,0}\Sigma_+^\infty X$, where $X \in \text{Sm}_F$ and $n \geq 0$. The category of \textit{very n-effective motivic spectra} for $n \in \mathbb{Z}$, denoted $\text{SH}(F)^{\text{veff}}(n)$, has objects $\Sigma^{2n, n}X$ for $X \in \text{SH}(F)^{\text{veff}}$. These categories assemble into colimit-preserving inclusions
\[\cdots \subset \text{SH}(F)^{\text{veff}}(n+1) \subset \text{SH}(F)^{\text{veff}}(n) \subset \text{SH}(F)^{\text{veff}}(n-1) \subset \cdots \subset\text{SH}(F).\]
In particular, the inclusion functor $\tilde{\text{i}}_n:\text{SH}(F)^{\text{veff}}(n) \to \text{SH}(F)$ admits a right adjoint called the \textit{n-th very effective cover}, denoted $\tilde{\text{f}}_n$. If $X \in \text{SH}(F)$ is any motivic spectrum, the very effective covers give a functorial filtration
\[\cdots \to\tilde{\text{f}}_{n+1}X \to \tilde{\text{f}}_nX \to \tilde{\text{f}}_{n-1}X \to \cdots \to X\]
which we will call the \textit{very effective slice tower} for $X$.
\begin{example}
    In this notation, we have that $\text{kq} = \tilde{\text{f}}_0\text{KQ}$.
\end{example}
The \textit{n-th very effective slice} of a motivic spectrum $X$, denoted $\tilde{\text{s}}_nX$, is defined by the cofiber sequence
\[\tilde{\text{f}}_{n+1}X \to \tilde{\text{f}}_nX \to \tilde{\text{s}}_nX.\]
By construction, the very effective cover and slice functors behave well with respect to $\mathbb{P}^1$-suspension.

\begin{lemma}[{\cite[Lemma 8]{Bachmann-generalized}}]\label{slice lemma bachmann}
    For any $X \in \textup{SH}(F)$, we have
    \[\begin{array}{lr}
        \tilde{\textup{f}}_{n+1}(\Sigma^{2,1}X) \simeq \Sigma^{2,1}\tilde{\textup{f}}_{n}X, & \tilde{\textup{s}}_{n+1}(\Sigma^{2,1}X) \simeq \Sigma^{2,1}\tilde{\textup{s}}_nX.
    \end{array}\]
\end{lemma}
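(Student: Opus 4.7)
The plan is to exploit the tautological shift in the definition of the very effective filtration: by definition $\mathrm{SH}(F)^{\mathrm{veff}}(n)$ consists of objects of the form $\Sigma^{2n,n}Y$ with $Y \in \mathrm{SH}(F)^{\mathrm{veff}}$, so there is a tautological equivalence of subcategories
\[
\mathrm{SH}(F)^{\mathrm{veff}}(n+1) = \Sigma^{2,1}\,\mathrm{SH}(F)^{\mathrm{veff}}(n).
\]
Since $\Sigma^{2,1}\colon \mathrm{SH}(F)\to \mathrm{SH}(F)$ is an auto-equivalence, I will combine this with the universal property of $\tilde{\mathrm{f}}_n$ (as the counit of the adjunction $\tilde{\mathrm{i}}_n\dashv \tilde{\mathrm{f}}_n$) to identify $\tilde{\mathrm{f}}_{n+1}(\Sigma^{2,1}X)$.

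First I would verify that $\Sigma^{2,1}\tilde{\mathrm{f}}_n X$ lies in $\mathrm{SH}(F)^{\mathrm{veff}}(n+1)$: this is immediate from the identification above, since $\tilde{\mathrm{f}}_n X \in \mathrm{SH}(F)^{\mathrm{veff}}(n)$. Next I would check the universal mapping property. Given any $Y \in \mathrm{SH}(F)^{\mathrm{veff}}(n+1)$, write $Y = \Sigma^{2,1}Y'$ with $Y' \in \mathrm{SH}(F)^{\mathrm{veff}}(n)$ and compute
\[
\mathrm{Map}(Y, \Sigma^{2,1}X) \simeq \mathrm{Map}(Y', X) \simeq \mathrm{Map}(Y', \tilde{\mathrm{f}}_n X) \simeq \mathrm{Map}(Y, \Sigma^{2,1}\tilde{\mathrm{f}}_n X),
\]
where the middle equivalence uses the adjunction applied to $Y' \in \mathrm{SH}(F)^{\mathrm{veff}}(n)$ and the outer two use that $\Sigma^{2,1}$ is an equivalence. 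By the Yoneda lemma applied inside $\mathrm{SH}(F)^{\mathrm{veff}}(n+1)$, this exhibits $\Sigma^{2,1}\tilde{\mathrm{f}}_n X$ as the very $(n+1)$-effective cover of $\Sigma^{2,1}X$, giving the first equivalence.

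For the second equivalence, I would apply $\Sigma^{2,1}$ to the defining cofiber sequence
\[
\tilde{\mathrm{f}}_{n+1}X \to \tilde{\mathrm{f}}_n X \to \tilde{\mathrm{s}}_n X
\]
and compare with the cofiber sequence defining $\tilde{\mathrm{s}}_{n+1}(\Sigma^{2,1}X)$. Since $\Sigma^{2,1}$ is exact and, by the first part, intertwines $\tilde{\mathrm{f}}_{n}$ with $\tilde{\mathrm{f}}_{n+1}$ (and likewise for $\tilde{\mathrm{f}}_{n+1}$ with $\tilde{\mathrm{f}}_{n+2}$), the cofibers agree; this yields $\tilde{\mathrm{s}}_{n+1}(\Sigma^{2,1}X) \simeq \Sigma^{2,1}\tilde{\mathrm{s}}_n X$. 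The main (and essentially only) subtlety is the tautological identification $\mathrm{SH}(F)^{\mathrm{veff}}(n+1) = \Sigma^{2,1}\mathrm{SH}(F)^{\mathrm{veff}}(n)$, which is built into the definition of the very effective slice filtration; once this is unwound the rest is a formal Yoneda/adjunction argument with no real obstacles.
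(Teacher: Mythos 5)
Your argument is correct. The paper gives no proof of this lemma --- it is quoted directly from Bachmann's \emph{The generalized slices of Hermitian K-theory} (Lemma 8) --- and your formal argument (the tautological identification $\mathrm{SH}(F)^{\mathrm{veff}}(n+1) = \Sigma^{2,1}\mathrm{SH}(F)^{\mathrm{veff}}(n)$, uniqueness of right adjoints via Yoneda, then exactness of $\Sigma^{2,1}$ for the slice statement) is exactly the standard proof found in that reference. The only point worth making explicit is that the chain of equivalences is natural in $Y$ and compatible with the counit maps to $\Sigma^{2,1}X$, which is what lets you invoke Yoneda and, in the second part, lets the square of cover maps commute so that the cofibers can be identified; both follow since each step is either the auto-equivalence $\Sigma^{2,1}$ or the adjunction counit.
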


\begin{remark}
    Heuristically, the very effective slice filtration acts as a motivic stand in for the Whitehead tower in classical stable homotopy. We give an example to justify this comparison. Let $A$ and $Y$ be classical spectra, and let $P_nY$ be defined by the cofiber sequence
    \[\tau_{\geq n+1}Y \to \tau_{\geq n}Y \to P_nY\]
    One way to obtain the Atiyah--Hirzebruch spectral sequence 
    \[\textup{E}_2 = A_*(P_nY) \implies A_*(Y)\]
    is by smashing the Whitehead tower for $Y$ with $A$ and applying homotopy groups. Now, let $E$ and $X$ be motivic spectra. Then there is a (generalized) very effective slice spectral sequence \cite{CarrickHillRavenel-homologicalslice}
    \[\textup{E}_2=E_{**}(\tilde{\text{s}}_nX) \implies E_{**}(X)\]
    obtained by smashing the very effective slice tower for $X$ with $E$ and applying homotopy groups. Over $F=\mathbb{C}$, Betti realization sends the very effective slice spectral sequence to an Atiyah--Hirzebruch spectral sequence.
\end{remark}

\subsection{A splitting of symplectic K-theory}
Let ksp denote the very effective cover $\tilde{\text{f}}_0(\Sigma^{4,2}\text{KQ})$. Note that by \cref{slice lemma bachmann}, this implies that $\Sigma^{4,2}\text{ksp} \simeq \tilde{\text{f}}_2\text{KQ}.$

\begin{proposition}
\label{ksp homology} 
    There is an isomorphism of $\euscr{A}^\vee_F$-comodules:
    \[\textup{H}_{**}(\textup{ksp}) \cong (\euscr{A} \modmod \euscr{A}\langle \textup{Sq}^1, \textup{Sq}^5 \rangle )_F^\vee\]
\end{proposition}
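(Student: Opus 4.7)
The plan is to exploit the very effective slice tower of $\textup{KQ}$. Since $\textup{kq} = \tilde{\textup{f}}_0\textup{KQ}$ and, by \cref{slice lemma bachmann}, $\Sigma^{4,2}\textup{ksp} \simeq \tilde{\textup{f}}_2\textup{KQ}$, the composite map $\tilde{\textup{f}}_2\textup{KQ} \to \tilde{\textup{f}}_0\textup{KQ}$ yields a cofiber sequence
\[\Sigma^{4,2}\textup{ksp} \to \textup{kq} \to C,\]
where $C$ is a two-stage filtered object whose associated graded pieces are the bottom two very effective slices $\tilde{\textup{s}}_0\textup{KQ}$ and $\tilde{\textup{s}}_1\textup{KQ}$. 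By the known computation of the very effective slices of $\textup{KQ}$ (e.g.\ in the spirit of Bachmann's work and \cite{ARO20}), one has $\tilde{\textup{s}}_0\textup{KQ} \simeq \textup{H}\mathbb{Z}$ and $\tilde{\textup{s}}_1\textup{KQ} \simeq \Sigma^{3,1}\textup{H}\mathbb{F}_2$, so $C$ fits into a further cofiber sequence $\Sigma^{3,1}\textup{H}\mathbb{F}_2 \to C \to \textup{H}\mathbb{Z}$.

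Next I would apply mod-2 motivic homology to both cofiber sequences to extract $\textup{H}_{**}(\textup{ksp})$ as an $\euscr{A}_F^\vee$-comodule. The inputs are the identifications $\textup{H}_{**}(\textup{H}\mathbb{Z}) \cong (\euscr{A}\modmod\euscr{A}(0))^\vee_F$, $\textup{H}_{**}(\textup{H}\mathbb{F}_2) \cong \euscr{A}^\vee_F$, and the description of $\textup{H}_{**}(\textup{kq})$ from \cref{kq homology}. Having pinned down $\textup{H}_{**}(C)$, the long exact sequence associated with $\Sigma^{4,2}\textup{ksp} \to \textup{kq} \to C$ determines $\textup{H}_{**}(\textup{ksp})$ as the shifted cokernel of the comodule map $\textup{H}_{**}(\textup{kq}) \to \textup{H}_{**}(C)$. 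The result should exhibit a generator in bidegree $(4,2)$ whose annihilator ideal in $\euscr{A}_F$ is precisely $\euscr{A}_F\langle \textup{Sq}^1, \textup{Sq}^5\rangle$, matching the claimed identification with $(\euscr{A}\modmod\euscr{A}\langle \textup{Sq}^1, \textup{Sq}^5\rangle)^\vee_F$.

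The main obstacle I foresee is identifying the connecting maps (equivalently, the slice $k$-invariants) at the level of homology precisely enough to read off the comodule structure. The underlying $\mathbb{M}_2^F$-module structure on $\textup{H}_{**}(\textup{ksp})$ follows from an Euler-characteristic style count, but pinning down exactly which Milnor generators annihilate the fundamental class requires knowing that the first slice map $\textup{H}\mathbb{Z} \to \Sigma^{4,1}\textup{H}\mathbb{F}_2$ is the appropriate integral Milnor/Bockstein operation, and that the next attaching map involves $\textup{Sq}^4$ composed with the Bockstein. Once these are identified, the classical computation $\textup{H}^*(\textup{bsp};\mathbb{F}_2) \cong \euscr{A}^{cl}/\euscr{A}^{cl}\langle \textup{Sq}^1, \textup{Sq}^5\rangle$ serves both as a template and as a sanity check via complex Betti realization, since $\textup{Be}^\mathbb{C}(\textup{ksp}) \simeq \textup{bsp}$ by naturality of the very effective cover with respect to realization.

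An alternative, possibly cleaner, route is to bootstrap from the formula for $\textup{H}_{**}(\textup{kq})$ directly. Since the subalgebra $\euscr{A}\langle \textup{Sq}^1, \textup{Sq}^5\rangle$ strictly contains $\euscr{A}(1) = \langle \textup{Sq}^1, \textup{Sq}^2 \rangle$ only in the sense that it is a different subalgebra (generated in higher degrees), there should be an inclusion of comodules $\Sigma^{4,2}(\euscr{A}\modmod\euscr{A}\langle \textup{Sq}^1, \textup{Sq}^5\rangle)^\vee_F \hookrightarrow (\euscr{A}\modmod\euscr{A}(1))^\vee_F$ that I would exhibit explicitly and then match with the map $\Sigma^{4,2}\textup{ksp} \to \textup{kq}$ on homology. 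Either route reduces the theorem to a tractable algebraic check, with the slice-theoretic route providing the needed geometric justification for the choice of comodule map.
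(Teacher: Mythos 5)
You have the right general idea --- use the very effective slice tower of $\textup{KQ}$ as a stand-in for the Whitehead tower of $\textup{KO}$ --- but you have chosen the wrong segment of the tower, and this choice creates the two gaps you yourself flag without resolving. The paper instead uses the cofiber sequence $\tilde{\textup{f}}_3\textup{KQ} \to \tilde{\textup{f}}_2\textup{KQ} \to \tilde{\textup{s}}_2\textup{KQ}$. By Bachmann's computation of the very effective slices, $\tilde{\textup{s}}_2\textup{KQ} \simeq \Sigma^{4,2}\textup{H}\mathbb{Z}$ and $\tilde{\textup{s}}_3\textup{KQ} \simeq 0$; the vanishing of $\tilde{\textup{s}}_3$ together with Bott periodicity and \cref{slice lemma bachmann} gives $\tilde{\textup{f}}_3\textup{KQ} \simeq \tilde{\textup{f}}_4\textup{KQ} \simeq \Sigma^{8,4}\textup{kq}$, so the sequence becomes $\Sigma^{8,4}\textup{kq} \to \Sigma^{4,2}\textup{ksp} \to \Sigma^{4,2}\textup{H}\mathbb{Z}$. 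Here \emph{both} the fiber and the cofiber have homology given by known quotient comodules, $(\euscr{A}\modmod\euscr{A}(1))^\vee_F$ and $(\euscr{A}\modmod\euscr{A}(0))^\vee_F$, so the long exact sequence in homology identifies $\textup{H}_{**}(\textup{ksp})$ with the kernel of the evident surjection $(\euscr{A}\modmod\euscr{A}(0))^\vee_F \to \Sigma^{4,2}(\euscr{A}\modmod\euscr{A}(1))^\vee_F$, which is $(\euscr{A}\modmod\euscr{A}\langle\textup{Sq}^1,\textup{Sq}^5\rangle)^\vee_F$ by pure algebra. No attaching maps or $k$-invariants need to be identified.

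Your sequence $\Sigma^{4,2}\textup{ksp} \to \textup{kq} \to C$ goes the other way down the tower and forces you through the slices $\tilde{\textup{s}}_0\textup{KQ}$ and $\tilde{\textup{s}}_1\textup{KQ}$, which are the complicated ones. Your identifications of them are not correct: $\tilde{\textup{s}}_0\textup{KQ}$ is \emph{not} $\textup{H}\mathbb{Z}$ --- it is a two-stage object containing an additional shifted $\textup{H}\mathbb{Z}/2$ accounting for the class $\eta \in \pi_{1,1}(\textup{kq})$ (and its square in $\pi_{2,2}$), which your $C$ would otherwise miss entirely; similarly $\tilde{\textup{s}}_1\textup{KQ}$ is a suspension $\Sigma^{2,1}(-)$ of a very effective spectrum, not $\Sigma^{3,1}\textup{H}\mathbb{F}_2$. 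Even granting correct slices, your argument still requires identifying the slice attaching maps to pin down $\textup{H}_{**}(C)$ and then the comodule extension determining $\textup{H}_{**}(\textup{ksp})$; you name this as the main obstacle but do not overcome it, and your proposed fallback (exhibiting the comodule inclusion $\Sigma^{4,2}(\euscr{A}\modmod\euscr{A}\langle\textup{Sq}^1,\textup{Sq}^5\rangle)^\vee_F \hookrightarrow (\euscr{A}\modmod\euscr{A}(1))^\vee_F$ and ``matching'' it to the map induced by $\Sigma^{4,2}\textup{ksp}\to\textup{kq}$) still needs a geometric input to know the induced map is that one. Rerouting through $\tilde{\textup{f}}_2 \to \tilde{\textup{f}}_3$ is exactly the move that dissolves all of these issues.
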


\begin{proof}
    The idea is to mimic the classical proof, replacing the Whitehead tower with the very effective slice tower for KQ. This tower gives a cofiber sequence 
    \begin{align}
    \label{slice cofiber}
    \tilde{\text{f}}_3\text{KQ} \to \tilde{\text{f}}_2\text{KQ} \to \tilde{\text{s}}_2\text{KQ}.
    \end{align}
    By definition, we have $\tilde{\text{f}}_2\text{KQ} \simeq \Sigma^{4,2}\text{ksp}$. Bachmann's identification of the very effective slices of KQ \cite[Theorem 16]{Bachmann-generalized} shows that 
    \[\begin{array}{lr}
    \tilde{\text{s}}_2\text{KQ} \simeq \Sigma^{4,2}\text{H}\mathbb{Z}, & \tilde{\text{s}}_3\text{KQ} \simeq 0.
    \end{array}\]
    By Bott periodicity for KQ and \cref{slice lemma bachmann}, we have
    \[\tilde{\text{f}}_4\text{KQ} \simeq \tilde{\text{f}}_4(\Sigma^{8,4}\text{KQ}) \simeq \Sigma^{8,4}\tilde{\text{f}}_0\text{KQ} = \Sigma^{8,4}\text{kq}.\]
    Since $\tilde{\text{s}}_3\text{KQ} \simeq 0$, the defining cofiber sequence gives an equivalence $\tilde{\text{f}}_4\text{KQ} \simeq \tilde{\text{f}}_3\text{KQ} \simeq \Sigma^{8,4}\text{kq}$. This allows us to rewrite the cofiber sequence (\ref{slice cofiber}) as
    \begin{align}
    \label{slice cofiber good}
    \Sigma^{8,4}\text{kq} \to \Sigma^{4,2}\text{ksp} \to \Sigma^{4,2}\text{H}\mathbb{Z}.
    \end{align}
    Using the long exact sequence in homology and the isomorphisms from \cref{kq homology}
    \[\text{H}_{**}(\text{H}\mathbb{Z}) \cong ( \euscr{A} \modmod \euscr{A}(0))^\vee_F, \quad \text{H}_{**}(\text{kq}) \cong (\euscr{A} \modmod \euscr{A}(1))_F^\vee,\]
    and comparing with the short exact sequence of $\euscr{A}^\vee_F$-comodules:
    \[0 \to \Sigma^{4,2}(\euscr{A} \modmod \euscr{A} \langle \text{Sq}^1, \text{Sq}^5 \rangle)^\vee_F \to \Sigma^{4,2}(\euscr{A} \modmod \euscr{A}(0))^\vee_F \to \Sigma^{8,4}(\euscr{A} \modmod \euscr{A}(1))^\vee_F \to 0\]
    gives the result, as in the classical case.
\end{proof}

Recall the integral motivic Brown--Gitler spectrum $\textup{H}\mathbb{Z}_1^F$ constructed in \cref{HZ1 construction}.

\begin{thm}
\label{ksp hz1}
    There is an equivalence of motivic spectra
    \[\textup{ksp} \simeq \textup{kq} \otimes \textup{H}\mathbb{Z}_1^F.\]
\end{thm}

\begin{proof}
    As $\textup{kq}$ is a ring spectrum, there is a multiplication map
    \[\mu:\textup{kq} \otimes \textup{kq} \to \textup{kq}.\]
    In homology, this map is the usual multiplication in $\euscr{A}^\vee_F$, where we identify the left side using \cref{Kunneth}.
    This induces the $\textup{kq}$-module structure on ksp, realized as a map
    \[\mu':\text{ksp} \otimes \textup{kq} \to \textup{ksp}.\]
    By \cref{ksp homology}, both $\text{Sq}^2$ and $\text{Sq}^1\text{Sq}^2$ act nontrivially on the degree $(0,0)$ generator of $\text{H}_{**}(\text{ksp})$, hence there is an inclusion
    \[\iota:\textup{H}\mathbb{Z}_1^F \to \text{ksp}\]
    inducing the obvious map in homology. One can see now that the composition
    \[\textup{H}\mathbb{Z}_1^F \otimes \textup{kq} \xrightarrow{\iota \otimes 1}\text{ksp} \otimes \textup{kq} \xrightarrow{\mu'}\text{ksp}\]
    is an isomorphism in homology. Since we are working in the 2-complete category, this gives an equivalence, finishing the proof.
\end{proof}

As we will see, this spectrum level splitting allows us to calculate $\pi_{**}^F(\text{ksp})$ in a particularly nice way.

\begin{remark}
\label{motivic j}
    There is another way to relate the motivic spectra ksp and kq that is relevant to our work. In \cite{BH21}, Bachmann--Hopkins construct Adams operations $\psi^q$ on kq. In \cite{kongquigley}, the homotopy groups of the spectrum L were computed, where L sits in a cofiber sequence
    \[\text{L} \to \text{kq} \xrightarrow{\psi^3-1} \text{kq}.\]
    However, Bachmann--Hopkins show that the map $\psi^3-1$ factors through the second very effective cover of kq. At least 2-locally, this defines a motivic image of J spectrum $\text{j}_\text{O}$, which sits in a cofiber sequence
    \[\text{j}_{\text{O}} \to \text{kq} \xrightarrow{\psi^3-1}\Sigma^{4,2}\text{ksp}.\]
    Culver--Quigley show that the $\mathbb{C}$-motivic 0- and 1-lines of the $\text{kq}$-resolution are isomorphic to $\pi_{**}^\mathbb{C}(\text{j}_\text{O})$ \cite[Theorem 7.13]{CQ21}. We imagine that the same is true over arbitrary base fields.
\end{remark}

\section{The inductive process to calculate $\pi_{**}^F(\mathrm{kq} \otimes \mathrm{kq})$}
\label{inductive process section}
In this section, we consider the motivic Adams spectral sequence converging to $\pi_{**}^{F}(\textup{kq} \otimes \textup{kq})$. We show that the $\textup{E}_2$-page splits using motivic Brown--Gitler comodules, then outline an inductive process to compute the $\textup{E}_2$-page by a series of algebraic Atiyah--Hirzebruch spectral sequences.

\subsection{The motivic Adams spectral sequence for $\pi_{**}^F(\textup{kq} \otimes \textup{kq})$}
There is an $\textup{H}\mathbb{F}_2$-motivic Adams spectral sequence computing the cooperations (\textbf{mASS}$^F$($\textup{kq} \otimes \textup{kq}$)). Since $\text{H}\mathbb{F}_2$ is flat in the sense of Adams, this takes the form \cite{HKO-convergencemASS}:
\[\textup{E}_2^{s,f,w} = \text{Ext}^{s,f,w}_{\euscr{A}_F^\vee}(\textup{H}_{**}(\textup{kq} \otimes \textup{kq})) \implies \pi_{s,w}^F(\textup{kq} \otimes \textup{kq}), \quad d_r:\text{E}_r^{s,f,w} \to \text{E}_r^{s-1, f+r, w}.\]
We can rewrite the $\textup{E}_2$-page of this spectral sequence in a convenient way.

\begin{thm}
\label{e2 mass}
    The $\textup{E}_2$-page of the \textup{\textbf{mASS}$^F$($\textup{kq} \otimes \textup{kq}$)} is given by
    \[\textup{E}_2^{s,f,w} = \bigoplus_{k \geq 0}\textup{Ext}^{s,f,w}_{\euscr{A}(1)_F^\vee}(\Sigma^{4k, 2k}B_0^F(k)).\]
\end{thm}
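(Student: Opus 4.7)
The plan is to reduce the Ext computation over the full dual Steenrod algebroid $\euscr{A}_F^\vee$ to an Ext computation over the much smaller algebra $\euscr{A}(1)_F^\vee$, then apply the Brown--Gitler splitting of $\textup{H}_{**}(\textup{kq})$ from \cref{kq brown gitler homology}. The strategy follows the classical template used for the $\textup{bo}$-cooperations and is carried out in the $\mathbb{C}$-motivic setting by Culver--Quigley; the work here is to check the necessary ingredients behave correctly in the stated generality.

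First I would apply \cref{Kunneth} to rewrite
\[\textup{H}_{**}(\textup{kq} \otimes \textup{kq}) \cong \textup{H}_{**}(\textup{kq}) \otimes_{\mathbb{M}_2^F} \textup{H}_{**}(\textup{kq})\]
as an $\euscr{A}_F^\vee$-comodule, where the right-hand side carries the diagonal coaction. Next, using that $\textup{H}_{**}(\textup{kq}) \cong (\euscr{A} \modmod \euscr{A}(1))_F^\vee = \euscr{A}_F^\vee \Box_{\euscr{A}(1)_F^\vee} \mathbb{M}_2^F$ (\cref{kq homology}), I would identify the diagonal-coaction tensor product with an extended comodule on the first factor:
\[\textup{H}_{**}(\textup{kq}) \otimes_{\mathbb{M}_2^F} \textup{H}_{**}(\textup{kq}) \cong \euscr{A}_F^\vee \Box_{\euscr{A}(1)_F^\vee} \textup{H}_{**}(\textup{kq}),\]
where on the right $\textup{H}_{**}(\textup{kq})$ is regarded as an $\euscr{A}(1)_F^\vee$-comodule via the coaction of \cref{a(1) coaction}. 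This is the standard antipode-twist isomorphism that rewrites a diagonal coaction as an extended coaction.

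With this identification in hand, I would invoke the change-of-rings theorem for the Hopf algebroid extension $(\euscr{A} \modmod \euscr{A}(1))_F^\vee \hookrightarrow \euscr{A}_F^\vee \twoheadrightarrow \euscr{A}(1)_F^\vee$ to obtain
\[\textup{Ext}_{\euscr{A}_F^\vee}\!\left(\mathbb{M}_2^F,\ \euscr{A}_F^\vee \Box_{\euscr{A}(1)_F^\vee} \textup{H}_{**}(\textup{kq})\right) \cong \textup{Ext}_{\euscr{A}(1)_F^\vee}\!\left(\mathbb{M}_2^F,\ \textup{H}_{**}(\textup{kq})\right).\]
Finally, applying the splitting of $\euscr{A}(1)_F^\vee$-comodules in \cref{kq brown gitler homology} and using that Ext commutes with direct sums in the second variable yields the claimed decomposition
\[\bigoplus_{k \geq 0} \textup{Ext}^{s,f,w}_{\euscr{A}(1)_F^\vee}\!\left(\Sigma^{4k,2k}B_0^F(k)\right).\]

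The main obstacle is the verification that the diagonal $\euscr{A}_F^\vee$-coaction on $\textup{H}_{**}(\textup{kq}) \otimes_{\mathbb{M}_2^F} \textup{H}_{**}(\textup{kq})$ matches the extended coaction on $\euscr{A}_F^\vee \Box_{\euscr{A}(1)_F^\vee} \textup{H}_{**}(\textup{kq})$. Over a base such as $\mathbb{R}$, $\euscr{A}_F^\vee$ is a genuine Hopf algebroid (see \cref{algebroid}), so one must be careful that the conjugation/antipode map intertwines $\eta_L$ and $\eta_R$ appropriately; the explicit formulas in \cref{dual motivic steenrod} and \cref{a(1) coaction} make this a direct, if slightly tedious, check. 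Once this identification and the corresponding Hopf-algebroid change-of-rings statement are secured, the remainder of the argument is formal.
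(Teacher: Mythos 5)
Your proposal is correct and follows essentially the same route as the paper: Künneth, then the change-of-rings reduction from $\euscr{A}_F^\vee$ to $\euscr{A}(1)_F^\vee$ (which the paper compresses into a citation of the standard argument in Ravenel's Appendix A, while you unpack it via the shear isomorphism onto the cotensor product $\euscr{A}_F^\vee \Box_{\euscr{A}(1)_F^\vee} \textup{H}_{**}(\textup{kq})$), and finally the Brown--Gitler splitting together with Ext commuting with direct sums. Your explicit attention to the Hopf algebroid subtlety over bases where $\rho \neq 0$ is a reasonable elaboration of what the paper leaves implicit, but it is not a different argument.
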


\begin{proof}
    By \cref{kq homology} and \cref{Kunneth}, we have an isomorphism of $\euscr{A}^\vee_F$-comodule algebras
    \[\textup{H}_{**}(\textup{kq} \otimes \textup{kq}) \cong (\euscr{A} \modmod \euscr{A}(1))_F^\vee \otimes_{\mathbb{M}_2^F} (\euscr{A} \modmod \euscr{A}(1))^\vee_F.\]
    Combined with a standard change of rings argument \cite[Appendix A]{Rav86}, we can rewrite the $\textup{E}_2$-page of the \textbf{mASS}$^F$($\textup{kq} \otimes \textup{kq}$) as
    \[\textup{E}_2^{s,f,w} = \textup{Ext}^{s,f,w}_{\euscr{A}(1)^\vee_F}((\euscr{A} \modmod \euscr{A}(1))^\vee_F).\]
    Finally, using \cref{kq brown gitler homology} and the fact that $\mathbb{M}_2^F$ is compact as an $\euscr{A}(1)^\vee_F$-comodule, we can rewrite this as
    \[\textup{E}_2^{s,f,w} = \bigoplus_{k \geq 0}\text{Ext}_{\euscr{A}(1)_F^\vee}^{s,f,w}(\Sigma^{4k, 2k}B_0^F(k)),\]
    proving the theorem.
\end{proof}
Thus, to compute the $\textup{E}_2$-page of the \textbf{mASS}$^F(\textup{kq} \otimes \textup{kq})$, we must compute the trigraded groups $\text{Ext}^{***}_{\euscr{A}(1)^\vee_F}(B_0^F(k))$ for $k \geq 0$. Note that as the integral Brown--Gitler comodules are subcomodules of $(\euscr{A} \modmod \euscr{A}(0))_F^\vee$, we understand their structure as $\euscr{A}(1)^\vee_F$-comodules.

\begin{remark}
    Up until this point, all computation has been a precise motivic analogue of the classical work of Mahowald \cite{Mah81}. However, Mahowald's computation of $\pi_{*}(\text{bo} \otimes \text{bo})$ heavily relies on the fact that the classical integral Brown--Gitler spectra $\textup{H}\mathbb{Z}_k$ give rise to a topological splitting:
    \[\text{bo} \otimes \text{bo} \simeq \bigoplus_{k \geq 0} \Sigma^{4k}\text{bo} \otimes \textup{H}\mathbb{Z}_k.\]
    As we have noted, there are currently no known motivic analogues of integral Brown--Gitler spectra except in special cases, and so we must use different methods.
\end{remark}

\subsection{Computing the summands of the $\textbf{mASS}^F(\text{kq} \otimes \text{kq})$}
\label{2.5}
\label{aAHSS generic}
We now wish to compute the groups $\text{Ext}^{***}_{\euscr{A}(1)_F^\vee}(B_0^F(k))$ appearing in the decomposition of the $\textup{E}_2$-page from \cref{e2 mass}. To do this, one can use a series of algebraic Atiyah--Hirzebruch spectral sequences (\textbf{aAHSS}) along with the short exact sequences of Brown--Gitler comodules from \cref{ses bg}. This is originally inspired by the work of Behrens--Ormsby--Stapleton--Stojanoska \cite{BOSS19}. We explain the method here for clarity:
\begin{enumerate}
    \item Compute $\text{Ext}^{***}_{\euscr{A}(1)^\vee_F}(B^F_0(1))/(v_1\text{-torsion})$ using an \textbf{aAHSS}:
    \[\text{Ext}^{***}_{\euscr{A}(1)^\vee_F}(\mathbb{M}_2^F) \otimes \mathbb{M}_2^F\{[1], [\overline{\xi}_1], [\overline{\tau}_1]\}\implies \text{Ext}^{***}_{\euscr{A}(1)^\vee_F}(B_0^F(1));\]
    \item Compute $\text{Ext}^{***}_{\euscr{A}(1)^\vee_F}(B^F_0(k))/(v_1\text{-torsion})$ by induction and \cref{ses bg}.
\end{enumerate}
\begin{remark}
\label{v1 torsion}
    Since we are only concerned with using the $\textup{kq}$-resolution to compute the $v_1$-periodic component of $\pi_{**}^F(\mathbb{S})$, passing to $v_1$-torsion free Ext groups loses no information. Additionally, we will see that there are multiple $v_1$-torsion classes arising in these spectral sequences, and ignoring these classes drastically simplifies our calculations.
\end{remark}

\subsubsection{The algebraic Atiyah--Hirzebruch spectral sequence}
We have that $B_0^F(0) \cong \mathbb{M}_2^F$, and so the first summand of the $\textbf{mASS}^F(\textup{kq} \otimes \textup{kq})$ is given by $\text{Ext}^{***}_{\euscr{A}(1)^\vee_F}(\mathbb{M}_2^F)$.
Notice that since $\textup{H}_{**}(\textup{kq}) \cong (\euscr{A} \modmod \euscr{A}(1))_F^\vee$ by \cref{kq homology}, a standard change of rings argument shows that the $\textup{E}_2$-page of the $\textbf{mASS}^F(\textup{kq})$ is given by
\[\textup{E}_2 = \text{Ext}^{***}_{\euscr{A}^\vee_F}(\textup{H}_{**}(\textup{kq})) \cong \text{Ext}^{***}_{\euscr{A}(1)^\vee_F}(\mathbb{M}_2^F).\]
There are explicit formulae for this algebra in $\mathbb{C}$-motivic \cite[Theorem 6.6]{IS11}, $\mathbb{R}$-motivic \cite[Theorem 4.8]{Hil11}, \cite[Theorem 6.2]{GHIRkoc2}, and $\mathbb{F}_q$-motivic homotopy \cite[Theorems 4.4.2, 4.4.3]{Kylingkqfinite}.
We can use $\text{Ext}^{***}_{\euscr{A}(1)_F^\vee}(\mathbb{M}_2^F)$ as input for an algebraic Atiyah--Hirzebruch spectral sequence $\textbf{aAHSS}(B_0^F(1))$ computing the next summand, $\text{Ext}^{***}_{\euscr{A}(1)^\vee_F}(B_0^F(1))$, which we now outline.

Filter $B_0^F(1) = \mathbb{M}_2^F\{1, \bar{\xi}_1, \bar{\tau}_1\}$ by topological degree. Let $\text{F}_iB_0^F(1)$ denote the subspace spanned by generators of degree $\leq i$. This gives a finite filtration of the form
\begin{equation}
\label{filtration:aAHSS}
\begin{tikzcd}
	0 & {\text{F}_0B_0^F(1)} & {\text{F}_1B_0^F(1)} & {\text{F}_2B_0^F(1)} & {\text{F}_3B_0^F(1) = B_0^F(1)} \\
	& {\mathbb{M}_2^F\{[1]\}} & 0 & {\mathbb{M}_2^F\{[\overline{\xi}_1]\}} & {\mathbb{M}_2^F\{[\overline{\tau}_1]\}}
	\arrow[from=1-1, to=1-2]
	\arrow[from=1-2, to=1-3]
	\arrow[from=1-2, to=2-2]
	\arrow[from=1-3, to=1-4]
	\arrow[from=1-3, to=2-3]
	\arrow[from=1-4, to=1-5]
	\arrow[from=1-4, to=2-4]
	\arrow[from=1-5, to=2-5]
\end{tikzcd}
\end{equation}
We refer to this filtration as the \textit{Atiyah--Hirzebruch filtration}.
The $\textbf{aAHSS}(B_0^F(1))$ comes from the exact couple arising from applying the functor $\text{Ext}^{***}_{\euscr{A}(1)^\vee_F}(-)$ to \eqref{filtration:aAHSS}. This spectral sequence has signature
\[\textup{E}_1^{s,f,w,a} = \text{Ext}^{s,f,w}_{\euscr{A}(1)_F^\vee}(\mathbb{M}_2^F) \otimes \mathbb{M}^F_2\{[1], [\overline{\xi}_1], [\overline{\tau}_1]\}\implies \text{Ext}^{s,f,w}_{\euscr{A}(1)_F^\vee}(B_0^F(1)).\]
The $\textup{E}_1$-page of this spectral sequence consists of 3 copies of $\text{Ext}^{***}_{\euscr{A}(1)^\vee_F}(\mathbb{M}_2^F)$ in Atiyah--Hirzebruch filtration degrees 0, 2, and 3, which we index by $a$. Each Atiyah--Hirzebruch filtration piece is suitably shifted by the degree of the cell it is attached to. To make this precise, we can rewrite the $\textup{E}_1$-page as
\[\textup{E}_1 \cong \text{Ext}^{***}_{\euscr{A}(1)^\vee_F}(\mathbb{M}_2^F)\{[1]\} \oplus \Sigma^{2, 1}\text{Ext}^{***}_{\euscr{A}(1)_F^\vee}(\mathbb{M}_2^F)\{[\overline{\xi}_1]\}\oplus \Sigma^{3, 1}\text{Ext}^{***}_{\euscr{A}(1)^\vee_F}(\mathbb{M}_2^F)\{[\overline{\tau}_1]\}.\]
We give a heuristic for this spectral sequence in \Cref{heuristic b0(1)}.
\begin{figure}[h]
    \begin{center}
    \begin{tikzpicture}
        \node[shape=rectangle,draw=black] (0) at (0,0) {$[1] \otimes \text{Ext}^{***}_{\euscr{A}(1)^\vee}(\mathbb{M}_2)$}; 
        \node[shape = rectangle, draw=black] (2) at (0,2) {$[\overline{\xi}_1] \otimes \text{Ext}^{***}_{\euscr{A}(1)^\vee}(\mathbb{M}_2)$};
        \node[shape = rectangle, draw=black] (3) at (0,3) {$[\overline{\tau}_1]\otimes \text{Ext}^{***}_{\euscr{A}(1)^\vee}(\mathbb{M}_2)$};

        \path (0) edge[thick,bend right = 60, draw=blue] node[right]{} (2);
        \path (2) edge[thick] node[right] {}(3);
    \end{tikzpicture}
    \end{center}
    \caption{A heuristic for the \textbf{aAHSS}$(B_0^F(1))$.}
    \label{heuristic b0(1)}
\end{figure}

If $x \in \text{Ext}^{***}_{\euscr{A}(1)^\vee_F}(\mathbb{M}_2^F)$ has \textbf{aAHSS} degree $(s, f, w,a)$, then $|\Sigma^{p, q}x| = (s+p, f, w+q, a).$
We will let $\alpha[i]$ denote the copy of the class $\alpha \in \text{Ext}^{***}_{\euscr{A}(1)_F^\vee}(\mathbb{M}_2^F)$ in Atiyah--Hirzebruch filtration $i$. With this notation, differentials in the $\textbf{aAHSS}(B_0^F(1))$ are of the form
\[d_r:\text{E}_r^{s,f,w,a} \to \text{E}_r^{s-1, f+1,w, a-r}.\]
Thus the $d_r$ differential lowers Atiyah--Hirzebruch filtration by $r$. In particular, the cell structure of $B_0^F(1)$ ensures that each differential only takes nonzero value on one Atiyah--Hirzebruch filtration piece.

The differentials in this spectral sequence are computed by the cobar complex: to determine the differential on a class $\alpha[k]$, we first lift to a representative in the cobar complex for $\text{F}_kB_0^F(1)$, compute the cobar differential on this representative, then project down to the appropriate filtration. In other words, the differentials witness the attaching maps in the algebraic cell complex for $B_0^F(1)$. 

The $d_1$-differential witnesses the $h_0$-attaching map between the cells $[\overline{\tau}_1]$ and $[\overline{\xi}_1]$, so is nonzero only on Atiyah--Hirzebruch filtration 3 and lands in Atiyah--Hirzebruch filtration 2. This gives the following formula:
\begin{proposition}
\label{aAHSS d1}
    In the $\textup{\textbf{aAHSS}}(B_0^F(1))$, the $d_1$-differential is determined by
    \[d_1(\alpha[3]) = h_0\alpha[2].\]
\end{proposition}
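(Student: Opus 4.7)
The plan is to compute $d_1$ directly from the cobar complex. The $\textbf{aAHSS}(B_0^F(1))$ arises by applying $\text{Ext}_{\euscr{A}(1)^\vee_F}(-)$ to the cellular filtration $F_\bullet B_0^F(1)$, and on the $E_1$-page a class $\alpha[k]$ is represented by $\tilde{\alpha} \otimes [c_k]$, where $\tilde{\alpha}$ is a cobar cocycle for $\alpha \in \text{Ext}_{\euscr{A}(1)^\vee_F}(\mathbb{M}_2^F)$ and $[c_k]$ generates $F_k B_0^F(1) / F_{k-1} B_0^F(1)$. The $d_1$-differential is computed by lifting $\tilde{\alpha} \otimes [c_k]$ to a cochain in the cobar complex for $F_k B_0^F(1)$, applying the cobar differential, and projecting the output to $F_{k-1}/F_{k-2}$.

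The key input is the reduced coaction on $\bar{\tau}_1$ recorded in \cref{HZ1 construction}:
\[\psi(\bar{\tau}_1) = 1 \otimes \bar{\tau}_1 + \bar{\tau}_0 \otimes \bar{\xi}_1,\]
so $\bar{\psi}(\bar{\tau}_1) = \bar{\tau}_0 \otimes \bar{\xi}_1$. Since $\bar{\tau}_0$ is dual to $\textup{Sq}^1$, the cobar cocycle $[\bar{\tau}_0]$ represents the class $h_0 \in \text{Ext}^{(0,1,0)}_{\euscr{A}(1)^\vee_F}(\mathbb{M}_2^F)$. This identifies the primary attaching map from the $3$-cell $[\bar{\tau}_1]$ to the $2$-cell $[\bar{\xi}_1]$ as multiplication by $h_0$.

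Applying the cobar differential to the lift $\tilde{\alpha} \otimes [\bar{\tau}_1]$ of $\alpha[3]$, the terms whose trailing comodule factor remains $[\bar{\tau}_1]$ (those coming from the reduced coproduct applied to entries of $\tilde{\alpha}$) assemble into $(d_{\mathrm{cobar}}\tilde{\alpha}) \otimes [\bar{\tau}_1]$, which vanishes because $\tilde{\alpha}$ is a cocycle. The only surviving term is $[\tilde{\alpha} \mid \bar{\tau}_0] \otimes [\bar{\xi}_1]$, which sits in $F_2$. Projecting to $F_2/F_1 \cong \mathbb{M}_2^F\{[\bar{\xi}_1]\}$ and identifying $[\bar{\tau}_0]$ with $h_0$, this represents $h_0 \alpha [2]$, as claimed.

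The main subtlety is just bookkeeping: one checks that with $|\bar{\tau}_1| - |\bar{\xi}_1| = (1, 0)$ and $|h_0| = (0, 1, 0)$, the displacement from $\alpha[3]$ to $h_0 \alpha[2]$ is exactly $(-1, +1, 0, -1)$ in $(s, f, w, a)$, matching the signature of $d_1$. Because $B_0^F(1)$ has no $1$-cell, no other $E_1$-classes can support a $d_1$-differential, so this completely determines $d_1$.
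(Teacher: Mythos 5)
Your argument is correct and is essentially the paper's own: the paper justifies this proposition by the same cobar-complex computation, observing that the $d_1$-differential witnesses the $h_0$-attaching map between the cells $[\overline{\tau}_1]$ and $[\overline{\xi}_1]$ coming from the reduced coaction $\bar{\psi}(\overline{\tau}_1) = \overline{\tau}_0 \otimes \overline{\xi}_1$. Your version simply spells out the lift-differentiate-project step and the degree bookkeeping in more detail, and both correctly note that the absence of a $1$-cell forces $d_1$ to vanish outside Atiyah--Hirzebruch filtration $3$.
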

Linearity over $\text{Ext}^{***}_{\euscr{A}(1)^\vee_F}(\mathbb{M}_2^F)$ combined with the above formula gives the behavior of $d_1$. The $d_2$-differential witness the $h_1$-attaching map between the cells $[\overline{\xi}_1]$ and $[1]$, so is nonzero only on Atiyah--Hirzebruch filtration 2 and lands in Atiyah--Hirzebruch filtration 0. This gives the following formula:
\begin{proposition}
    In the $\textup{\textbf{aAHSS}}(B_0^F(1))$, the $d_2$-differential is determined by
    \[d_2(\alpha[2]) = h_1\alpha[0].\]
\end{proposition}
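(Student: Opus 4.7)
The plan is to compute $d_2$ directly from the filtered cobar complex, in the same spirit as the proof of \cref{aAHSS d1}. Recall that the $\textbf{aAHSS}(B_0^F(1))$ is the spectral sequence associated to the exact couple obtained by applying the $\euscr{A}(1)^\vee_F$-cobar complex to the cellular filtration $\text{F}_0 \subset \text{F}_2 \subset \text{F}_3B_0^F(1) = B_0^F(1)$. A $d_r$-differential on a class $\alpha[a]$ is computed by lifting to a cobar cycle in $\text{F}_aB_0^F(1)$, applying the cobar differential, and reading off the component supported on the filtration-$(a-r)$ subquotient.

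Applying this to $\alpha[2]$, I would first lift to the representative $\alpha \cdot [\overline{\xi}_1]$ in the cobar complex of $\text{F}_2B_0^F(1)$. Since $\alpha$ is already a cobar cocycle, only the reduced coaction on $[\overline{\xi}_1]$ contributes to the cobar differential. The description of the $\euscr{A}(1)^\vee_F$-comodule structure on $B_0^F(1)$ recorded in \cref{HZ1 construction} gives
\[\overline{\psi}([\overline{\xi}_1]) = \overline{\xi}_1 \otimes [1].\]
Under the identification of the length-one cobar class $[\overline{\xi}_1]$ with $h_1 \in \text{Ext}^{1,1,1}_{\euscr{A}(1)^\vee_F}(\mathbb{M}_2^F)$, projection onto the filtration-$0$ subquotient $\text{F}_0 B_0^F(1) = \mathbb{M}_2^F\{[1]\}$ converts this into $h_1 \alpha[0]$, which lies in Atiyah--Hirzebruch filtration $0$. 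This exhibits the differential as $d_2$, as claimed.

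The only minor obstacle I would want to rule out is that no spurious $d_2$ contribution arises from the top cell $[\overline{\tau}_1]$. The reduced coaction $\overline{\psi}([\overline{\tau}_1]) = \overline{\tau}_0 \otimes [\overline{\xi}_1]$ has no $\euscr{A}(1)^\vee_F \otimes [1]$ component, so the entire cobar differential on a lift of $\alpha[3]$ is captured in Atiyah--Hirzebruch filtration $2$ by \cref{aAHSS d1}, leaving nothing for $d_2$ to do on filtration $3$. Consequently the formula $d_2(\alpha[2]) = h_1 \alpha[0]$ is the complete statement of the $d_2$-differential, and linearity over $\text{Ext}_{\euscr{A}(1)^\vee_F}(\mathbb{M}_2^F)$ extends it to all of filtration $2$.
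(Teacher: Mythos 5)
Your proof is correct and matches the paper's argument: the paper likewise derives $d_2(\alpha[2]) = h_1\alpha[0]$ from the cobar-complex description of the differentials, observing that $d_2$ witnesses the $h_1$-attaching map between the cells $[\overline{\xi}_1]$ and $[1]$ coming from the reduced coaction $\overline{\psi}(\overline{\xi}_1) = \overline{\xi}_1 \otimes 1$. Your final check on filtration $3$ is harmless but unnecessary, since a $d_2$ from Atiyah--Hirzebruch filtration $3$ would land in filtration $1$, which is zero by the cellular filtration.
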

Linearity combined with the above formula gives the behavior of $d_2$. By inspection, the only possible nonzero $d_3$ differential is between Atiyah--Hirzebruch filtrations 3 and 0, and will be given by the Massey product
\[d_3(\alpha[3]) = \langle\alpha, h_0, h_1 \rangle[0].\]
We will see that this differential is dependent on the base field $F$, as is evident from the formula. 

By inspection of the length of \eqref{filtration:aAHSS}, we see the following.
\begin{proposition}
\label{aAHSS convergenece general}
    In the $\textup{\textbf{aAHSS}}(B_0^F(1))$, we have that $\textup{E}_4=\textup{E}_\infty$.
\end{proposition}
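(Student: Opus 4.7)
The plan is to observe that the proposition is a purely formal consequence of the cellular filtration on $B_0^F(1)$, which is concentrated in only three Atiyah--Hirzebruch filtration degrees: $a = 0, 2, 3$. Since the $d_r$-differential has the form
\[d_r : E_r^{s,f,w,a} \to E_r^{s-1, f+1, w, a-r},\]
it lowers the Atiyah--Hirzebruch filtration by exactly $r$. To have a nonzero $d_r$, the source filtration $a$ and target filtration $a - r$ must both lie in $\{0, 2, 3\}$.

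First I would enumerate the possible nonzero differentials. For $d_1$, the only pair with $a - (a-1) = 1$ and both in $\{0,2,3\}$ is $(3,2)$, giving the $d_1$ already recorded in \cref{aAHSS d1}. For $d_2$, the only such pair is $(2,0)$, giving the recorded $d_2$. For $d_3$, the only such pair is $(3, 0)$, giving the possible Massey-product differential mentioned in the discussion above. For $r \geq 4$, there is no pair $(a, a-r)$ with $a \leq 3$ and $a - r \geq 0$ that lies in $\{0,2,3\}$, since even starting from the top cell $a = 3$ we would have $a - r \leq -1$. Hence $d_r = 0$ for $r \geq 4$ on every Atiyah--Hirzebruch filtration piece.

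Therefore, after passing to $E_4$, no further differentials are possible for degree reasons, and we conclude $E_4 = E_\infty$. There is no real obstacle here; the statement is immediate from the fact that $B_0^F(1)$ is a three-cell complex and the support of the Atiyah--Hirzebruch filtration has diameter $3$, so the spectral sequence necessarily truncates after $d_3$.
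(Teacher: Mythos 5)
Your proof is correct and is exactly the argument the paper intends: the paper justifies the proposition simply ``by inspection of the cellular filtration,'' and your enumeration of the admissible filtration pairs $(a,a-r)$ with $a \in \{0,2,3\}$ makes that inspection explicit. Since no pair supports a $d_r$ for $r \geq 4$, the conclusion $E_4 = E_\infty$ follows as you say.
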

The process of solving extension problems on the $\textup{E}_\infty$-page is more involved, so we postpone the discussion until we specialize to particular fields $F$.

\begin{remark}
    Though not tantamount to computing the ring of cooperations, it still provides insight to compute $\text{Ext}^{***}_{\euscr{A}(1)^\vee_F}(B_0^F(1)^{\otimes i})$. Classically, Mahowald is able to express this group in terms of Adams covers of bo and bsp. This is not quite the case here; see \cref{C-b0(1)^i} and \cref{b_0(1) powers proof}. 
\end{remark}

\subsubsection{\textup{Ext} of higher integral Brown--Gitler comodules}
Now, we discuss the second step of our method. While one could analyze the algebraic Atiyah--Hirzebruch filtration on $B_0^F(k)$, the resulting spectral sequence becomes drastically more complicated. We will proceed in a different manner. After finding a formula for $\text{Ext}^{***}_{\euscr{A}(1)_F^\vee}(B_0^F(1))$, one can induct along the short exact sequences of \cref{ses bg} to compute $\text{Ext}^{***}_{\euscr{A}(1)_F^\vee}(B_0^F(k))$. 

When $k$ is even, then there is a short exact sequence
\[0 \to \Sigma^{2k, k}B_0^F(\tfrac{k}{2}) \to B_0^F(k) \to B_1^F(\tfrac{k}{2} -1) \otimes_{\mathbb{M}_2^F} (\euscr{A}(1) \modmod \euscr{A}(0))_F^\vee \to 0.\]
Applying $\text{Ext}^{***}_{\euscr{A}(1)^\vee_F}(-)$ gives a long exact sequence of $\text{Ext}^{***}_{\euscr{A}(1)^\vee_F}(\mathbb{M}_2^F)$-modules. By a change of rings isomorphism, the Ext group of the cokernel may be rewritten:
\[\text{Ext}^{***}_{\euscr{A}(1)^\vee_F}(B_1^F(\tfrac{k}{2}-1) \otimes_{\mathbb{M}_2^F}(\euscr{A}(1)\modmod \euscr{A}(0))^\vee_F) \cong \text{Ext}_{\euscr{A}(0)^\vee_F}^{***}(B_1^F(\tfrac{k}{2}-1)).\]
Since $v_1$ acts trivially over $\euscr{A}(0)^\vee$, we see that, modulo $v_1$-torsion, the connecting homomorphism is trivial. This reduces the computation of $\text{Ext}^{***}_{\euscr{A}(1)^\vee_F}(B_0^F(k))$ to the kernel and cokernel in these short exact sequences in Ext. The kernels are taken care of by the inductive hypothesis, and the cokernels are much simpler after reducing to $\euscr{A}(0)^\vee_F$. This finishes the computation when $k$ is even. 

When $k$ is odd, there is a short exact sequence
\[0 \to \Sigma^{2(k-1), k-1}B_0^F(\tfrac{k-1}{2}) \otimes_{\mathbb{M}_2^F} B_0^F(1) \to B_0^F(k) \to B_1^F(\tfrac{k-1}{2}-1) \otimes_{\mathbb{M}_2^F} (\euscr{A}(1) \modmod \euscr{A}(0))_F^\vee \to 0.\]
In the same way as before, the long exact sequence induced by $\text{Ext}^{***}_{\euscr{A}(1)^\vee_F}(-)$ splits into short exact sequences modulo $v_1$-torsion. The cokernels are again a simple computation, but the kernels require a more delicate analysis. Indeed, to compute $\text{Ext}^{***}_{\euscr{A}(1)^\vee_F}\left(B_0^F(\tfrac{k-1}{2}) \otimes_{\mathbb{M}_2^F} B_0^F(1)\right)$, we use another \textbf{aAHSS}. This spectral sequence comes from applying $\text{Ext}^{***}_{\euscr{A}(1)_F^\vee}\left(B_0^F(\tfrac{k-1}{2}) \otimes_{\mathbb{M}_2^F}-\right)$ to \eqref{filtration:aAHSS}. The $\textup{E}_1$-page is given by
\[\textup{E}_1 = \text{Ext}^{***}_{\euscr{A}(1)^\vee}(B_0^F(\tfrac{k-1}{2})) \otimes_{\mathbb{M}_2^F}\mathbb{M}_2^F\{[1], [\overline{\xi}_1], [\overline{\tau}_1]\}.\]
The induction hypothesis handles the left side of the tensor product, and a careful analysis of the spectral sequence gives us the Ext group in question. This finishes the computation when $k$ is odd.

By \cref{e2 mass}, assembling these results gives the $\textup{E}_2$-page of the $\textbf{mASS}^F(\textup{kq} \otimes \textup{kq})$, modulo $v_1$-torsion. We can now hope to analyze this spectral sequence by comparing with the classical case and base change. In \cite{CQ21}, this was shown to collapse on the $\textup{E}_2$-page for all algebraically closed fields of characteristic 0, such as $\mathbb{C}$. We will show that this is also true over $\mathbb{R}$.

\section{Computations in $\text{SH}(\mathbb{C})$}
\label{section 3}
In this section, we review Culver--Quigley's \cite{CQ21} computation of $\pi_{**}^\mathbb{C}(\textup{kq} \otimes \textup{kq})$. This serves both as an example of the methods described in \cref{section 2} and as a place to recall results which will be useful in the sequel via base change. There are a few places where our arguments differ from the ones given in \cite{CQ21}; we will explicitly indicate when this is the case.
\subsection{Background}
Let $\mathbb{M}_2^\mathbb{C} = \mathbb{F}_2[\tau]$ denote the mod-2 motivic homology of a point \cite[Corollary 6.10]{Voemotiviccohomology}, where $|\tau| = (0,-1).$ Recall that as $\rho=0$ in $\mathbb{M}_2^\mathbb{C}$, the dual Steenrod algebra is an honest Hopf algebra (see \cref{algebroid}). For the reader's convenience, we review its structure.
\begin{thm}[{\cite[Theorem 12.6]{Voereduced}}]
     The dual motivic Steenrod algebra $\euscr{A}^\vee_\mathbb{C} = \pi_{**}^\mathbb{C}(\textup{H}\mathbb{F}_2 \otimes \textup{H}\mathbb{F}_2)$ is the commutative Hopf algebra given by
    \[\euscr{A}^\vee_\mathbb{C} = \mathbb{M}_2^\mathbb{C}[\overline{\xi}_1, \overline{\xi}_2, \hdots, \overline{\tau}_0, \overline{\tau}_1, \hdots]/(\overline{\tau}_i^2 =  \tau \overline{\xi}_{i+1}),\]
    where $|\overline{\xi}_i|=(2^{i+1}-2, 2^i-1)$ and $|\overline{\tau}_i| = (2^{i+1}-1, 2^i-1)$. The unit map is given by inclusion, and the coproduct is determined by the formulae: 
    \[ 
    \Delta(\overline{\xi}_k) = \sum_{i+j=k}\overline{\xi}_i \otimes \overline{\xi}_j^{2^i}, \quad \Delta(\overline{\tau}_i) = \sum_{i+j=k}\overline{\tau}_i \otimes \overline{\xi}_j^{2^i} + 1 \otimes \overline{\tau}_k.\]    
\end{thm}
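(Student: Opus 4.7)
The plan is to deduce this statement as an immediate specialization of the general description of the dual motivic Steenrod algebra recorded in \cref{dual motivic steenrod} to the case $F = \mathbb{C}$. The single input needed is that the class $\rho \in \textup{K}_1^{\textup{M}}(\mathbb{C})/2$ representing $-1$ vanishes, since $-1 = i^2$ is a square in $\mathbb{C}$. Equivalently, Voevodsky's computation gives $\mathbb{M}_2^\mathbb{C} = \mathbb{F}_2[\tau]$, and no $\rho$-class appears.

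With $\rho = 0$ in hand, I would then read off each piece of structure from \cref{dual motivic steenrod}. The defining quadratic relations $\overline{\tau}_i^2 = \rho\overline{\tau}_{i+1} + \rho\overline{\tau}_0\overline{\xi}_{i+1} + \tau\overline{\xi}_{i+1}$ collapse to $\overline{\tau}_i^2 = \tau\overline{\xi}_{i+1}$, giving the stated presentation as a polynomial/exterior-type algebra over $\mathbb{M}_2^\mathbb{C}$. The right unit formula $\eta_R(\tau) = \tau + \rho\overline{\tau}_0$ simplifies to $\eta_R(\tau) = \tau$, so $\eta_R$ agrees with $\eta_L$ on the generators of $\mathbb{M}_2^\mathbb{C}$. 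This means $\mathbb{M}_2^\mathbb{C}$ is central in $\euscr{A}^\vee_\mathbb{C}$ and the Hopf algebroid $(\mathbb{M}_2^\mathbb{C}, \euscr{A}^\vee_\mathbb{C})$ degenerates to an honest commutative Hopf algebra with unit map given by inclusion, as remarked in \cref{algebroid}. The coproduct formulas for $\overline{\xi}_k$ and $\overline{\tau}_k$ carry over verbatim from the general case since they do not involve $\rho$, and the bidegrees are inherited unchanged.

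There is essentially no obstacle: the entire content of the specialization is the vanishing of $\rho$, which simultaneously trivializes the $\rho$-terms in the relations and the discrepancy between $\eta_L$ and $\eta_R$. Thus the statement follows formally from \cref{dual motivic steenrod} without further computation.
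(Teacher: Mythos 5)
Your proposal is correct and matches how the paper treats this statement: it is recalled from Voevodsky as the specialization of the general description in \cref{dual motivic steenrod} to $F=\mathbb{C}$, where $\rho=0$ because $-1$ is a square (equivalently $\mathbb{M}_2^\mathbb{C}=\mathbb{F}_2[\tau]$), so the $\rho$-terms in the relations vanish, $\eta_L=\eta_R$, and the Hopf algebroid degenerates to an honest Hopf algebra exactly as noted in \cref{algebroid}. No further argument is needed.
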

\begin{remark}
     There is a well understood relationship between $\euscr{A}_\mathbb{C}^\vee$ and the classical dual Steenrod algebra $\euscr{A}^\vee_{\text{cl}}$. Complex Betti realization gives a map $\text{Be}^\mathbb{C}:\euscr{A}^\vee_\mathbb{C} \to \euscr{A}^\vee_{\text{cl}}$ determined by
     \[\text{Be}^\mathbb{C}(\tau) = 1, \quad \text{Be}^\mathbb{C}(\overline{\xi}_i) = \overline{\xi}_i^2, \quad \text{Be}^\mathbb{C}(\overline{\tau}_i) = \overline{\xi}_i\]
     This extends to an isomorphism in Ext \cite{DImASS}:
     \[\text{Ext}^{***}_{\euscr{A}_\mathbb{C}^\vee}(\mathbb{M}_2^\mathbb{C}) \otimes_{\mathbb{M}_2^\mathbb{C}}\mathbb{M}_2^\mathbb{C}[\tau^{-1}] \cong \text{Ext}^{**}_{\euscr{A}_{\text{cl}}^\vee}(\mathbb{F}_2) \otimes \mathbb{F}_2[\tau^{\pm 1}].\]
     This observation has proved very useful in $\mathbb{C}$-motivic computations \cite{Isaksen19,IWX23} and sits at the heart of the theory of synthetic spectra \cite{GheWanXu21, GIKR22, Pst23}. We will use this isomorphism to determine motivic information from classical information.
\end{remark}
In addition to an understanding of the homology $\textup{H}_{**}(\textup{kq}) \cong (\euscr{A} \modmod \euscr{A}(1))_\mathbb{C}^\vee$, we also have a computation of the homotopy groups $\pi_{**}^{\mathbb{C}}(\textup{kq})$ \cite{IS11}.
\begin{thm}[{\cite[Theorem 4.10]{IS11}}] 
\label{C-Ext A(1)}
    The $\textup{\textbf{mASS}}^\mathbb{C}(\textup{kq})$ takes the form
    \[\textup{E}_2^{s,f,w}=\textup{Ext}^{s,f,w}_{\euscr{A}(1)^\vee_\mathbb{C}}(\mathbb{M}_2^\mathbb{C}) = \frac{\mathbb{M}_2^\mathbb{C}[h_0, h_1, a, b]}{(h_0h_1, \tau h_1^3, h_1 a, a^2=h_0^2b)} \implies \pi_{s,w}^\mathbb{C}(\textup{kq}),\]
    where $|h_0| = (0,1,0)$, $|h_1| = (1,1,1)$, $|a| = (4,3,2)$ and $|b|=(8,4,4)$. The spectral sequence collapses at $\textup{E}_2$ with no extensions, giving an isomorphism
    \[ \pi_{**}^\mathbb{C}(\textup{kq} )\cong\frac{\Z_2[\tau, \eta, \alpha, \beta]}{(2\eta, \tau \eta^3, \eta \alpha, \alpha^2=4\beta)}, \]
    where the stem and filtration of the generators match those above.
\end{thm}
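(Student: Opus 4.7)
My plan has three parts: compute $\text{Ext}_{\euscr{A}(1)^\vee_\mathbb{C}}(\mathbb{M}_2^\mathbb{C})$ algebraically, show the $\textbf{mASS}(\textup{kq})$ collapses at $E_2$, and resolve multiplicative extensions in the passage to $\pi^{\mathbb{C}}_{**}\textup{kq}$. For the first part, I would use a $\tau$-Bockstein spectral sequence on the finite Hopf algebra
\[\euscr{A}(1)^\vee_\mathbb{C}=\mathbb{M}_2^\mathbb{C}[\bar{\xi}_1,\bar{\tau}_0,\bar{\tau}_1]/(\bar{\xi}_1^2,\,\bar{\tau}_0^2-\tau\bar{\xi}_1,\,\bar{\tau}_1^2).\]
Modulo $\tau$, this algebra becomes the $\mathbb{F}_2$-exterior algebra on $\bar{\xi}_1,\bar{\tau}_0,\bar{\tau}_1$ tensored with $\mathbb{F}_2[\tau]/\tau$, whose Ext is a polynomial ring on the duals $h_0,h_1$ of $\bar{\tau}_0,\bar{\xi}_1$ and an auxiliary class $v_1=[\bar{\tau}_1]$. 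The Bockstein is driven by $\bar{\tau}_0^2=\tau\bar{\xi}_1$ together with the reduced coproduct $\bar{\Delta}(\bar{\tau}_1)=\bar{\tau}_0\otimes\bar{\xi}_1$; these produce a first-stage differential killing $h_0 h_1$ and a higher-stage differential imposing $\tau h_1^3=0$. The surviving generators fit the stated polynomial-exterior pattern, with $a\in\text{Ext}^{4,3,2}$ and $b\in\text{Ext}^{8,4,4}$ coming from the higher Bott-periodic cobar classes and satisfying $h_1 a=0$ and $a^2=h_0^2 b$. As a consistency check, $\tau$-inversion recovers the classical $\text{Ext}_{\euscr{A}(1)^\vee}(\mathbb{F}_2)$ via the Dugger--Isaksen isomorphism \cite{DImASS}, with $\tau h_1^3=0$ specializing to $h_1^3=0$ in the classical presentation.

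For the collapse of the $\textbf{mASS}(\textup{kq})$: an Adams differential $d_r:E_r^{s,f,w}\to E_r^{s-1,f+r,w}$ applied to each polynomial generator $\tau,h_0,h_1,a,b$ would have to land in a tridegree that does not occur among the generators or their products, as a direct tridegree inspection confirms. This can be cross-checked under $\tau$-inversion by comparison with the classical bo-based Adams spectral sequence, which is known to collapse \cite{Mah81}. By multiplicativity, $E_2=E_\infty$.

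Finally, to obtain the stated description of $\pi^{\mathbb{C}}_{**}\textup{kq}$, I resolve the hidden multiplicative extensions. Multiplication by $2$ is detected by $h_0$, so the $E_\infty$ relation $a^2=h_0^2 b$ lifts to the homotopy relation $a^2=4b$, matching the classical $\alpha^2=4\beta$ in $\pi_*\textup{bo}$. The remaining relations $2h_1=0$, $\tau h_1^3=0$, and $h_1 a=0$ pass through unchanged from $E_\infty$, since they involve $h_0$-powers or torsion relations already visible on the $E_\infty$-page. The main technical obstacle is the $\tau$-Bockstein bookkeeping required to correctly distinguish the motivic relation $\tau h_1^3=0$ from the classical $h_1^3=0$, and to confirm the hidden extension $a^2=4b$; both require careful tracking of $\tau$-factors through the cobar complex, which is where all the subtlety over $\mathbb{C}$ lies.
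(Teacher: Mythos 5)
The paper does not prove this statement; it is quoted directly from Isaksen--Shkembi \cite{IS11}, so there is no internal argument to compare against. On its own terms, your three-part architecture is the standard one, and Parts 2 and 3 are essentially sound: since $d_r$ is a derivation, the generator-by-generator tridegree inspection does force the collapse (each of $\tau, h_0, h_1, a, b$ would have to map to an empty tridegree), and the extension $a^2=4b$ follows from Betti realization to $\pi_*\textup{bo}$ after possibly renormalizing $b$ by a $2$-adic unit.

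The genuine gap is in Part 1, and it is a real confusion rather than a notational slip. If the $E_1$-page of your $\tau$-Bockstein is $\textup{Ext}$ of the mod-$\tau$ reduction $E(\bar{\xi}_1,\bar{\tau}_0,\bar{\tau}_1)$, then --- precisely because the reduced coproduct $\bar{\Delta}(\bar{\tau}_1)=\bar{\tau}_0\otimes\bar{\xi}_1$ survives mod $\tau$ --- that Ext is \emph{not} polynomial on $h_0,h_1,v_1$; it is already $\mathbb{F}_2[h_0,h_1,v_1^2]/(h_0h_1)$, so the relation $h_0h_1=0$ and the disappearance of odd powers of $v_1$ occur before any Bockstein differential runs. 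A $\tau$-Bockstein differential has the form $d_r(x)=\tau^r y$ and can only impose relations of the form $\tau^r y=0$; it cannot produce $h_0h_1=0$. What you have written conflates the Cartan--Eilenberg/May differential $d(v_1)=h_0h_1$ with the Bockstein; these are two different filtrations that must either be run in sequence or packaged into a single motivic May spectral sequence in which $\tau$ carries positive filtration. Relatedly, the actual content of the theorem --- the existence of $a\in(4,3,2)$ and $b\in(8,4,4)$ and the relations $h_1a=0$, $a^2=h_0^2b$ --- is asserted (``coming from the higher Bott-periodic cobar classes'') rather than derived. In the corrected spectral sequence these fall out explicitly: $a$ is detected by $h_0v_1^2$ and $b$ by $v_1^4$, the relations $h_1a=0$ and $a^2=h_0^2b$ are immediate from $h_0h_1=0$, and the single family of Bockstein differentials $d_1(v_1^{4k+2})=\tau h_1^3 v_1^{4k}$ accounts for $\tau h_1^3=0$ while leaving $h_1^3$ alive. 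Without identifying these classes and differentials, Part 1 does not yet constitute a proof.
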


Following our discussion in \cref{section 2}, the first step towards computing the $\textup{kq}$-resolution is computing the ring of cooperations $\pi_{**}^\mathbb{C}(\textup{kq} \otimes \textup{kq})$.
Recall from \cref{e2 mass} that the $\textbf{mASS}^{\mathbb{C}}(\textup{kq} \otimes \textup{kq})$ takes the form
\[
\textup{E}_2^{s,f,w} = \bigoplus_{k \geq 0} \text{Ext}_{\euscr{A}(1)^\vee_\mathbb{C}}^{s,f,w}(\Sigma^{4k, 2k}B_0^\mathbb{C}(k)) \implies \pi_{s,w}^\mathbb{C}(\text{kq} \otimes \text{kq}),
\]
where $B_0^\mathbb{C}(k)$ are the $\mathbb{C}$-motivic integral motivic Brown--Gitler comodules.
Therefore, we must compute the trigraded groups $\text{Ext}^{***}_{\euscr{A}(1)^\vee_\mathbb{C}}(B_0^\mathbb{C}(k))$ for $k \geq 0.$ Since $B_0^\mathbb{C}(0) = \mathbb{M}_2^\mathbb{C}$, the first summand is computed by \cref{C-Ext A(1)}. We move then to compute $\text{Ext}^{***}_{\euscr{A}(1)^\vee_\mathbb{C}}(B_0^\mathbb{C}(1))$ by the algebraic Atiyah--Hirzebruch spectral sequence outlined in \cref{aAHSS generic}.

\subsection{$\text{Ext}^{***}_{\euscr{A}(1)^\vee_\mathbb{C}}(B_0^\mathbb{C}(1)^{\otimes i})$}
Recall that the $\textbf{aAHSS}(B_0^\mathbb{C}(1))$ takes the form
\[\textup{E}_1^{s,f,w,a} = \text{Ext}^{s,f,w}_{\euscr{A}(1)_\mathbb{C}^\vee}(\mathbb{M}_2^\mathbb{C}) \otimes_{\mathbb{M}_2^\mathbb{C}} \mathbb{M}^\mathbb{C}_2\{[1], [\overline{\xi}_1], [\overline{\tau}_1]\}\implies \text{Ext}^{s,f,w}_{\euscr{A}(1)_\mathbb{C}^\vee}(B_0^\mathbb{C}(1)),\]
with differentials 
\[
d_r:\text{E}_r^{s,f,w,a} \to \text{E}_r^{s-1, f+1, w,a-r}.
\]
We will let $\alpha[i]$ denote the copy of a class $\alpha \in \text{Ext}^{***}_{\euscr{A}(1)^\vee_\mathbb{C}}(\mathbb{M}_2^\mathbb{C})$ in Atiyah--Hirzebruch filtration $i$. We have discussed in \cref{aAHSS generic} the differentials for this spectral sequence. Namely, the $d_1$-differential is only nonzero on Atiyah--Hirzebruch filtration 3, where 
\[d_1(\alpha[3]) = h_0\alpha[2],\]
and the $d_2$-differential is only nonzero on Atiyah--Hirzebruch filtration 2, where
\[d_2(\alpha[2])=h_1\alpha[0].\]
There is a potential nonzero third differential $d_3:\textup{E}_3^{s,f,w,3} \to \textup{E}_3^{s-1, f+1, w,0}$. However, for degree reasons we have that $\langle \alpha, h_0, h_1 \rangle = 0$ for all remaining classes $\alpha[3] \in \textup{E}_3^{s,f,w,3}$, hence this differential is 0. Thus the spectral sequence collapses on $\textup{E}_3=\textup{E}_\infty$. 

We depict the $\textup{E}_1$-page in \Cref{aAHSS(B_0^C(1)) E_1}, the $\textup{E}_2$-page in \Cref{aAHSS(B_0^C(1)) E_2}, and the $\textup{E}_3=\textup{E}_\infty$-page in \Cref{aAHSS(B_0^C(1)) E_3=E_infty}. Charts are written in $(s,f)$-grading, with motivic weight and Atiyah--Hirzebruch filtration suppressed. A black $\bullet$ represents $\mathbb{M}_2^\mathbb{C}$. A hollow $\circ$ represents $\mathbb{F}_2$. A black vertical line represents multiplication by $h_0$. A diagonal line represents multiplication by $h_1$. Differentials are blue, linear with respect to the $\text{Ext}^{***}_{\euscr{A}(1)_\mathbb{C}^\vee}(\mathbb{M}_2^\mathbb{C})$-module structure, and preserve motivic weight. A dashed blue line represents a differential where either source or target is $\tau$-torsion.
\begin{figure}[h]
\centering
\includegraphics[scale = .75]{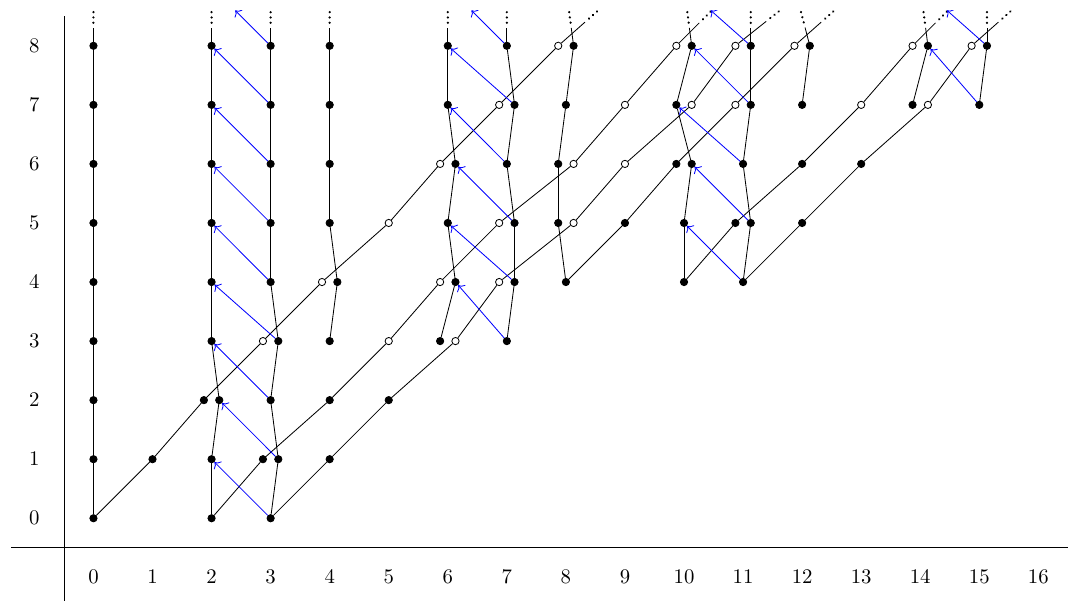}
\caption{The $\textup{E}_1$-page of the $\textbf{aAHSS}(B_0^\mathbb{C}(1))$}
\label{aAHSS(B_0^C(1)) E_1}
\end{figure}

\begin{figure}[h]
    \centering
    \includegraphics[scale=0.75]{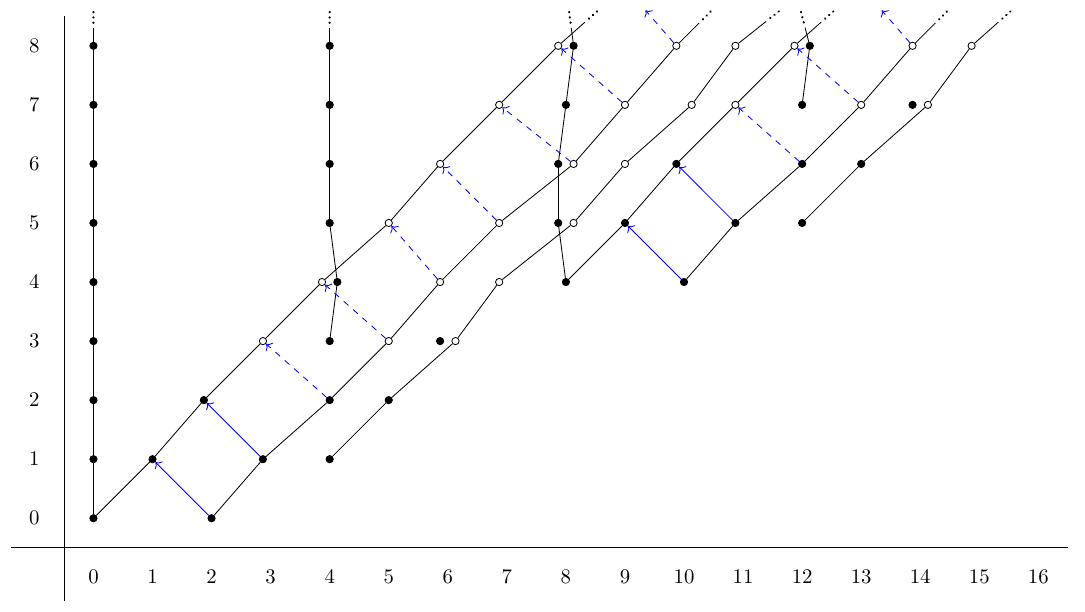}
    \caption{The $\textup{E}_2$-page of the \textbf{aAHSS}($B_0^\mathbb{C}(1)$)}
    \label{aAHSS(B_0^C(1)) E_2}
\end{figure}

\begin{figure}[h]
    \centering
    \includegraphics[scale=0.75]{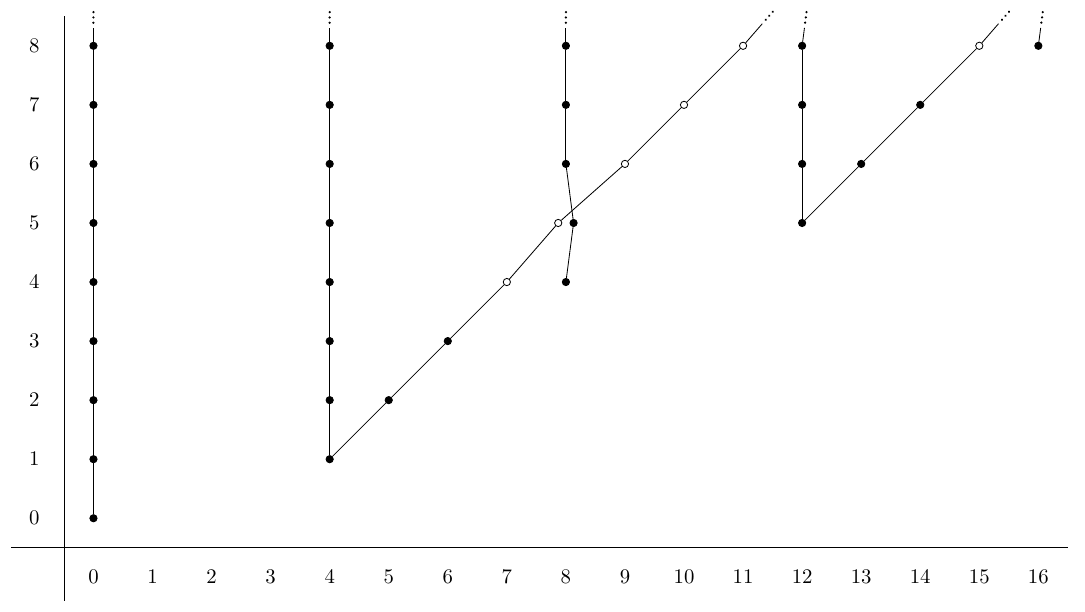}
    \caption{The $\textup{E}_3=\textup{E}_\infty$-page of the \textbf{aAHSS}($B_0^\mathbb{C}(1))$ with hidden extensions.}
    \label{aAHSS(B_0^C(1)) E_3=E_infty}
\end{figure}

The hidden extensions on the $\textup{E}_\infty$-page can be resolved using complex Betti realization. Recall from \cref{2.1} that this is a functor $\text{Be}^\mathbb{C}:\text{SH}(\mathbb{C}) \to \text{Sp}$. For $X \in \text{SH}(\mathbb{C})$, this gives a map in homotopy
\[\pi_{s,w}^\mathbb{C}(X) \to \pi_s(\text{Be}^\mathbb{C}(X)).\]
In particular, there is a map of dual Steenrod algebras $\euscr{A}^\vee_\mathbb{C} \to \euscr{A}_{\text{cl}}^\vee$ sending $\bar{\xi}_i \mapsto \bar{\xi}_i^2$ and $\bar{\tau}_i \mapsto \bar{\xi}_i$. Additionally, there is an isomorphism of $\euscr{A}(1)_\mathbb{C}^\vee$-comodules
\[(B_0^\mathbb{C}(1))[\tau^{-1}] \cong B_0^\text{cl}(1)[\tau^{\pm 1}],\]
where $B_0^\text{cl}(1)$ is the classical Brown--Gitler comodule.
This induces a map of Ext groups
\[\text{Ext}^{s,f,w}_{\euscr{A}(1)^\vee_\mathbb{C}}(B_0^\mathbb{C}(k)) \to \text{Ext}_{\euscr{A}(1)_{\text{cl}}^\vee}^{s,f}(B_0(k)),\]
and allows one to lift hidden extensions from the classical case in \cite{Mah81} to the $\mathbb{C}$-motivic case. 

The following is a motivic analogue of a classical result.
\begin{corollary}[{\cite[Corollary 3.31]{CQ21}}]
\label{C-ksp}
    The $\textup{\textbf{mASS}}^{\mathbb{C}}(\textup{ksp})$ has signature
    \[\textup{E}_2^{s,f,w} = \textup{Ext}^{s,f,w}_{\euscr{A}(1)_\mathbb{C}^\vee}(B_0^\C(1)) \implies \pi^\C_{s,w}(\textup{ksp})\]
    and collapses on the $\textup{E}_2$-page.
\end{corollary}

\begin{proof}
     By \cref{ksp hz1}, there is an equivalence of motivic spectra
     \[\textup{ksp} \cong \textup{H}\mathbb{Z}^\mathbb{C}_1 \otimes \textup{kq},\]
     where $\textup{H}\mathbb{Z}_1^\mathbb{C}$ is the motivic spectrum constructed in \cref{motivic bg}. The K\"unneth spectral sequence for mod-2 homology collapses for $\textup{H}\mathbb{Z}_1^\mathbb{C} \otimes \textup{kq}$, giving an isomorphism
     \[\textup{H}_{**}(\text{ksp}) \cong B_0^\mathbb{C}(1) \otimes_{\mathbb{M}_2^\mathbb{C}}(\euscr{A} \modmod \euscr{A}(1))_\mathbb{C}^\vee.\]
     A change of rings isomorphism gives the $\textup{E}_2$-page for the $\textbf{mASS}^{\mathbb{C}}(\text{ksp})$.
     This Ext group was computed by the $\textbf{aAHSS}(B_0^\mathbb{C}(1))$ and is depicted in \Cref{aAHSS(B_0^C(1)) E_3=E_infty}. There are no differentials for degree reasons, so the spectral sequence collapses and gives the result.
\end{proof}
Classically, one can express the groups $\text{Ext}^{**}_{\euscr{A}(1)_{\text{cl}}^\vee}(B_0(1)^{\otimes i})$ in terms of Adams covers of bo and bsp. This does not translate well to motivic homotopy theory. Over $\mathbb{C}$, Culver--Quigley \cite{CQ21} show that one cannot express the groups $\text{Ext}^{***}_{\euscr{A}(1)^\vee_\mathbb{C}}(B_0^\mathbb{C}(1)^{\otimes i})$ in terms of Adams covers of kq and ksp. We show a similar result over $\mathbb{R}$ in \cref{R-aAHSS section}. Instead, there is a periodic family of groups which will determine $\text{Ext}^{***}_{\euscr{A}(1)^\vee_\mathbb{C}}(B_0^\mathbb{C}(1))$ composed of Adams covers of bo, bsp, kq and ksp.

In what follows, we will use the notation defined in \cref{notation section}. For $i \geq 0$, let $Z_i^\mathbb{C}$ be the trigraded group defined as follows:
\begin{itemize}
    \item When $i \equiv 0 \, (4)$, let
    \[Z^\mathbb{C}_i = \bigoplus_{j=0}^{i/2-1}\Sigma^{4j, 2j}\mathbb{M}_2^\C[h_0] \oplus \Sigma^{2i, i}\text{Ext}^{***}_{\euscr{A}(1)_\C^\vee}(\mathbb{M}_2^\C)\]
    \item When $i \equiv 1 \, (4)$, let
    \[Z^\mathbb{C}_i = \bigoplus_{j=0}^{(i-1)/2-1}\Sigma^{4j, 2j}\mathbb{M}_2^\C[h_0] \oplus \Sigma^{2i-1, i-1}\text{Ext}^{***}_{\euscr{A}(1)_\C^\vee}(B_0^\C(1));\]
    \item When $i \equiv 2 \, (4)$, let
    \[Z^\mathbb{C}_i = \bigoplus_{j=0}^{i/2-1}\Sigma^{4j, 2j}\mathbb{M}_2^\C[h_0]  \oplus\Sigma^{2i-2, i}\mathbb{M}_2^\C \oplus \Sigma^{2i, i}\text{Ext}^{***}_{\euscr{A}(1)_\C^\vee}(B_0^\C(1))\langle 1\rangle;\]
    \item When $i \equiv 3 \, (4)$, let
    \[Z^\mathbb{C}_i = \bigoplus_{j=0}^{(i-1)/2}\Sigma^{4j, 2j}\mathbb{M}_2^\C[h_0] \oplus \Sigma^{2i-1, i}\mathbb{M}_2^\C[h_1]/(h_1^2) \oplus \Sigma^{2i+2, i+1}\text{Ext}^{***}_{\euscr{A}(1)_\C^\vee}( B_0^\C(1))\langle 2\rangle.\]
\end{itemize}

Recall that we are only interested in information up to $v_1$-torsion (see \cref{v1 torsion}).
\begin{proposition}[{\cite[Lemma 3.36]{CQ21}}]
\label{C-b0(1)^i}
    There is an isomorphism of $\euscr{A}(1)^\vee_\mathbb{C}$-comodules and $\textup{Ext}^{***}_{\euscr{A}(1)^\vee_\mathbb{C}}(\mathbb{M}_2^\mathbb{C})$-modules:
    \[\frac{\textup{Ext}^{***}_{\euscr{A}(1)^\vee_\mathbb{C}}(B_0^\mathbb{C}(1)^{\otimes i})}{v_1\textup{-torsion}} \cong Z_i^\mathbb{C}.\]
\end{proposition}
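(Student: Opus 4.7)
The plan is to proceed by induction on $i$, building on the base case $i=1$ which was established through the $\textbf{aAHSS}(B_0^\mathbb{C}(1))$ just analyzed. The inductive step uses a new algebraic Atiyah--Hirzebruch spectral sequence obtained by tensoring the cellular filtration on the last copy of $B_0^\mathbb{C}(1)$ with $B_0^\mathbb{C}(1)^{\otimes(i-1)}$ and applying $\text{Ext}_{\euscr{A}(1)^\vee_\mathbb{C}}(-)$. This yields
\[E_1 \cong \text{Ext}_{\euscr{A}(1)^\vee_\mathbb{C}}(B_0^\mathbb{C}(1)^{\otimes(i-1)}) \otimes_{\mathbb{M}_2^\mathbb{C}} \mathbb{M}_2^\mathbb{C}\{[1],[\bar{\xi}_1],[\bar{\tau}_1]\} \Longrightarrow \text{Ext}_{\euscr{A}(1)^\vee_\mathbb{C}}(B_0^\mathbb{C}(1)^{\otimes i}),\]
and by the inductive hypothesis the left factor is known modulo $v_1$-torsion to be $Z_{i-1}^\mathbb{C}$.

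The differentials are determined exactly as in the base case by the attaching maps in the algebraic cell complex for $B_0^\mathbb{C}(1)$: one has $d_1(\alpha[3]) = h_0 \alpha[2]$ coming from the $h_0$-attaching map between $[\bar{\tau}_1]$ and $[\bar{\xi}_1]$, and $d_2(\alpha[2]) = h_1 \alpha[0]$ coming from the $h_1$-attaching map between $[\bar{\xi}_1]$ and $[1]$. A potential $d_3$ given by a Massey product $\langle -, h_0, h_1\rangle$ is ruled out modulo $v_1$-torsion by degree reasons, since the $h_0$- and $h_1$-towers in $Z_{i-1}^\mathbb{C}$ that could support such a bracket already die on earlier pages or produce only $v_1$-torsion output. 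So the spectral sequence collapses at $E_3 = E_\infty$ modulo $v_1$-torsion.

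The bulk of the work is then identifying $E_\infty$ with $Z_i^\mathbb{C}$, which forces a case analysis based on $i \bmod 4$. The root cause of this four-fold case split is the periodicity element $b \in \text{Ext}_{\euscr{A}(1)^\vee_\mathbb{C}}(\mathbb{M}_2^\mathbb{C})$ of tridegree $(8,4,4)$, together with the fact that $B_0^\mathbb{C}(1)$ carries three cells which, upon tensoring, shuffle how the $h_0$-towers from the $[1]$ cell and the $\text{Ext}(B_0^\mathbb{C}(1))$-type contributions from the $[\bar{\xi}_1]$ and $[\bar{\tau}_1]$ cells combine. In each residue class one computes kernels of $d_1$ and $d_2$ applied to the inductive description of $Z_{i-1}^\mathbb{C}$, checks that the surviving towers assemble into $\bigoplus_{j}\Sigma^{4j,2j}\mathbb{M}_2^\mathbb{C}[h_0]$ together with either an $\text{Ext}(\mathbb{M}_2^\mathbb{C})$- or $\text{Ext}(B_0^\mathbb{C}(1))$-summand (with the appropriate Adams shift $\langle k\rangle$) on top, and matches this against the definition of $Z_i^\mathbb{C}$.

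The main obstacle is the bookkeeping and the resolution of hidden additive and multiplicative extensions on $E_\infty$, particularly the extensions between the newly produced $h_0$-tower at the top of the pattern and the tower inherited from the inductive step. These are resolved by complex Betti realization $\text{Be}^\mathbb{C}:\text{SH}(\mathbb{C}) \to \text{Sp}$, which after inverting $\tau$ produces an isomorphism with the classical group $\text{Ext}_{\euscr{A}(1)^\vee}(B_0(1)^{\otimes i})$ computed by Mahowald \cite{Mah81}. Lifting the known classical extensions through the map $\text{Ext}_{\euscr{A}(1)^\vee_\mathbb{C}}(B_0^\mathbb{C}(1)^{\otimes i}) \to \text{Ext}_{\euscr{A}(1)^\vee}(B_0(1)^{\otimes i})$, and noting that $v_1$-torsion classes are precisely what is killed by $\tau$-inversion, then pins down the $\mathbb{C}$-motivic answer as $Z_i^\mathbb{C}$ modulo $v_1$-torsion.
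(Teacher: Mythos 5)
Your proposal is correct and follows essentially the same route as the paper: an inductive algebraic Atiyah--Hirzebruch spectral sequence obtained by applying $\text{Ext}_{\euscr{A}(1)^\vee_\mathbb{C}}(B_0^\mathbb{C}(1)^{\otimes (i-1)} \otimes_{\mathbb{M}_2^\mathbb{C}} -)$ to the cellular filtration of $B_0^\mathbb{C}(1)$, with $d_1(\alpha[3]) = h_0\alpha[2]$, $d_2(\alpha[2]) = h_1\alpha[0]$, vanishing $d_3$, and hidden extensions resolved by comparison with Mahowald's classical computation via the map to $\text{Ext}_{\euscr{A}(1)^\vee}(B_0(1)^{\otimes i})$. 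The added detail on the $i \bmod 4$ case analysis and the Massey-product interpretation of $d_3$ is consistent with the paper's (terser) argument.
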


\begin{proof}
    By applying the functor $\text{Ext}^{***}_{\euscr{A}^\vee_\mathbb{C}}\left(B_0^\mathbb{C}(1)^{\otimes i} \otimes_{\mathbb{M}_2^\mathbb{C}} -\right)$ to \eqref{filtration:aAHSS}, we may inductively compute these Ext groups by an \textbf{aAHSS}, where the base case was computed in \cref{C-ksp}. This spectral sequence takes the form
    \[\textup{E}_1=\text{Ext}^{***}_{\euscr{A}(1)^\vee_\mathbb{C}}(B_0^\mathbb{C}(1)^{\otimes i}) \otimes_{\mathbb{M}_2^\mathbb{C}}\mathbb{M}_2^\mathbb{C}\{[1], ]\bar{\xi}_1], [\bar{\tau}_1]\} \implies\text{Ext}^{***}_{\euscr{A}(1)_\mathbb{C}^\vee}(B_0^\mathbb{C}(1)^{\otimes i+1}).\]
    The differentials take the same form as those in $\textbf{aAHSS}(B_0^\mathbb{C}(1))$, namely we have $d_1(\alpha[3])=h_0\alpha[2]$ and $d_2(\alpha[2]) = h_1\alpha[0]$, where $\alpha \in \text{Ext}^{***}_{\euscr{A}(1)^\vee_\mathbb{C}}(B_0^\mathbb{C}(1)^{\otimes i})$. For degree reasons, there is no $d_3$-differential. Hidden extensions may be obtained by comparison with the classical case, using the algebra map
    \[\text{Ext}^{***}_{\euscr{A}(1)^\vee_\mathbb{C}}(B_0^\mathbb{C}(1)^{\otimes i}) \to \text{Ext}^{**}_{\euscr{A}(1)^\vee}(B_0(1)^{\otimes i}).\]
    This resolves all hidden extensions and concludes the proof.
\end{proof}

\subsection{$\text{Ext}^{***}_{\euscr{A}(1)^\vee_\mathbb{C}}(B_0^\mathbb{C}(k))$ and the ring of cooperations}
Next, we compute $\text{Ext}^{***}_{\euscr{A}(1)^\vee_\mathbb{C}}(B_0^\mathbb{C}(k))$ for all $k \geq 1$. 

\begin{thm}[Corrected from {\cite[Theorem 3.38]{CQ21}}]
\label{C-b0(k) ext}
    There is an isomorphism of $\euscr{A}(1)^\vee_\mathbb{C}$-comodules and $\textup{Ext}^{***}_{\euscr{A}(1)^\vee_\mathbb{C}}(\mathbb{M}_2^\mathbb{C})$-modules:
    \[\frac{\textup{Ext}^{***}_{\euscr{A}(1)^\vee_\mathbb{C}}(B_0^\mathbb{C}(k))}{v_1\textup{-torsion}} \cong \Sigma^{4k-4, 2k-2}Z^\mathbb{C}_{\alpha(k)} \oplus \bigoplus_{j=0}^{k-2}\Sigma^{4j, 2j}\mathbb{M}_2^\mathbb{C}[h_0],\]
    where $\alpha(k)$ is the number of 1's in the dyadic expansion of $k$.
\end{thm}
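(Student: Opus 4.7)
The plan is to proceed by induction on $k \geq 1$, using the short exact sequences of \cref{ses bg} to reduce to smaller integral motivic Brown--Gitler comodules. The base case $k=1$ is \cref{C-ksp}, which identifies $\textup{Ext}_{\euscr{A}(1)^\vee_\mathbb{C}}(B_0^\mathbb{C}(1))$ with $Z_1^\mathbb{C}$; with $\alpha(1)=1$ the right-hand side of the theorem reduces to $\Sigma^{0,0}Z_1^\mathbb{C}$ and an empty $h_0$-tower sum, matching exactly. The inductive step then splits into two cases corresponding to the two sequences of \cref{ses bg}, which are compatible with the dyadic recursion $\alpha(2m)=\alpha(m)$ and $\alpha(2m+1)=\alpha(m)+1$. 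Throughout, we pass to $v_1$-torsion-free quotients so that any connecting homomorphism landing in $v_1$-torsion may be discarded, following the philosophy outlined in \cref{v1 torsion}.

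For the even case $k=2m$, I would first apply $\textup{Ext}_{\euscr{A}(1)^\vee_\mathbb{C}}(-)$ to
\[0 \to \Sigma^{4m,2m}B_0^\mathbb{C}(m) \to B_0^\mathbb{C}(2m) \to B_1^\mathbb{C}(m-1) \otimes_{\mathbb{M}_2^\mathbb{C}} (\euscr{A}(1) \modmod \euscr{A}(0))^\vee_\mathbb{C} \to 0.\]
A standard change-of-rings argument identifies the Ext of the quotient with $\textup{Ext}_{\euscr{A}(0)^\vee_\mathbb{C}}(B_1^\mathbb{C}(m-1))$, a direct sum of suspended $h_0$-towers indexed by an $\mathbb{M}_2^\mathbb{C}$-basis of $B_1^\mathbb{C}(m-1)$. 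The inductive hypothesis computes the Ext of the kernel, and the shift by $\Sigma^{4m,2m}$ converts its $\Sigma^{4m-4,2m-2}Z^\mathbb{C}_{\alpha(m)}$ summand into the required $\Sigma^{4k-4,2k-2}Z^\mathbb{C}_{\alpha(k)}$. A direct count of basis elements shows that the remaining $h_0$-towers from kernel and cokernel assemble into $\bigoplus_{j=0}^{4k-8}\Sigma^{4j,2j}\mathbb{M}_2^\mathbb{C}[h_0]$.

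For the odd case $k=2m+1$, the same argument applies to the second sequence of \cref{ses bg}; the only new ingredient is the computation of $\textup{Ext}_{\euscr{A}(1)^\vee_\mathbb{C}}(B_0^\mathbb{C}(m) \otimes_{\mathbb{M}_2^\mathbb{C}} B_0^\mathbb{C}(1))$, which I would obtain from an auxiliary algebraic Atiyah--Hirzebruch spectral sequence produced by applying $\textup{Ext}_{\euscr{A}(1)^\vee_\mathbb{C}}(B_0^\mathbb{C}(m)\otimes_{\mathbb{M}_2^\mathbb{C}}-)$ to the cellular filtration of $B_0^\mathbb{C}(1)$ used in \cref{aAHSS generic}. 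The $E_1$-page is three suspended copies of $\textup{Ext}_{\euscr{A}(1)^\vee_\mathbb{C}}(B_0^\mathbb{C}(m))$, known by induction, with differentials $d_1(\alpha[3])=h_0\alpha[2]$ and $d_2(\alpha[2])=h_1\alpha[0]$ as in \cref{aAHSS d1}. After the $\Sigma^{4m,2m}$ shift, this yields a $\Sigma^{4k-4,2k-2}Z^\mathbb{C}_{\alpha(k)}$ summand along with extra $h_0$-towers which, combined with the cokernel contribution, recover the full direct sum.

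The principal obstacle is verifying that the connecting homomorphisms in both long exact sequences, together with any potential Massey-product $d_3$ in the auxiliary Atiyah--Hirzebruch spectral sequence, all land in $v_1$-torsion, and then matching the shifted outcome against the piecewise definition of $Z_i^\mathbb{C}$. For the vanishing, one argues that any candidate target sits in an Ext bidegree where the non-$v_1$-torsion classes of $\textup{Ext}_{\euscr{A}(1)^\vee_\mathbb{C}}(\mathbb{M}_2^\mathbb{C})$ from \cref{C-Ext A(1)} are exhausted by $h_0$- or $h_1$-multiples with no available preimage, forcing the connecting class into the $v_1$-torsion subspace via filtration and weight considerations. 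The combinatorial match with $Z_i^\mathbb{C}$ must be checked separately on each residue of $\alpha(k)$ modulo $4$, using that the recursion $\alpha(k)\mapsto \alpha(k)\pm 1$ is compatible with the $a$- and $b$-periodicity built into the definition of $Z_i^\mathbb{C}$.
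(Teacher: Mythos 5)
Your proposal follows essentially the same route as the paper's proof: induction on $k$ using the short exact sequences of \cref{ses bg}, a change-of-rings identification of the cokernel term with $\textup{Ext}_{\euscr{A}(0)^\vee_\mathbb{C}}(B_1^\mathbb{C}(-))$ (pure $h_0$-towers modulo $v_1$-torsion), and, in the odd case, the auxiliary algebraic Atiyah--Hirzebruch spectral sequence for $B_0^\mathbb{C}(\tfrac{k-1}{2})\otimes_{\mathbb{M}_2^\mathbb{C}} B_0^\mathbb{C}(1)$ whose $Z^\mathbb{C}$-summand advances via the recursion of \cref{C-b0(1)^i}. The approach and the key verifications (triviality of the connecting maps and of $d_3$ modulo $v_1$-torsion, reindexing of the $h_0$-towers) match the paper's argument.
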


\begin{proof}
    For notational convenience, all Ext groups in this proof will be implicitly computed modulo $v_1$-torsion. We will induce on $k$. The case of $k=1$ was shown in \cref{C-ksp}. Now, suppose the theorem is true for all $i<k$.

    Suppose that $k$ is even. \cref{ses bg} gives a short exact sequence of $\euscr{A}(1)_\mathbb{C}^\vee$-comodules
    \begin{align}
    \label{ses bg C even}
    0 \to \Sigma^{2k, k}B_0^\mathbb{C}(\tfrac{k}{2}) \to B_0^\mathbb{C}(k) \to B_1^\mathbb{C}(\tfrac{k}{2} -1) \otimes_{\mathbb{M}_2^\mathbb{C}} (\euscr{A}(1) \modmod \euscr{A}(0))_\mathbb{C}^\vee \to 0.
    \end{align} 
    Applying the functor $\text{Ext}^{***}_{\euscr{A}(1)^\vee_\mathbb{C}}(-)$ gives a long exact sequence of $\text{Ext}^{***}_{\euscr{A}(1)^\vee_\mathbb{C}}(\mathbb{M}_2^\mathbb{C})$-modules. Moreover, after killing $v_1$-torsion the connecting homomorphism is trivial, giving short exact sequences whose middle term is $\text{Ext}^{***}_{\euscr{A}(1)^\vee_\mathbb{C}}(B_0^\mathbb{C}(k))$. Therefore, this Ext group decomposes into the Ext groups of the kernel and cokernel of the original short exact sequence (\ref{ses bg C even}). The kernel is handled by the inductive hypothesis: we have
    \[\text{Ext}^{***}_{\euscr{A}(1)^\vee_\mathbb{C}}(\Sigma^{2k, k}B_0^\mathbb{C}(\tfrac{k}{2})) \cong \Sigma^{2k, k}\left(\Sigma^{2k-4, k-2}Z_{\mathbb{\alpha}(k/2)}^\mathbb{C} \oplus \bigoplus_{j=0}^{2k-8}\Sigma^{4j, 2j}\mathbb{M}_2^\mathbb{C}[h_0]\right) .\]
    We can use a change of rings isomorphism to rewrite the cokernel as
    \[\text{Ext}^{***}_{\euscr{A}(1)^\vee_\mathbb{C}}\left(B_1^\mathbb{C}(\tfrac{k}{2}-1) \otimes_{\mathbb{M}_2^\mathbb{C}}(\euscr{A}(1) \modmod \euscr{A}(0))_\mathbb{C}^\vee\right) \cong \text{Ext}^{***}_{\euscr{A}(0)^\vee_\mathbb{C}}(B_1^\mathbb{C}(\tfrac{k}{2}-1)).\]
    Since $\euscr{A}(0)^\vee_\mathbb{C}\cong \mathbb{M}_2^\mathbb{C}[\bar{\tau}_0]/(\bar{\tau}_0^2)$ is exterior, the Ext groups in question are polynomial. Modulo $v_1$-torsion, we are only left with $h_0$-towers:
    \[\text{Ext}^{***}_{\euscr{A}(0)^\vee_\mathbb{C}}(B_1^\mathbb{C}(\tfrac{k}{2}-1)) \cong \bigoplus_{j=0}^{k/2-1}\Sigma^{4j, 2j}\mathbb{M}_2^\mathbb{C}[h_0].\]
    Putting these pieces together with some reindexing on the $h_0$-towers and using the fact that $\alpha(k/2)=\alpha(k)$ gives the result.
    
    Suppose now that $k$ is odd. Again, \cref{ses bg} gives a short exact sequence of $\euscr{A}(1)^\vee_\mathbb{C}$-comodules
    \begin{align}
    \label{ses BG C odd}
    0 \to \Sigma^{2(k-1), k-1}B_0^\mathbb{C}(\tfrac{k-1}{2}) \otimes_{\mathbb{M}_2^\mathbb{C}} B_0^\mathbb{C}(1) \to B_0^\mathbb{C}(k) \to B_1^\mathbb{C}(\tfrac{k-1}{2}-1) \otimes_{\mathbb{M}_2^\mathbb{C}} (\euscr{A}(1) \modmod \euscr{A}(0))_\mathbb{C}^\vee \to 0.
    \end{align}
    Applying the functor $\text{Ext}^{***}_{\euscr{A}(1)^\vee_\mathbb{C}}(-)$ gives a long exact sequence of $\text{Ext}^{***}_{\euscr{A}(1)^\vee_\mathbb{C}}(\mathbb{M}_2^\mathbb{C})$-modules, and as in the even case, the connecting homomorphism is trivial modulo $v_1$-torsion. Thus, the Ext group we are interested in decomposes into the Ext groups of the kernel and cokernel of \eqref{ses BG C odd}. In this case, the cokernel is simpler to compute. A change of rings isomorphism gives
    \[\text{Ext}^{***}_{\euscr{A}(1)^\vee_\mathbb{C}}\left(B_1^\mathbb{C}(\tfrac{k-1}{2}-1) \otimes_{\mathbb{M}_2^\mathbb{C}}(\euscr{A}(1) \modmod \euscr{A}(0))_\mathbb{C}^\vee\right) \cong \text{Ext}^{***}_{\euscr{A}(0)_\mathbb{C}^\vee}(B_1^\mathbb{C}(\tfrac{k-1}{2}-1)).\]
    By the same argument as before, modulo $v_1$-torsion this Ext group consists of purely $h_0$-towers:
    \[\text{Ext}^{***}_{\euscr{A}(0)_\mathbb{C}^\vee}(B_1^\mathbb{C}(\tfrac{k-1}{2}-1)) \cong \bigoplus_{j = 0}^{(k-1)/2-1}\Sigma^{4j, 2j}\mathbb{M}_2^\mathbb{C}[h_0].\]
    The Ext group of the kernel may be computed using an algebraic Atiyah--Hirzebruch spectral sequence $\textbf{aAHSS}\left(B_0^\mathbb{C}(\tfrac{k-1}{2}) \otimes_{\mathbb{M}_2^\mathbb{C}} B_0^\mathbb{C}(1)\right)$. This has signature
    \[\textup{E}_1=\text{Ext}^{***}_{\euscr{A}(1)^\vee_\mathbb{C}}(B_0^\mathbb{C}(\tfrac{k-1}{2})) \otimes_{\mathbb{M}_2^\mathbb{C}} \mathbb{M}_2^\mathbb{C}\{[1], [\bar{\xi}_1], [\bar{\tau}_1]\} \implies \text{Ext}^{***}_{\euscr{A}(1)^\vee_\mathbb{C}}\left(B_0^\mathbb{C}(\tfrac{k-1}{2}) \otimes_{\mathbb{M}_2^\mathbb{C}}B_0^\mathbb{C}(1)\right).\]
    By induction, we may rewrite $\text{Ext}^{***}_{\euscr{A}(1)^\vee_\mathbb{C}}(B_0^\mathbb{C}(\tfrac{k-1}{2}))$ as
    \[\text{Ext}^{***}_{\euscr{A}(1)^\vee_\mathbb{C}}(B_0^\mathbb{C}(\tfrac{k-1}{2})) \cong \Sigma^{2k-6, k-3}Z_{\alpha(k)-1}^\mathbb{C} \oplus \bigoplus_{j=0}^{2k-10}\Sigma^{4j, 2j}\mathbb{M}_2^\mathbb{C}[h_0],\]
    using that $\alpha(\tfrac{k-1}{2}) = \alpha(k)-1.$
    This splitting leads to a splitting of the $\textup{E}_1$-page of the algebraic Atiyah--Hirzebruch spectral sequence, and we may analyze each summand individually. On the left-hand summand, \cref{C-b0(1)^i} implies that result of the spectral sequence is isomorphic to $\Sigma^{2k-2, k-1}Z^\mathbb{C}_{\alpha(k)}$. On the right-hand summand, the spectral sequence collapses on the $\textup{E}_1$-page, giving back the original summand modulo $v_1$-torsion. Thus, after reindexing the $h_0$-towers, the Ext groups of the kernel and cokernel and right-hand side of the Ext groups assemble to give the result. 
\end{proof}

\begin{remark}
    The above proof is where our argument differs most from the one given in \cite{CQ21}. We explain these differences here. 
    
    The results of Culver--Quigley assert that over $\mathbb{C}$ there is an isomorphism
    \[\frac{\textup{Ext}^{***}_{\euscr{A}(1)^\vee_\mathbb{C}}(B_0^\mathbb{C}(k))}{v_1\text{-torsion}} \cong Z^\mathbb{C}_{2k-\alpha(k)}.\]
    However, this is not quite correct. Let us illustrate this difference in the case of $B_0^\mathbb{C}(2)$. 
    
    By \Cref{ses bg}, there is a short exact sequence of $\euscr{A}(1)^\vee_\mathbb{C}$-comodules
    \begin{equation}
    \label{SES:rmk_fixing_CQ}
        0 \to \Sigma^{4,2}B_0^{\mathbb{C}}(1) \to B_0^{\mathbb{C}}(2) \to  B_1^\mathbb{C}(0) \otimes (\euscr{A}(1)\modmod \euscr{A}(0))^\vee_\mathbb{C} \to 0.
    \end{equation}
    Applying the functor $\text{Ext}_{\euscr{A}(1)^\vee_\mathbb{C}}^{***}(-)$ gives a long exact sequence of $\text{Ext}_{\euscr{A}(1)^\vee_\mathbb{C}}^{***}(\mathbb{M}_2^\mathbb{C})$-modules. The Ext group of the kernel of \eqref{SES:rmk_fixing_CQ} has already been explicitly determined (see \Cref{aAHSS(B_0^C(1)) E_3=E_infty}). The Ext group of the cokernel may be written using a change of rings isomorphism as
    \[\text{Ext}^{***}_{\euscr{A}(1)^\vee_{\mathbb{C}}}(B_1^\mathbb{C}(0) \otimes (\euscr{A}(1) \modmod \euscr{A}(0))^\vee_\mathbb{C}) \cong \text{Ext}^{***}_{\euscr{A}(0)^\vee_\mathbb{C}}(B_1^\mathbb{C}(0)) \cong \mathbb{M}_2^\mathbb{C}[h_0].\]
    For degree reasons, the connecting homomorphism
    \[\text{Ext}^{s,f,w}_{\euscr{A}(0)^\vee_\mathbb{C}}(B_1^\mathbb{C}(0)) \xrightarrow{\delta} \text{Ext}^{s-1,f,w}_{\euscr{A}(1)^\vee_\mathbb{C}}(B_0^\mathbb{C}(1))\]
    is trivial, implying that the long exact sequence in Ext decomposes into short exact sequences. Thus, we have an isomorphism
    \[\text{Ext}_{\euscr{A}(1)^\vee}^{***}(B_0^\mathbb{C}(2)) \cong \Sigma^{4,2}\text{Ext}^{***}_{\euscr{A}(1)^\vee_\mathbb{C}}(B_0^\mathbb{C}(1)) \oplus \mathbb{M}_2^\mathbb{C}[h_0] \cong \Sigma^{4,2}Z_1^\mathbb{C} \oplus \mathbb{M}_2^\mathbb{C}[h_0],\]
    which agrees with \Cref{C-b0(k) ext} as $\alpha(2)=1$. We depict this group in \Cref{fig:ext_B02_C}.

    \begin{figure}[H]
        \centering
        \includegraphics[scale=.75]{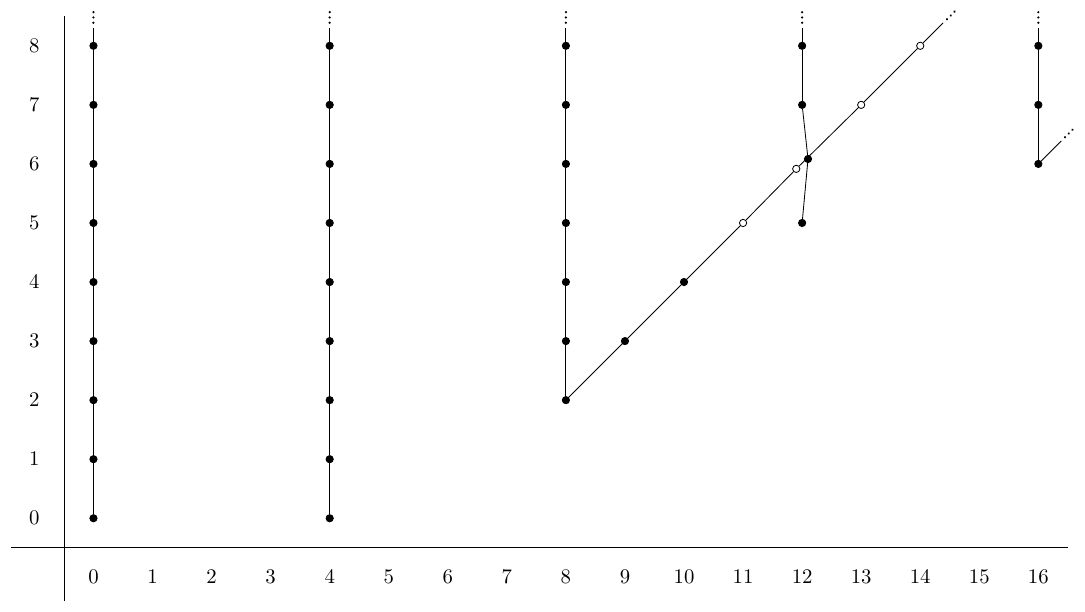}
        \caption{The group $\text{Ext}_{\euscr{A}(1)^\vee_\mathbb{C}}^{***}(B_0^\mathbb{C}(2))$.}
        \label{fig:ext_B02_C}
    \end{figure}    
    
    However, Culver--Quigley claim that $\text{Ext}^{***}_{\euscr{A}(1)^\vee_\mathbb{C}}(B_0^\mathbb{C}(2))$ is isomorphic to
    \[Z_3^\mathbb{C} = \mathbb{M}_2^\mathbb{C}[h_0] \oplus\Sigma^{4, 2}\mathbb{M}_2^\mathbb{C}[h_0] \oplus \Sigma^{5, 3}\mathbb{M}_2^\mathbb{C}[h_1]/(h_1^2) \oplus \Sigma^{8, 4}\text{Ext}^{***}_{\euscr{A}(1)^\vee_\mathbb{C}}(B_0^\mathbb{C}(1))\langle 2 \rangle.\]
    There is a class in stem 5 which supports an $h_1$-multiplication. We depict this group in \Cref{fig:C_Z3}.

    \begin{figure}[H]
        \centering
        \includegraphics[scale=.75]{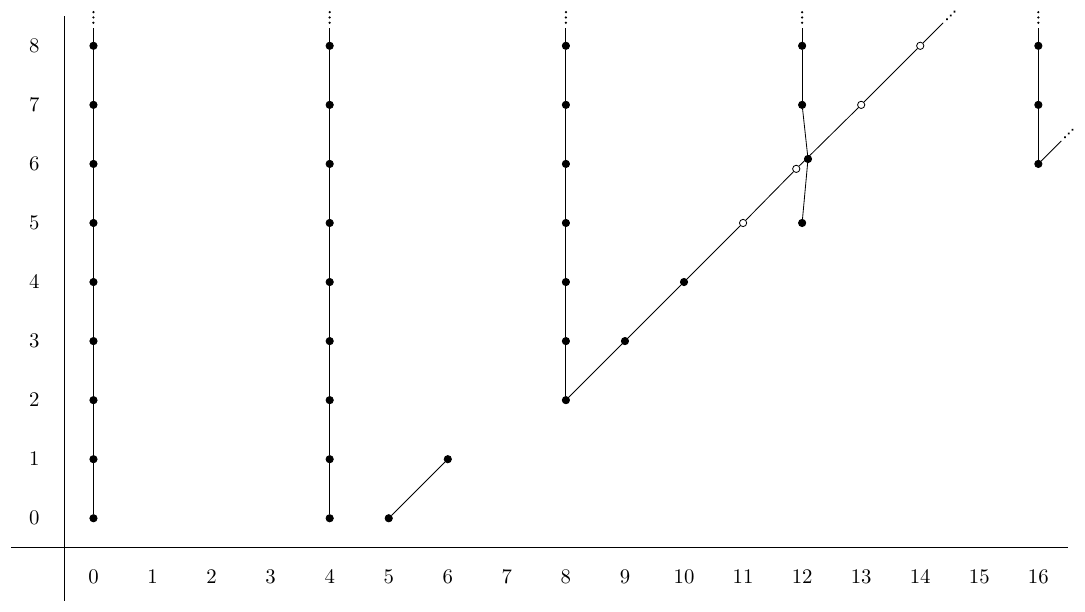}
        \caption{The group $Z_3^\mathbb{C}$.}
        \label{fig:C_Z3}
    \end{figure}
    
    This type of difference extends to all values of $k \geq 2$, showing that the formula given in \cite[Theorem 3.38]{CQ21} is incorrect. 
    However, this observation is actually in line with the rest of the work done in Culver--Quigley and doesn't affect any end results. More precisely:
    \begin{enumerate}
        \item[(1)] It was shown in \cite[Lemma 3.38]{CQ21} that the groups $\text{Ext}^{***}_{\euscr{A}(1)^\vee_\mathbb{C}}(B_0^\mathbb{C}(1)^{\otimes i})$ are not expressible in terms of Adams covers of kq and ksp. The purpose of the groups $Z_i^\mathbb{C}$ is exactly to account for this failure, mixing in an appropriate trigraded version of $\text{Ext}^{**}_{\euscr{A}(1)_\text{cl}^\vee}(\text{H}_{*}(\text{bo}^{\langle i \rangle}))$ and $\text{Ext}^{**}_{\euscr{A}(1)_\text{cl}^\vee}(\text{H}_{*}(\text{bsp}^{\langle i \rangle})).$ The difference being observed in the computation of $\text{Ext}^{***}_{\euscr{A}(1)^\vee_\mathbb{C}}(B_0^\mathbb{C}(k))$ can be accounted for by noting that this group \textit{also} cannot be expressed using just $Z_{2k-\alpha(k)}^\mathbb{C}$. Rather, we must express these groups using shifts of $Z_{\alpha(k)}^\mathbb{C}$ and an appropriate trigraded version of the $h_0$-towers found in the classical groups $\text{Ext}^{**}_{\euscr{A}(1)^\vee_\text{cl}}(\mathbb{F}_2)$; this last term is precisely what is given by $\text{Ext}^{***}_{\euscr{A}(0)^\vee_\mathbb{C}}(B_1^\mathbb{C}(\tfrac{k-1}{2}))$ in the case that $k \equiv 0 \, (2),$ and by $\text{Ext}^{***}_{\euscr{A}(0)^\vee_\mathbb{C}}(B_1^\mathbb{C}(\tfrac{k-1}{2}-1))$ in the case that $k \equiv 1 \, (2).$
        \item[(2)] Apart from this discrepancy, the contents of \cite{CQ21} are unaffected. In particular, we will see that the $\textbf{mASS}^\mathbb{C}(\text{kq} \otimes \text{kq})$ collapses on the $\textup{E}_2$-page by using the same arguments as employed by Culver--Quigley.
    \end{enumerate}
\end{remark}

The following is now immediate from \cref{e2 mass}.
\begin{proposition}[{\cite[Proposition 3.41]{CQ21}}]
    The $\textup{E}_2$-page of the $\textup{\textbf{mASS}}^\mathbb{C}(\textup{kq} \otimes \textup{kq})$ is given, modulo $v_1$-torsion, by
    \[\textup{E}_2 =\bigoplus_{k \geq 0}\textup{Ext}^{s,f,w}_{\euscr{A}(1)_\mathbb{C}^\vee}(\Sigma^{4k, 2k}B_0^\mathbb{C}(k)) \cong\bigoplus_{k \geq 0} \left(\Sigma^{4k, 2i}Z^\mathbb{C}_{\alpha(k)} \oplus \bigoplus_{j=0}^{4k-8}\Sigma^{4j, 2j}\mathbb{M}_2^\mathbb{C}[h_0]\right).\]
\end{proposition}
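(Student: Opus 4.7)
The plan is to derive this proposition as an immediate corollary of the two earlier results \cref{e2 mass} and \cref{C-b0(k) ext}, with no further spectral sequence work required. First I would invoke \cref{e2 mass}, which produces the decomposition
\[E_2^{s,f,w} \;=\; \bigoplus_{k \geq 0} \textup{Ext}^{s,f,w}_{\euscr{A}(1)^\vee_\mathbb{C}}(\Sigma^{4k, 2k} B_0^\mathbb{C}(k))\]
as $\textup{Ext}_{\euscr{A}(1)^\vee_\mathbb{C}}(\mathbb{M}_2^\mathbb{C})$-modules. Since the bigraded shift $\Sigma^{4k,2k}$ commutes with $\textup{Ext}$, each summand can be rewritten as $\Sigma^{4k,2k}\,\textup{Ext}^{s,f,w}_{\euscr{A}(1)^\vee_\mathbb{C}}(B_0^\mathbb{C}(k))$.

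Next, I plug in the explicit formula from \cref{C-b0(k) ext}, which (modulo $v_1$-torsion) gives an isomorphism
\[\textup{Ext}_{\euscr{A}(1)^\vee_\mathbb{C}}(B_0^\mathbb{C}(k)) \;\cong\; \Sigma^{4k-4, 2k-2} Z^\mathbb{C}_{\alpha(k)} \;\oplus\; \bigoplus_{j=0}^{4k-8} \Sigma^{4j, 2j} \mathbb{M}_2^\mathbb{C}[h_0].\]
Composing with the outer $\Sigma^{4k,2k}$ shift and reassembling the direct sum over $k \geq 0$ produces the claimed description. Note that this step is purely formal: every piece of genuine mathematical content — the inductive use of the Brown--Gitler short exact sequences of \cref{ses bg}, the change-of-rings isomorphisms, the base-case computations from \cref{C-ksp} and \cref{C-b0(1)^i}, and the vanishing of connecting homomorphisms modulo $v_1$-torsion — has already been absorbed into \cref{C-b0(k) ext}.

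The only genuine task is bookkeeping: one must track the composition of bigraded suspensions on the $Z^\mathbb{C}_{\alpha(k)}$ summand (so that $\Sigma^{4k,2k}\Sigma^{4k-4,2k-2}$ collapses to the appropriate single shift appearing in the statement), and verify the degenerate boundary cases $k = 0, 1$ where the index range $0 \leq j \leq 4k-8$ is empty under the convention that an empty direct sum is zero. These small cases should be cross-checked against \cref{C-Ext A(1)} and \cref{C-ksp}, which compute the $k=0$ and $k=1$ summands directly and agree with $Z_0^\mathbb{C}$ and $Z_1^\mathbb{C}$ respectively. There is no substantive obstacle: this proposition is essentially the statement of \cref{C-b0(k) ext} suspended and summed.
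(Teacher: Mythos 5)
Your proposal matches the paper's own (unstated) argument exactly: the paper introduces this proposition with ``The following is now immediate from \cref{e2 mass},'' i.e.\ it is obtained by substituting the formula of \cref{C-b0(k) ext} into the decomposition of \cref{e2 mass} and reindexing suspensions, which is precisely what you do. Your attention to the composite shift $\Sigma^{4k,2k}\Sigma^{4k-4,2k-2}$ and the empty-sum conventions for $k=0,1$ is the right bookkeeping, and in fact flags that the shift printed in the statement (``$\Sigma^{4k,2i}$'') appears to be a typo for the composed suspension.
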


Recall from \cite[Theorem 2.9]{Mah81} that the \textbf{ASS}$(\text{bo} \otimes \text{bo})$ collapses on $\textup{E}_2$.
\begin{thm}[{\cite[Cor 3.43]{CQ21}}]
\label{C-coop mass collapse}
    The \textup{\textbf{mASS}}$^\mathbb{C}(\textup{kq} \otimes \textup{kq})$ collapses on the $\textup{E}_2$-page.
\end{thm}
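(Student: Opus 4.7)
The plan is to combine the $\textup{kq}$-module structure on $\textup{kq} \otimes \textup{kq}$ with naturality under complex Betti realization, ultimately reducing to Mahowald's classical collapse theorem for the $\textup{bo}$-resolution.

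First I would observe that $\textup{kq} \otimes \textup{kq}$ is a $\textup{kq}$-module spectrum through either tensor factor of the multiplication. This makes $\textbf{mASS}(\textup{kq} \otimes \textup{kq})$ into a module spectral sequence over $\textbf{mASS}(\textup{kq})$, whose $E_2$-page $\textup{Ext}_{\euscr{A}(1)^\vee_\mathbb{C}}(\mathbb{M}_2^\mathbb{C})$ collapses by \cref{C-Ext A(1)} and therefore consists entirely of permanent cycles. Consequently every differential $d_r$ in $\textbf{mASS}(\textup{kq} \otimes \textup{kq})$ is $\textup{Ext}_{\euscr{A}(1)^\vee_\mathbb{C}}(\mathbb{M}_2^\mathbb{C})$-linear, so in particular it commutes with multiplication by $\tau$, $h_0$, $h_1$, $a$, and $b$. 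It thus suffices to exhibit a generating set of permanent cycles inside each summand $\Sigma^{4k, 2k} Z^\mathbb{C}_{\alpha(k)} \oplus \bigoplus_{j=0}^{4k-8} \Sigma^{4j, 2j} \mathbb{M}_2^\mathbb{C}[h_0]$ of the $E_2$-page.

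Next I would apply the multiplicative comparison of spectral sequences from \cref{C-betti kq res}. Since Betti realization induces an isomorphism on $E_2$-pages after inverting $\tau$ and Mahowald's theorem \cite[Theorem 2.9]{Mah81} guarantees that $\textbf{ASS}(\textup{bo} \otimes \textup{bo})$ collapses at $E_2$, any nonzero motivic differential $d_r(x)$ must map to a $\tau$-torsion class. The $\tau$-free generators of each $Z^\mathbb{C}_{\alpha(k)}$ and of each $h_0$-tower summand descend to permanent cycles in $\textbf{ASS}(\textup{bo} \otimes \textup{bo})$, so the corresponding motivic classes are themselves permanent cycles modulo $\tau$-torsion corrections; $\textup{Ext}_{\euscr{A}(1)^\vee_\mathbb{C}}(\mathbb{M}_2^\mathbb{C})$-linearity then propagates this conclusion across each summand.

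The main obstacle is ruling out differentials whose target is $\tau$-torsion (and, relatedly, $v_1$-torsion), as these are invisible to Betti realization. To handle this, I would use the explicit description of the summands obtained in \cref{C-b0(k) ext}: every $\tau$-torsion class appears as an $h_1$-multiple constrained by the relations $h_0 h_1 = 0$, $h_1 a = 0$, and $\tau h_1^3 = 0$. A hypothetical differential hitting such a target would, via $h_0$-divisibility and $\text{Ext}_{\euscr{A}(1)^\vee_\mathbb{C}}(\mathbb{M}_2^\mathbb{C})$-linearity, force its source to admit a nonzero Betti realization supporting a classical differential, contradicting Mahowald's theorem. A routine bidegree check separating the summands indexed by distinct $k$ (using the coweight grading and the shift by $(4k, 2k)$) rules out cross-summand differentials, yielding the desired collapse.
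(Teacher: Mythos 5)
Your first two steps track the paper's argument: Betti realization is multiplicative and sends the $\textbf{mASS}(\textup{kq}\otimes\textup{kq})$ to the $\textbf{ASS}(\textup{bo}\otimes\textup{bo})$, and since the latter collapses by Mahowald, no motivic differential can have both source and target $\tau$-torsion free. (Note also that a differential with $\tau$-torsion source automatically has $\tau$-torsion target, since $d_r(\tau^N x)=\tau^N d_r(x)$.) The gap is in your third step, which is exactly the hard case: a differential $d_r(x)=y$ with $x$ $\tau$-free and $y$ $\tau$-torsion. You claim such a differential would ``force its source to admit a nonzero Betti realization supporting a classical differential, contradicting Mahowald's theorem.'' This does not follow: since $y$ is $\tau$-torsion it dies upon inverting $\tau$, so the realized differential is $d_r(\mathrm{Be}^{\mathbb{C}}(x))=0$, which is perfectly consistent with the classical collapse. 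Betti realization simply cannot see differentials whose target is $\tau$-torsion, and no appeal to $h_0$-divisibility or $\text{Ext}_{\euscr{A}(1)^\vee_\mathbb{C}}(\mathbb{M}_2^\mathbb{C})$-linearity converts such a differential into a visible classical one.

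The paper closes this case differently. From the explicit description of the $E_2$-page (\cref{C-b0(k) ext}), every $\tau$-torsion class is $h_1$-torsion free, i.e.\ lies in an infinite $h_1$-tower (the $\tau$-torsion arises only from the relation $\tau h_1^3=0$). If $d_r(x)=y$ with $y$ $\tau$-torsion, then $h_1^ny\neq 0$ for all $n$, so by $h_1$-linearity $h_1^nx\neq 0$ for all $n$ and the source must also lie in an infinite $h_1$-tower. One then checks that the bidegrees of the $h_1$-towers occurring in the decomposition are incompatible with a differential $d_r\colon(s,f)\to(s-1,f+r)$, so no such differential exists. Your proposal is missing this $h_1$-linearity propagation and the accompanying degree check; as written, the $\tau$-torsion case is not actually ruled out.
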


\begin{proof}
    Betti realization sends the \textbf{mASS}($\textup{kq} \otimes \textup{kq})$ to the \textbf{ASS}($\text{bo} \otimes \text{bo}$). At the level of $\textup{E}_2$-pages, this is a map 
    \[\text{Ext}^{s,f,w}_{\euscr{A}(1)^\vee_\mathbb{C}}(\textup{H}_{**}(\textup{kq} \otimes \textup{kq})) \to \text{Ext}_{\euscr{A}(1)_{\text{cl}}^\vee}^{s,f}(\text{H}_*(\text{bo} \otimes \text{bo})),\]
    obtained by inverting $\tau$ on the $\textup{E}_2$-page of the \textbf{mASS}($\textup{kq} \otimes \textup{kq}$) and setting $\tau=1$. Since the $\textbf{ASS}(\text{bo} \otimes \text{bo})$ collapses on $\textup{E}_2$, we must have that there are no motivic differentials where both source and target are $\tau$-torsion free. However, our description of the $\textup{E}_2$-page of the $\textbf{mASS}(\textup{kq} \otimes \textup{kq})$ shows that any $\tau$-torsion class must be $h_1$-torsion free, hence is some $h_1$-tower. For degree reasons, there can be no such differentials between these towers, concluding the proof.
\end{proof}

\newpage
\part{Computations in $\mathrm{SH}(\mathbb{R})$}
\label{part 2}
\section{Algebraic preliminaries and comparisons}
\label{section 4}
In this section, we recall the $\mathbb{R}$-motivic dual Steenrod algebra $\euscr{A}^\vee_\mathbb{R}$ and introduce notation for modules to be used throughout. Then, we give a brief overview of the relationship between $\mathbb{C}$-motivic and $\mathbb{R}$-motivic homotopy and compare the respective kq-resolutions.

\subsection{Background}
\label{R-background and notation}
Let $\mathbb{M}_2^\mathbb{R} = \mathbb{F}_2[\rho, \tau]$ denote the mod-2 motivic homology of a point \cite[Corollary 6.10]{Voemotiviccohomology}, where $|\tau|=(0,-1)$ and $|\rho| = (-1, -1)$. Recall from \cref{dual motivic steenrod} that the dual motivic Steenrod algebra $\euscr{A}^\vee_\mathbb{R}$ takes the form
\[\euscr{A}^\vee_\mathbb{R}=\mathbb{M}_2^\mathbb{R}[\overline{\xi}_1, \overline{\xi}_2, \hdots, \overline{\tau}_0, \overline{\tau}_1, \hdots]/(\overline{\tau}_i^2 = \rho\overline{\tau}_{i+1} + \rho\overline{\tau}_0\overline{\xi}_{i+1} + \tau \overline{\xi}_{i+1}).\]
\cref{e2 mass} asserts that the $\textbf{mASS}^\mathbb{R}(\textup{kq} \otimes \textup{kq})$ takes the form
\[\textup{E}_2^{s,f,w} = \bigoplus_{k \geq 0} \text{Ext}_{\euscr{A}(1)^\vee_\mathbb{R}}^{s,f,w}(\Sigma^{4k, 2k}B_0^\mathbb{R}(k)) \implies \pi_{s,w}^\mathbb{R}(\text{kq} \otimes \text{kq}),\]
where $B_0^\mathbb{R}(k)$ are the $\mathbb{R}$-motivic integral motivic Brown--Gitler comodules.
Therefore, to compute this $\textup{E}_2$-page we must compute the trigraded groups $\text{Ext}^{***}_{\euscr{A}(1)^\vee_\mathbb{R}}(B_0^\mathbb{R}(k))$ for $k \geq 0.$ Since $B_0^\mathbb{R}(0) = \mathbb{M}_2^\mathbb{R}$, the first summand is isomorphic to the $\textup{E}_2$-page of the $\textbf{mASS}^{\mathbb{R}}(\textup{kq})$. This was computed by Hill in \cite[Theorem 5.6]{Hil11}. We use a presentation adopted by Guillou--Hill--Isaksen--Ravenel \cite[Theorem 6.2]{GHIRkoc2}.

\begin{thm}
    [{\cite{Hil11, GHIRkoc2}}]
    \label{thm:Ext_A1_R}
    The \textup{\textbf{mASS}$^\mathbb{R}(\textup{kq})$} takes the form
    \[ \textup{E}_2^{s,f,w} = \textup{Ext}^{***}_{\euscr{A}(1)_\mathbb{R}^\vee}(\mathbb{M}_2^\mathbb{R}, \mathbb{M}_2^\mathbb{R})= \mathbb{F}_2[\tau^4, \rho,h_0, h_1, a, b,  \tau^2 h_0, \tau h_1, b, \tau^2 a]/I \implies \pi_{s,w}^\mathbb{R}(\textup{kq}).\]
    The spectral sequence collapses at $\textup{E}_2$. The degrees of the generators are listed in \Cref{GHIRgenerators} and the ideal of relations $I$ is described in \Cref{GHIRrelations}.
\end{thm}
We organize \Cref{GHIRgenerators} and \Cref{GHIRrelations} by coweight modulo 4.

\begin{table}[H]
    \centering
    \setlength{\tabcolsep}{0.5em} 
    {\renewcommand{\arraystretch}{1.2}
    \begin{tabular}{|l|l|l|}
        \hline
        Generator & $(s,f,w)$ & Coweight \\
        \hline 
        \hline
        $\rho$ & $(-1, 0, -1)$ &0\\
        $h_0$ &$(0,1,0)$ &0\\
        $h_1$ & $(1,1,1)$&0\\
        $\tau^2 a$ & $(4,3,0)$& 4\\
        $\tau^4$ &$(0,0,-4)$ & 4\\
        $b$ & $(8,4,4)$&4\\
        \hline
        \hline
        $\tau h_1$ &$(1,1,0)$ &1\\
        \hline
        \hline
        $ \tau^2 h_0$ & $(0,1,-2)$&2\\
        $a$ & $(4,3,2)$& 2 \\
        \hline
    \end{tabular}}
    \caption{Generators for $\text{Ext}^{***}_{\euscr{A}(1)_\mathbb{R}^\vee}(\mathbb{M}_2^\mathbb{R})$.}      
    \label{GHIRgenerators}
\end{table}

\begin{table}[H]
    \centering
    \setlength{\tabcolsep}{0.5em} 
    {\renewcommand{\arraystretch}{1.2}
    \begin{tabular}{|l|l|l|}
        \hline
        Relation & $(s,f,w)$ &Coweight \\
        \hline
        \hline
        $\rho h_0$ & $(-1,1,-1)$ & 0\\
        $h_0h_1$ & $(1,2,1)$ & 0\\
        $(\tau^2h_0)^2+\tau^4 h_0^2$ & $(0,2,-4)$ & 4\\
        $\tau^4  h_1^3 + \rho \cdot \tau^2 a$ & $(3,3,-1)$ & 4\\
        $\tau^2h_0\cdot a + h_0 \cdot \tau^2 a$ & $(4,4,0)$ & 4\\
        $h_1 \cdot \tau^2 a + \rho^3 b$ & $(5,4,1)$ &4\\
        $a^2 + h_0^2b$ & $(8,6,4)$ & 4\\
        $(\tau^2 a)^2 + \tau^4 h_0^2 b + \rho^2 \tau^4 h_1^2 b$ & $(8,6,0)$ & 8\\
        \hline
        \hline
        $\rho^2 \cdot \tau h_1$ & $(-1,1,-2)$ & 1\\
        $h_0 \cdot \tau h_1 + \rho h_1 \cdot \tau h_1$ & $(1,2,0)$ & 1\\
        $h_1^2 \cdot \tau h_1$ & $(3,3,2)$ & 1\\
        $\tau h_1 \cdot \tau^2 a$ & $(5,4,0)$ & 5\\
        \hline
        \hline
        $\rho \cdot \tau^2 h_0$ & $(-1,1,-3)$ & 2\\
        $ \rho^3 \cdot a$ & $(1,3,-1)$ & 2\\
        $\tau^2h_0 \cdot h_1 + \rho(\tau h_1)^2$ & $(1,2,-1)$ & 2\\
        $h_1(\tau h_1)^2 + \rho a$ & $(3,3,1)$ & 2\\
        $h_1 a$ & $(5,4,3)$ & 2\\
        $\tau^2 h_0 \cdot \tau^2a + \tau^4 h_0a$ & $(4,4,-2)$ & 6\\
        $a \cdot \tau^2 a + \tau^2 h_0 \cdot h_0 b$ & $(8,6,2)$ & 6\\
        \hline
        \hline
        $\tau^2 h_0 \cdot \tau h_1$ & $(1,2,-2)$ & 3\\
        $(\tau h_1)^3$ & $(3,3,0)$ & 3\\
        $\tau h_1 \cdot a$ & $(5,4,2)$ & 3\\
        \hline
    \end{tabular}}
    \caption{Relations for $\text{Ext}^{***}_{\euscr{A}(1)_\mathbb{R}^\vee}(\mathbb{M}_2^\mathbb{R})$.}        
    \label{GHIRrelations}
\end{table}

The data comprising $\text{Ext}^{***}_{\euscr{A}(1)^\vee_\mathbb{R}}(\mathbb{M}_2^\mathbb{R})$ is too complicated to display in one chart. To present this material more clearly, we give individual charts for each coweight modulo 4, where this 4-periodicity is induced by the generator $\tau^4$. Notice that there are no generators in coweight 3 modulo 4, and the relations ensure that there are never any classes in this coweight. Charts are written in $(s,f)$-grading, with motivic weight suppressed. We also suppress infinite $\rho$-towers, which would extend to the left in the charts, for readability. \Cref{R-Ext(M_2) cw=0} depicts the coweight 0 piece, \Cref{R-Ext(M_2) cw=1} depicts the coweight 1 piece, and \Cref{R-Ext(M_2) cw=2} depicts the coweight 2 piece.

A black $\blacksquare$ represents $\mathbb{F}_2[\rho,\tau^4]$. A black $\bullet$ represents $\mathbb{F}_2[\tau^4]$. A vertical black line represents multiplication by $h_0$. A horizontal black line represents multiplication by $\rho$. A diagonal black line represents multiplication by $h_1$. A dashed horizontal line indicates that $\rho$-multiplication hits $\tau^4$ times a generator. For example, in \Cref{R-Ext(M_2) cw=0} there is a dashed line indicating $\rho \cdot \tau^2a = \tau^4h_1^3$.
\begin{figure}[h]
    \centering
    \includegraphics[scale=0.75]{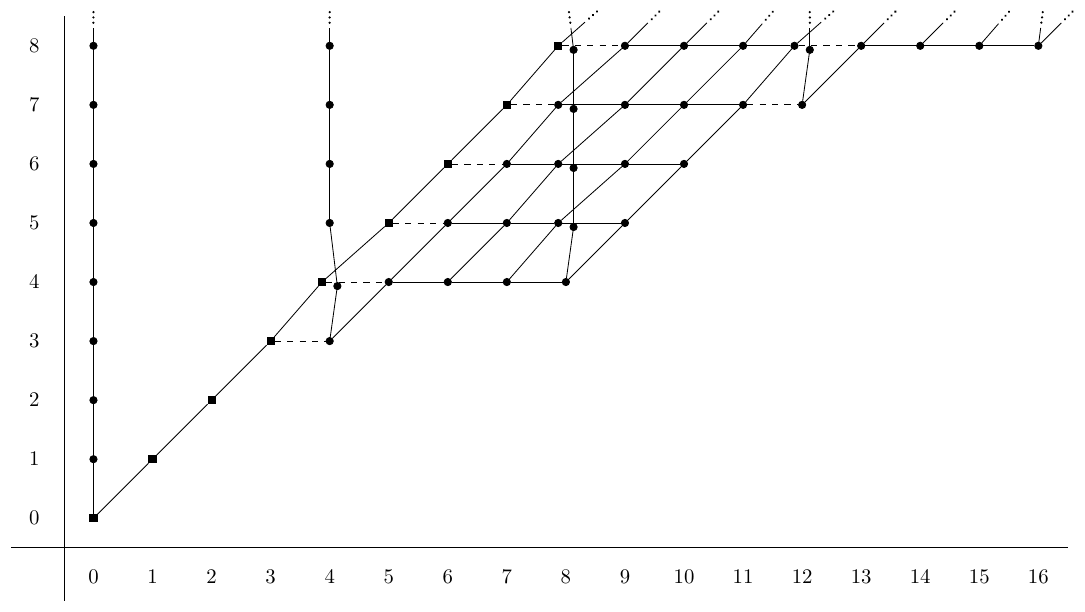}
    \caption{$\text{Ext}^{***}_{\euscr{A}(1)^\vee_\mathbb{R}.}(\mathbb{M}_2^\mathbb{R})$ in $cw \equiv 0 \, (4)$.}
    \label{R-Ext(M_2) cw=0}
\end{figure}

\begin{figure}[h]
    \centering
    \includegraphics[scale=.75]{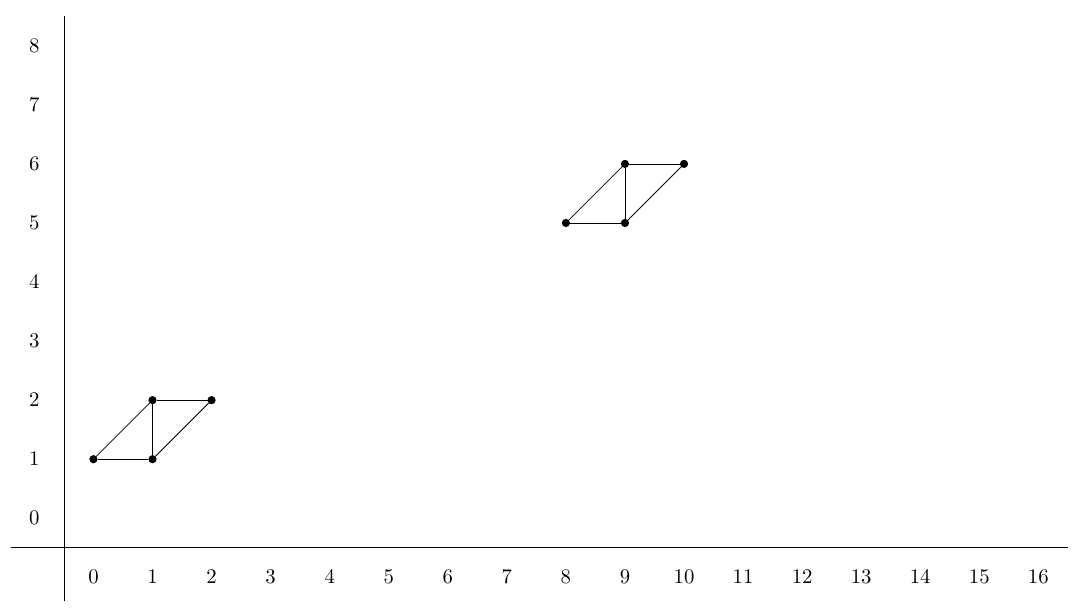}
    \caption{$\text{Ext}^{***}_{\euscr{A}(1)^\vee_\mathbb{R}.}(\mathbb{M}_2^\mathbb{R})$ in $cw \equiv 1 \, (4)$.}
    \label{R-Ext(M_2) cw=1}
\end{figure}

\begin{figure}[h]
    \centering
    \includegraphics[scale=0.75]{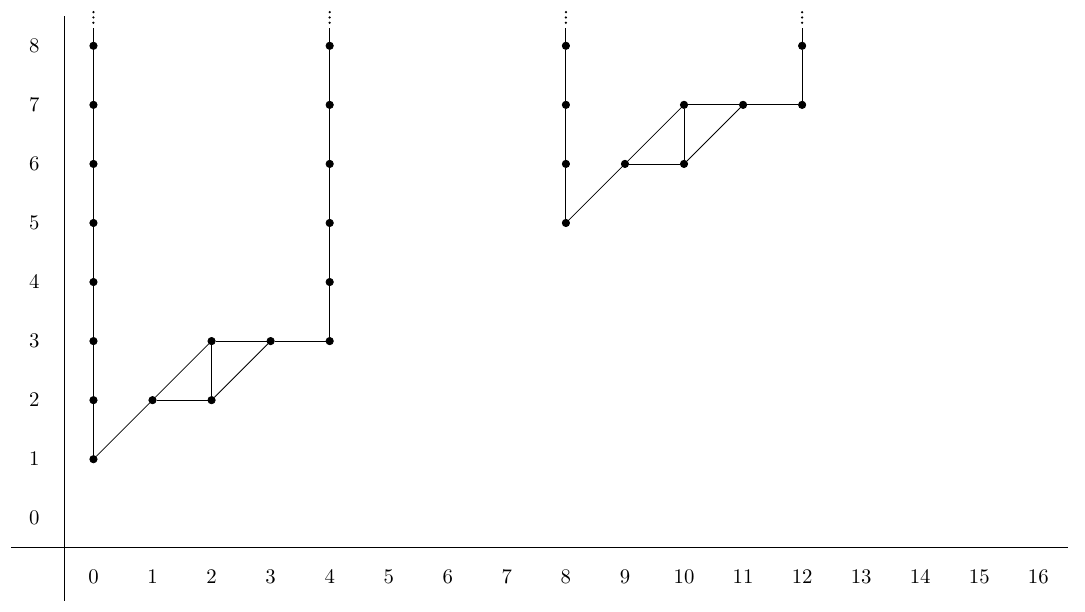}
    \caption{$\text{Ext}^{***}_{\euscr{A}(1)^\vee_\mathbb{R}.}(\mathbb{M}_2^\mathbb{R})$ in $ cw \equiv 2 \, (4)$.}
    \label{R-Ext(M_2) cw=2}
\end{figure}
\subsection{Some modules over $\text{Ext}^{***}_{\euscr{A}(1)^\vee_\mathbb{R}}(\mathbb{M}_2^\mathbb{R})$}
We now introduce notation for some modules over $\text{Ext}^{***}_{\euscr{A}(1)^\vee_\mathbb{R}}(\mathbb{M}_2^\mathbb{R})$ which will appear throughout. The reader is encouraged to consult \cref{charts} to follow along with these definitions.
\begin{definition}
\label{module defs}
    Define the following $\text{Ext}^{***}_{\euscr{A}(1)^\vee_\mathbb{R}}(\mathbb{M}_2^\mathbb{R})$-modules:
    \begin{itemize}
        \item Let $\euscr{D}$ denote the entire coweight $cw \equiv 1 \, (4)$ piece of $\text{Ext}^{***}_{\euscr{A}(1)_\mathbb{R}^\vee}
        (\mathbb{M}_2^\mathbb{R})$, called a \textit{big diamond} (see \Cref{R-Ext(M_2) cw=1});
        
        \item Let $\euscr{S}$ denote the entire coweight $cw \equiv 2 \, (4)$ piece of $\text{Ext}^{***}_{\euscr{A}(1)^\vee_\mathbb{R}}(\mathbb{M}_2^\mathbb{R})$, called a \textit{big staircase} (see \Cref{R-Ext(M_2) cw=2});
        
        \item Let $P$ be the module
        \[P = \mathbb{F}_2[\rho,\tau^4],\] 
        called a \textit{$\rho$-tower}, with the obvious $\text{Ext}^{***}_{\euscr{A}(1)^\vee_\mathbb{R}}(\mathbb{M}_2^\mathbb{R})$-module structure (see \Cref{rho tower picture}); 

        \item Let $H$ be the module
        \[H = \mathbb{F}_2[h_0,\tau^4],\]
        called an \textit{$h_0$-tower}, with the obvious $\text{Ext}^{***}_{\euscr{A}(1)^\vee_\mathbb{R}}(\mathbb{M}_2^\mathbb{R})$-module structure (see \Cref{h0 tower picture});
        
        \item Let $PH$ be the module
        \[PH = \frac{\mathbb{F}_2[h_0, \rho,\tau^4]}{(\rho h_0)},\]
        called a \textit{$(\rho, h_0)$-tower}, with the obvious $\text{Ext}^{***}_{\euscr{A}(1)^\vee_\mathbb{R}}(\mathbb{M}_2^\mathbb{R})$-module structure (see \Cref{rho h0 tower picture});

        \item Let $D$ be the module
        \[D = \frac{\mathbb{F}_2[h_0, h_1\rho,\tau^4]}{(\rho^2, h_0^2, h_1^2, \rho h_0, h_0 h_1,\rho h_1 = h_0 )},\]
        called a \textit{diamond}, with the obvious $\text{Ext}^{***}_{\euscr{A}(1)^\vee_\mathbb{R}}(\mathbb{M}_2^\mathbb{R})$-module structure (see \Cref{diamond picture});

        \item Let $S$ be the module
        \[S=\frac{\mathbb{F}_2[h_0, h_1, t, a,\tau^4]}{(h_1^3, t^2, a^2, h_0h_1, h_1a, ta, h_0^2t, h_1^2t)},\] called a \textit{staircase}, with 
        $\text{Ext}^{***}_{\euscr{A}(1)^\vee_\mathbb{R}}(\mathbb{M}_2^\mathbb{R})$-module structure given by the relations (see \Cref{staircase picture})
        \[\rho \cdot t = h_1, \quad h_1 \cdot t = \rho \cdot a,\quad h_0 \cdot t = h_1^2 = \rho^2 \cdot a, \quad\rho^2 \cdot t = 0, \quad \rho^3  \cdot a = 0;\]
        
        \item Let $T$ be the module
        \[T = \frac{\mathbb{F}_2[\rho,\tau^4]}{(\rho^2)},\]
        called a \textit{segment}, with the obvious $\text{Ext}^{***}_{\euscr{A}(1)^\vee_\mathbb{R}}(\mathbb{M}_2^\mathbb{R})$-module structure (see \Cref{segment picture});
        
        \item Let $J$ be the module
        \[J = \frac{\mathbb{F}_2[h_0, \rho,\tau^4]}{(\rho^3, \rho h_0)},\]
        called a \textit{J-tower}, with the obvious $\text{Ext}^{***}_{\euscr{A}(1)^\vee_\mathbb{R}}(\mathbb{M}_2^\mathbb{R})$-module structure (see \Cref{J-tower picture});
        
        \item Let 
        \[JD = \frac{\mathbb{F}_2[h_0, t, \rho,\tau^4]}{(\rho^3, t^2, \rho^2 t, \rho h_0,  h_0^2 t)},\]
        called a \textit{$JD$-tower}, with $\text{Ext}^{***}_{\euscr{A}(1)^\vee_\mathbb{R}}(\mathbb{M}_2^\mathbb{R})$-module structure given by the relations (see \Cref{JD-tower picture})
        \[\rho = h_1 t, \quad h_1 \cdot \rho t = h_0 t = \rho^2.\]
        \end{itemize}
\end{definition}

\begin{remark}
    As should be indicated by the figures in \cref{charts}, the naming of these modules is derived from their general shapes when depicted in charts.
\end{remark}

We construct now a more intricate family of modules. This module will track the behavior of the coweight $cw \equiv 0\, (4)$ piece of $\text{Ext}^{***}_{\euscr{A}(1)^\vee_\mathbb{R}}(\mathbb{M}_2^\mathbb{R})$ throughout the coming spectral sequences.

\begin{definition}
\label{big flag def}
   In this construction, we will attach generators to extend the coweight $cw \equiv 0 \, (4)$ piece of $\text{Ext}^{***}_{\euscr{A}(1)^\vee_\mathbb{R}}(\mathbb{M}_2^\mathbb{R})$. We continue until we have attached a generator in Adams filtration 0, at which point we stop. For $n \geq 0$, let $\euscr{F}_{s,f,w}$ be the  $\text{Ext}^{***}_{\euscr{A}(1)^\vee_\mathbb{R}}(\mathbb{M}_2^\mathbb{R})$-module concentrated in coweight $cw \equiv s-w \, (4)$, called a \textit{big flag}, described by the following process:
      \begin{itemize}
        \item[(0)] There is a copy of the coweight $cw \equiv 0 \, (4)$ piece of $\text{Ext}^{***}_{\euscr{A}(1)^\vee_\mathbb{R}}(\mathbb{M}_2^\mathbb{R})$ (see \Cref{R-Ext(M_2) cw=0}) with generator $x_0$ in degree $(s,f,w)$ called a \textit{flag}.
        \item[$(4n+1)$] There is a $PH$ generated by $x_{4n+1}$ in degree $(s-4-8n, f-1-4n, w-4-8n)$ subject to the relations 
        \[h_1x_{4n+1}=\rho^3x_{4n}\text{ and }bx_{4n+1}=\tau^2ax_{4n}.\]
        \item[$(4n+2)$] There is a $P$ generated by $x_{4n+2}$ in degree $(s-6-8n, f-2-4n, w-6-8n)$ subject to to the relations 
        \[h_1x_{4n+2}=\rho x_{4n+1} \text{ and } bx_{4n+2}=h_1^2x_{4n}.\]
        \item[$(4n+3)$]There is a $P$ generated by $x_{4n+3}$ in degree $(s-7-8n, f-3-4n, w-7-8n)$ subject to the relations 
        \[h_1x_{4n+3}=x_{4n+2}\text{ and }bx_{4n+3} = h_1x_{4n}.\] 
        \item[$(4n+4)$] There is a $PH$ generated by $x_{4n+4}$ in degree $(s-8(n+1), f-4(n+1), w-8(n+1))$ subject to the relations 
        \[h_1x_{4n+4}=x_{4n+3}\text{ and }bx_{4n+4}=x_{4n}.\] 
    \end{itemize}
    In short, the submodule $\euscr{F}_{s,f,w}$ looks like the coweight $cw \equiv 0 \, (4)$ portion of $\text{Ext}^{***}_{\euscr{A}(1)^\vee_\mathbb{R}}(\mathbb{M}_2^\mathbb{R})$ with generator shifted from $(0,0,0)$ to $(s,f,w)$, and then a complicated ``tail" pattern coming from the generator of the flag which decreases in all degrees until Adams filtration 0 is reached. Importantly, at each step we are attaching a new generator by $h_1$, and since $h_1$ has coweight 0 as an element of $\text{Ext}^{***}_{\euscr{A}(1)^\vee_\mathbb{R}}(\mathbb{M}_2^\mathbb{R})$, this ensures that each stage of this process is concentrated in the same coweight modulo 4. As an example, we depict the big flag $\euscr{F}_{12,3,6}$ in \Cref{newbigflag}.
\end{definition}

\begin{figure}[h]
\includegraphics[scale=.75]{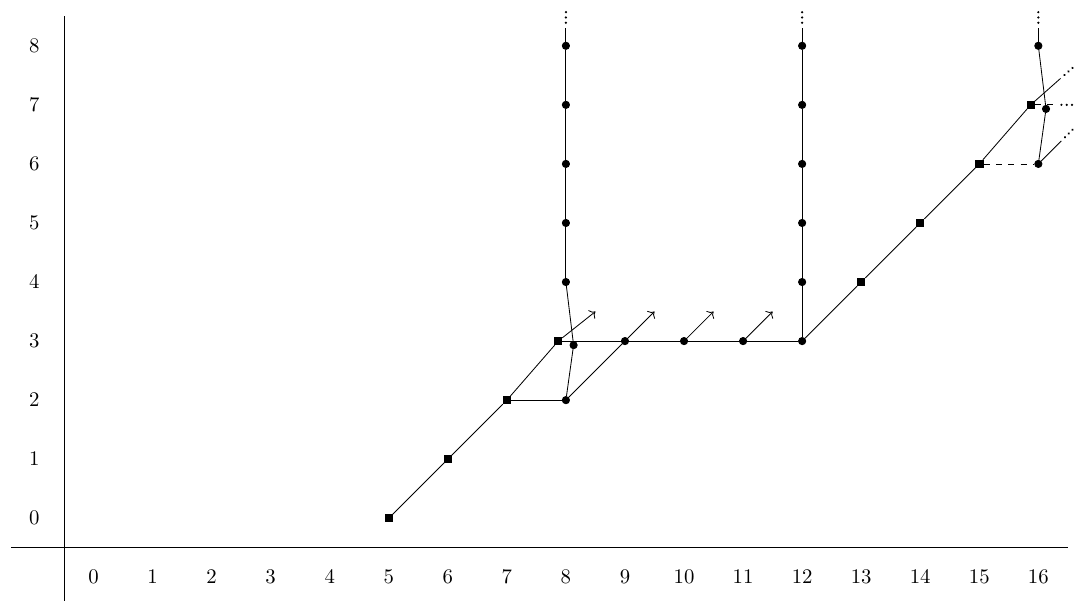}
\caption{The $\text{Ext}^{***}_{\euscr{A}(1)^\vee_\mathbb{R}}(\mathbb{M}_2^\mathbb{R})$-module $\euscr{F}_{12,3,6}$.}
\label{newbigflag}
\end{figure}

\begin{remark}
    The big flag $\euscr{F}_{0,0,0}$ is isomorphic to the coweight $cw \equiv 0 \, (4)$ piece of $\text{Ext}^{***}_{\euscr{A}(1)^\vee_\mathbb{R}}(\mathbb{M}_2^\mathbb{R})$ (see \Cref{R-Ext(M_2) cw=0}).
\end{remark}

\begin{remark}
    The big flag $\euscr{F}_{4n, n, 2n}$ is a submodule of the coweight $cw \equiv 0 \, (4)$ portion of the $\text{E}_2$-page of the $\textbf{mASS}^\mathbb{R}(\text{kq}^{\langle n \rangle})$, where $\text{kq}^{\langle n \rangle}$ denotes the $n^{th}$ Adams cover.
\end{remark}

\subsection{Comparison with $\mathbb{C}$}
\label{comparison with C}
There is a well-studied relationship between $\text{SH}(\mathbb{C})$ and $\text{SH}(\mathbb{R})$ \cite{BelIsa-stem}. In \cite{Hil11}, Hill developed the $\rho$-Bockstein spectral sequence, an algebraic spectral sequence which allows for the computation of $\mathbb{R}$-motivic information from $\mathbb{C}$-motivic information. The observation that $\mathbb{M}_2^\mathbb{C} = \mathbb{M}_2^\mathbb{R}/\rho$ lifts to the dual Steenrod algebra, giving $\euscr{A}^\vee_\mathbb{C} = \euscr{A}^\vee_\mathbb{R}/\rho$. Filtering the cobar complex which computes $\text{Ext}^{***}_{\euscr{A}^\vee_\mathbb{R}}(\mathbb{M}_2^\mathbb{R})$ by powers of $\rho$, we obtain the $\rho$-Bockstein spectral sequence
\[\textup{E}_1 = \text{Ext}^{***}_{\euscr{A}^\vee_\mathbb{C}}(\mathbb{M}_2^\mathbb{C})[\rho] \implies \text{Ext}^{***}_{\euscr{A}^\vee_\mathbb{R}}(\mathbb{M}_2^\mathbb{R}).\]
This spectral sequence converges to the $\textup{E}_2$-page of the $\textbf{mASS}^\mathbb{R}(\mathbb{S})$. One can also filter the cobar complex which computes $\text{Ext}^{***}_{\euscr{A}(1)^\vee_\mathbb{R}}(\mathbb{M}_2^\mathbb{R})$ by powers of $\rho$, giving a $\rho$-Bockstein spectral sequence
\[\textup{E}_1  = \text{Ext}^{***}_{\euscr{A}(1)^\vee_\mathbb{C}}(\mathbb{M}_2^\mathbb{C})[\rho] \implies \text{Ext}^{***}_{\euscr{A}(1)^\vee_\mathbb{R}}(\mathbb{M}_2^\mathbb{R}).\]
This spectral sequence converges to the $\textup{E}_2$-page of the $\textbf{mASS}^\mathbb{R}(\textup{kq})$.

There is also an extension of scalars functor $-\otimes \mathbb{C}:\text{SH}(\mathbb{R}) \to \text{SH}(\mathbb{C})$. For $X \in \text{SH}(\mathbb{R})$, we denote its image under extensions of scalars by $X^{\mathbb{C}}$. This functor has the property $\rho \otimes \mathbb{C} = 0$ (modulo 2) and Eilenberg-Mac Lane spectra to are sent to Eilenberg-Mac Lane spectra. In particular, this gives maps
\[\mathbb{M}_2^\mathbb{R} \to \mathbb{M}_2^\mathbb{C}\cong \mathbb{M}_2^\mathbb{R}/\rho, \quad \euscr{A}^\vee_\mathbb{R} \to \euscr{A}^\vee_\mathbb{C}\cong \euscr{A}^\vee_\mathbb{R} / \rho.\] 
Extension of scalars induces highly structured maps of Adams spectral sequences, which will allow us to lift $\mathbb{C}$-motivic extensions to $\text{SH}(\mathbb{R})$.

In the 2-complete setting, one may lift $\rho$ to the sphere, giving an element $\rho \in \pi_{-1, -1}^\mathbb{R}(\mathbb{S})$. This gives a cofiber sequence
\[\Sigma^{-1, -1}\mathbb{S} \xrightarrow{\rho} \mathbb{S} \to \mathbb{S}/\rho.\]
This cofiber sequence induces a long exact sequence in $\text{Ext}_{\euscr{A}^\vee_\mathbb{R}}$. Since $\rho \otimes \mathbb{C} = 0$, extension of scalars on this long exact sequence gives a split long exact sequence in $\text{Ext}_{\euscr{A}^\vee_\mathbb{C}}$. Moreover, we can identify $\textup{H}_{**}(\mathbb{S}/\rho) \cong \mathbb{M}_2^\mathbb{R}/\rho$ since $\mathbb{M}_2^\mathbb{R}$ is $\rho$-torsion free. This gives us the following diagram.
\[\begin{tikzcd}
	\cdots & {\text{Ext}^{***}_{\euscr{A}^\vee_{\mathbb{R}}}(\mathbb{M}_2^\mathbb{R})} & {\text{Ext}^{***}_{\euscr{A}^\vee_\mathbb{R}}(\mathbb{M}_2^\mathbb{R})} & {\text{Ext}^{***}_{\euscr{A}^\vee_\mathbb{R}}(\mathbb{M}_2^\mathbb{R}/\rho)} & \cdots \\
	\cdots & {\text{Ext}^{***}_{\euscr{A}^\vee_\mathbb{C}}(\mathbb{M}_2^\mathbb{C})} & {\text{Ext}^{***}_{\euscr{A}^\vee_\mathbb{C}}(\mathbb{M}_2^\mathbb{C})} & {\text{Ext}^{***}_{\euscr{A}^\vee_\mathbb{C}}(\mathbb{M}_2^\mathbb{C}) \oplus \text{Ext}^{***}_{\euscr{A}^\vee_\mathbb{C}}(\mathbb{M}_2^\mathbb{C})} & \cdots
	\arrow[from=1-1, to=1-2]
	\arrow["\rho", from=1-2, to=1-3]
	\arrow[from=1-2, to=2-2]
	\arrow[from=1-3, to=1-4]
	\arrow[from=1-3, to=2-3]
	\arrow[from=1-4, to=1-5]
	\arrow[from=1-4, to=2-4]
	\arrow[from=2-1, to=2-2]
	\arrow["0", from=2-2, to=2-3]
	\arrow[from=2-3, to=2-4]
	\arrow[from=2-4, to=2-5]
\end{tikzcd}\]

The splitting on the bottom row gives a lift $\text{Ext}_{\euscr{A}^\vee_\mathbb{R}}(\mathbb{M}_2^\mathbb{R}/\rho) \to\text{Ext}_{\euscr{A}^\vee_\mathbb{C}}(\mathbb{M}_2^\mathbb{C})$. In fact, this map is an isomorphism due to the following.

\begin{proposition}
\label{rho-torsion free base change prop homology}
    Let $X \in \textup{SH}(\mathbb{R})$ be any spectrum such that $\textup{H}_{**}(X)$ is $\rho$-torsion free. Then there is an isomorphism
    \[\textup{Ext}^{***}_{\euscr{A}^\vee_\mathbb{R}}(\textup{H}_{**}(X/\rho)) \xrightarrow{\cong} \textup{Ext}^{***}_{\euscr{A}^\vee_\mathbb{C}}(\textup{H}_{**}(X^ \mathbb{C})).\]
\end{proposition}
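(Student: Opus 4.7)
The strategy is to reduce both Ext groups to the same cobar complex. First, I would smash the cofiber sequence $\mathbb{S}^{-1,-1} \xrightarrow{\rho} \mathbb{S} \to \mathbb{S}/\rho$ with $X$ and apply $\textup{H}_{**}(-)$ to produce a long exact sequence of $\euscr{A}^\vee_\mathbb{R}$-comodules. Under the hypothesis that $\textup{H}_{**}(X)$ is $\rho$-torsion free, $\rho$ acts injectively on $\textup{H}_{**}(X)$, so the long exact sequence splits into short exact sequences
\[0 \to \Sigma^{-1,-1}\textup{H}_{**}(X) \xrightarrow{\rho} \textup{H}_{**}(X) \to \textup{H}_{**}(X/\rho) \to 0,\]
yielding an isomorphism $\textup{H}_{**}(X/\rho) \cong \textup{H}_{**}(X)/\rho$ of $\euscr{A}^\vee_\mathbb{R}$-comodules. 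In particular $\rho$ acts as zero on $\textup{H}_{**}(X/\rho)$, so its $\euscr{A}^\vee_\mathbb{R}$-coaction factors through $\euscr{A}^\vee_\mathbb{R}/\rho = \euscr{A}^\vee_\mathbb{C}$, as observed in the excerpt.

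Next, I would identify $\textup{H}_{**}(X^\mathbb{C})$. Because the base change functor $-\otimes\mathbb{C}\colon \textup{SH}(\mathbb{R}) \to \textup{SH}(\mathbb{C})$ sends $\textup{H}\mathbb{F}_2$ to $\textup{H}\mathbb{F}_2$, the standard base change formula for motivic homology gives an $\euscr{A}^\vee_\mathbb{C}$-comodule isomorphism
\[\textup{H}_{**}(X^\mathbb{C}) \cong \textup{H}_{**}(X) \otimes_{\mathbb{M}_2^\mathbb{R}} \mathbb{M}_2^\mathbb{C} = \textup{H}_{**}(X)/\rho.\]
Combining with the previous step, the two coefficient modules $\textup{H}_{**}(X/\rho)$ and $\textup{H}_{**}(X^\mathbb{C})$ are canonically isomorphic as $\euscr{A}^\vee_\mathbb{C}$-comodules, and the problem reduces to showing that for any such comodule $N$ (viewed as an $\euscr{A}^\vee_\mathbb{R}$-comodule via the quotient $\euscr{A}^\vee_\mathbb{R} \twoheadrightarrow \euscr{A}^\vee_\mathbb{C}$),
\[\textup{Ext}_{\euscr{A}^\vee_\mathbb{R}}(\mathbb{M}_2^\mathbb{R}, N) \cong \textup{Ext}_{\euscr{A}^\vee_\mathbb{C}}(\mathbb{M}_2^\mathbb{C}, N).\]

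Finally, I would prove this last isomorphism by direct inspection of the cobar complex. The complex computing the left-hand side has terms of the form $N \otimes_{\mathbb{M}_2^\mathbb{R}} \overline{\euscr{A}}_\mathbb{R}^{\vee \otimes_{\mathbb{M}_2^\mathbb{R}} n}$. Since $N$ is killed by $\rho$, it is naturally a $\mathbb{M}_2^\mathbb{C}$-module, so each tensor product over $\mathbb{M}_2^\mathbb{R}$ quotients the adjacent copy of $\overline{\euscr{A}}_\mathbb{R}^\vee$ by $\rho$, producing $\overline{\euscr{A}}_\mathbb{C}^\vee$, and converts the remaining $\otimes_{\mathbb{M}_2^\mathbb{R}}$ to $\otimes_{\mathbb{M}_2^\mathbb{C}}$. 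The cobar differential is given by the coproduct on $\euscr{A}^\vee_\mathbb{R}$; since the coproduct descends compatibly to $\euscr{A}^\vee_\mathbb{C}$, the differentials match. The result is exactly the cobar complex for $\textup{Ext}_{\euscr{A}^\vee_\mathbb{C}}(\mathbb{M}_2^\mathbb{C}, N)$, giving the desired isomorphism.

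The main obstacle is the final step: because $(\mathbb{M}_2^\mathbb{R}, \euscr{A}^\vee_\mathbb{R})$ is a genuine Hopf algebroid with $\eta_L \ne \eta_R$, one must carefully track the bimodule structure on $\overline{\euscr{A}}_\mathbb{R}^\vee$ when performing the tensor reductions. Fortunately, the discrepancy between $\eta_L$ and $\eta_R$ is encoded by $\eta_R(\tau) - \eta_L(\tau) = \rho \overline{\tau}_0$, which vanishes modulo $\rho$; this is precisely what makes the reduction to $\euscr{A}^\vee_\mathbb{C}$, which is an honest Hopf algebra, compatible with the bimodule structures on both sides. Alternatively, one could package the same calculation as collapse of a $\rho$-Bockstein spectral sequence whose input $\textup{Ext}_{\euscr{A}^\vee_\mathbb{C}}(N)[\rho]$ is concentrated in $\rho$-degree zero on the target module, forcing all Bockstein differentials to vanish.
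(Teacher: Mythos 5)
Your proposal is correct and follows essentially the same route as the paper: use the long exact sequence and $\rho$-torsion-freeness to identify $\textup{H}_{**}(X/\rho) \cong \textup{H}_{**}(X)/\rho \cong \textup{H}_{**}(X^{\mathbb{C}})$, then observe that the cobar complex over $\euscr{A}^\vee_\mathbb{R}$ with these $\rho$-killed coefficients reduces termwise modulo $\rho$ to the cobar complex over $\euscr{A}^\vee_\mathbb{C}$. Your explicit check that the Hopf algebroid discrepancy $\eta_R(\tau)-\eta_L(\tau)=\rho\overline{\tau}_0$ vanishes modulo $\rho$ is a detail the paper leaves implicit, but the argument is the same.
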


\begin{proof}
    First, since $\textup{H}_{**}(X)$ is $\rho$-torsion free, we know that $\textup{H}_{**}(X/\rho) \cong \textup{H}_{**}(X)/\rho$, where $X/\rho \simeq X \otimes \mathbb{S}/\rho$. This follows from the long exact sequence in homology associated to the cofiber sequence
    \[X \xrightarrow{\rho} X \to X/\rho.\]
    Note that in this case, we have an isomorphism $\textup{H}_{**}(X)/\rho \cong \textup{H}_{**}(X^\mathbb{C})$. The result now follows from an isomorphism of cobar complexes 
    \[C_{\euscr{A}^\vee_\mathbb{R}}(\textup{H}_{**}(X/\rho))\cong C_{\euscr{A}_\mathbb{R}^\vee}(\textup{H}_{**}(X))/\rho \cong C_{\euscr{A}_\mathbb{C}^\vee}(\textup{H}_{**}(X^\mathbb{C})).\]
    The left isomorphism holds because the cobar complex $C_{\euscr{A}_\mathbb{R}^\vee}(\textup{H}_{**}(X/\rho))$ is entirely $\rho$-torsion free. The right isomorphism holds since $\textup{H}_{**}(X)/\rho \cong \textup{H}_{**}(X^\mathbb{C})$ and $\mathbb{M}_2^\mathbb{R}/\rho \cong \mathbb{M}_2^\mathbb{C}$.
\end{proof}
One immediate consequence of \cref{rho-torsion free base change prop homology} is that $\text{Ext}^{***}_{\euscr{A}^\vee_\mathbb{C}}(\mathbb{M}_2^\mathbb{C})$ is a module over $\text{Ext}^{***}_{\euscr{A}^\vee_\mathbb{R}}(\mathbb{M}_2)$. This module structure is simple to describe. By the $\rho$-Bockstein, we can represent classes in $\text{Ext}^{***}_{\euscr{A}^\vee_\mathbb{R}}(\mathbb{M}_2^\mathbb{R})$ in the form $\rho^kx$  for some $x \in \text{Ext}^{***}_{\euscr{A}^\vee_\mathbb{C}}(\mathbb{M}_2^\mathbb{C})$. If $y \in \text{Ext}^{***}_{\euscr{A}^\vee_\mathbb{C}}(\mathbb{M}_2^\mathbb{C})$, then $\rho^kx \cdot y$ is only nonzero when $k=0$, and in this case we have $x\cdot y=xy$.
\begin{corollary}
\label{rho torison free base change homotopy groups corollary}
    Let $X \in \textup{SH}(\mathbb{R})$ be any spectrum such $\textup{H}_{**}(X)$ is $\rho$-torsion free. Then there is an isomorphism
    \[\pi_{**}^\mathbb{R}(X/\rho) \cong \pi_{**}^\mathbb{C}(X^\mathbb{C}).\]
\end{corollary}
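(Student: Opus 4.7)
The plan is to apply the motivic Adams spectral sequence to both sides and leverage \cref{rho-torsion free base change prop homology} to identify the two computations. In the $2$-complete setting in which we work, both $\pi_{**}^\mathbb{R}(X/\rho)$ and $\pi_{**}^\mathbb{C}(X^\mathbb{C})$ are the strong abutments of their respective motivic Adams spectral sequences:
\[
E_2^{s,f,w} = \textup{Ext}^{s,f,w}_{\euscr{A}_\mathbb{R}^\vee}(\textup{H}_{**}(X/\rho)) \Longrightarrow \pi_{s,w}^\mathbb{R}(X/\rho), \qquad
E_2^{s,f,w} = \textup{Ext}^{s,f,w}_{\euscr{A}_\mathbb{C}^\vee}(\textup{H}_{**}(X^\mathbb{C})) \Longrightarrow \pi_{s,w}^\mathbb{C}(X^\mathbb{C}).
\]
The key input from \cref{rho-torsion free base change prop homology} is not merely an isomorphism of $E_2$-pages, but of the underlying cobar complexes $C_{\euscr{A}^\vee_\mathbb{R}}(\textup{H}_{**}(X/\rho)) \cong C_{\euscr{A}_\mathbb{C}^\vee}(\textup{H}_{**}(X^\mathbb{C}))$, which upgrades to an isomorphism of full Adams resolutions, and hence an isomorphism of the two spectral sequences as filtered objects.

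To promote this into an actual isomorphism of homotopy groups, I would construct a natural comparison map using base change. Since extension of scalars $-\otimes \mathbb{C}$ kills $\rho$, it sends the cofiber sequence $\Sigma^{-1,-1}X \xrightarrow{\rho} X \to X/\rho$ to the cofiber of the zero map, producing a splitting $(X/\rho)^\mathbb{C} \simeq X^\mathbb{C} \vee \Sigma^{0,-1} X^\mathbb{C}$. The composition of base change with projection onto the $X^\mathbb{C}$-summand gives a natural map
\[
\pi_{**}^\mathbb{R}(X/\rho) \longrightarrow \pi_{**}^\mathbb{C}((X/\rho)^\mathbb{C}) \longrightarrow \pi_{**}^\mathbb{C}(X^\mathbb{C}).
\]
This map is compatible with the motivic Adams filtrations on both sides, and on $E_2$-pages it induces precisely the isomorphism from \cref{rho-torsion free base change prop homology} (since base change, at the level of the cobar complex, is exactly the quotient by $\rho$ that appears in that proof). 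Strong convergence of both spectral sequences then forces the induced map on abutments to be an isomorphism.

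The main obstacle is verifying that the base-change-plus-projection map actually realizes the $E_2$-isomorphism from the proposition, rather than some other isomorphism obtained up to a suspension shift; this is where the choice of splitting of $(X/\rho)^\mathbb{C}$ has to be tracked carefully. Everything else is formal: the cobar-complex isomorphism of the previous proposition makes the identification of differentials, filtrations, and extension problems automatic, so no further analysis of $\mathbf{mASS}$ differentials is required.
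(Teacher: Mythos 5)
Your proof is correct and follows essentially the same route as the paper, whose entire argument is the observation that the isomorphism of \cref{rho-torsion free base change prop homology} is an isomorphism of $E_2$-pages of the two motivic Adams spectral sequences, together with convergence. Your extra step of realizing that isomorphism by the composite of base change with the projection off the split summand $\Sigma^{0,-1}X^{\mathbb{C}}$ of $(X/\rho)^{\mathbb{C}}$ supplies exactly the compatibility with differentials, filtrations, and extension problems that the paper leaves implicit (and the naturality square for $X \to X/\rho$ under base change shows this composite does induce the proposition's isomorphism on $E_2$, settling the one concern you flag), so the only quibble is that your opening claim that the cobar-complex isomorphism alone ``upgrades to an isomorphism of full Adams resolutions'' has the logic backwards --- it is the spectrum-level comparison map you build afterwards that controls the higher pages.
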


\begin{proof}
    The map from \cref{rho-torsion free base change prop homology} gives an isomorphism of $\textup{E}_2$-pages of Adams spectral sequences.
\end{proof}

Note that $\textup{kq}^\mathbb{C}$ is the spectrum in $\text{SH}(\mathbb{C})$ representing very effective hermitian K-theory over $\mathbb{C}$, which we will abusively also denote by kq. Since $\textup{H}_{**}(\textup{kq}) \cong (\euscr{A}\modmod\euscr{A}(1))_\mathbb{R}^\vee$ is $\rho$-torsion free, it follows from \cref{rho torison free base change homotopy groups corollary} that $\pi_{**}^\mathbb{R}(\textup{kq}/\rho) \cong \pi_{**}^\mathbb{C}(\textup{kq})$. The K\"unneth spectral sequence for $\textup{H}_{**}(\textup{kq} \otimes \textup{kq})$ collapses by \cref{Kunneth}, so we also have 
\[\pi_{**}^\R((\textup{kq} \otimes \textup{kq})/\rho) \cong \pi_{**}^\mathbb{C}(\textup{kq} \otimes \textup{kq}).\] 
Combining \cref{rho torison free base change homotopy groups corollary} and \cref{kq bar homology}, we can generalize this idea.

\begin{proposition}
    The $\rho$-periodic $\textup{E}_1$-page of the real $\textup{kq}$-resolution is isomorphic to the $\textup{E}_1$-page of the complex $\textup{kq}$-resolution.
\end{proposition}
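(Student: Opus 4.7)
The plan is to apply Corollary \ref{rho torison free base change homotopy groups corollary} levelwise to the terms of the Adams tower defining the real $\textup{kq}$-resolution. In filtration $f$, the $E_1$-page of the real $\textup{kq}$-resolution is $\pi^\mathbb{R}_{s+f,w}(\textup{kq} \otimes \overline{\textup{kq}}^{\otimes f})$, and the goal is to identify its $\rho$-periodic part (i.e.\ the quotient by $\rho$-torsion, equivalently the mod-$\rho$ reduction once $\rho$-torsion-freeness is known) with the corresponding filtration-$f$ term of the complex $\textup{kq}$-resolution.

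The key input is to verify that $\textup{H}_{**}(\textup{kq} \otimes \overline{\textup{kq}}^{\otimes f})$ is $\rho$-torsion free for every $f \geq 0$. By \cref{kq homology}, $\textup{H}_{**}(\textup{kq}) \cong (\euscr{A} \modmod \euscr{A}(1))^\vee_\mathbb{R}$ is free over $\mathbb{M}_2^\mathbb{R}$, hence $\rho$-torsion free. By \cref{kq bar homology}, $\textup{H}_{**}(\overline{\textup{kq}})$ is a direct sum of motivic integral Brown--Gitler comodules $B_0^\mathbb{R}(k)$, each an $\mathbb{M}_2^\mathbb{R}$-submodule of the free algebra $(\euscr{A} \modmod \euscr{A}(0))^\vee_\mathbb{R}$; in particular each $B_0^\mathbb{R}(k)$ is $\rho$-torsion free. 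A K\"unneth argument analogous to \cref{Kunneth}---which goes through because both factors are free over $\mathbb{M}_2^\mathbb{R}$, so the higher Tor groups vanish and the spectral sequence collapses---then yields
\[
\textup{H}_{**}(\textup{kq} \otimes \overline{\textup{kq}}^{\otimes f}) \cong \textup{H}_{**}(\textup{kq}) \otimes_{\mathbb{M}_2^\mathbb{R}} \textup{H}_{**}(\overline{\textup{kq}})^{\otimes f},
\]
which is a tensor product of $\rho$-torsion free $\mathbb{M}_2^\mathbb{R}$-modules and is therefore itself $\rho$-torsion free.

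With this hypothesis in hand, I apply \cref{rho torison free base change homotopy groups corollary} with $X = \textup{kq} \otimes \overline{\textup{kq}}^{\otimes f}$ to obtain
\[
\pi_{**}^\mathbb{R}\bigl((\textup{kq} \otimes \overline{\textup{kq}}^{\otimes f})/\rho\bigr) \cong \pi_{**}^\mathbb{C}(\textup{kq} \otimes \overline{\textup{kq}}^{\otimes f}),
\]
using that $-\otimes \mathbb{C}$ is symmetric monoidal and sends $\textup{kq}$ to its $\mathbb{C}$-motivic counterpart. The right-hand side is precisely the filtration-$f$ term of the $E_1$-page of the complex $\textup{kq}$-resolution. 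I do not anticipate a real obstacle: the main content has been set up in \cref{rho-torsion free base change prop homology} and its corollary, and the remaining work is the $\rho$-torsion-freeness verification above. The only thing worth noting is that these isomorphisms assemble compatibly across filtration, which follows from the naturality of base change and of the cobar-complex identification in \cref{rho-torsion free base change prop homology}, so no additional argument is needed.
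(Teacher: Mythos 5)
Your proof is correct and is exactly the argument the paper intends: the paper gives no explicit proof beyond ``combining'' \cref{rho torison free base change homotopy groups corollary} with \cref{kq bar homology}, and your verification that $\textup{H}_{**}(\textup{kq} \otimes \overline{\textup{kq}}^{\otimes f})$ is $\rho$-torsion free via the collapsing K\"unneth spectral sequence, followed by a levelwise application of the corollary, fills in those details in the intended way. (Only your parenthetical gloss of ``$\rho$-periodic'' as ``quotient by $\rho$-torsion'' is imprecise---what is meant, and what you actually compute, is $\pi_{**}^{\mathbb{R}}$ of the cofiber $(-)/\rho$---but this does not affect the argument.)
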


We can also prove analogous statements using the Brown--Gitler comodules. 
\begin{proposition}
\label{bg-comod R/rho = C}
    There are isomorphisms for all $k \geq 0$:
    \[\textup{Ext}^{***}_{\euscr{A}(1)^\vee_{\mathbb{R}}}(B_0^{\mathbb{R}}(k)^{\otimes i}/\rho) \cong \textup{Ext}^{***}_{\euscr{A}(1)^\vee_{\mathbb{C}}}(B_0^{\mathbb{C}}(k)^{\otimes i}).\]
\end{proposition}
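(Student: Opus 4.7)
The plan is to reduce this to a direct generalization of the argument in \cref{rho-torsion free base change prop homology}. The essential ingredients are (1) $\rho$-torsion freeness of $B_0^\mathbb{R}(k)^{\otimes i}$, and (2) a compatible identification of the mod $\rho$ reduction with the $\mathbb{C}$-motivic analogue.

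First I would verify that $B_0^\mathbb{R}(k)$ is $\rho$-torsion free as an $\mathbb{M}_2^\mathbb{R}$-module. By definition, $B_0^\mathbb{R}(k)$ is a sub-$\euscr{A}(1)_\mathbb{R}^\vee$-comodule of $(\euscr{A} \modmod \euscr{A}(0))_\mathbb{R}^\vee$ (see \cref{preserves weight}), and the latter is a polynomial $\mathbb{M}_2^\mathbb{R}$-algebra on the $\bar{\xi}_i$ and $\bar{\tau}_j$ (with $j \geq 1$), hence free over $\mathbb{M}_2^\mathbb{R}$. As a submodule of a free module over the $\rho$-torsion free ring $\mathbb{M}_2^\mathbb{R}$, the comodule $B_0^\mathbb{R}(k)$ is $\rho$-torsion free. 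Since tensoring over $\mathbb{M}_2^\mathbb{R}$ preserves this property, so is $B_0^\mathbb{R}(k)^{\otimes i}$.

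Next, I would identify the reduction $B_0^\mathbb{R}(k)^{\otimes i}/\rho$ with $B_0^\mathbb{C}(k)^{\otimes i}$ as $\euscr{A}(1)^\vee_\mathbb{C}$-comodules. The isomorphism $\mathbb{M}_2^\mathbb{R}/\rho \cong \mathbb{M}_2^\mathbb{C}$ lifts to an isomorphism of Hopf algebras $\euscr{A}(1)^\vee_\mathbb{R}/\rho \cong \euscr{A}(1)^\vee_\mathbb{C}$ and of comodule algebras $(\euscr{A} \modmod \euscr{A}(0))_\mathbb{R}^\vee/\rho \cong (\euscr{A} \modmod \euscr{A}(0))_\mathbb{C}^\vee$. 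The Mahowald weight filtration is preserved by this isomorphism because it only sees the generators $\bar{\xi}_i, \bar{\tau}_j$ and $wt(\rho) = 0$. Thus $B_0^\mathbb{R}(k)/\rho \cong B_0^\mathbb{C}(k)$, and $\rho$-torsion freeness gives $B_0^\mathbb{R}(k)^{\otimes i}/\rho \cong B_0^\mathbb{C}(k)^{\otimes i}$ compatibly with the comodule structures.

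With these identifications, the argument runs as in \cref{rho-torsion free base change prop homology}. The cofiber sequence $B_0^\mathbb{R}(k)^{\otimes i} \xrightarrow{\rho} B_0^\mathbb{R}(k)^{\otimes i} \to B_0^\mathbb{R}(k)^{\otimes i}/\rho$ together with $\rho$-torsion freeness of the cobar complex $C_{\euscr{A}(1)^\vee_\mathbb{R}}(B_0^\mathbb{R}(k)^{\otimes i})$ yields an isomorphism of cobar complexes
\[
C_{\euscr{A}(1)^\vee_\mathbb{R}}(B_0^\mathbb{R}(k)^{\otimes i}/\rho) \cong C_{\euscr{A}(1)^\vee_\mathbb{R}}(B_0^\mathbb{R}(k)^{\otimes i})/\rho \cong C_{\euscr{A}(1)^\vee_\mathbb{C}}(B_0^\mathbb{C}(k)^{\otimes i}),
\]
where the second isomorphism uses the identification of the reduced comodules and the fact that the cobar differential is defined using the coaction. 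Taking cohomology gives the claimed isomorphism. The only subtlety worth double-checking is the last isomorphism at the level of cobar complexes, which ultimately follows from the fact that $\euscr{A}(1)^\vee_\mathbb{R} \otimes_{\mathbb{M}_2^\mathbb{R}} \mathbb{M}_2^\mathbb{C} \cong \euscr{A}(1)^\vee_\mathbb{C}$ together with the analogous statement for the tensor powers of $B_0^\mathbb{R}(k)$.
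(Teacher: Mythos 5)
Your argument is correct and follows essentially the same route the paper takes (the paper only sketches it): $\rho$-torsion freeness of the Brown--Gitler comodules, the identification of the mod-$\rho$ reduction with the $\mathbb{C}$-motivic comodule via the weight filtration, and the cobar-complex reduction of \cref{rho-torsion free base change prop homology}. One small refinement: tensoring over $\mathbb{M}_2^\mathbb{R}$ does not preserve torsion-freeness in general, but here $B_0^\mathbb{R}(k)$ is in fact free on its monomial basis, so its tensor powers are free and hence $\rho$-torsion free.
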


\begin{proof}
    First, note that we have a diagram in Ext coming from the long exact sequences induced by the cofiber sequence
    \[\textup{kq} \xrightarrow{\rho} \textup{kq} \to \textup{kq}/\rho\]
    and extension of scalars, combined with the usual change of rings isomorphism:
    \[\begin{tikzcd}
	\cdots & {\text{Ext}^{***}_{\euscr{A}(1)^\vee_{\mathbb{R}}}(\mathbb{M}_2^\mathbb{R})} & {\text{Ext}^{***}_{\euscr{A}(1)^\vee_\mathbb{R}}(\mathbb{M}_2^\mathbb{R})} & {\text{Ext}^{***}_{\euscr{A}(1)^\vee_\mathbb{R}}(\mathbb{M}_2^\mathbb{R}/\rho)} & \cdots \\
	\cdots & {\text{Ext}^{***}_{\euscr{A}(1)^\vee_\mathbb{C}}(\mathbb{M}_2^\mathbb{C})} & {\text{Ext}^{***}_{\euscr{A}(1)^\vee_\mathbb{C}}(\mathbb{M}_2^\mathbb{C})} & {\text{Ext}^{***}_{\euscr{A}(1)^\vee_\mathbb{C}}(\mathbb{M}_2^\mathbb{C}) \oplus \text{Ext}^{***}_{\euscr{A}(1)^\vee_\mathbb{C}}(\mathbb{M}_2^\mathbb{C})} & \cdots
	\arrow[from=1-1, to=1-2]
	\arrow["\rho", from=1-2, to=1-3]
	\arrow[from=1-2, to=2-2]
	\arrow[from=1-3, to=1-4]
	\arrow[from=1-3, to=2-3]
	\arrow[from=1-4, to=1-5]
	\arrow[from=1-4, to=2-4]
	\arrow[from=2-1, to=2-2]
	\arrow["0", from=2-2, to=2-3]
	\arrow[from=2-3, to=2-4]
	\arrow[from=2-4, to=2-5]
    \end{tikzcd}\]
    This gives an isomorphism in Ext by the same argument as in \Cref{rho-torsion free base change prop homology}:
    \[\text{Ext}^{***}_{\euscr{A}(1)^\vee_\mathbb{R}}(\mathbb{M}_2^\mathbb{R}/\rho) \cong \text{Ext}^{***}_{\euscr{A}(1)^\vee_{\mathbb{C}}}(\mathbb{M}_2^\mathbb{C}).\]
    By construction, the Brown--Gitler comodules $B_0^\mathbb{R}(k)$ are $\rho$-torsion free since they are subcomodules of the dual Steenrod algebra. This extends naturally to tensor factors. The analogous diagram of long exact sequences in Ext gives the desired isomorphism.
\end{proof}

These isomorphisms will be helpful for determining hidden extensions in the $\textbf{aAHSS}(B_0^\mathbb{R}(k))$ as they allow us to compare with the computations of \cite{CQ21} recalled in \cref{section 3}.

\begin{remark}
    If $F$ is a field with a real embedding, then there is a $C_2$-equivariant Betti realization functor
    \[\text{Be}^{C_2}:\text{SH}(F) \to \text{Sp}^{C_2},\]
    obtained from the Betti realization functor of \cref{betti remark r vs c} by remembering the $C_2$-action given by complex conjugation. This functor has been well studied, see for example \cite{HelOrm-Galois}, and exhibits a close relationship between $\mathbb{R}$-motivic and $C_2$-equivariant homotopy theory. It was shown by \cite{Kon23} that
    \[\text{Be}^{C_2}(\textup{kq})_{2,\eta}^\wedge \simeq (\text{ko}_{C_2})_{2,\eta}^\wedge.\]
    Moreover, there is a decomposition of the homology of a point as $\mathbb{M}_2^{C_2} \cong \mathbb{M}_2^\mathbb{R} \oplus NC$, where $NC$ is the so-called ``negative cone". This leads to a splitting of the $C_2$-equivariant dual Steenrod algebra as $\euscr{A}^\vee_{C_2} \cong \euscr{A}_\mathbb{R}^\vee \otimes _{\mathbb{M}_2^\mathbb{R}} \mathbb{M}_2^{C_2}$, and hence an isomorphism of Ext groups:
    \[\text{Ext}^{***}_{\euscr{A}^\vee_{C_2}} \cong \text{Ext}^{***}_{\euscr{A}_\mathbb{R}^\vee} \oplus \text{Ext}^{***}_{NC}.\]
    This splitting implies that the $\textup{E}_2$-page of the $C_2$-equivariant Adams spectral sequence computing the ring of cooperations $\pi_\star^{C_2}(\text{ko}_{C_2} \otimes \text{ko}_{C_2})$ contains the $\textup{E}_2$-page of the $\textbf{mASS}(\textup{kq} \otimes \textup{kq})$ as a summand. This will be investigated further in future work with Petersen and Tatum.
\end{remark}

\section{The algebraic Atiyah--Hirzebruch spectral sequence}
\label{R-aAHSS section}
In this section, we  compute the groups $\text{Ext}^{***}_{\euscr{A}(1)^\vee_\mathbb{R}}(B_0^\mathbb{R}(1)^{\otimes i})$ for all $i \geq 0$ by a series of algebraic Atiyah--Hirzebruch spectral sequences. Motivated by the presentation of $\text{Ext}^{***}_{\euscr{A}(1)^\vee_\mathbb{R}}(\mathbb{M}_2^\mathbb{R})$ given in \cite{GHIRkoc2}, we organize these computations by coweight modulo 4. Consequently, our results are best displayed in multiple charts which should be considered simultaneously. Throughout, we use the notation of \cref{section 4}.
\label{section R-aAHSS}
\subsection{$\text{Ext}^{***}_{\euscr{A}(1)^\vee_\mathbb{R}}(B_0^\mathbb{R}(1))$}
\label{subsection:Ext_B0R1}
We first compute $\text{Ext}^{***}_{\euscr{A}(1)^\vee_\mathbb{R}}(B_0^\mathbb{R}(1))$ by the $\textbf{aAHSS}(B_0^\mathbb{R}(1))$. This spectral sequence has signature
\[\textup{E}_1^{s,f,w,a}=\text{Ext}^{s,f,w}_{\euscr{A}(1)^\vee_\mathbb{R}}(\mathbb{M}_2^\mathbb{R}) \otimes_{\mathbb{M}_2^\mathbb{R}} \mathbb{M}_2^\mathbb{R}\{[1], [\overline{\xi}_1], [\overline{\tau}_1]\} \implies \text{Ext}_{\euscr{A}(1)^\vee_\mathbb{R}}^{s,f,w}(B_0^\mathbb{R}(1)),\] 
and the differentials take the form
\[d_1:\text{E}_r^{s,f,w,a} \to \text{E}_r^{s-1, f+1, w, a-r}.\]
The $d_1$ and $d_2$ differentials in this spectral sequence are determined by our discussion in \cref{aAHSS generic}. In particular the $d_1$-differential is only nonzero on Atiyah--Hirzebruch filtration 3, where
\[d_1(\alpha[3]) = h_0\alpha[2],\]
and the $d_2$-differential is only nonzero on Atiyah--Hirzebruch filtration 2, where
\[d_2(\alpha[2]) = h_1\alpha[0].\]
Note that in terms of coweight, both of these differentials are determined by multiplication by an element in $\text{Ext}^{***}_{\euscr{A}(1)^\vee_\mathbb{R}}(\mathbb{M}_2^\mathbb{R})$ of coweight 0. This implies that we can represent the $d_1$ and $d_2$ differentials as maps from one Atiyah--Hirzebruch filtration piece to another in the exact same coweight. In particular, as the coweight $cw \equiv 3 \, (4)$ portion of $\text{Ext}^{***}_{\euscr{A}(1)^\vee_\mathbb{R}}(\mathbb{M}_2^\mathbb{R})$ was shown to be 0 in \Cref{thm:Ext_A1_R}, we can ensure that no differentials land in this vanishing region.

Given the complicated nature of this spectral sequence, we only present charts where the differential is nonzero and only include the relevant Atiyah--Hirzebruch filtrations. For example, since the $d_1$-differential is trivial on Atiyah--Hirzebruch filtration 0, we omit this filtration from the charts for the $\textup{E}_1$-page. 

The $\textup{E}_1$-page is of the $\textbf{aAHSS}(B_0^{\mathbb{R}}(1))$ is depicted in \Cref{R-aAHSS(b_0(1)) E1 coweight 0}, \Cref{R-aAHSS(b_0(1)) E1  coweight 1}, and \Cref{R-aAHSS(b_0(1)) E1 coweight 2}. The $\textup{E}_2$-page of the $\textbf{aAHSS}(B_0^\mathbb{R}(1))$ is depicted in \Cref{R-aAHSS(b_0(1)) E2 coweight 0}, \Cref{R-aahss(b_0(1)) E2 coweight 1}, and  \Cref{R-aAHSS(b_0(1)) E2 coweight 2}.
Differentials are blue, linear with respect to the underlying $\text{Ext}_{\euscr{A}(1)^\vee_\mathbb{R}}(\mathbb{M}_2^\mathbb{R})$-module structure, and preserve motivic weight.

\begin{figure}[h]
    \centering
    \includegraphics[scale=.75]{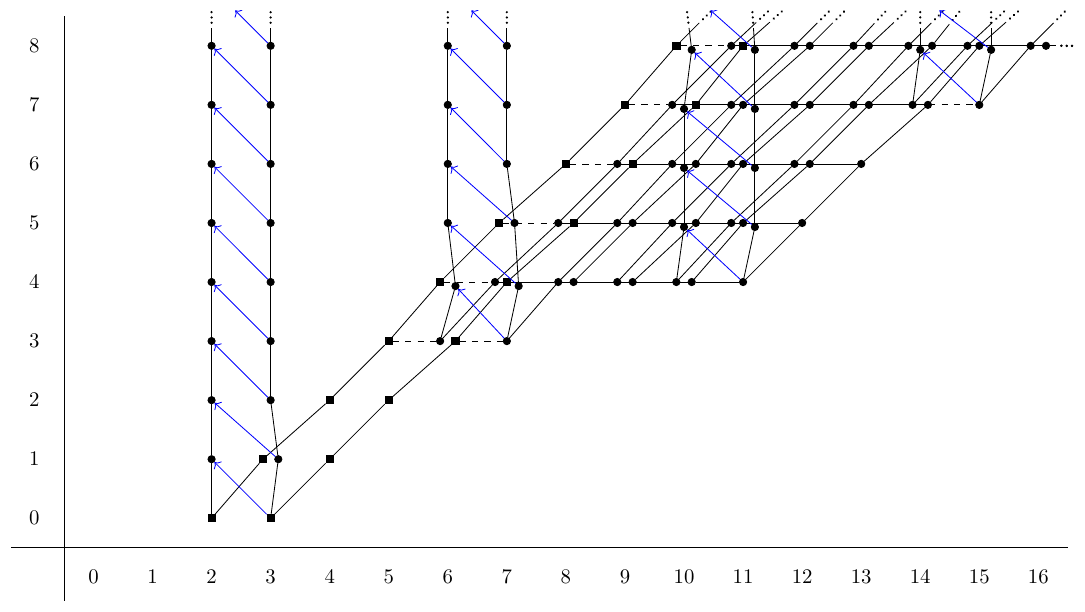}
    \caption{The $\textup{E}_1$-page of the \textbf{aAHSS}($B_0^\mathbb{R}(1)$) with $cw \equiv 0 \, (4)$ in Atiyah--Hirzebruch filtration 2 and 3.}
    \label{R-aAHSS(b_0(1)) E1 coweight 0}
\end{figure}
\begin{figure}[h]
    \centering
    \includegraphics[scale=0.75]{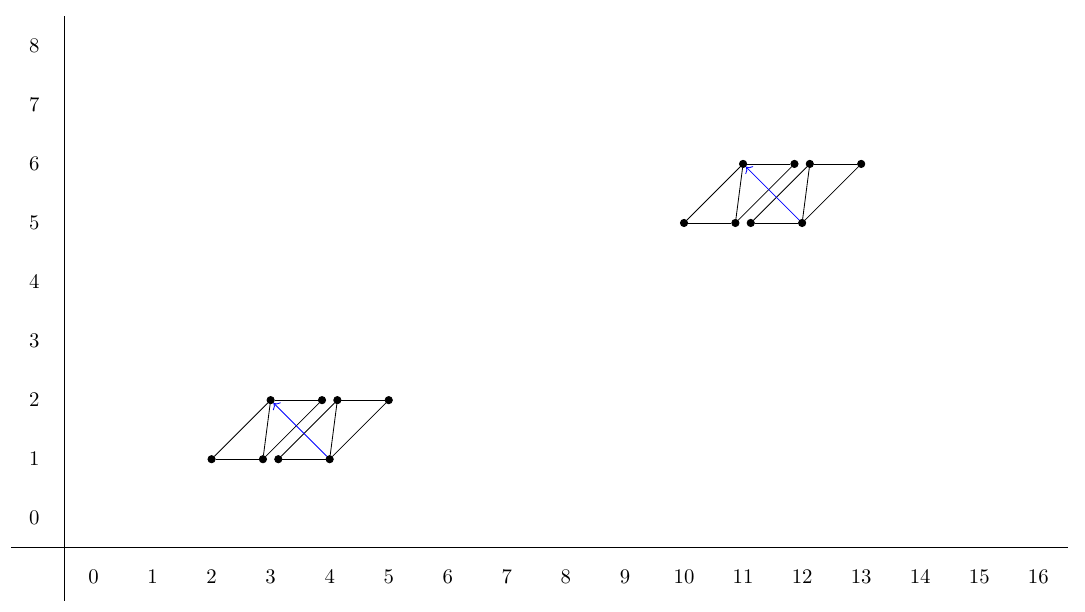}
    \caption{The $\textup{E}_1$-page of the \textbf{aAHSS}($B_0^\mathbb{R}(1)$) with $cw \equiv 1 \, (4)$ in Atiyah--Hirzebruch filtration 2 and 3.}
    \label{R-aAHSS(b_0(1)) E1  coweight 1}
\end{figure}
\begin{figure}[h]
    \centering
    \includegraphics[scale=0.75]{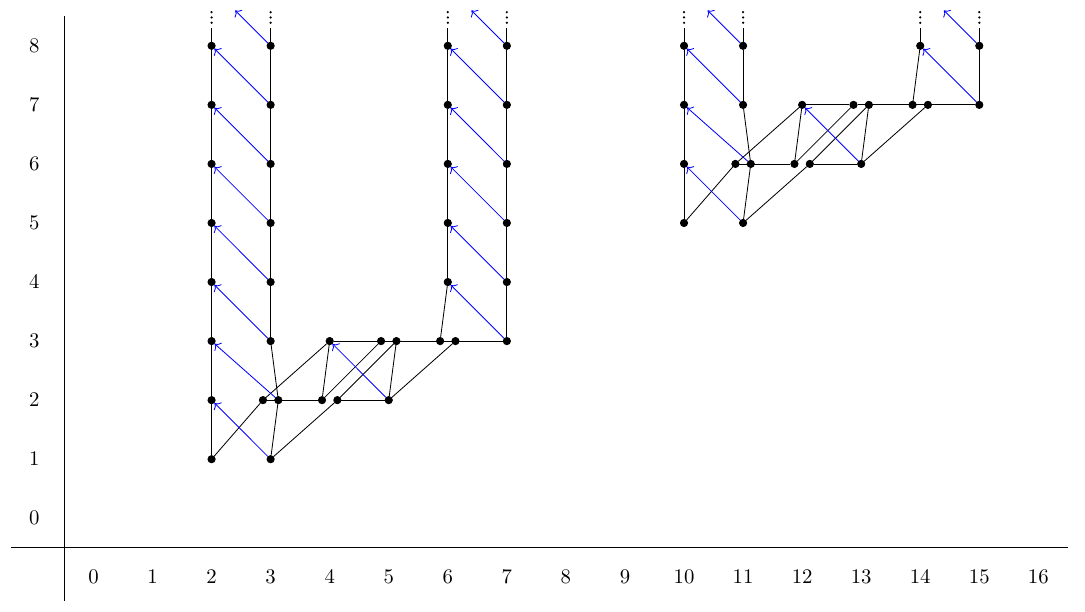}
    \caption{The $\textup{E}_1$-page of the \textbf{aAHSS}($B_0^\mathbb{R}(1)$) with $cw \equiv 2 \, (4)$ in Atiyah--Hirzebruch filtration 2 and 3.}
    \label{R-aAHSS(b_0(1)) E1 coweight 2}
\end{figure}
\begin{figure}[h]
    \centering
    \includegraphics[scale=.75]{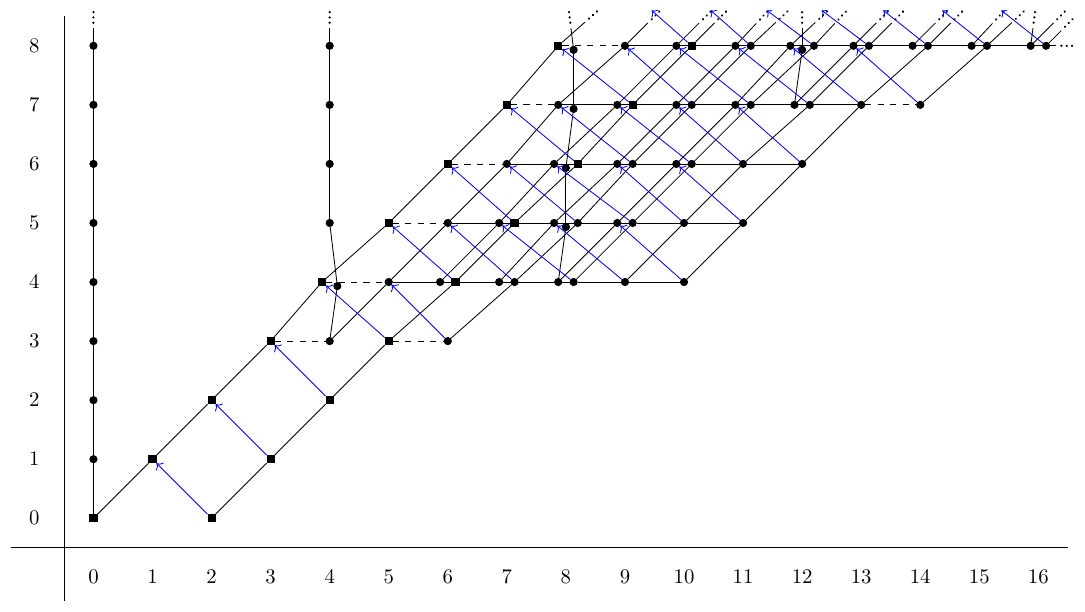}
    \caption{The $\textup{E}_2$-page of the \textbf{aAHSS}($B_0^\mathbb{R}(1)$) with $cw \equiv 0 \, (4)$ in Atiyah--Hirzebruch filtration 0 and 2.}
    \label{R-aAHSS(b_0(1)) E2 coweight 0}
\end{figure}

\begin{figure}[h]
    \centering
    \includegraphics[scale=.75]{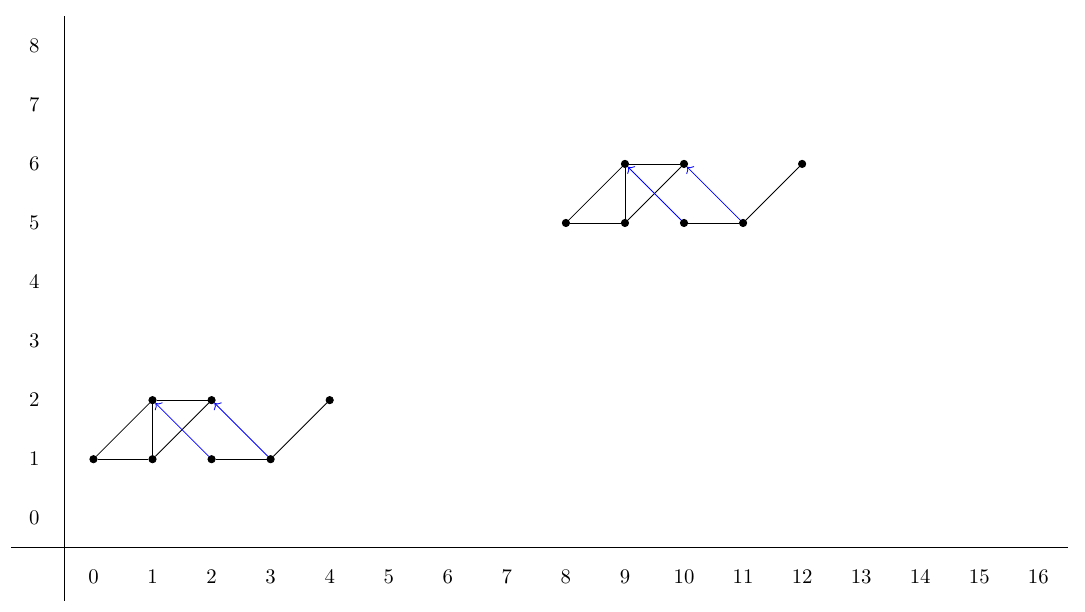}
    \caption{The $\textup{E}_2$-page of the \textbf{aAHSS}($B_0^\mathbb{R}(1)$) with $cw \equiv 1 \, (4)$ in Atiyah--Hirzebruch filtration 0 and 2.}
    \label{R-aahss(b_0(1)) E2 coweight 1}
\end{figure}

\begin{figure}[h]
    \centering
    \includegraphics[scale=.75]{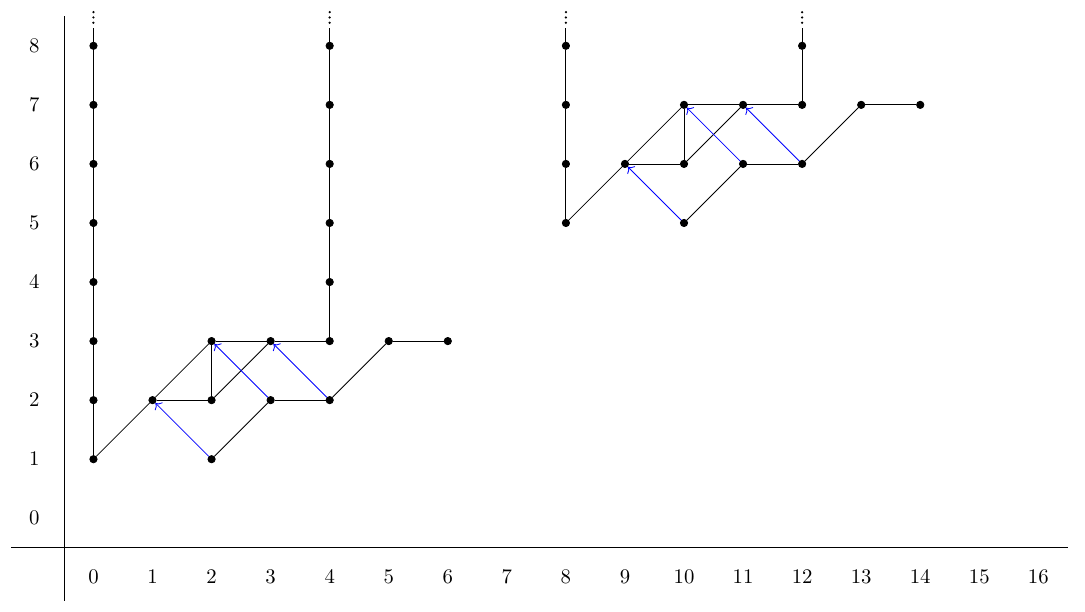}
    \caption{The $\textup{E}_2$-page of the \textbf{aAHSS}($B_0^\mathbb{R}(1)$) with $cw \equiv 2 \, (4)$ in Atiyah--Hirzebruch filtration 0 and 2.}
    \label{R-aAHSS(b_0(1)) E2 coweight 2}
\end{figure}

As we remarked in \cref{2.5}, the $d_3$-differential in this spectral sequence is dependent on the field we are working over. While both classically and $\mathbb{C}$-motivically we saw that the $d_3$-differential, which is only potentially nonzero on Atiyah--Hirzebruch filtration 3, was actually trivial for degree reasons, we see that this is not the case here. By inspection, the only possible nonzero $d_3$ differential is between Atiyah--Hirzebruch filtrations 3 and 0, and will be given by the formula
\[d_3(\alpha[3]) = \langle\alpha, h_0, h_1 \rangle[0].\]

\begin{lemma}
\label{d3 lemma}
    There is a $d_3$-differential in the \textup{\textbf{aAHSS}}\textup{(}$B_0^\mathbb{R}(1)$\textup{)}:
    \[d_3(\rho[3]) = (\tau h_1)[0].\]
\end{lemma}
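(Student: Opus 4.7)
The plan is to produce an explicit lift of the class $\rho[3]$ in the cobar complex for $B_0^{\mathbb{R}}(1)$ and read off the filtration-$0$ piece of its coboundary, equivalently evaluating the Massey product $\langle \rho, h_0, h_1 \rangle$ prescribed by the formula displayed just above the lemma.

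First I would produce null-homotopies for the two relations $\rho h_0 = 0$ and $h_0 h_1 = 0$. The Hopf algebroid identity $\eta_R(\tau) - \eta_L(\tau) = \rho \bar{\tau}_0$ from \cref{dual motivic steenrod} yields $d\tau = \rho[\bar{\tau}_0]$, so $\tau \in C^0(\mathbb{M}_2^{\mathbb{R}})$ is a null-homotopy of $\rho \cdot h_0$. The coproduct formula from \cref{dual motivic steenrod} gives $\bar{\Delta}(\bar{\tau}_1) = \bar{\tau}_0 \otimes \bar{\xi}_1$, so $[\bar{\tau}_1]$ serves as a null-homotopy of $h_0 \cdot h_1$ in the cobar for $\mathbb{M}_2^{\mathbb{R}}$.

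Next I would assemble the lift $c := \rho \bar{\tau}_1 + \tau \bar{\xi}_1 \in C^0(B_0^{\mathbb{R}}(1))$, where the correction $\tau \bar{\xi}_1$ sits in Atiyah--Hirzebruch filtration $2$. A careful computation of the reduced coaction $\bar{\psi}$, in particular using the tensor identity $\eta_R(\tau) \otimes \bar{\xi}_1 = 1 \otimes \eta_L(\tau)\bar{\xi}_1$ to evaluate $\bar{\psi}(\tau \bar{\xi}_1)$, shows that the two filtration-$2$ contributions $\rho[\bar{\tau}_0 \mid \bar{\xi}_1]$ coming from $\bar{\psi}(\rho \bar{\tau}_1)$ and from the ``$\eta_R$-shift'' in $\bar{\psi}(\tau \bar{\xi}_1)$ cancel modulo $2$, leaving $dc = [\tau \bar{\xi}_1 + \rho \bar{\tau}_1 \mid 1] \in C^1$ supported entirely in filtration $0$.

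Finally I would identify the resulting cobar $1$-cocycle $\tau \bar{\xi}_1 + \rho \bar{\tau}_1$ as a representative of $\tau h_1 \in \textup{Ext}^{1,1,0}_{\euscr{A}(1)^\vee_{\mathbb{R}}}(\mathbb{M}_2^{\mathbb{R}})$. Using the relation $\bar{\tau}_0^2 = \rho \bar{\tau}_1 + \rho \bar{\tau}_0 \bar{\xi}_1 + \tau \bar{\xi}_1$ from \cref{dual motivic steenrod}, this representative is equivalent to $\bar{\tau}_0^2 + \rho \bar{\tau}_0 \bar{\xi}_1$; under base change to $\textup{SH}(\mathbb{C})$ (setting $\rho = 0$) it becomes $[\tau \bar{\xi}_1]$, which is exactly $\tau h_1$ in $\textup{Ext}_{\euscr{A}(1)^\vee_{\mathbb{C}}}(\mathbb{M}_2^{\mathbb{C}})$ by \cref{C-Ext A(1)}. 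Combined with an inspection of $\textup{Ext}^{1,1,0}$ depicted in \cref{R-Ext(M_2) cw=1}, where $\tau h_1$ is the natural generator, this pins the class down as $\tau h_1$ over $\mathbb{R}$. The main obstacle is the delicate bookkeeping in the Hopf algebroid cobar: every time a $\tau$-scalar is moved across a tensor, the identity $\eta_R(\tau) - \eta_L(\tau) = \rho \bar{\tau}_0$ injects a nontrivial $\rho$-correction, and these corrections must be tracked carefully to produce the needed filtration-$2$ cancellation and isolate the filtration-$0$ representative.
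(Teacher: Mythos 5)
Your proof is correct, and it follows the same conceptual route as the paper: the $d_3$ on the top cell is the Massey product $\langle \rho, h_0, h_1\rangle$, evaluated by choosing null-homotopies of $\rho h_0$ and $h_0 h_1$ and reading off the filtration-$0$ component of the coboundary of the resulting lift. The only difference is that the paper outsources the evaluation of the Massey product to \cite[Thm 8.1]{GHIRkoc2} (which also records that the indeterminacy vanishes), whereas you carry out the cobar computation from scratch. Your version checks out: $d\tau = \rho[\bar\tau_0]$ and $\bar\Delta(\bar\tau_1)=\bar\tau_0\otimes\bar\xi_1$ are the correct null-homotopies, the lift $c=\rho\bar\tau_1+\tau\bar\xi_1$ is the right filtration-compatible representative of $\rho[3]$, and the $\eta_R(\tau)=\tau+\rho\bar\tau_0$ shift does produce the second copy of $\rho\,\bar\tau_0\otimes\bar\xi_1$ that cancels the one from $\bar\psi(\rho\bar\tau_1)$, leaving the cocycle $\tau\bar\xi_1+\rho\bar\tau_1$, which is the standard representative of $\tau h_1$ (nonzero because its image under $\rho\mapsto 0$ is $\tau h_1$ over $\mathbb{C}$, and $\mathrm{Ext}^{1,1,0}_{\euscr{A}(1)^\vee_{\mathbb{R}}}$ is generated by $\tau h_1$). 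What your approach buys is self-containedness and an explicit view of the $\rho$-correction mechanism that drives the coweight jump in this differential; what it costs is the delicate Hopf-algebroid bookkeeping that the citation avoids. Since you compute the actual attaching-map differential via a specific lift rather than quoting the bracket, you also sidestep the indeterminacy question, though it is worth noting (as the paper does) that the indeterminacy of $\langle\rho,h_0,h_1\rangle$ in degree $(1,1,0)$ is zero, so the two formulations agree.
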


\begin{proof}
    This differential is witnessed by the Massey product $\langle \rho, h_0, h_1 \rangle$. This Massey product is shown to be equal to $\tau h_1$ in \cite[Theorem 8.1]{GHIRkoc2} with zero indeterminacy.
\end{proof}
In fact, since the differentials are linear over $\text{Ext}^{***}_{\euscr{A}(1)^\vee_\mathbb{R}}(\mathbb{M}_2^\mathbb{R})$ this determines all $d_3$-differentials. For example, we have 
\[d_3(b \rho[3]) = b \cdot \tau h_1[0],\]
and we have
\[d_3(\rho \cdot \tau h_1[3]) = (\tau h_1)^2[0].\]
It is important to note that since $\tau h_1$ has coweight 1 in $\text{Ext}^{***}_{\euscr{A}(1)^\vee_\mathbb{R}}(\mathbb{M}_2^\mathbb{R})$, and since each Atiyah--Hirzebruch filtration is zero in coweight 3, the $d_3$-differential is trivial on the coweight $cw \equiv 2 \, (4)$ portion of Atiyah--Hirzebruch filtration 3. This coweight jump requires us to pay close attention to the $d_3$-differential. For example, in the coweight $cw \equiv 0 \, (4)$ piece of Atiyah--Hirzebruch filtration 3, there is an infinite $\rho$-tower stemming from $\rho[3]$. However, while 
\[d_3(\rho[3])=\tau h_1[0]\]
and 
\[d_3(\rho^2[3]) = \rho \cdot \tau h_1[0],\]
there are no other classes in the coweight $cw \equiv 1 \,(4)$ piece of Atiyah--Hirzebruch filtration 0 for the rest of this $\rho$-tower to hit. In other words, for we have
\[d_3(\rho^n[3])=0[0], \, n \geq 2.\]

\Cref{R-aAHSS(b0(1)) AHF=3 CW=0} depicts the $\textup{E}_3$-page where the coweight of Atiyah--Hirzebruch filtration 3 is $cw \equiv 0 \, (4)$ and the coweight of Atiyah--Hirzebruch filtration 0 is $cw \equiv 1 \, (4)$. \Cref{R-aAHSS(b0(1)) AHF=3 CW=1} depicts the $\textup{E}_3$-page where the coweight of Atiyah--Hirzebruch filtration 3 is $cw \equiv 1 \, (4)$ and the coweight of Atiyah--Hirzebruch filtration 0 is $cw \equiv 2 \, (4)$.

\begin{figure}[h]
    \centering
    \includegraphics[scale=.75]{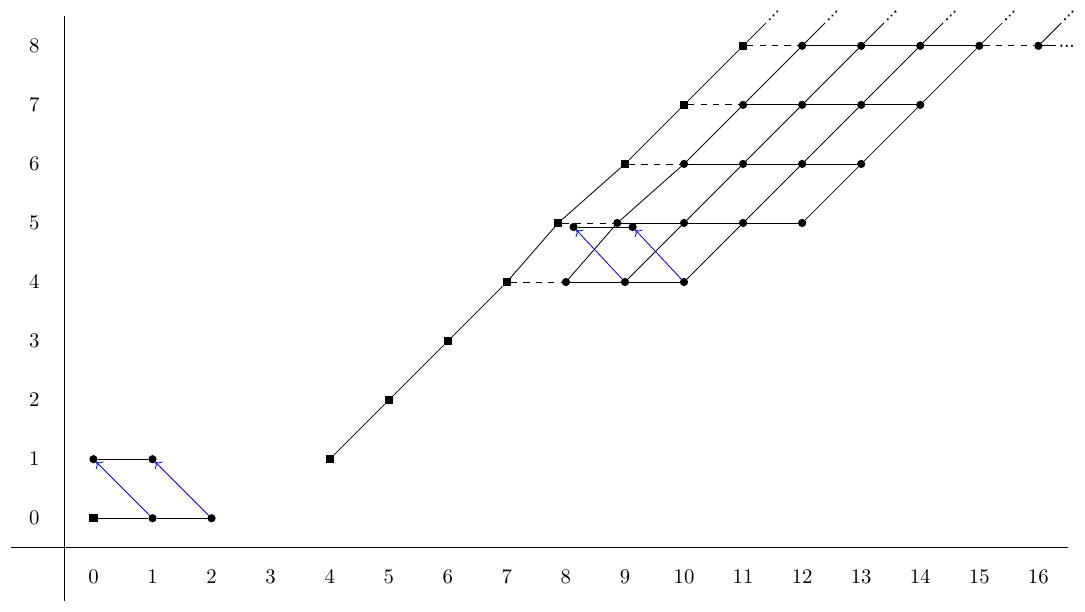}
    \caption{The $\textup{E}_3$-page of the \textbf{aAHSS}($B_0^\mathbb{R}(1)$) with $cw \equiv 0 \, (4)$ in Atiyah--Hirzebruch filtration 3 and $cw \equiv 1 \, (4)$ in Atiyah--Hirzebruch filtration 0.}
    \label{R-aAHSS(b0(1)) AHF=3 CW=0}
\end{figure}

\begin{figure}[h]
    \centering
    \includegraphics[scale=.75]{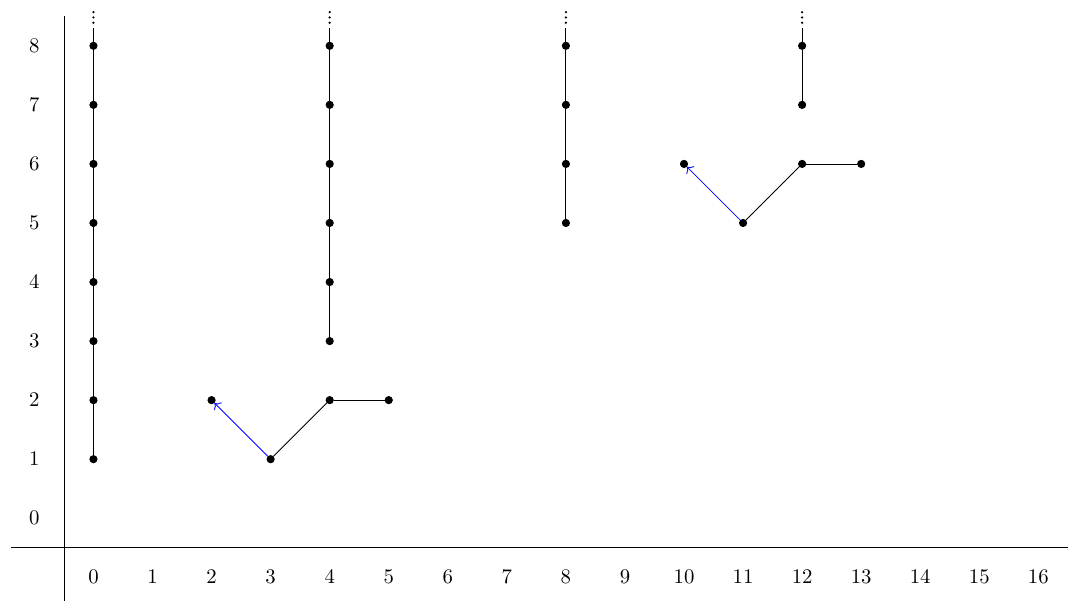}
    \caption{The $\textup{E}_3$-page of the \textbf{aAHSS}($B_0^\mathbb{R}(1)$) with $cw \equiv 1 \, (4)$ in Atiyah--Hirzebruch filtration 3 and $cw \equiv 2 \, (4)$ in Atiyah--Hirzebruch filtration 0.}
    \label{R-aAHSS(b0(1)) AHF=3 CW=1}
\end{figure}

Recall from \cref{aAHSS convergenece general} that $\textup{E}_4=\textup{E}_\infty$ for Atiyah--Hirzebruch filtration reasons.
Hidden extensions can be realized using complex Betti realization, linearity over $\text{Ext}^{***}_{\euscr{A}(1)^\vee_\mathbb{R}}(\mathbb{M}_2^\mathbb{R})$, base change $\otimes\mathbb{C}: \text{SH}(\mathbb{C}) \to \text{SH}(\mathbb{R})$, and the isomorphism from \cref{bg-comod R/rho = C}:
\[\text{Ext}^{***}_{\euscr{A}(1)^\vee_\mathbb{R}}(B_0^\mathbb{R}(1)/\rho) \cong \text{Ext}^{***}_{\euscr{A}(1)_\mathbb{C}}(B_0^\mathbb{C}(1)).\]

The following result will be useful when depicting the $\textup{E}_\infty$-page in charts.
\begin{lemma}
\label{trivial cw1 ext b01}
    The group $\textup{Ext}^{***}_{\euscr{A}(1)^\vee_\mathbb{R}}(B_0^\mathbb{R}(1))$ is zero in coweight $cw \equiv 1\, (4)$.
\end{lemma}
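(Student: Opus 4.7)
The plan is to trace the $\textbf{aAHSS}(B_0^\mathbb{R}(1))$ in coweight $cw \equiv 1\,(4)$ and show that every class on the $E_1$-page either lies in the image of some $d_r$ or supports a nonzero differential. First I would identify the contributions: since the cells $[1], [\overline{\xi}_1], [\overline{\tau}_1]$ sit in coweights $0, 1, 2$ respectively and $\text{Ext}_{\euscr{A}(1)^\vee_\mathbb{R}}(\mathbb{M}_2^\mathbb{R})$ is concentrated in coweights $0, 1, 2\,(4)$, only filtrations $0$ and $2$ contribute to coweight $1\,(4)$. Filtration $0$ houses the big diamond $\euscr{D}$, filtration $2$ houses a shifted copy of the coweight $0\,(4)$ piece $\euscr{F}_{0,0,0}$, and filtration $3$ is empty (it would require coweight $3\,(4)$ classes in $\text{Ext}$, of which there are none).

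Next I would handle filtration $2$. The $d_1$ differential $d_1(\gamma[3]) = h_0\gamma[2]$ from filtration $3$ coweight $2\,(4)$ has image $h_0 \cdot \euscr{F}_{0,0,0}$, and the $d_2$ differential kills $\alpha[2]$ whenever $h_1\alpha \neq 0$. The key algebraic claim is that $\ker_{\euscr{F}_{0,0,0}}(h_1) \subseteq h_0 \cdot \euscr{F}_{0,0,0}$: the candidate $h_1$-cycles in coweight $0\,(4)$ not visibly divisible by $h_0$ (namely $1, \rho^k, \tau^{4k}, b^n, \tau^2 a$ and their products) all have nonzero $h_1$-images, for instance $h_1, \rho^k h_1, \tau^{4k} h_1, b^n h_1$, and $\rho^3 b = h_1 \cdot \tau^2 a$ via \cref{GHIRrelations}. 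Hence $\ker(d_2)/\mathrm{im}(d_1) = 0$ on filtration $2$ coweight $1\,(4)$.

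For filtration $0$ I would combine $d_2$ and $d_3$. The $d_2$ differential from filtration $2$ coweight $2\,(4)$ sends $\alpha[2]$ with $\alpha \in \euscr{D}$ to $h_1\alpha[0]$, producing the subspace $h_1 \cdot \euscr{D}$. The $d_3$ differential, by \cref{d3 lemma} and $\text{Ext}$-linearity, satisfies $d_3(\alpha\rho[3]) = \alpha\tau h_1[0]$ for any $\alpha \in \euscr{F}_{0,0,0}$. Since $\tau h_1$ generates $\euscr{D}$ as a module over $\text{Ext}_{\euscr{A}(1)^\vee_\mathbb{R}}(\mathbb{M}_2^\mathbb{R})$---every class of coweight $1\,(4)$ factors as $\alpha \cdot \tau h_1$ with $\alpha$ of coweight $0\,(4)$, by direct inspection combined with $\tau^4$-periodicity---the image of $d_3$ exhausts $\euscr{D}$, and combined with $d_2$ this annihilates filtration $0$ in coweight $1\,(4)$.

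Combining with \cref{aAHSS convergenece general}, no class in coweight $1\,(4)$ survives to $E_\infty = E_4$, which proves the lemma. The principal obstacle will be verifying the two module-theoretic assertions $\ker_{\euscr{F}_{0,0,0}}(h_1) \subseteq h_0 \cdot \euscr{F}_{0,0,0}$ and $\euscr{D} = \euscr{F}_{0,0,0} \cdot \tau h_1$; both reduce to bookkeeping in the explicit presentation of $\text{Ext}_{\euscr{A}(1)^\vee_\mathbb{R}}(\mathbb{M}_2^\mathbb{R})$ recorded in \cref{GHIRgenerators,GHIRrelations}.
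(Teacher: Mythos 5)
Your argument is correct and takes essentially the same route as the paper's proof: both identify the three Atiyah--Hirzebruch filtration contributions to total coweight $1\,(4)$, dispose of filtration $3$ for trivial coweight reasons, kill the filtration $2$ contribution with the $d_1$/$d_2$ pair, and kill the filtration $0$ contribution with the $d_2$/$d_3$ pair. The only difference is presentational: you state the underlying module-theoretic facts explicitly ($\ker(h_1)\subseteq h_0\cdot\text{Ext}$ in coweight $0\,(4)$, and $\euscr{D}$ being generated by $\tau h_1$ over the coweight $0\,(4)$ part), whereas the paper simply lists the specific differentials $d_1((\tau^2a)^nb^m[3])$, $d_2((\tau^2a)^nb^m[2])$, $d_2(b^n\tau h_1[2])$, $d_2(b^n\rho\tau h_1[2])$, $d_3(b^n\rho[3])$, and $d_3(b^n\rho^2[3])$ that realize them.
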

\begin{proof}
    In coweight $cw \equiv 1 \, (4)$, the $\textup{E}_\infty$-page of the \textbf{aAHSS}$(B_0^\mathbb{R}(1))$ is isomorphic to the a direct sum of coweight $cw \equiv 1 \, (4)$ in Atiyah--Hirzebruch filtration 0, coweight $cw \equiv 0 \, (4)$ in Atiyah--Hirzebruch filtration 2, and coweight $cw \equiv 3 \, (4)$ in Atiyah--Hirzebruch filtration 3 pieces. The differentials:
    \[d_2(b^n \cdot\tau h_1[2]) = b^nh_1 \cdot \tau h_1[0], \quad d_2( b^n\rho \cdot \tau h_1[2]) = b^n \rho h_1 \cdot \tau h_1[0], \quad n \geq 0\]
    and
    \[d_3(b^n\rho[3]) = b^n \cdot \tau h_1[0], \quad d_3(b^n\rho^2[3]) = b^n\rho \cdot \tau h_1[0], \quad n \geq 0\]
    ensure that Atiyah--Hirzebruch filtration 0 is trivial in coweight $cw \equiv 1 \, (4)$. The differentials
    \[d_1((\tau^2a)^n b^m[3]) = (\tau^2a)^nb^mh_0[2], \quad d_2((\tau^2a)^nb^m[2]) = (\tau^2a)^nb^mh_1[0], \quad n,m \geq 0\]
    ensure that Atiyah--Hirzebruch filtration 2 is trivial in coweight $cw \equiv 0 \, (4)$. Atiyah--Hirzebruch filtration 3 is trivial in coweight $cw \equiv 3 \, (4)$ since $\text{Ext}^{***}_{\euscr{A}(1)^\vee_\mathbb{R}}(\mathbb{M}_2^\mathbb{R})$ is trivial in this coweight.
\end{proof}

We organize the $\textup{E}_\infty$-page by coweight modulo 4 in the same way that we depicted $\text{Ext}^{***}_{\euscr{A}(1)^\vee_\mathbb{R}}(\mathbb{M}_2^\mathbb{R})$. As indicated by \cref{trivial cw1 ext b01}, the different Atiyah--Hirzebruch filtration pieces contribute different pieces to each coweight of the $\textup{E}_\infty$-page, suitably shifted by the degree of the cell in the filtration it is attached to.
Since $\text{Ext}^{***}_{\euscr{A}(1)^\vee_\mathbb{R}}(\mathbb{M}_2^\mathbb{R})$ is concentrated in coweights $cw \equiv 0, 1, \text{ and } 2 \, (4)$, for each Atiyah--Hirzebruch filtration piece there is a coweight on the $\textup{E}_\infty$-page which it does not contribute to.
To make this clear, we describe each coweight piece of the $\textup{E}_\infty$-page in terms of contributions from Atiyah--Hirzebruch filtration in \cref{coweight organization}.

\begin{table}[H]
\label{table-coweight contribution}
    \centering
    \setlength{\tabcolsep}{0.5em} 
    {\renewcommand{\arraystretch}{1.2}
    \begin{tabular}{|l||l|l|l|}
        \hline
       $\textup{E}_\infty$ coweight & Filtration 0 & Filtration 2 & Filtration 3 \\
       \hline
       \hline
       0 & 0 &3 & 2\\
       1 & 1 & 0& 3\\
       2 & 2 & 1& 0\\
       3 & 3 & 2& 1\\
       \hline
    \end{tabular}}
    \caption{Atiyah--Hirzebruch filtration coweight contributions in the \textbf{aAHSS}($B_0^\mathbb{R}(1))$.}    
    \label{coweight organization}
\end{table}

With this in mind, we are prepared to give charts for the $\textup{E}_\infty$-page. 
\Cref{R-b0(1) cw =0} depicts the $\textup{E}_\infty$-page in coweight $cw \equiv 0 \, (4)$. \Cref{R-b0(1) cw=2} depicts the $\textup{E}_\infty$-page in coweight $cw \equiv 2 \, (4)$. \Cref{R-b0(1) cw=3} depicts the $\textup{E}_\infty$-page in coweight $cw \equiv 3 \, (4)$. We use the same notation as previously used in this section, with some additions. A $\square$ represents $\mathbb{F}_2[\tau^4][\rho]$, where in contrast to a black $\blacksquare$, the $\rho$-tower does not support $h_1$-multiplication. For example, in coweight $cw \equiv 0 \, (2)$, there is a class $z$ in degree $(-1, 0)$ which is a $\rho$-multiple of the class in degree $(0,0)$. As the symbol at $(0,0)$ is a $\square$, the class $z$ is $h_1$-torsion.
Red lines indicate hidden extensions. A diagonal black $\nearrow$ indicates infinite $h_1$-multiplication on a $\rho$-tower which is compatible with an infinite $\rho$-tower off of a class which is $h_1$-divisible. For example, in coweight $cw \equiv 2 \, (4)$, is a relation 
\[h_1x = \rho y,\]
where $x$ is the class in degree $(3,1)$ which supports an infinite $h_1$-tower, and $y$ is the class in degree $(5,2)$ which supports an infinite $\rho$-tower.
This relation occurs in some $\mathbb{F}_2$ which is \emph{not} present in the current chart: the current $\mathbb{F}_2$ in degree $(4,2)$ is a part of an $h_0$-tower, and thus is not $h_1$-divisible. We suppress notation and indicate this relation by $\nearrow$ rather than drawing an extra $\bullet$.

\begin{figure}[h]
    \centering
    \includegraphics[scale=.75]{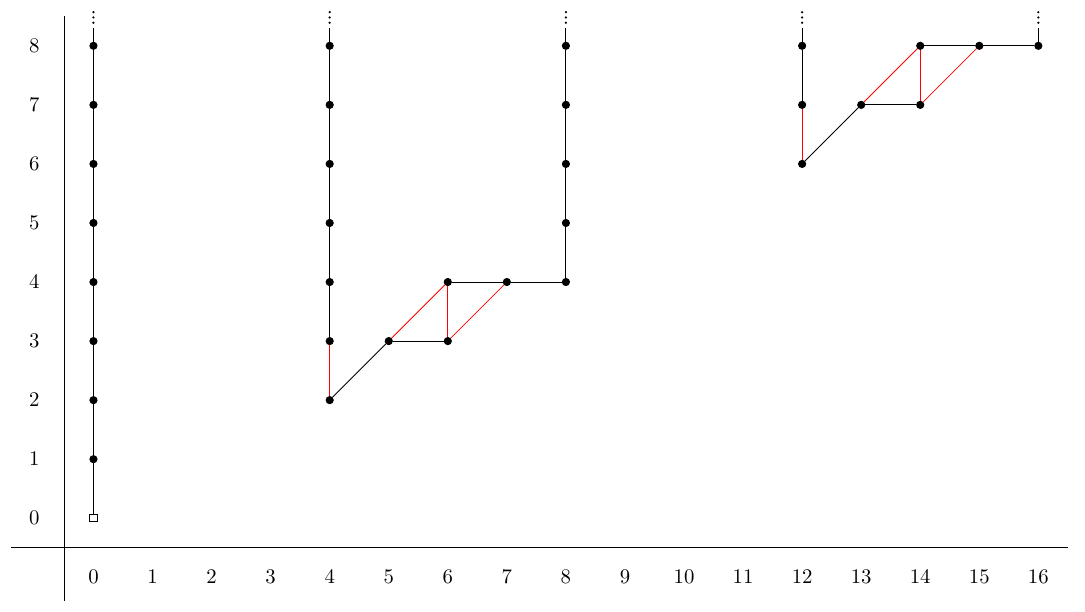}
    \caption{The $\textup{E}_\infty$-page of the \textbf{aAHSS}($B_0^\mathbb{R}(1))$ in $cw \equiv 0 \, (4)$.}
    \label{R-b0(1) cw =0}
\end{figure}

\begin{figure}[h]
    \centering
    \includegraphics[scale=.75]{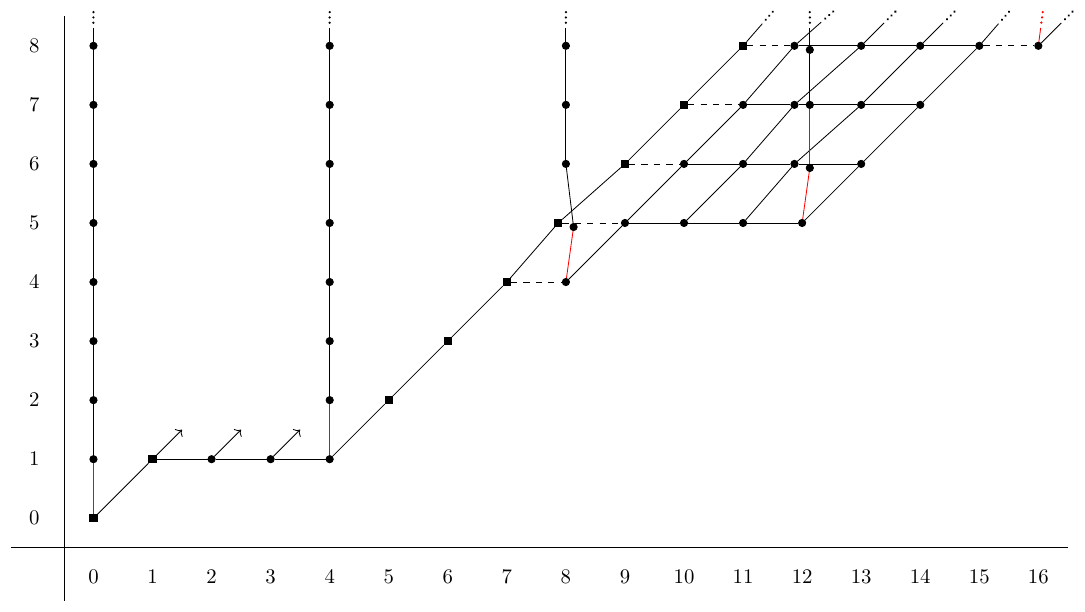}
    \caption{The $\textup{E}_\infty$-page of the \textbf{aAHSS}($B_0^\mathbb{R}(1))$ in $cw \equiv 2 \,(4)$.}
    \label{R-b0(1) cw=2}
\end{figure}

\begin{figure}[h]
    \centering
    \includegraphics[scale=.75]{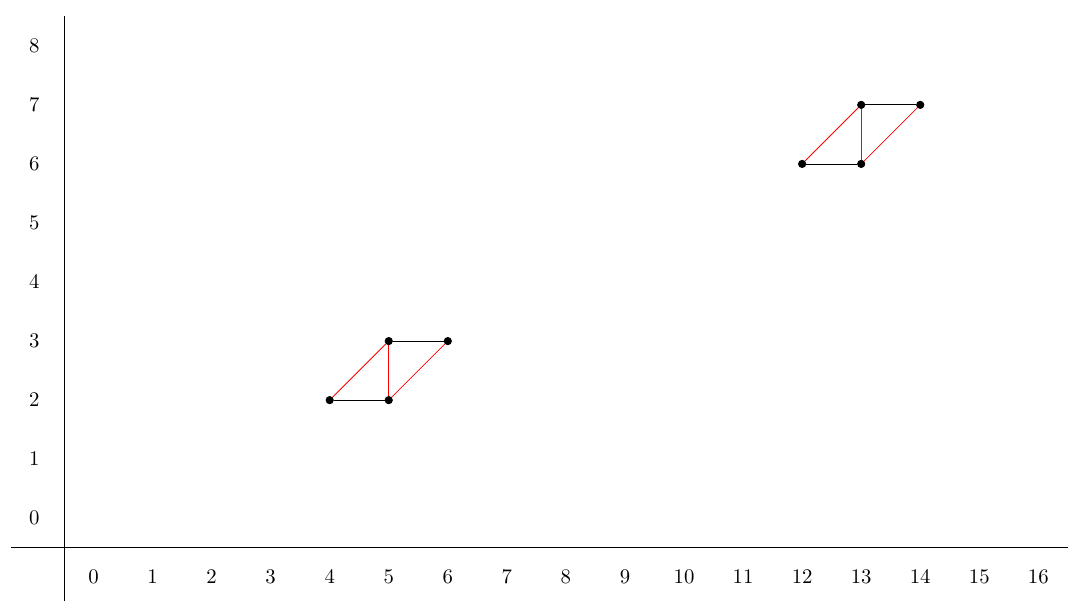}
    \caption{The $\textup{E}_\infty$-page of the \textbf{aAHSS}($B_0^\mathbb{R}(1))$ in $cw \equiv 3 \, (4)$.}
    \label{R-b0(1) cw=3}
\end{figure}

\begin{remark}
    The big flag $\euscr{F}_{4,1,2}$ is isomorphic to the coweight $cw \equiv 2 \, (4)$ piece of $\text{Ext}^{***}_{\euscr{A}(1)^\vee_\mathbb{R}}(B_0^\mathbb{R}(1))$ (see \cref{R-b0(1) cw=2}).
\end{remark}

We note that most classes on the $\textup{E}_\infty$-page are $v_1$-periodic. In fact, the only classes which are $v_1$-torsion are those in the $\rho$-tower in Adams filtration 0 of the coweight $cw \equiv 0 \, (4)$ piece which are divisible by $\rho^3$.

We close this section with the following result.
\begin{corollary}
\label{R-ksp}
    The $\textup{\textbf{mASS}}^{\mathbb{R}}(\textup{ksp})$ has signature
    \[\textup{E}_2^{s,f,w} = \textup{Ext}^{s,f,w}_{\euscr{A}(1)_\mathbb{R}^\vee}( B_0^\mathbb{R}(1)) \implies \pi^\mathbb{R}_{s,w}(\textup{ksp})\]
    and collapses on the $\textup{E}_2$-page.
\end{corollary}
\begin{proof}
    The proof is identical to the one given in \cref{C-ksp}.
\end{proof}

\subsection{$\text{Ext}^{***}_{\euscr{A}(1)^\vee_\mathbb{R}}(B_0^\mathbb{R}(1)^{\otimes 2})$ and recursive submodules}

To illustrate the numerous $v_1$-torsion classes which arise in the process of computing $\text{Ext}^{***}_{\euscr{A}(1)^\vee_\mathbb{R}}\left(B_0^\mathbb{R}(1)^{\otimes i}\right)$, which are ultimately to be ignored (see \Cref{v1 torsion}), we explicitly compute $\text{Ext}^{***}_{\euscr{A}(1)_\R^\vee}\left(B_0^\R(1)^{\otimes 2}\right)$. We form an algebraic Atiyah--Hirzebruch spectral sequence by applying the functor $\text{Ext}^{***}_{\euscr{A}(1)^\vee_\mathbb{R}}\left(B_0^\R(1) \otimes -\right)$ to \eqref{filtration:aAHSS}. This has signature
\[\textup{E}_1 = \text{Ext}^{***}_{\euscr{A}(1)^\vee_\R}(B_0^\R(1)) \otimes \mathbb{M}_2^\R\{[1], [\overline{\xi}_1], [\overline{\tau}_1]\} \implies \text{Ext}^{***}_{\euscr{A}(1)^\vee_\R}(B_0^\R(1)^{\otimes 2}).\]
The differentials are again induced by the algebraic cell structure of $B_0^\mathbb{R}(1)$. Thus, the $d_1$-differential is only nonzero on Atiyah--Hirzebruch filtration 3, where
\[d_1(\alpha[3]) = h_0\alpha[2],\]
the $d_2$-differential is only nonzero on Atiyah--Hirzebruch filtration 2, where
\[d_2(\alpha[2]) = h_1\alpha[0],\]
and the $d_3$-differential is only nonzero on Atiyah--Hirzebruch filtration 3, where
\[d_3(\alpha[3]) = \langle\alpha, h_0, h_1 \rangle[0].\]
Since the $d_1$- and $d_2$-differentials are given by a product with an element of $\text{Ext}_{\euscr{A}(1)^\vee_\mathbb{R}}^{***}(\mathbb{M}_2^\mathbb{R})$ of coweight 0, they preserve coweight between Atiyah-Hirzebruch filtrations. Similar to the previous case, the $d_3$-differential increases coweight by 1.

In fact, most of the work in running the $\textbf{aAHSS}(B_0^\mathbb{R}(1)^{\otimes 2})$ follows immediately from our previous computation of the $\textbf{aAHSS}(B_0^\mathbb{R}(1))$  in \Cref{subsection:Ext_B0R1}. By observation, we see that there is a submodule 
\[M_1 :=\Sigma^{4,2}\text{Ext}^{***}_{\euscr{A}(1)^\vee_\mathbb{R}}(\mathbb{M}_2^\mathbb{R})\langle 1\rangle  \subseteq \text{Ext}^{***}_{\euscr{A}(1)^\vee_\mathbb{R}}(B_0^\mathbb{R}(1)),\] 
using the notation from \cref{section 3}. This submodule is very large in $\text{Ext}_{\euscr{A}(1)^\vee_\mathbb{R}}^{***}(B_0^\mathbb{R}(1))$: in coweight $cw \equiv 0 \, (4)$, $M_1$ consists of everything except for the $h_1$-torsion $(\rho, h_0)$-tower in stem 0; in coweight $cw \equiv 2 \, (4)$, $M_1$ consists of everything except for the $(\rho, h_0)$-tower in stem 0; and in coweight $cw \equiv 3 \, (4)$, it consists of every nonzero element. 

As a result, there is a submodule of the $\textup{E}_1$-page of the $\textbf{aAHSS}(B_0^\mathbb{R}(1)^{\otimes 2})$ given by
\begin{equation}
\label{eq:summand for aahss b0r1_2}
M_1 \otimes \mathbb{M}_2^\mathbb{R}\{[1], [\overline{\xi}_1], [\overline{\tau}_1]\}.
\end{equation}
The differentials on this submodule are identical to the differentials in the $\textbf{aAHSS}(B_0^\mathbb{R}(1))$. Thus, one can depict the differentials in the $\textbf{aAHSS}(B_0^\mathbb{R}(1))$ on the summand \eqref{eq:summand for aahss b0r1_2} by shifting the charts given in \Cref{R-aAHSS(b_0(1)) E1 coweight 0} through \Cref{R-aAHSS(b_0(1)) E2 coweight 2} to the appropriate tridegree and coweight. The hidden extensions on this summand are the exact same as those in the $\textbf{aAHSS}(B_0^\mathbb{R}(1))$.

The differentials on the classes not in the $M_1 \otimes \mathbb{M}_2^\mathbb{R}\{[1], [\overline{\xi}_1], [\overline{\tau}_1]\}$ are straightforward to compute by the formulas given, as are hidden extensions. Similar to the previous case, the behavior of the differentials ensures that the $\textup{E}_\infty$-page is trivial in coweight $cw \equiv 1 \, (4)$. We present the $\textup{E}_\infty$ page, modulo $v_1$-torsion, below. \Cref{b0(1)^2 coweight 0} depicts the $\textup{E}_\infty$-page in coweight $cw \equiv 0 \, (4)$. \Cref{b0(1)^2 coweight 1} depicts the $\textup{E}_\infty$-page in coweight $cw \equiv 1 \, (4)$. \Cref{b0(1)^2 coweight 2} depicts the $\textup{E}_\infty$-page in coweight $cw \equiv 2 \, (4)$. Note that we do not present hidden extensions in red in these charts.

\begin{figure}[h]
    \centering
    \includegraphics[scale=.75]{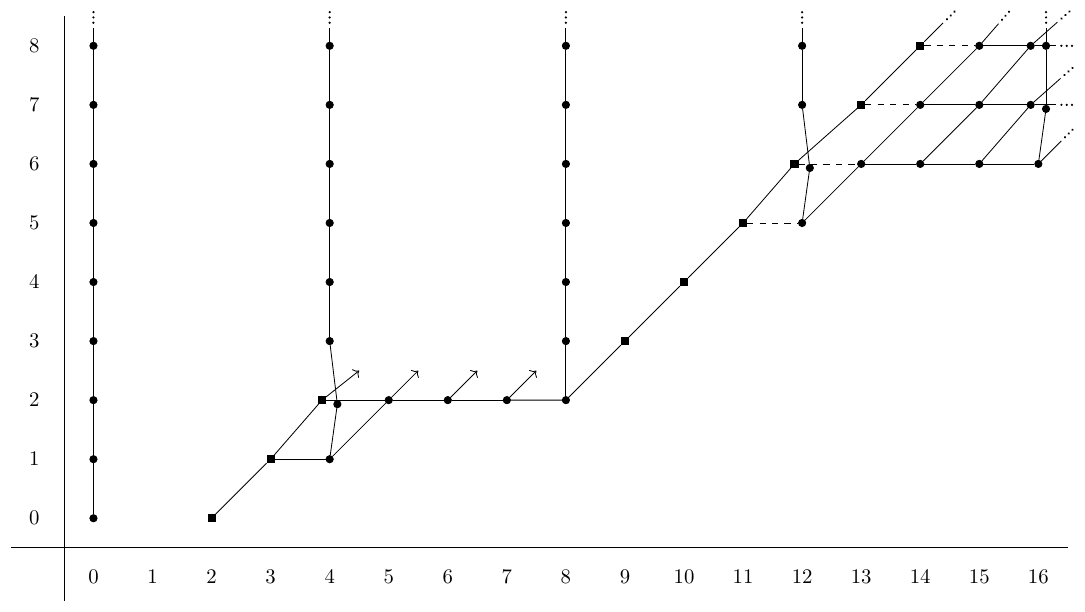}
    \caption{The $\textup{E}_\infty$-page of the \textbf{aAHSS}$\left(B_0^\mathbb{R}(1)^{\otimes 2}\right)$ in $cw \equiv 0 \, (4)$, modulo $v_1$-torsion.}
    \label{b0(1)^2 coweight 0}
\end{figure}
\begin{figure}[h]
    \centering
    \includegraphics[scale=.75]{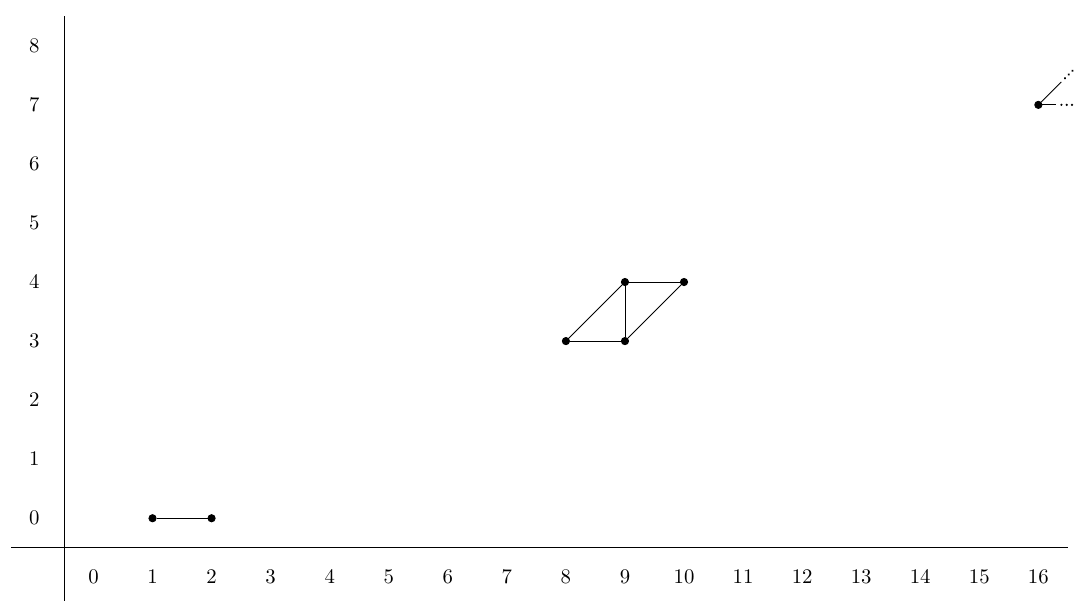}
    \caption{The $\textup{E}_\infty$-page of the \textbf{aAHSS}$\left(B_0^\mathbb{R}(1)^{\otimes 2}\right)$ in $cw \equiv 1 \, (4)$, modulo $v_1$-torsion.}
    \label{b0(1)^2 coweight 1}
\end{figure}
\begin{figure}[h]
    \centering
    \includegraphics[scale=.75]{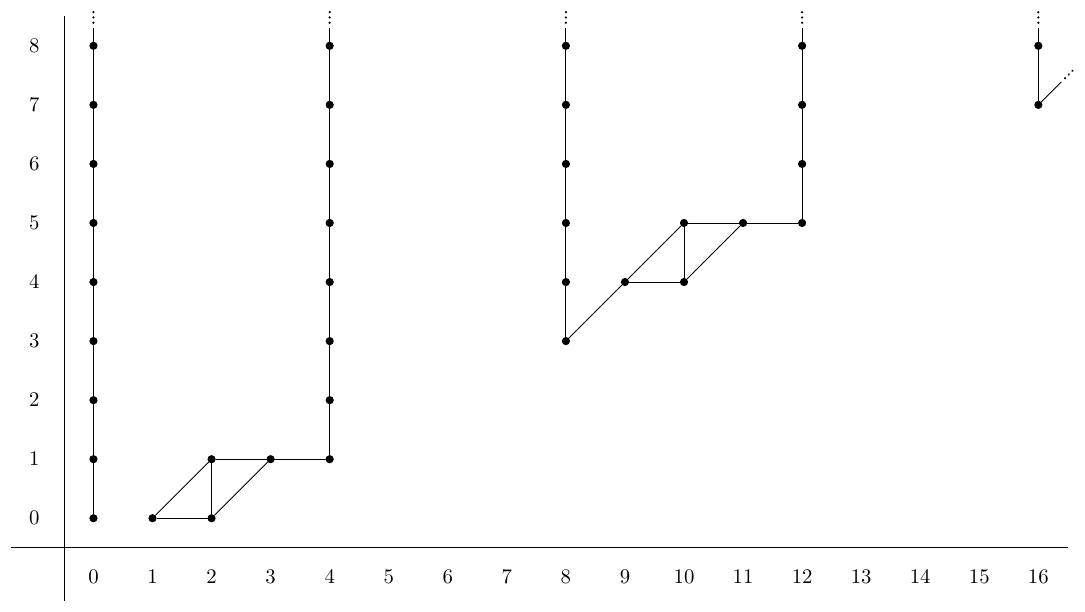}
    \caption{The $\textup{E}_\infty$-page of the \textbf{aAHSS}$\left(B_0^\mathbb{R}(1)^{\otimes 2}\right)$ in $cw \equiv 2 \, (4)$, modulo $v_1$-torsion.}
    \label{b0(1)^2 coweight 2}
\end{figure}

\begin{remark}
    The big flag $\euscr{F}_{8,2,4}$ is isomorphic to the summand of the coweight $cw \equiv 0 \, (4)$ piece of $\text{Ext}^{***}_{\euscr{A}(1)^\vee_\mathbb{R}}(B_0^\mathbb{R}(1)^{\otimes 2})$ concentrated in stems $s \geq 2$ (see \Cref{b0(1)^2 coweight 0}).
\end{remark}

Before giving a general formula for $\text{Ext}^{***}_{\euscr{A}(1)_\mathbb{R}^\vee}\left(B_0^\mathbb{R}(1)^{\otimes i}\right)$, we make a few simple observations. The group $\text{Ext}^{***}_{\euscr{A}(1)^\vee_\mathbb{R}}(\mathbb{M}_2^\mathbb{R})$, is only nonzero in coweights $cw \equiv 0, 1, 2\, (4)$. After running the $\textbf{aAHSS}(B_0^\mathbb{R}(1)$), we saw that there was a submodule 
\[M_1 :=\Sigma^{4,2}\text{Ext}^{***}_{\euscr{A}(1)^\vee_\mathbb{R}}(\mathbb{M}_2^\mathbb{R})\langle 1\rangle  \subseteq \text{Ext}^{***}_{\euscr{A}(1)^\vee_\mathbb{R}}(B_0^\mathbb{R}(1)),\] 
Note that if $x$ has coweight $cw \equiv t \, (4),$ then $\Sigma^{4,2}x\langle 1 \rangle$ has coweight $cw \equiv t +2 \, (4)$. In particular, $M_1$ is concentrated in coweights $cw \equiv 0, 2, 3 \, (4)$. By observation, we see that in fact all classes in $\text{Ext}^{***}_{\euscr{A}(1)^\vee_\mathbb{R}}(B_0^\mathbb{R}(1))$ are concentrated in these coweights. Similarly, after running the $\textbf{aAHSS}\left(B_0^\mathbb{R}(1)^{\otimes 2}\right)$ we see that there is a submodule
\[M_2 := \Sigma^{8,4}\text{Ext}^{***}_{\euscr{A}(1)^\vee_\mathbb{R}}(\mathbb{M}_2^\mathbb{R}) \langle 2 \rangle \subseteq \text{Ext}^{***}_{\euscr{A}(1)^\vee_\mathbb{R}}\left(B_0^\mathbb{R}(1)^{\otimes 2}\right).\]
It follows that $M_2$ is concentrated in coweights $cw \equiv 0,1,2 \, (4)$, and by observation we see that in fact all classes in $\text{Ext}^{***}_{\euscr{A}(1)^\vee_\mathbb{R}}\left(B_0^\mathbb{R}(1)^{\otimes 2}\right)$ are concentrated in these coweights.
More generally, from the $\textbf{aAHSS}\left(B_0^\mathbb{R}(1)^{\otimes i}\right)$ we see that the group $\text{Ext}^{***}_{\euscr{A}_\mathbb{R}^\vee}(B_0^\mathbb{R}(1)^{\otimes i})$ contains a submodule
\[M_i=\Sigma^{4i, 2i}\text{Ext}^{***}_{\euscr{A}(1)^\vee_\mathbb{R}}(\mathbb{M}_2^\mathbb{R})\langle i\rangle \subseteq \text{Ext}^{***}_{\euscr{A}(1)^\vee_\mathbb{R}}\left(B_0^\mathbb{R}(1)^{\otimes i}\right),\]
and that, modulo $v_1$-torsion, $\text{Ext}^{***}_{\euscr{A}(1)_\mathbb{R}^\vee}\left(B_0^\mathbb{R}(1)^{\otimes i}\right)$ is concentrated in the same coweights as $M_i$ is. More precisely, we have the following result.
\begin{proposition}
\label{coweight vanish}
    The group $\textup{Ext}^{***}_{\euscr{A}(1)^\vee_\mathbb{R}}\left(B_0^\mathbb{R}(1)^{\otimes i}\right)/v_1\textup{-torsion}$ is trivial in coweight $cw \equiv 1 \, (4)$ for $i \equiv 1 \, (2)$ and in coweight $cw \equiv 3 \, (4)$ for $i \equiv 0 \, (2)$.
\end{proposition}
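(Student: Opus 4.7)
The plan is to induct on $i$, using the algebraic Atiyah--Hirzebruch spectral sequence
\[E_1 = X_i \otimes_{\mathbb{M}_2^\R} \mathbb{M}_2^\R\{[1], [\bar\xi_1], [\bar\tau_1]\} \Longrightarrow X_{i+1}\]
obtained by tensoring the cellular filtration of $B_0^\R(1)$ with $B_0^\R(1)^{\otimes i}$; here $X_j := \textup{Ext}_{\euscr{A}(1)^\vee_\R}(B_0^\R(1)^{\otimes j})$ modulo $v_1$-torsion. The base cases $i = 0$ and $i = 1$ follow respectively from the explicit description of $\textup{Ext}_{\euscr{A}(1)^\vee_\R}(\mathbb{M}_2^\R)$ in \cref{R-background and notation} (which is supported in coweights $\equiv 0, 1, 2 \pmod 4$) and from \cref{trivial cw1 ext b01}. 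The three cells $[1], [\bar\xi_1], [\bar\tau_1]$ shift coweight by $0, 1, 2$ respectively, and by inspection of the bidegree formula $d_r\colon E_r^{s,f,w,a} \to E_r^{s-1, f+1, w, a-r}$ each differential decreases coweight by exactly $1$.

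For the inductive step, I focus on the case $i$ even (the case $i$ odd is symmetric). The inductive hypothesis says $X_i$ vanishes in coweight $\equiv 3 \pmod 4$, so on the $E_1$-page the $[\bar\tau_1]$-contribution to coweight $\equiv 1 \pmod 4$ in $X_{i+1}$, which would require $X_i$ in coweight $\equiv 3 \pmod 4$, is automatically zero. The only remaining contributions are classes $\alpha[0]$ with $\alpha \in X_i$ coweight $\equiv 1$, and $\alpha[2]$ with $\alpha \in X_i$ coweight $\equiv 0$.

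The key step is to verify that these classes die on $E_\infty$ modulo $v_1$-torsion. Three differentials are relevant: $d_1(\beta[3]) = h_0\beta[2]$, $d_2(\alpha[2]) = h_1\alpha[0]$, and $d_3(\gamma[3]) = \tau h_1 \cdot \gamma[0]$, the last following from the Massey product identity $\langle \rho, h_0, h_1\rangle = \tau h_1$ of \cref{d3 lemma}. These kill, respectively: (a) all $h_0$-multiples in cell $[\bar\xi_1]$ coweight $\equiv 1$; (b) classes $\alpha[2]$ with $h_1\alpha \neq 0$, while also hitting $h_1\beta[0]$ for $\beta \in X_i$ coweight $\equiv 1$; and (c) all classes $\tau h_1\gamma[0]$ for $\gamma \in X_i$ coweight $\equiv 0$. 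Using the inductive description of $X_i$ in terms of the submodule $M_i = \Sigma^{4i, 2i}\textup{Ext}_{\euscr{A}(1)^\vee_\R}(\mathbb{M}_2^\R)\langle i\rangle$ together with the big flag modules of \cref{big flag def}, one verifies that on the $v_1$-free part every $\alpha \in X_i$ coweight $\equiv 0$ is either $h_0$-divisible or has $h_1\alpha \neq 0$, and every $\alpha \in X_i$ coweight $\equiv 1$ lies in the image of $h_1$ from coweight $1$ or of $\tau h_1$ from coweight $0$. This forces coweight $\equiv 1 \pmod 4$ to vanish in $X_{i+1}$.

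The main obstacle is this last bookkeeping step: verifying the $h_0$-, $h_1$-, and $\tau h_1$-divisibility claims on the $v_1$-free part of $X_i$ across the relevant coweights. This uses the explicit module structure of $M_i$ and the big flags computed at previous stages, together with the observation that $v_1$-torsion is confined to controlled $h_0$-towers, as witnessed in the small-$i$ computations displayed in the charts. The check is a direct but lengthy inspection, carried out as part of the broader induction establishing the complete structure of the Ext groups.
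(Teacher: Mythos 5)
Your overall strategy---induct on $i$ through the $\textbf{aAHSS}$, track the coweight shifts $0,1,2$ of the three cells $[1],[\bar\xi_1],[\bar\tau_1]$, and use the differentials $d_1(\beta[3])=h_0\beta[2]$, $d_2(\alpha[2])=h_1\alpha[0]$, and $d_3(\gamma\rho[3])=\tau h_1\cdot\gamma[0]$ to clear out the forbidden coweight---is the same mechanism the paper uses: in the paper this proposition is recorded as an observation whose actual verification lives inside the proof of \cref{b_0(1) powers proof} (see in particular the coweight $cw\equiv 1\,(4)$ portion of the argument for $i=4k+1$). Your coweight bookkeeping via \cref{coweight organization} and the use of \cref{d3 lemma} are both correct.

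However, one step is wrong as stated. You claim that, on the $v_1$-free part, every $\alpha\in X_i$ of coweight $\equiv 0\,(4)$ is either $h_0$-divisible or supports $h_1$-multiplication, so that the filtration-$2$ contribution to coweight $\equiv 1\,(4)$ is consumed entirely by $d_1$ and $d_2$. This fails for the bases of the bare $h_0$-towers $\bigoplus_j\Sigma^{4j-4,\ast}H$ appearing in $Z_i^{\mathbb{R}}$: such a base is neither $h_0$-divisible nor $h_1$-periodic, so it survives the spectral sequence and sits on the $E_\infty$-page as an isolated $\mathbb{F}_2[\tau^4]$ in coweight $\equiv 1\,(4)$. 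The proposition is still true, but for a different reason: since filtrations $0$ and $3$ contribute nothing to this coweight at $E_\infty$, these isolated survivors have lost the rest of their $h_0$-towers to $d_1$ and are therefore $v_1$-torsion in the abutment $X_{i+1}$, hence vanish only after passing to the quotient. This is exactly the point the paper makes (``this forces the classes in the summand $\bigoplus\Sigma^{4j-4,\ast}\mathbb{F}_2[\tau^4]$ to be $v_1$-torsion''). The moral your write-up elides is that killing $v_1$-torsion does not commute with running the spectral sequence: $v_1$-periodic classes of $X_i$ can contribute $v_1$-torsion to $X_{i+1}$, so the vanishing must be checked in the abutment rather than on the $E_1$-page. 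With that correction, your argument goes through and matches the paper's.
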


We also have the following submodule of $\text{Ext}^{***}_{\euscr{A}(1)^\vee_\mathbb{R}}(B_0^\mathbb{R}(1)^{\otimes i}).$

\begin{proposition}
\label{nested}
    Let $i \geq 1$. For all $1 \leq j \leq i$, the group $\textup{Ext}^{***}_{\euscr{A}(1)^\vee_\mathbb{R}}\left(B_0^\mathbb{R}(1)^{\otimes i}\right)/v_1\textup{-torsion}$ contains as a submodule:
    \[\Sigma^{4i, 2i}\textup{Ext}^{***}_{\euscr{A}(1)^\vee_\mathbb{R}}\left(B_0^\mathbb{R}(1)^{\otimes j}\right)\langle i \rangle \subseteq \textup{Ext}^{***}_{\euscr{A}(1)^\vee_\mathbb{R}}\left(B_0^\mathbb{R}(1)^{\otimes i}\right).\]
\end{proposition}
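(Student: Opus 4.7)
The plan is to proceed by induction on $i \geq 1$, using the algebraic Atiyah--Hirzebruch spectral sequence
\[
E_1 = \textup{Ext}_{\euscr{A}(1)^\vee_\mathbb{R}}\!\left(B_0^\mathbb{R}(1)^{\otimes (i-1)}\right) \otimes_{\mathbb{M}_2^\mathbb{R}} \mathbb{M}_2^\mathbb{R}\{[1], [\overline{\xi}_1], [\overline{\tau}_1]\} \Longrightarrow \textup{Ext}_{\euscr{A}(1)^\vee_\mathbb{R}}\!\left(B_0^\mathbb{R}(1)^{\otimes i}\right)
\]
arising from the cellular filtration on the last $B_0^\mathbb{R}(1)$ tensor factor, with differentials $d_1(\alpha[3]) = h_0\alpha[2]$, $d_2(\alpha[2]) = h_1\alpha[0]$, and the $\mathbb{R}$-motivic $d_3$ of \cref{d3 lemma}, all extended $\textup{Ext}_{\euscr{A}(1)^\vee_\mathbb{R}}(\mathbb{M}_2^\mathbb{R})$-linearly.

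The key observation to establish at each inductive step is that, modulo $v_1$-torsion, the $E_\infty$-page contains a \emph{top-cell submodule} isomorphic to $\Sigma^{4,2}\textup{Ext}_{\euscr{A}(1)^\vee_\mathbb{R}}(B_0^\mathbb{R}(1)^{\otimes (i-1)})\langle 1\rangle$, coming from the $[\overline{\tau}_1]$ piece in Atiyah--Hirzebruch filtration 3. The raw cellular shift for this piece is $\Sigma^{3,1}$, but after $d_1$ kills $h_0$-divisible classes and $d_2, d_3$ trim the filtration-3 survivors further, the remaining classes are detected by hidden $h_1$-extensions that raise Adams filtration by one and stem/weight by $(1,1)$, giving the net shift $\Sigma^{4,2}\langle 1\rangle$, exactly as verified for $i = 1, 2$ in the analyses leading to \cref{R-b0(1) cw =0}--\cref{b0(1)^2 coweight 2}.

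With the top-cell submodule established, the proposition follows by iterating. Starting from the tautological inclusion $\textup{Ext}(B_0^\mathbb{R}(1)^{\otimes j}) \subseteq \textup{Ext}(B_0^\mathbb{R}(1)^{\otimes j})$ and applying the top-cell embedding at each passage from $\textup{Ext}(B_0^\mathbb{R}(1)^{\otimes k-1})$ to $\textup{Ext}(B_0^\mathbb{R}(1)^{\otimes k})$ for $k = j+1, j+2, \dots, i$, the accumulated shift is $\Sigma^{4,2}\langle 1\rangle$ composed $i-j$ times. Combined with the inductive hypothesis that organizes shifted submodules of $\textup{Ext}(B_0^\mathbb{R}(1)^{\otimes (i-1)})$ in the form $\Sigma^{4(i-1),2(i-1)}\textup{Ext}(B_0^\mathbb{R}(1)^{\otimes j})\langle i-1\rangle$, and with the comodule inclusions $\mathbb{M}_2^\mathbb{R} \hookrightarrow B_0^\mathbb{R}(1)$ inducing a chain $\textup{Ext}(B_0^\mathbb{R}(1)^{\otimes j}) \hookrightarrow \textup{Ext}(B_0^\mathbb{R}(1)^{\otimes (j+1)})$ modulo $v_1$-torsion, one obtains the nested family of submodules asserted in the proposition, thus justifying the label ``nested''.

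The main technical obstacle is controlling the $\mathbb{R}$-motivic $d_3$-differential and its $\textup{Ext}_{\euscr{A}(1)^\vee_\mathbb{R}}(\mathbb{M}_2^\mathbb{R})$-linear extensions throughout the iterated spectral sequences. Unlike the $\mathbb{C}$-motivic picture where only $d_1, d_2$ act on the top cell, the $d_3$ over $\mathbb{R}$ kills a substantial family of classes emanating from the $\rho$-towers; however, these killings respect the coweight-modulo-$4$ organization and the $\tau^4$-periodicity used to present the charts in \cref{R-aAHSS section}, and modulo $v_1$-torsion the surviving top-cell pattern stabilizes to exactly $\Sigma^{4,2}\langle 1\rangle$ at each step. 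Hidden extensions required to pin down the identification may be read off by comparison with the $\mathbb{C}$-motivic analogue via \cref{bg-comod R/rho = C} and with the classical bo picture via complex Betti realization, exactly as done in the base cases.
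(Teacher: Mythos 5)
Your high-level strategy (induction through the iterated algebraic Atiyah--Hirzebruch spectral sequences) is the paper's, but the mechanism you use to produce the submodules is different and has a genuine gap. Your ``top-cell submodule'' is not a sub-spectral-sequence: the differentials $d_1(\alpha[3]) = h_0\alpha[2]$ and $d_3(\alpha[3]) = \langle \alpha, h_0, h_1\rangle[0]$ map \emph{out} of Atiyah--Hirzebruch filtration $3$, so the $[\overline{\tau}_1]$-column contributes to $E_\infty$ only its $h_0$-torsion further trimmed by $d_3$ --- in particular the class $1[\overline{\tau}_1]$ itself dies --- and what remains is far from a full shifted copy of $\textup{Ext}_{\euscr{A}(1)^\vee_\mathbb{R}}(B_0^\mathbb{R}(1)^{\otimes (i-1)})$. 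For $i=1$ the hidden extensions do reassemble the survivors generated by $h_1[\overline{\tau}_1]$ into a free copy $M_1 \cong \Sigma^{4,2}\textup{Ext}_{\euscr{A}(1)^\vee_\mathbb{R}}(\mathbb{M}_2^\mathbb{R})\langle 1\rangle$, but you give no argument that the submodule generated by $h_1$ on the top cell is free over the coefficients for $i>1$; many would-be generators $h_1x[\overline{\tau}_1]$ vanish outright when $x$ is $h_1$-torsion, and whether hidden extensions repair this must be checked. The paper's argument sidesteps this entirely: it restricts the $E_1$-page to $(\text{a shifted submodule of the coefficients}) \otimes \mathbb{M}_2^\mathbb{R}\{[1],[\overline{\xi}_1],[\overline{\tau}_1]\}$ --- all three cells --- which \emph{is} closed under the $\textup{Ext}_{\euscr{A}(1)^\vee_\mathbb{R}}(\mathbb{M}_2^\mathbb{R})$-linear differentials and is literally a shifted copy of the smaller $\textbf{aAHSS}(B_0^\mathbb{R}(1)^{\otimes j})$, so its abutment is the desired shifted Ext group.

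Separately, your shift bookkeeping does not close up. Composing a $\Sigma^{4,2}\langle 1\rangle$ top-cell embedding $i-j$ times with the tautological inclusion yields $\Sigma^{4(i-j),2(i-j)}\textup{Ext}_{\euscr{A}(1)^\vee_\mathbb{R}}(B_0^\mathbb{R}(1)^{\otimes j})\langle i-j\rangle$, not the asserted $\Sigma^{4i,2i}(\cdot)\langle i\rangle$; the missing factor $\Sigma^{4j,2j}\langle j\rangle$ is precisely the self-similar $j=i$ instance of the proposition, which your iteration therefore presupposes rather than proves. The patch you offer --- that the bottom-cell inclusions $\mathbb{M}_2^\mathbb{R}\hookrightarrow B_0^\mathbb{R}(1)$ induce injections $\textup{Ext}_{\euscr{A}(1)^\vee_\mathbb{R}}(B_0^\mathbb{R}(1)^{\otimes j}) \hookrightarrow \textup{Ext}_{\euscr{A}(1)^\vee_\mathbb{R}}(B_0^\mathbb{R}(1)^{\otimes (j+1)})$ modulo $v_1$-torsion --- is false: $h_1$ is $b$-torsion free, hence $v_1$-periodic, in $\textup{Ext}_{\euscr{A}(1)^\vee_\mathbb{R}}(\mathbb{M}_2^\mathbb{R})$, yet it is killed by $d_2(1[2]) = h_1[0]$ and there is nothing in higher Atiyah--Hirzebruch filtration in degree $(1,1,1)$, so its image in $\textup{Ext}_{\euscr{A}(1)^\vee_\mathbb{R}}(B_0^\mathbb{R}(1))$ is zero. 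To repair the argument you should follow the paper's route: feed the inductively known submodules $\Sigma^{4(i-1),2(i-1)}\textup{Ext}_{\euscr{A}(1)^\vee_\mathbb{R}}(B_0^\mathbb{R}(1)^{\otimes (j-1)})\langle i-1\rangle$ of the coefficients, tensored with the whole three-cell filtration, into the next spectral sequence and identify the resulting sub-spectral-sequence with a shift of $\textbf{aAHSS}(B_0^\mathbb{R}(1)^{\otimes j})$.
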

\begin{proof}
    We have shown this to be true in the case of $i=1$ in \cref{R-ksp}. The differentials in the $\textbf{aAHSS}\left(B_0^\mathbb{R}(1)^{\otimes 2}\right)$ on the submodule $M_1$ are isomorphic to the differentials in the $\textbf{aAHSS}(B_0^\mathbb{R}(1))$, hence the submodule $M_2$ of $\text{Ext}^{***}_{\euscr{A}(1)^\vee_\mathbb{R}}\left(B_0^\mathbb{R}(1)^{\otimes 2}\right)$ extends to give a submodule
    \[\Sigma^{8,4}\text{Ext}^{***}_{\euscr{A}(1)^\vee_\mathbb{R}}\left(B_0^\mathbb{R}(1)\right)\langle 2 \rangle \subseteq \text{Ext}^{***}_{\euscr{A}(1)^\vee_\mathbb{R}}\left(B_0^\mathbb{R}(1)^{\otimes 2}\right).\]
    For arbitrary $i$, the submodule $M_{i-1} \subseteq \text{Ext}^{***}_{\euscr{A}(1)^\vee_\mathbb{R}}\left(B_0^\mathbb{R}(1)^{\otimes i-1}\right)$ extends to the submodule
    \[\Sigma^{4(i-1), 2(i-1)}\text{Ext}^{***}_{\euscr{A}(1)^\vee_\mathbb{R}}\left(B_0^\mathbb{R}(1)^{\otimes j}\right) \langle i-1 \rangle\]
    for each $1 \leq j \leq i-1$. The differentials in the $\textbf{aAHSS}\left(B_0^\mathbb{R}(1)^{\otimes i}\right)$ are isomorphic to the differentials in the $\textbf{aAHSS}\left(B_0^\mathbb{R}(1)^{\otimes j}\right)$ when restricted to this submodule, hence the result of the spectral sequence gives the desired submodules in the target.
\end{proof}
\begin{remark}
    It is important that we work modulo $v_1$-torsion to make the above claim. For example, by computing the $\textbf{aAHSS}\left(B_0^\mathbb{R}(1)^{\otimes 3}\right)$ one can observe that the group $\text{Ext}^{***}_{\euscr{A}(1)^\vee_\mathbb{R}}\left(B_0^\mathbb{R}(1)^{\otimes 3}\right)$ is nonzero in every coweight modulo 4. However, the only classes in coweight $cw \equiv 1 \, (4)$ are $v_1$-torsion for degree reasons. Moreover, these $v_1$-torsion classes are permament cycles of the $\textbf{aAHSS}\left(B_0^\mathbb{R}(1)^{\otimes 4}\right)$, so their impact on this inductive process is negligible.
\end{remark}

\subsection{$\text{Ext}^{***}_{\euscr{A}(1)^\vee_\mathbb{R}}\left(B_0^\mathbb{R}(1)^{\otimes i}\right)$}
To aid in our presentation of $\text{Ext}^{***}_{\euscr{A}(1)^\vee_\mathbb{R}} 
\left(B_0^\mathbb{R}(1)^{\otimes i}\right)$,
we use the notation from \cref{R-background and notation} to construct a family of $\text{Ext}^{***}_{\euscr{A}(1)_\mathbb{R}^\vee}(\mathbb{M}_2^\mathbb{R})$-modules which we organize by coweight modulo 4. As we will see, each module is concentrated in precisely 3 of these coweights, reflecting our observations from \cref{coweight vanish}.

\begin{definition}
    For each $i \geq 0$, let $Z_i^\mathbb{R}$ be the  $\text{Ext}^{***}_{\euscr{A}(1)^\vee_\mathbb{R}}(\mathbb{M}_2^\mathbb{R})$-module given by the direct sum of the columns of the corresponding table.
    
    For $i =4k$, let $Z_i^\mathbb{R}$ be the direct sum of the columns of \cref{Zi i=4k}:
    \begin{table}[H]
    \centering
    \setlength{\tabcolsep}{0.5em} 
    {\renewcommand{\arraystretch}{1.2}
    \begin{tabular}{|l|l|l|}
    \hline
    $cw \equiv 0 \, (4)$ & $cw \equiv 1 \, (4)$ & $cw \equiv 2 \, (4)$  \\
    \hline
    \hline
    $\euscr{F}_{4i, i,2i}$ &$\Sigma^{2i, i}\euscr{D}$ & $\Sigma^{2i, i}\euscr{S}$ \\
        
    $\oplus$ &  & $\oplus$ \\
        
    $\bigoplus_{j=1}^{2k}\Sigma^{4j-4, 4\lfloor(j+1)/2 \rfloor-4}H$   & & $\bigoplus_{j=1}^{2k}\Sigma^{4j-4, 4\lfloor j/2 \rfloor -2}H$   
     \\
    \hline
    \end{tabular}}
    \caption{$Z_i^\mathbb{R}$ for $i = 4k$}
    \label{Zi i=4k}
    \end{table}
    For $i =4k+1$, let $Z_i^\mathbb{R}$ be the direct sum of the columns of \cref{Zi i=4k+1}:   
    \begin{table}[H]
    \setlength{\tabcolsep}{0.5em} 
    {\renewcommand{\arraystretch}{1.2}
    \begin{tabular}{|l|l|l|}
    \hline
    $cw \equiv 0 \, (4)$ & $cw \equiv 2 \, (4)$ & $cw \equiv 3 \, (4)$  \\
    \hline
    \hline
    $\Sigma^{2i+2, i+1}\euscr{S}\langle 1\rangle$&    $\euscr{F}_{4i, i,2i}$ &$\Sigma^{2i+2, i+1}\euscr{D}\langle 1\rangle$  \\
        
    $\oplus$ & $\oplus$ &  \\
        
    $\bigoplus_{j=1}^{2k}\Sigma^{4j-4, 4\lfloor (j+1)/2 \rfloor-4}H$   &    $\bigoplus_{j=1}^{2k}\Sigma^{4j-4, 4\lfloor j/2 \rfloor-2}H$   & \\ 
    $\oplus $ & &\\
    $\Sigma^{2i+2, i-1} J$ & & \\
    \hline
    \end{tabular}}
    \caption{$Z_i^\mathbb{R}$ for $i = 4k+1$}
    \label{Zi i=4k+1}
    \end{table}
    For $i=4k+2$, let $Z_i^\mathbb{R}$ be the direct sum of the columns of \cref{Zi i=4k+2}: 
    \begin{table}[H]
    \centering
    \setlength{\tabcolsep}{0.5em} 
    {\renewcommand{\arraystretch}{1.2}
    \begin{tabular}{|l|l|l|}
    \hline
    $cw \equiv 0 \, (4)$ & $cw \equiv 1 \, (4)$ & $cw \equiv 2 \, (4)$  \\
    \hline
    \hline
    $\euscr{F}_{4i, i,2i}$ &$\Sigma^{2i+4, i+2}\euscr{D}\langle 2\rangle$ & $\Sigma^{2i+4, i+2}\euscr{S}\langle 2\rangle$ \\
        
    $\oplus$ & $\oplus$ & $\oplus$ \\
        
    $\bigoplus_{j=1}^{2k+1}\Sigma^{4j-4, 4\lfloor(j+1)/2 \rfloor-4}H$   &$\Sigma^{2i-2, i-1}T$&$\bigoplus_{j=1}^{2k+1}\Sigma^{4j-4, 4\lfloor j/2 \rfloor-2}H$\\  

    &  & $\oplus$ \\
    &     &  $\Sigma^{2i, i}JD \langle 1 \rangle$\\  
    \hline
    \end{tabular}}
    \caption{$Z_i^\mathbb{R}$ for $i = 4k+2$}
    \label{Zi i=4k+2}
    \end{table}    
    For $i=4k+3$, let $Z_i^\mathbb{R}$ be the direct sum of the columns of \cref{Zi i=4k+3}:
    \begin{table}[H]
    \setlength{\tabcolsep}{0.5em} 
    {\renewcommand{\arraystretch}{1.2}
    \begin{tabular}{|l|l|l|}
    \hline
    $cw \equiv 0 \, (4)$ & $cw \equiv 2 \, (4)$ & $cw \equiv 3 \, (4)$  \\
    \hline
    \hline
    $\Sigma^{2i-2, i-1}\euscr{S}\langle -1\rangle$&    $\euscr{F}_{4i, i,2i}$ &$\Sigma^{2i-2, i-1}\euscr{D}\langle -1\rangle$  \\
        
    $\oplus$ & $\oplus$ &  \\
        
    $\bigoplus_{j=1}^{2k+1}\Sigma^{4j-4, 4\lfloor j/2 \rfloor}H$ &    $\bigoplus_{j=1}^{2k+2}\Sigma^{4j-4, 4\lfloor(j+1)/2 \rfloor-2}H$   &  \\  
    \hline
    \end{tabular}}
    \caption{$Z_i^\mathbb{R}$ for $i = 4k+3$}
    \label{Zi i=4k+3}
    \end{table}    
\end{definition}
We are now prepared to state the $\mathbb{R}$-motivic analogue of \cref{C-b0(1)^i}.
\begin{thm}
\label{b_0(1) powers proof}
    There is an isomorphism of $\euscr{A}(1)^\vee_\mathbb{R}$-comodules and $\textup{Ext}^{***}_{\euscr{A}(1)^\vee_\mathbb{R}}(\mathbb{M}_2^\mathbb{R})$-modules:
    \[\frac{\textup{Ext}^{***}_{\euscr{A}(1)^\vee_\mathbb{R}}\left(B_0^\mathbb{R}(1)^{\otimes i}\right)}{v_1\textup{-torsion}} \cong Z^{\mathbb{R}}_i\]
\end{thm}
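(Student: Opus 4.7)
The plan is to argue by induction on $i$, with the base case $i=1$ already handled: the chart in \cref{R-b0(1) cw =0}--\cref{R-b0(1) cw=3} together with the module conventions of \cref{module defs} exhibits an isomorphism $\textup{Ext}_{\euscr{A}(1)^\vee_\mathbb{R}}(B_0^\mathbb{R}(1))/v_1\textup{-torsion} \cong Z_1^\mathbb{R}$. For the inductive step, we apply $\textup{Ext}_{\euscr{A}(1)^\vee_\mathbb{R}}\bigl(B_0^\mathbb{R}(1)^{\otimes i}\otimes_{\mathbb{M}_2^\mathbb{R}}-\bigr)$ to the cellular filtration of $B_0^\mathbb{R}(1)$ from \cref{aAHSS generic}. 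This yields an algebraic Atiyah--Hirzebruch spectral sequence
\[
E_1 = \textup{Ext}_{\euscr{A}(1)^\vee_\mathbb{R}}\bigl(B_0^\mathbb{R}(1)^{\otimes i}\bigr) \otimes_{\mathbb{M}_2^\mathbb{R}} \mathbb{M}_2^\mathbb{R}\{[1],[\overline{\xi}_1],[\overline{\tau}_1]\} \implies \textup{Ext}_{\euscr{A}(1)^\vee_\mathbb{R}}\bigl(B_0^\mathbb{R}(1)^{\otimes i+1}\bigr),
\]
whose differentials have the same form as in \cref{aAHSS generic}: $d_1(\alpha[3])=h_0\alpha[2]$, $d_2(\alpha[2])=h_1\alpha[0]$, and $d_3(\alpha[3])=\langle\alpha,h_0,h_1\rangle[0]$, the latter witnessed by $d_3(\rho[3])=\tau h_1[0]$ as in \cref{d3 lemma}. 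The spectral sequence collapses at $E_4$ for filtration reasons.

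Next I would use the submodule structure established in \cref{nested}: the summand of $\textup{Ext}_{\euscr{A}(1)^\vee_\mathbb{R}}(B_0^\mathbb{R}(1)^{\otimes i})$ isomorphic to $\Sigma^{4i,2i}\textup{Ext}_{\euscr{A}(1)^\vee_\mathbb{R}}(\mathbb{M}_2^\mathbb{R})\langle i\rangle$ (the module $M_i$) feeds into the spectral sequence just as the input $\textup{Ext}_{\euscr{A}(1)^\vee_\mathbb{R}}(\mathbb{M}_2^\mathbb{R})$ did in the base case $\textbf{aAHSS}(B_0^\mathbb{R}(1))$, and therefore contributes (after running the differentials) a shifted copy $\Sigma^{4(i+1),2(i+1)}\textup{Ext}_{\euscr{A}(1)^\vee_\mathbb{R}}(B_0^\mathbb{R}(1))\langle i+1\rangle$ to the target. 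Similarly, the ``tail'' of $h_0$-towers present in $Z_i^\mathbb{R}$ passes through the spectral sequence essentially unchanged (the cell attachments can only introduce additional $h_0$-towers or produce $v_1$-torsion contributions which we discard per \cref{v1 torsion}), producing the shifted $h_0$-towers appearing in $Z_{i+1}^\mathbb{R}$. The big flag piece $\euscr{F}_{4i,i,2i}$ advances to $\euscr{F}_{4(i+1),i+1,2(i+1)}$ precisely because the $d_1$, $d_2$, $d_3$ differentials realize exactly the $h_0$, $h_1$, $\langle-,h_0,h_1\rangle$-attaching maps used to build the flag (cf. \cref{big flag def}); each cycle through $i \bmod 4$ appends one more row to the tail, matching the case analysis in the tables defining $Z_i^\mathbb{R}$.

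The main obstacle is the coweight-$2$ piece, where the big staircase, the $JD$-tower, the $J$-tower, and the segment $T$ interact via the $d_3$-differential $d_3(\rho[3])=\tau h_1[0]$. I would verify these case by case: the coweight contributions of \cref{coweight organization} for each of the three surviving coweights (determined by the parity of $i$ via \cref{coweight vanish}) must be matched against the columns of \cref{Zi i=4k}--\cref{Zi i=4k+3}. Because $\textup{Ext}_{\euscr{A}(1)^\vee_\mathbb{R}}(\mathbb{M}_2^\mathbb{R})$ vanishes in coweight $cw\equiv 3\,(4)$, the coweights present in $Z_{i+1}^\mathbb{R}$ are controlled by the shift induced by tensoring with $B_0^\mathbb{R}(1)$, confirming the 2-periodic pattern of vanishing coweights in \cref{coweight vanish}.

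Finally, hidden $\text{Ext}_{\euscr{A}(1)^\vee_\mathbb{R}}(\mathbb{M}_2^\mathbb{R})$-module extensions on $E_\infty$ are resolved by three compatible inputs: $\mathbb{M}_2^\mathbb{R}$-linearity of the spectral sequence, the comparison with the $\mathbb{C}$-motivic answer $Z^\mathbb{C}_i$ via the isomorphism of \cref{bg-comod R/rho = C} (which pins down the $\rho$-periodic extensions), and reduction modulo higher powers of $\rho$ together with the $\rho$-Bockstein computations of \cite{Hil11, GHIRkoc2} used to construct the big flag (which pin down the $\rho$-torsion extensions). After discarding the $v_1$-torsion classes introduced at each step, these together identify the $E_\infty$-page with $Z_{i+1}^\mathbb{R}$, completing the induction.
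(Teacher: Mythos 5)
Your proposal is correct and follows essentially the same route as the paper: the same inductive algebraic Atiyah--Hirzebruch spectral sequence obtained by applying $\textup{Ext}_{\euscr{A}(1)^\vee_\mathbb{R}}(B_0^\mathbb{R}(1)^{\otimes i}\otimes -)$ to the cellular filtration, the same $d_1$, $d_2$, $d_3$ differentials with collapse at $E_4$, and the same resolution of hidden extensions via linearity and base change to $\mathbb{C}$. The paper simply carries out the deferred coweight-by-coweight verification explicitly for the congruence class $i\equiv 1\,(4)$ and leaves the others to the reader, which is the detailed bookkeeping your sketch postpones.
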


\begin{proof}
    The case of $i=1$ was computed in \cref{R-ksp}, and the case of $i=2$ is presented in \Cref{b0(1)^2 coweight 0}, \Cref{b0(1)^2 coweight 1}, and \Cref{b0(1)^2 coweight 2}. The general case is obtained by applying the functor $\text{Ext}^{***}_{\euscr{A}(1)^\vee_\mathbb{R}}(B_0^\mathbb{R}(1)^{\otimes i-1}\otimes -)$ to \eqref{filtration:aAHSS} and using the resulting $\textbf{aAHSS}(B_0^\mathbb{R}(1)^{\otimes i})$:
    \[\text{Ext}^{***}_{\euscr{A}(1)^\vee_\mathbb{R}}(B_0^\mathbb{R}(1)^{\otimes i-1}) \otimes \mathbb{M}_2^\mathbb{R}\{[1], [\bar{\xi}_1], [\bar{\tau}_1]\} \implies \text{Ext}^{***}_{\euscr{A}(1)^\vee_\mathbb{R}}(B_0^\mathbb{R}(1)^{\otimes i}).\]
    The differentials are determined by the attaching maps in the algebraic cell complex $B_0^\mathbb{R}(1)$ as before, hence \cref{aAHSS convergenece general} implies that $\textup{E}_4 = \textup{E}_\infty$. Hidden extensions may be obtained using complex Betti realization to the classical case, linearity over $\text{Ext}^{***}_{\euscr{A}(1)^\vee_\mathbb{R}}(\mathbb{M}_2^\mathbb{R})$, and the base change functor $-\otimes \mathbb{C}: \text{SH}(\mathbb{R}) \to \text{SH}(\mathbb{C})$ combined with the isomorphism from \cref{bg-comod R/rho = C}:
    \[\textup{Ext}^{***}_{\euscr{A}(1)^\vee_{\mathbb{R}}}(B_0^{\mathbb{R}}(k)^{\otimes i}/\rho) \cong \textup{Ext}^{***}_{\euscr{A}(1)^\vee_{\mathbb{C}}}(B_0^{\mathbb{C}}(k)^{\otimes i}).\]
    The calculations are very similar for all $i$. We give the full argument in the case of $4k+1$ and leave the rest to the reader. As an example, we depict $Z_5^\mathbb{R}$ in \Cref{Z_5 cw=0}, \Cref{Z5 cw=2}, and \Cref{Z5 cw=3}.
    
    Let $i = 4k$, and consider the $\textbf{aAHSS}(B_0^\mathbb{R}(1)^{\otimes 4k+1})$. By induction, the $\textup{E}_1$-page is isomorphic to 
    \[Z_{4k}^\mathbb{R} \otimes \mathbb{M}_2^\mathbb{R} \{[1], [\bar{\xi}_1], [\bar{\tau}_1]\}.\]
    Recall from \cref{table-coweight contribution} the contributions from each Atiyah--Hirzebruch filtration to each coweight of the $\textup{E}_\infty$-page of the spectral sequence. In particular, since $Z_{4k}^\mathbb{R}$ is trivial in coweight $cw \equiv 3 \, (4)$, the $\textup{E}_\infty$-page consist of summands from only two Atiyah--Hirzebruch filtration pieces in    
    coweights $cw \equiv 0, 1, 3 \,(4)$. We proceed by analyzing the $\textup{E}_\infty$-page of the $\textbf{aAHSS}\left(B_0^\mathbb{R}(1)^{\otimes 4k+1}\right)$ one coweight at a time.
    \newline
    
    \fbox{$\textup{E}_\infty$-\textbf{coweight $cw \equiv 0 \, (4)$}}\newline
    
    We begin by analyzing the coweight $cw \equiv 0 \, (4)$ portion of the spectral sequence. This  consists of classes coming from Atiyah--Hirzebruch filtration 0 in coweight $cw \equiv 0 \, (4)$ and from Atiyah--Hirzebruch filtration 3 in coweight  $cw \equiv 2 \, (4)$. There is a $d_1$-differential between the $h_0$-towers of Atiyah--Hirzebruch filtrations 2 and 3 in coweight $cw \equiv 2 \, (4)$. Letting $x_j$ be a generator for $\Sigma^{4j-4, 4\lfloor j/2 \rfloor -2}H$, this differential is determined by
    \[d_1(x_j [3]) = h_0 \cdot x_j[2].\]
    We depict this differential in \Cref{fig:mainproof:d1_Htowers_cw0}.
    \begin{figure}[H]
        \centering
        \includegraphics[scale=.75]{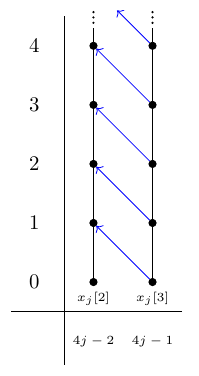}
        \caption{The $d_1$-differential between the $H$-towers from Atiyah--Hirzebruch filtrations 3 to 2 in coweight $cw \equiv 2 \, (4)$.}
        \label{fig:mainproof:d1_Htowers_cw0}
    \end{figure}    
    There is also a $d_1$-differential between big staircases $\Sigma^{2i, i}\euscr{S}$ in Atiyah--Hirzebruch filtrations 2 and 3 in coweight $cw \equiv 2 \, (4)$. The classes in each staircase in Atiyah--Hirzebruch filtration 3 which do not support a $d_1$-differential are those which do not support an $h_0$-tower. These classes cannot support any further differentials for degree reasons, and so they survive to the $\textup{E}_\infty$-page (see \Cref{R-aAHSS(b_0(1)) E1 coweight 2}).

    The $d_2$-differential between Atiyah--Hirzebruch filtrations 0 and 2 is given by the $h_1$-attaching map in $B_0^\mathbb{R}(1)$. Since the $h_0$-towers from Atiyah--Hirzebruch filtration 0 in coweight $cw\equiv 0 \, (4)$ are $h_1$-torsion, each tower survives to $\textup{E}_3$. None of these $h_0$-towers can be the target of a $d_3$-differential for degree reasons, so they also survive to the $\textup{E}_\infty$-page. 
    
    The $d_2$-differential between the big flags $\euscr{F}_{4i, i, 2i}$ in Atiyah--Hirzebruch filtrations 0 and 2 in coweight $cw \equiv 0 \, (4)$ is determined by the structure given in \cref{big flag def}. The classes in the big flag in Atiyah--Hirzebruch filtration 0 which survive to $\textup{E}_3$ are those classes which are not $h_1$-divisible. For $i=4k$, these are precisely the $h_0$-towers in stems $s \equiv i+4 \, (8)$, the $J$-towers in stems $s \equiv i \, (8)$ with Adams filtration $f \geq 1$, and the $(\rho,h_0)$-tower in Adams filtration $f=0$. For degree reasons, the $d_3$-differential is trivial on Atiyah--Hirzebruch filtration 0 in coweight $cw \equiv 0 \, (4)$ (see \Cref{R-aAHSS(b_0(1)) E2 coweight 0}).

    Thus, in coweight $cw \equiv 0 \, (4)$, the $\textup{E}_\infty$-page is given by the $h_0$-towers from Atiyah--Hirzebruch filtration 0 in coweight $cw \equiv 0\, (4)$ which do not come from the big flag: 
    \[\bigoplus_{j=1}^{k+1}\Sigma^{4j-4, 4 \lfloor (j+1)/2 \rfloor -4}H;\]
    the $h_0$-towers, $J$-towers and $(\rho, h_0)$-tower remaining from the big flag $\euscr{F}_{4i, i, 2i}$ in Atiyah--Hirzebruch filtration 0 in coweight $cw \equiv 0 \, (4)$; and the remaining classes from the big staircase $\Sigma^{2i, i}\euscr{S}$ in Atiyah--Hirzebruch filtration 3 in coweight $cw \equiv 2 \, (4)$. 
    
    Base change to $\mathbb{C}$ and linearity give hidden extensions between the $h_0$-towers and $J$-towers from Atiyah--Hirzebruch filtration 0 and the remaining classes from the big staircase in Atiyah--Hirzebruch filtration 3. To be precise, the classes in $\Sigma^{2i, i}\euscr{S}[3]$ in smallest stem and Adams filtration which survive to $\textup{E}_\infty$ are represented by 
    \[\Sigma^{2i, i}(h_1\cdot (\tau^2 h_0)) [3],\quad \Sigma^{2i, i}(h_1^2 \cdot (\tau^2 h_0)) [3],\quad \Sigma^{2i, i}(\rho a) [3],\] and their $\tau^4$-multiples. Let $x_{i-4}[0]$ denote the generator of $H \subseteq \euscr{F}_{4i, i, 2i}$ from Atiyah--Hirzebruch filtration 0 in lowest stem and Adams-filtration, and let $x_{i-5}[0]$ denote the generator of $J \subseteq \euscr{F}_{4i, i, 2i}$ from the same Atiyah--Hirzebruch filtration. Base change to $\mathbb{C}$ gives hidden extensions
    \[h_0 \cdot \Sigma^{2i, i}(h_1 \cdot (\tau^2 h_0))[3] = x_{i-4}[0], \quad h_1 \cdot \Sigma^{2i, i}(\rho a)[3] = \rho x_{i-5}[0].\]
    Then, linearity over $\text{Ext}^{***}_{\euscr{A}(1)^\vee_\mathbb{R}}(\mathbb{M}_2^\mathbb{R})$ gives us that
    \[h_1 \cdot \Sigma^{2i, i}(h_1^2 \cdot \tau^2 h_0)[3] =  h_1 \cdot \Sigma^{2i, i}(\rho^2 a)[3] = \rho \cdot h_1 \cdot \Sigma^{2i, i}(\rho a)[3] = \rho^2x_{i-5}[0]\]
    and
    \[h_0 \cdot \Sigma^{2i, i}(\rho a)[3] = h_0 \cdot \Sigma^{2i, i}(h_1(\tau h_1)^2)[3] = h_1 \cdot \Sigma^{2i, i}(h_0(\tau h_1)^2)[3] = h_1 \cdot \Sigma^{2i, i}(\rho^2 a)[3] = \rho^2 x_{i-5}[0].\]
    Multiplication by $b$ gives the same extensions throughout the $\textup{E}_\infty$-page. Altogether, this gives the summand
    \[\Sigma^{2i+4, i+2}\euscr{S} \langle 1 \rangle.\]
    For degree reasons, the elements in the $(\rho, h_0)$-tower from Atiyah--Hirzebruch filtration 0 which are divisible by $\rho^3$ must be $v_1$-torsion by inspection. Since our result is modulo this torsion, this gives the last summand
    \[\Sigma^{2i-2, i-1}J.\]
    The reader may benefit from consulting the chart in \Cref{R-b0(1) cw =0}.
    
    As an example, \Cref{Z_5 cw=0} depicts the coweight $cw \equiv 0 \, (2)$ portion of $Z_5^\mathbb{R}$. In this case, we have:
    \begin{itemize}
        \item the $h_0$-towers in stems 0 and 4 come from Atiyah--Hirzebruch filtration 0 in coweight $cw \equiv 0 \, (4)$;
        \item the $J$-tower in stem 8 is the $v_1$-torsion free component of the $(\rho, h_0)$-tower in Atiyah--Hirzebruch filtration 0;
        \item the staircase $\euscr{S}\langle 1\rangle$ in stem 12 consists of the $h_0$-towers and $J$-towers remaining from the big flag in Atiyah--Hirzebruch filtration 0 and the remainder of the big staircase from Atiyah--Hirzebruch filtration 3, together with hidden extensions.
    \end{itemize}
    \begin{figure}[H]
        \centering
        \includegraphics[scale=.5]{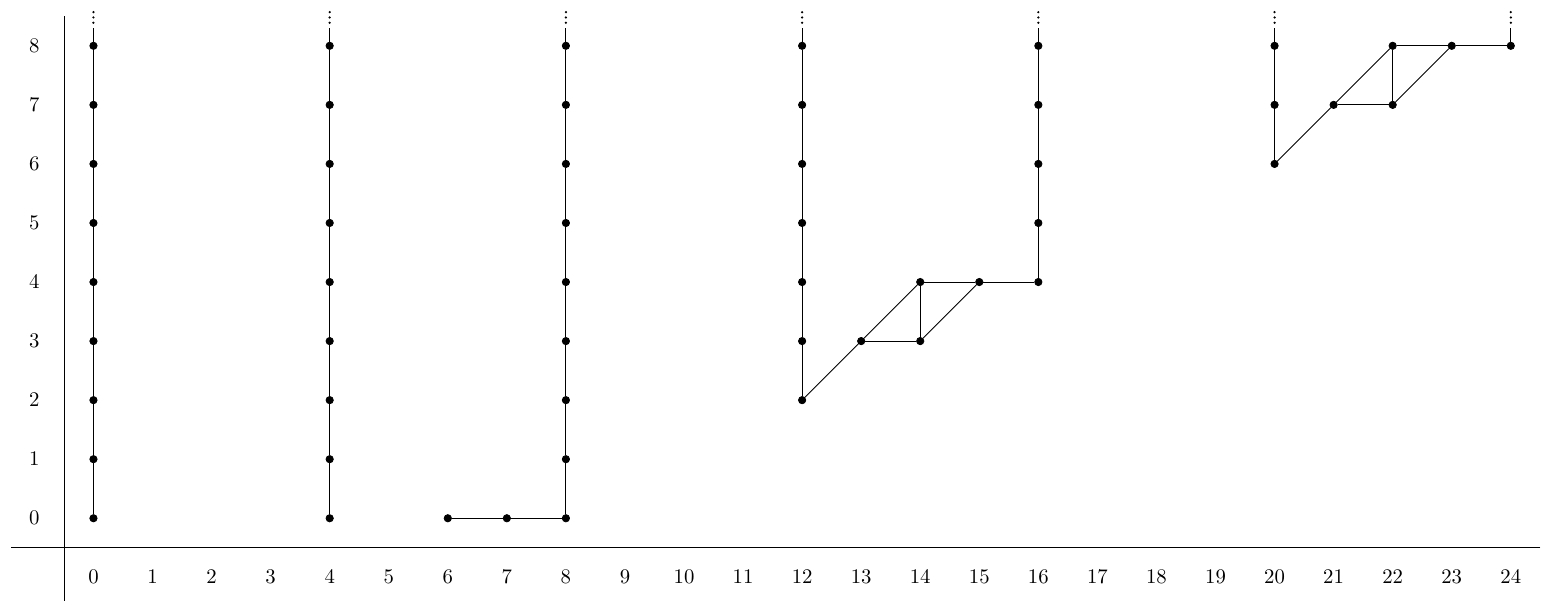}
        \caption{$Z_{5}^\mathbb{R}$ in $cw \equiv 0 \, (4)$.}
        \label{Z_5 cw=0}
    \end{figure}
    
    \fbox{$\textup{E}_\infty$-\textbf{coweight $cw \equiv 1 \, (4)$}}\newline

    The coweight $cw \equiv 1 \, (4)$ portion of the spectral sequence consists of classes coming from Atiyah--Hirzebruch filtration 0 in coweight $cw \equiv 1 \, (4)$ and from Atiyah--Hirzebruch filtration 2 in coweight $cw \equiv 0 \, (4)$. The $d_1$-differential between Atiyah--Hirzebruch filtrations 2 and 3 kills all of the classes from Atiyah--Hirzebruch filtration 2 in coweight $cw \equiv 0 \, (4)$ which are divisible by $h_0$ (see \Cref{R-aAHSS(b_0(1)) E1 coweight 0} for the differential on the big flag summands and \Cref{fig:mainproof:d1_Htowers_cw0} for the differentials on the $h_0$-towers). The $d_2$-differential between Atiyah--Hirzebruch filtrations 0 and 2 kills all of the classes from Atiyah--Hirzebruch filtration 2 in coweight $cw \equiv 0 \, (4)$ which support multiplication by $h_1$ (see \Cref{R-aAHSS(b_0(1)) E2 coweight 0}). By observation, we see that every class remaining from the big flag $\euscr{F}_{4i, i, 2i}$ on the $\textup{E}_2$-page supports $h_1$-multiplication. Moreover, the Atiyah--Hirzebruch on $B_0^\mathbb{R}(1)$ filtration forces $\textup{E}_3=\textup{E}_\infty$ on Atiyah--Hirzebruch filtration 2, so the only classes which survive are the bases of the $h_0$-towers 
    \begin{equation}
    \label{eq:cw1summand}
    \bigoplus_{j=1}^{2k}\Sigma^{4j-4, 4 \lfloor(j+1)/2 \rfloor -4}\mathbb{F}_2[\tau^4].
    \end{equation}
    
    The $d_2$-differential between Atiyah--Hirzebruch filtrations 0 and 2 kills all of the classes from Atiyah--Hirzebruch filtration 0 in coweight $cw \equiv 1 \, (4)$ which are divisible by $h_1$. This can be recovered by appropriately shifting the analogous $d_2$-differential in the $\textbf{aAHSS}(B_0^\mathbb{R}(1))$ (see \Cref{R-aahss(b_0(1)) E2 coweight 1}). The $d_3$-differential between Atiyah--Hirzebruch filtrations 0 and 3 is determined by the formula 
    \[d_3(\rho[3]) = (\tau h_1)[0]\]
    established in \cref{d3 lemma}. In particular, each remaining class from Atiyah--Hirzebruch filtration 0 in coweight $cw \equiv 1$ is the target of a differential. Similar to the $d_2$-differential just mentioned, this can be recovered by appropriately shifting the analogous $d_3$-differential in the $\textbf{aAHSS}(B_0^\mathbb{R}(1))$ (see \Cref{R-aAHSS(b0(1)) AHF=3 CW=0}). Thus, there are no contributions to the $\textup{E}_\infty$-page in coweight $cw \equiv 1 \, (4)$ from Atiyah--Hirzebruch filtration 0. For degree reasons, the classes in the summand \eqref{eq:cw1summand} are $v_1$-torsion, hence the $\textup{E}_\infty$-page is trivial in coweight $cw \equiv 1 \, (4)$.
    \newline
    
    \fbox{$\textup{E}_\infty$-\textbf{coweight $cw \equiv 2 \, (4)$}}\newline

    The coweight $cw \equiv 2 \, (4)$ portion of the spectral sequence consists of classes coming from Atiyah--Hirzebruch filtration 0 in coweight $cw \equiv 2 \, (4)$, from Atiyah--Hirzebruch filtration 2 in coweight $cw \equiv 1 \, (4)$, and from Atiyah--Hirzebruch filtration 3 in coweight $cw \equiv 0 \, (4)$.
    The $d_2$-differential between Atiyah--Hirzebruch filtrations 0 and 2 kills all of the classes from Atiyah--Hirzebruch filtration 0 in coweight $cw \equiv 2 \, (4)$ which are divisible by $h_1$. Notice that while the behavior on the big staircase $\Sigma^{2i, i}\euscr{S}$ can be recovered from the analogous differential in the $\textbf{aAHSS}(B_0^\mathbb{R}(1))$ (see \Cref{R-aAHSS(b_0(1)) E2 coweight 2}), the summand of $H$-towers
    \begin{equation}
    \label{eq:cw2Htowers}
    \bigoplus_{j=1}^{2k}\Sigma^{4j-4, 4 \lfloor j/2 \rfloor -2}H
    \end{equation}
    in Atiyah--Hirzebruch filtration 0 is entirely $h_1$-torsion, hence survives to $\textup{E}_3$. The $d_3$-differential between Atiyah--Hirzebruch filtration 0 in coweight $cw \equiv 2 \, (4)$ and Atiyah--Hirzebruch filtration 3 $cw \equiv 1 \, (4)$ can be recovered from the $\textbf{aAHSS}(B_0^\mathbb{R}(1))$ (see \Cref{R-aAHSS(b0(1)) AHF=3 CW=1}). Notice that there can be no $d_3$ differentials whose target are the $h_0$-towers in \eqref{eq:cw2Htowers} for degree reasons. This determines the contributions to the $\textup{E}_\infty$-page in coweight $cw \equiv 2 \, (4)$ coming from Atiyah--Hirzebruch filtration 0.

    The $d_1$-differential between Atiyah--Hirzebruch filtrations 2 and 3 in coweight $cw \equiv 1 \, (4)$ can be recovered from the $\textbf{aAHSS}(B_0^\mathbb{R}(1))$ (see \Cref{R-aAHSS(b_0(1)) E1  coweight 1}). Similarly, the $d_2$-differential between Atiyah--Hirzebruch filtrations 0 and 2 in coweight $cw \equiv 1 \, (4)$ can be recovered from the $\textbf{aAHSS}(B_0^\mathbb{R}(1))$ (see \Cref{R-aahss(b_0(1)) E2 coweight 1}). For degree reasons, we have that $\textup{E}_3=\textup{E}_\infty$ on Atiyah--Hirzebruch filtration 2, so this determines the contributions to the $\textup{E}_\infty$-page in coweight $cw \equiv 2 \, (4)$ from Atiyah--Hirzebruch filtration 2.

    The $d_1$-differential between Atiyah--Hirzebruch filtrations 2 and 3 kills all classes from Atiyah--Hirzebruch filtration 3 in coweight $cw \equiv 0 \, (4)$ which support $h_0$-multiplication. In particular, this kills the summand $\bigoplus_{j=1}^{2k}\Sigma^{4j-4, r \lfloor(j+1)/2 \rfloor -4}H$ and all $h_0$-towers in the big flag $\euscr{F}_{4i, i, 2i}$. The $d_3$-differential between Atiyah--Hirzebruch filtrations 0 and 3 is determined by the formula 
    \[d_3(\rho[3]) = (\tau h_1)[0]\]
    established in \cref{d3 lemma}. In particular, using the notation of \cref{big flag def}, we have (see \Cref{R-aAHSS(b0(1)) AHF=3 CW=0})
    \[d_3(\rho \cdot x_{4k}[3]) = (\Sigma^{2i, i}\tau h_1)[0].\]
    Linearity gives the rest of the differentials. Note that this is the differential which ensures that Atiyah--Hirzebruch filtration 0 is trivial in coweight $cw \equiv 1 \, (4)$ on the $\textup{E}_\infty$-page.

    Thus, in coweight $cw \equiv 2 \, (4)$, the $\textup{E}_\infty$-page is given by the remaining $h_0$-towers from the big staircase $\Sigma^{2i, i}\euscr{S}$ from Atiyah--Hirzebruch filtration 0 in coweight $cw \equiv 2 \, (4)$ and the $h_0$-towers 
    \[\bigoplus_{j=1}^{2k}\Sigma^{4j-4, 4\lfloor j/2 \rfloor -2}H\]
    which do not come from the big staircase from Atiyah--Hirzebruch filtration 0 in coweight $cw \equiv 2 \, (4)$; the remaining $\mathbb{F}_2[\tau^4]$'s from Atiyah--Hirzebruch filtration 2 in coweight $cw \equiv 1 \, (4)$; and the remaining submodule of the big flag $\euscr{F}_{4i, i, 2i}$ from Atiyah--Hirzebruch filtration 3 in coweight $cw \equiv 0 \, (4)$. Base change to $\mathbb{C}$ and linearity give the remaining hidden extensions, which we discuss now. 
    
    There is a hidden $h_0$-extension between each $\mathbb{F}_2[\tau^4]$ from Atiyah--Hirzebruch filtration 2 and the $h_0$-tower's concentrated in stems $s \equiv i+4 \, (8)$ from  Atiyah--Hirzebruch filtration 0. Additionally, there is a hidden $h_0$-extension between each $\mathbb{F}_2[\tau^4]$ from Atiyah--Hirzebruch filtration 2 and the remaining submodule of the big flag $\euscr{F}_{4i, i, 2i}$ from Atiyah--Hirzebruch filtration 3. For example, there is an $h_0$-extension 
    \[h_0 \cdot h_1x_{4k}[3]= \Sigma^{2i, i}(h_1 \cdot \tau h_1)[2].\]
    Finally, there is a hidden $h_0$-extension between the $h_0$-towers concentrated in stems $s \equiv i \, (8)$ from Atiyah--Hirzebruch filtration 0 and the remaining submodule of the big flag $\euscr{F}_{4i, i, 2i}$ from Atiyah--Hirzebruch filtration 3. For example, there is an $h_0$-extension 
    \[h_0 \cdot \rho^3x_{4k}[3] = \Sigma^{2i, i}(\tau^2h_0)[0].\]
    By inspection, we see that the hidden extensions assemble all of the remaining classes on the $\textup{E}_\infty$-page into the big flag
    \[\euscr{F}_{4i+4, 1+1, 2i+2}.\]
    The reader may benefit from consulting the chart in \Cref{R-b0(1) cw=2}.
    
    As an example, \Cref{Z5 cw=2} depicts the coweight $cw \equiv 2 \, (4) $ portion of $Z_5^\mathbb{R}$. In this case, we have:
    \begin{itemize}
        \item the $h_0$-towers in stems 0 and 4 come from the $h_0$-towers in Atiyah--Hirzebruch filtration 0 in coweight $cw \equiv 2 \, (4)$ which do not come from the big staircase;
        \item the big flag $\euscr{F}_{16, 4, 8}$ consists of the $h_0$-towers from the big staircase from Atiyah--Hirzebruch filtration 0 in coweight $cw \equiv 2 \, (4)$, the remaining $\mathbb{F}_2[\tau^4]$'s from Atiyah--Hirzebruch filtration 2 in coweight $cw \equiv 1 \, (4)$, and the remaining submodule of the big flag from Atiyah--Hirzebruch filtration 3 in coweight $cw \equiv 0 \, (4)$, together with hidden extensions.
    \end{itemize}
    \begin{figure}[h]
        \centering
        \includegraphics[scale=.5]{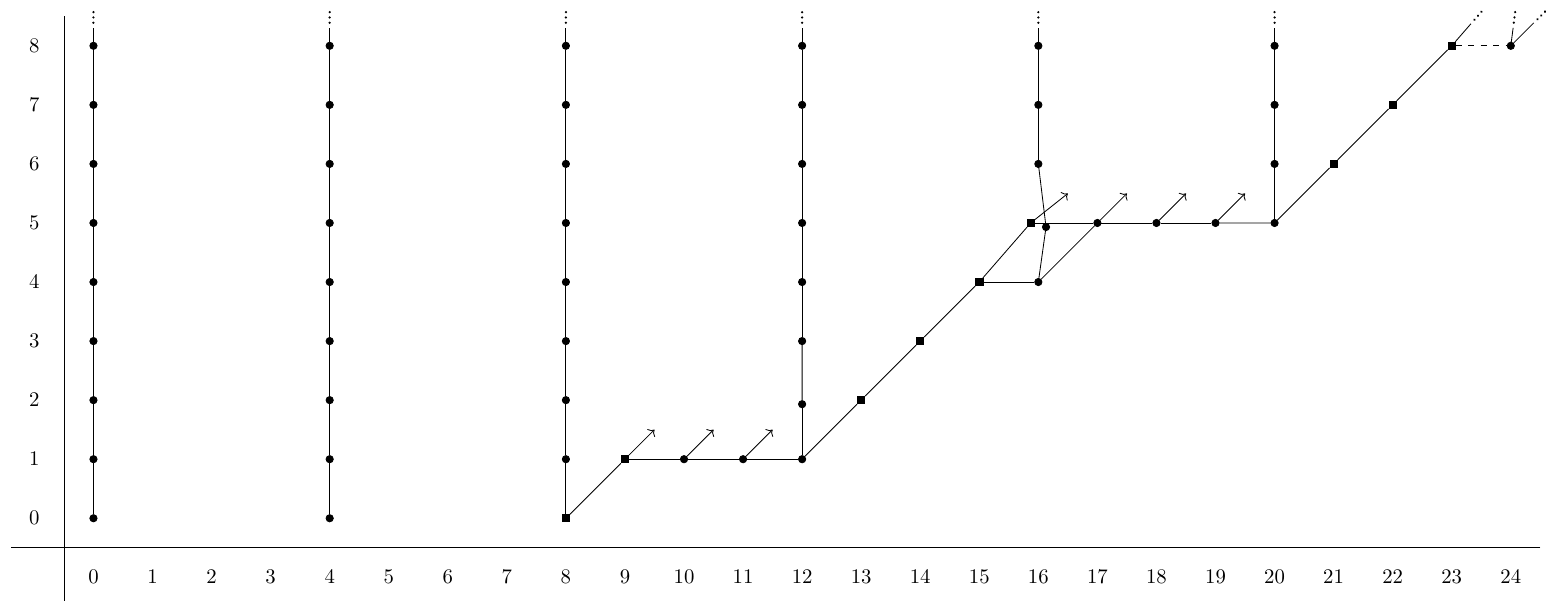}
        \caption{$Z_5^\mathbb{R}$ in $cw \equiv 2 \, (4)$.}
        \label{Z5 cw=2}
    \end{figure}

    \fbox{$\textup{E}_\infty$-\textbf{coweight} $cw \equiv 3 \, (4)$}\newline

    Finally, the coweight $cw \equiv 3 \, (4)$ portion of the spectral sequence consists of classes coming from Atiyah--Hirzebruch filtration 2 in coweight $cw \equiv 2 \, (4)$ and from Atiyah--Hirzebruch filtration 3 in coweight $cw \equiv 1 \, (4)$. The $d_1$-differential between Atiyah--Hirzebruch filtrations 2 and 3 kills all classes from Atiyah--Hirzebruch filtration 2 in coweight $cw \equiv 2 \, (4)$ which are $h_0$-divisible. In particular, only the bases of the $h_0$-towers $\bigoplus_{j=1}^{2k} \Sigma^{4j-4, 4\lfloor j/2 \rfloor -2}H$ survive, to the $\textup{E}_2$-page:
    \begin{equation}
    \label{eq:main:cw3}
    \bigoplus_{j=1}^{2k}\Sigma^{4j-4, 4 \lfloor j/2 \rfloor -2}\mathbb{F}_2[\tau^4],
    \end{equation}
    and only bases of the $h_0$-towers in the big staircase $\Sigma^{2i, i}\euscr{S}$, as well as the classes that are not $h_0$-divisible, survive (see \Cref{R-aAHSS(b_0(1)) E1 coweight 2} for the differential on the big staircase summands and \Cref{fig:mainproof:d1_Htowers_cw0} for the differential on the $h_0$-towers). The $d_2$-differential between Atiyah--Hirzebruch filtrations 0 and 2 kills all classes from Atiyah--Hirzebruch filtration 2 in coweight $cw \equiv 2 \, (4)$ which support $h_1$-multiplication. The classes in \eqref{eq:main:cw3} do not support $h_1$-multiplication, and neither do the classes $b^na$ and  $b^n\rho a$ remaining from the staircases (see \Cref{R-aAHSS(b_0(1)) E2 coweight 2}), so they survive to the $\textup{E}_3$-page. For degree reasons, we have that $\textup{E}_3=\textup{E}_\infty$ on Atiyah--Hirzebruch filtration 2, so this determines the contributions to the $\textup{E}_\infty$-page in coweight $cw \equiv 3 \, (4)$ from Atiyah--Hirzebruch filtration 2.

    The $d_1$-differential between Atiyah--Hirzebruch filtrations 2 and 3 kills all classes from Atiyah--Hirzebruch filtration 3 in coweight $cw \equiv 1 \, (4)$ which support $h_0$-multiplication. This can be recovered from the $\textbf{aAHSS}(B_0^\mathbb{R}(1))$ (see \Cref{R-aAHSS(b_0(1)) E1  coweight 1}). The $d_3$-differential between Atiyah--Hirzebruch filtrations 3 and 0 is only nonzero on the bottom left corner of the remnants of the diamond (see \Cref{R-aAHSS(b0(1)) AHF=3 CW=1}). This determines the contributions to the $\textup{E}_\infty$-page in coweight $cw \equiv 3 \, (4)$ from Atiyah--Hirzebruch filtration 3.

    Thus, in coweight $cw \equiv 1 \, (4)$, the $\textup{E}_\infty$-page is given by the segments $T$ coming from Atiyah--Hirzebruch filtrations 2 and 3 and the bases of the $h_0$-towers from Atiyah--Hirzebruch filtration 2. These segments are connected by hidden extensions in the following way. Let $\Sigma^{2i, i}(h_1(\tau h_1))[3]$ and $\Sigma^{2i, i}\rho h_1 (\tau h_1)[3]$ be the classes remaining from Atiyah--Hirzebruch filtration 3 in lowest Adams filtration, and let $\Sigma^{2i, i}a[2]$ and $\Sigma^{2i, i}\rho a [2]$ be the classes remaining from Atiyah--Hirzebruch filtration 2 in lowest Adams filtration. Base change to $\mathbb{C}$ gives a hidden extension
    \[h_1 \cdot \Sigma^{2i, i}( h_1 (\tau h_1))[3] = a[2].\]
    Linearity over $\text{Ext}^{***}_{\euscr{A}(1)^\vee_\mathbb{R}}(\mathbb{M}_2^\mathbb{R})$ gives us that
    \[h_1 \cdot \Sigma^{2i, i}(\rho h_1(\tau h_1))[3] = \rho a[2]\]
    and
    \[h_0 \cdot \Sigma^{2i, i}(h_1 (\tau h_1))[3] = h_1 \cdot \Sigma^{2i, i}(h_0 (\tau h_1))[3]=h_1 \cdot \Sigma^{2i, i}(\rho h_1 (\tau h_1))[3] = \rho a [2].\]
    Multiplication by $b$ gives the same extensions throughout the $\textup{E}_\infty$-page. Altogether, this gives the summand
    \[\Sigma^{2i+4, i+2} \euscr{D} \langle 1 \rangle.\]
    For degree reasons, the bases of the $h_0$-towers from Atiyah--Hirzebruch filtration 2 are $v_1$-torsion. The reader may benefit from consulting the chart in \Cref{R-b0(1) cw=3}.
    
    As an example, \Cref{Z5 cw=3} depicts the coweight $cw \equiv 3 \, (4)$ portion of $Z_5^\mathbb{R}$. In this case, we have only a $\euscr{D} \langle 1 \rangle$ in stem 13 which consists of the segments from Atiyah--Hirzebruch filtration 2 in coweight $cw \equiv 2 \, (4)$ and from Atiyah--Hirzebruch filtration 3 in coweight $cw \equiv 1 \, (4)$, together with hidden extensions.
    
    \begin{figure}[h]
        \centering
        \includegraphics[scale=.5]{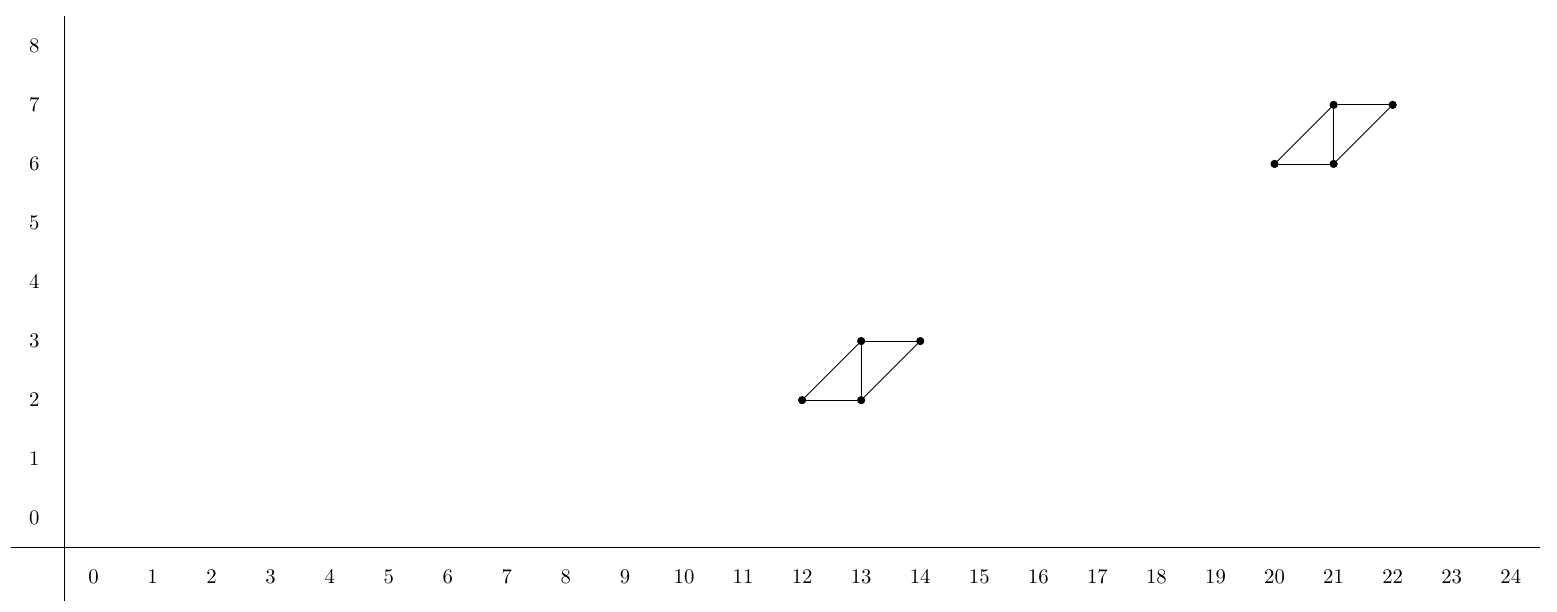}
        \caption{$Z_5^\mathbb{R}$ in $cw \equiv 3 \, (4)$.}
        \label{Z5 cw=3}
    \end{figure}

    Thus, we have shown that for $i=4k+1$, $\text{Ext}^{***}_{\euscr{A}(1)^\vee_\mathbb{R}}\left(B_0^\mathbb{R}(1)^{\otimes i}\right)/v_1\text{-torsion}$ is given by the direct sum of columns of \cref{Zi i=4k+1}. The other congruence classes are proved in a similar fashion.
\end{proof}

\begin{remark}
    It is possible that one can recover \cref{b_0(1) powers proof} by using the $\rho$-Bockstein spectral sequence and \cite[Lemma 3.36]{CQ21}. This spectral sequence has signature
    \[\textup{E}_1 = \text{Ext}^{***}_{\euscr{A}(1)^\vee_\mathbb{C}}\left(B_0^\mathbb{C}(1)^{\otimes i}\right)[\rho] \implies \text{Ext}^{***}_{\euscr{A}(1)^\vee_\mathbb{R}}\left(B_0^\mathbb{R}(1)^{\otimes i}\right).\]
    While we do not analyze this spectral sequence here, we remark that the extension problems we encounter throughout the algebraic Atiyah--Hirzebruch process are the same extension problems one encounters in the $\rho$-Bockstein spectral sequence.
\end{remark}

\section{The ring of cooperations}
\label{section-Rcoop}
In this section, we continue the inductive procedure and compute the groups $\text{Ext}^{***}_{\euscr{A}(1)^\vee_\mathbb{R}}(B_0^\mathbb{R}(k))$. Then, we assemble our results to compute the ring of cooperations $\pi_{**}^\mathbb{R}(\text{kq} \otimes \text{kq})$. As an application, we determine the structure of the $\text{E}_1$-page of the $\mathbb{R}$-motivic kq-resolution.

\subsection{$\text{Ext}^{***}_{\euscr{A}(1)^\vee_\mathbb{R}}(B_0^\mathbb{R}(k))$}
\label{section for b0R(k)}
The goal of this section is to compute $\text{Ext}^{***}_{\euscr{A}(1)^\vee_\mathbb{R}}(B_0^\mathbb{R}(k))$. We begin with the following lemma.
\begin{lemma}
\label{lemma about b1(k)}
    There is an isomorphism of $\euscr{A}(1)^\vee_\mathbb{R}$-comodules and $\textup{Ext}^{***}_{\euscr{A}(1)^\vee_\mathbb{R}}(\mathbb{M}_2^\mathbb{R})$-modules:
    \[\textup{Ext}^{***}_{\euscr{A}(0)^\vee_\mathbb{R}}(B_1^\mathbb{R}(k)) \cong \bigoplus_{j=0}^{k}\Sigma^{4j, 2j}\mathbb{F}_2[\rho, \tau^2, h_0]/(\rho h_0).\]
\end{lemma}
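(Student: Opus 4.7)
The plan is to identify $\euscr{A}(0)^\vee_\mathbb{R}$ as the exterior Hopf algebroid $\mathbb{M}_2^\mathbb{R}[\overline{\tau}_0]/(\overline{\tau}_0^2)$ with the Hopf algebroid structure $\eta_R(\tau) = \tau + \rho \overline{\tau}_0$ inherited from $\euscr{A}^\vee_\mathbb{R}$ (see \cref{dual motivic steenrod}), decompose $B_1^\mathbb{R}(k)$ as an $\euscr{A}(0)^\vee_\mathbb{R}$-comodule into $k+1$ trivial summands and a free remainder, and then read off the Ext groups.

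First I would establish $\textup{Ext}_{\euscr{A}(0)^\vee_\mathbb{R}}(\mathbb{M}_2^\mathbb{R}) \cong \mathbb{F}_2[\rho, \tau^2, h_0]/(\rho h_0)$ by a short cobar calculation: the differential $d(\tau) = \eta_R(\tau) - \eta_L(\tau) = \rho \overline{\tau}_0$ replaces $\tau$ by $\tau^2$ as a polynomial generator in Adams filtration $0$ and makes $\rho h_0$ a boundary. Next, restricting the $\euscr{A}^\vee_\mathbb{R}$-comultiplication to $B_1^\mathbb{R}(k) \subseteq (\euscr{A} \modmod \euscr{A}(1))^\vee_\mathbb{R}$ and projecting onto $\euscr{A}(0)^\vee_\mathbb{R}$, the formulae of \cref{dual motivic steenrod} show that every $\overline{\xi}_i^n$ is $\euscr{A}(0)^\vee_\mathbb{R}$-primitive (as $\overline{\xi}_i$ projects to zero) while $\psi(\overline{\tau}_i) = 1 \otimes \overline{\tau}_i + \overline{\tau}_0 \otimes \overline{\xi}_i$ for $i \geq 2$. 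Consequently, each pair $\{\overline{\xi}_i, \overline{\tau}_i\}$ with $i \geq 2$ generates a free rank-one $\euscr{A}(0)^\vee_\mathbb{R}$-subcomodule, isomorphic to $\euscr{A}(0)^\vee_\mathbb{R}$ shifted to the bidegree of $\overline{\xi}_i$.

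The key step is the decomposition
\[
B_1^\mathbb{R}(k) \;\cong\; \bigoplus_{j=0}^{k} \Sigma^{4j, 2j}\, \mathbb{M}_2^\mathbb{R} \;\oplus\; F_k
\]
as $\euscr{A}(0)^\vee_\mathbb{R}$-comodules, where the $j$-th trivial summand is $\mathbb{M}_2^\mathbb{R} \cdot \overline{\xi}_1^{2j}$ (these lie in $B_1^\mathbb{R}(k)$ precisely because $wt(\overline{\xi}_1^{2j}) = 4j \leq 4k$ and sit in bidegree $(4j, 2j)$) and $F_k$ is a direct sum of free rank-one $\euscr{A}(0)^\vee_\mathbb{R}$-subcomodules. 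To produce $F_k$, I would introduce the derivation $\delta$ on $(\euscr{A} \modmod \euscr{A}(1))^\vee_\mathbb{R}$ that extracts the $\overline{\tau}_0$-component of the coaction, so $\delta(\tau) = \rho$, $\delta(\overline{\xi}_i^n) = 0$, $\delta(\overline{\tau}_i) = \overline{\xi}_i$, and $\delta^2 = 0$, then filter monomials of $B_1^\mathbb{R}(k)$ by the total number of $\overline{\tau}_i$-factors ($i \geq 2$) and pair each monomial $m$ containing an $\overline{\tau}_i$ with its $\delta$-image to isolate the free rank-one summands.

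Finally, applying $\textup{Ext}_{\euscr{A}(0)^\vee_\mathbb{R}}(-)$: each trivial summand contributes $\Sigma^{4j, 2j}\, \mathbb{F}_2[\rho, \tau^2, h_0]/(\rho h_0)$, while each free rank-one summand contributes only a single $\mathbb{M}_2^\mathbb{R}$ concentrated in Adams filtration $0$ (by Shapiro/change of rings), which becomes $v_1$-torsion once we pass back to $\textup{Ext}_{\euscr{A}(1)^\vee_\mathbb{R}}(B_1^\mathbb{R}(k) \otimes_{\mathbb{M}_2^\mathbb{R}} (\euscr{A}(1) \modmod \euscr{A}(0))^\vee_\mathbb{R})$ and is invisible under the running convention of \cref{v1 torsion}. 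The main obstacle I anticipate is the explicit construction of $F_k$: because the relations $\overline{\tau}_i^2 = \rho \overline{\tau}_{i+1} + \tau \overline{\xi}_{i+1}$ (after killing $\overline{\tau}_0$) entangle products of $\overline{\tau}_i$'s, a careful monomial basis must be chosen so that every $\euscr{A}(0)^\vee_\mathbb{R}$-primitive outside the line $\{\overline{\xi}_1^{2j}\}_{j=0}^{k}$ lies in the image of $\delta$, and therefore contributes to a free rather than an additional trivial summand.
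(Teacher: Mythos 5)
Your strategy is correct but genuinely different from the paper's. The paper filters the cobar complex $C_{\euscr{A}(0)^\vee_\mathbb{R}}(B_1^\mathbb{R}(k))$ by powers of $\rho$ to get a $\rho$-Bockstein spectral sequence with $E_1 = \text{Ext}_{\euscr{A}(0)^\vee_\mathbb{C}}(B_1^\mathbb{C}(k))[\rho]$, imports the $\mathbb{C}$-motivic answer from \cite[Remark 3.25]{CQ21} (already taken modulo $v_1$-torsion), and runs the single family of differentials $d_1(\Sigma^{4j,2j}\tau) = \Sigma^{4j,2j}(\rho h_0)$ plus Leibniz. You instead decompose $B_1^\mathbb{R}(k)$ directly as an $\euscr{A}(0)^\vee_\mathbb{R}$-comodule into the trivial summands $\mathbb{M}_2^\mathbb{R}\cdot\overline{\xi}_1^{2j}$ and a free remainder, and apply Shapiro. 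The two routes encode the same two facts --- the $h_0$-towers sit on the $\overline{\xi}_1^{2j}$, and the relation $\rho h_0 = 0$ comes from $\eta_R(\tau)-\eta_L(\tau) = \rho\overline{\tau}_0$ --- but yours is self-contained (it reproves the Culver--Quigley input rather than citing it) and it makes visible exactly which classes are being discarded: the filtration-$0$ contributions of the free summands, which both you and the paper silently drop as $v_1$-torsion even though the lemma is stated as an isomorphism on the nose. The paper's route buys brevity and avoids all comodule structure theory over the Hopf algebroid.

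Two cautions on the step you yourself flag as the obstacle. First, your pairing argument needs more than matching each monomial with its $\delta$-image, since $\delta$ of a monomial containing several $\overline{\tau}_i$-factors is a sum of monomials; the clean way to finish is to observe that $\delta^2=0$, that the Mahowald weight is preserved by $\delta$ (so the truncation defining $B_1^\mathbb{R}(k)$ is $\delta$-stable and creates no new homology), and that the $\delta$-homology of $(\euscr{A}\modmod\euscr{A}(1))^\vee$ is exactly $\mathbb{F}_2[\overline{\xi}_1^2]$ by a Koszul-type computation --- then the standard splitting of a bounded-below comodule over an exterior coalgebra into free summands plus its Margolis homology gives $F_k$. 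Second, over $\mathbb{R}$ the operator $\delta$ is only $\mathbb{F}_2[\rho]$-linear, not $\mathbb{M}_2^\mathbb{R}$-linear (because $\delta(\tau)=\rho$), so the exterior-algebra structure theory does not apply verbatim; you should either check by hand that your explicit spans are subcomodules whose direct sum exhausts $B_1^\mathbb{R}(k)$, or first perform the splitting modulo $\rho$ (where $\euscr{A}(0)^\vee_\mathbb{C}$ is an honest Hopf algebra) and lift --- at which point you have essentially rediscovered the paper's $\rho$-Bockstein argument.
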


\begin{proof}
    Let $C_{\euscr{A}(0)_{\mathbb{R}}^\vee}(B_1^\mathbb{R}(k))$ denote the cobar complex computing $\text{Ext}^{***}_{\euscr{A}(0)^\vee_\mathbb{R}}(B_1^\mathbb{R}(k))$. This is a cosimplicial ring which takes the form
    \[\begin{tikzcd}
	{B_1^\mathbb{R}(k)} & {\euscr{A}(0)^\vee_\mathbb{R} \otimes B_1^\mathbb{R}(k)} & {\euscr{A}(0)^\vee_\mathbb{R} \otimes \euscr{A}(0)^\vee_\mathbb{R} \otimes B_1^\mathbb{R}(k)} & \cdots
	\arrow[from=1-1, to=1-2]
	\arrow[from=1-2, to=1-3]
	\arrow[from=1-3, to=1-4]
    \end{tikzcd}\]
    after totalization. Since the coaction of $\euscr{A}(0)^\vee_\mathbb{R}$ on $B_1^\mathbb{R}(k)$ fixes $\rho$, all of the maps in $C_{\euscr{A}(0)^\vee_\mathbb{R}}(B_1^\mathbb{R}(k))$ are $\rho$-linear. This implies that the filtration by powers of $\rho$
    \[C_{\euscr{A}(0)^\vee_\mathbb{R}}(B_1^\mathbb{R}(k)) \supset \rho \cdot C_{\euscr{A}(0)^\vee_\mathbb{R}}(B_1^\mathbb{R}(k)) \supset \rho^2 \cdot C_{\euscr{A}(0)^\vee_\mathbb{R}}(B_1^\mathbb{R}(k)) \supset \cdots\]
    is a filtration of chain complexes. Moreover, we have the following identifications
    \[\begin{array}{ccc}
        \mathbb{M}_2^\mathbb{R}/\rho \cong \mathbb{M}_2^\mathbb{C}, & \euscr{A}(0)^\vee_\mathbb{R}/\rho = \euscr{A}(0)^\vee_\mathbb{C}, &  B_1^\mathbb{R}(k)/\rho =  B_1^\mathbb{C}(k).
    \end{array} \]
    Thus, the filtration on $C_{\euscr{A}(0)^\vee_\mathbb{R}}(B_1^\mathbb{R}(k))$ gives a $\rho$-Bockstein spectral sequence (see \cref{comparison with C}) of the form:
    \[\textup{E}_1 = \text{Ext}^{***}_{\euscr{A}(0)^\vee_\mathbb{C}}( B_1^\mathbb{C}(k))[\rho] \implies \text{Ext}^{***}_{\euscr{A}(0)^\vee_\mathbb{R}}(B_1^\mathbb{R}(k)).\]
    The groups $\text{Ext}^{***}_{\euscr{A}(0)^\vee_\mathbb{C}}(B_1^\mathbb{C}(k))$ were calculated by Culver-Quigley \cite[Remark 3.25]{CQ21}. Modulo $v_1$-torsion, we have
    \[\text{Ext}^{***}_{\euscr{A}(0)^\vee_\mathbb{C}}(B_1^\mathbb{C}(k)) \cong \bigoplus_{j=0}^k\Sigma^{4j, 2j}\mathbb{M}^\mathbb{C}_2[h_0].\]
    In the case of $k=0$, we have that $B_1^\mathbb{C}(0) \cong \mathbb{M}_2^\mathbb{C}$, and this $\rho$-Bockstein spectral sequence was completely determined by Hill \cite[Theorem 3.1]{Hil11}. We can rewrite the $\textup{E}_1$-page as
    \[\textup{E}_1 = \text{Ext}^{***}_{\euscr{A}(0)^\vee_\mathbb{C}}(\mathbb{M}_2^\mathbb{C})[\rho] = \mathbb{M}_2^\mathbb{C}[h_0][\rho],\]
    with a differential $d_1(\tau) = \rho h_0$. The Liebniz rule immediately determines the end result:
    \[\text{Ext}^{***}_{\euscr{A}(0)^\vee_\mathbb{R}}(\mathbb{M}_2^\mathbb{R}) \cong \mathbb{F}_2[\rho, \tau^2, h_0]/(\rho h_0).\]
    More generally, the $\textup{E}_1$-page of the $\rho$-Bockstein takes the form
    \[\textup{E}_1 = \bigoplus_{j=0}^k \Sigma^{4j, 2j}\mathbb{M}_2^\mathbb{C}[h_0][\rho],\]
    and the differentials can be determined using the cobar complex. In particular, there are  differentials 
    \[d_1(\Sigma^{4j, 2j}\tau) = \Sigma^{4j, 2j}(\rho h_0)\]
    for each $0 \leq j \leq k$ on the classes $\tau$ in each $H$-tower in the decomposition of $\text{Ext}^{***}_{\euscr{A}(0)^\vee_\mathbb{C}}(B_1^\mathbb{C}(k))$. The Liebniz rule immediately determines the end result:
    \[\text{Ext}^{***}_{\euscr{A}(0)^\vee_\mathbb{R}}(B_1^\mathbb{R}(k)) \cong \bigoplus_{j=0}^k\Sigma^{4j, 2j}\mathbb{F}_2[\rho, \tau^2, h_0]/(\rho h_0),\]
    concluding the proof.
\end{proof}

\begin{figure}[h]
    \centering
    \includegraphics[scale=0.75]{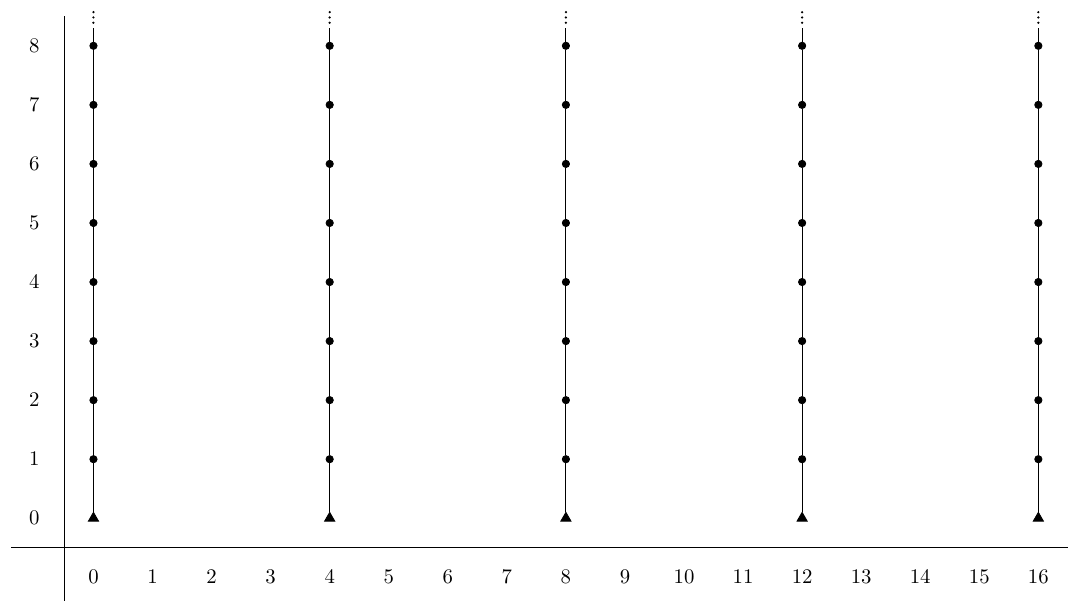}
    \caption{The group $\text{Ext}^{***}_{\euscr{A}(0)^\vee_\mathbb{R}}(B_1^\mathbb{R}(k)).$ A black $\blacktriangle$ denotes $\mathbb{F}_2[\rho, \tau^2]$, and a black $\bullet$ denotes $\mathbb{F}_2[\tau^2]$. A vertical black line represents multiplication by $h_0$.}
    \label{Ext over a(0)}
\end{figure}

We will now combine the results of \cref{b_0(1) powers proof} and the short exact sequences of motivic Brown--Gitler comodules from \Cref{ses bg}. The following result should be compared with \cite[Prop 2.6]{Mah81}, \cite[Prop 3.3]{BOSS19}, and \cref{C-b0(k) ext}.
\begin{thm}
\label{r-ext b0(k)}
    There is an isomorphism of $\euscr{A}(1)_\mathbb{R}^\vee$-comodules and $\textup{Ext}^{***}_{\euscr{A}(1)^\vee_\mathbb{R}}(\mathbb{M}_2^\mathbb{R})$-modules:
    \[\frac{\textup{Ext}^{***}_{\euscr{A}(1)^\vee_\mathbb{R}}(B_0^\mathbb{R}(k))}{v_1\textup{-torsion}} \cong \Sigma^{4k-4, 2k-2}Z_{\mathbb{\alpha}(k)}^\mathbb{R} \oplus \bigoplus_{j=0}^{4k-8}\left(\Sigma^{4j, 2j}H \oplus \Sigma^{4j, 2j-2}H\right) ,\]
    where $\alpha(k)$ is the number of 1's in the dyadic expansion of $k$. There are $\tau^2$-extensions between $\Sigma^{4j, 2j}H$ and $\Sigma^{4j, 2j-2}H.$
\end{thm}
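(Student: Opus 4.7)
The plan is to induct on $k$, closely following the $\mathbb{C}$-motivic argument of \cref{C-b0(k) ext} while tracking the richer structure appearing over $\mathbb{R}$. The base case $k=1$ is supplied by \cref{b_0(1) powers proof} (specialized to $i=1$), which identifies $\textup{Ext}_{\euscr{A}(1)^\vee_\mathbb{R}}(B_0^\mathbb{R}(1))/(v_1\textup{-torsion})$ with $Z_1^\mathbb{R}$. For the inductive step, applying $\textup{Ext}_{\euscr{A}(1)^\vee_\mathbb{R}}(-)$ to the appropriate short exact sequence of \cref{ses bg} yields a long exact sequence whose connecting homomorphism is trivial modulo $v_1$-torsion. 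This vanishing follows by base change to $\mathbb{C}$ via \cref{bg-comod R/rho = C}, which reduces the claim modulo $\rho$ to the corresponding statement in the proof of \cref{C-b0(k) ext}, together with the observation that the residual $\rho$-divisible part of the image lies in bidegrees consisting only of $v_1$-torsion classes. The long exact sequence therefore splits into short exact sequences in Ext.

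For the cokernel I use the standard change of rings isomorphism together with \cref{lemma about b1(k)} to rewrite
\[\textup{Ext}_{\euscr{A}(1)^\vee_\mathbb{R}}\!\left(B_1^\mathbb{R}(m)\otimes_{\mathbb{M}_2^\mathbb{R}}(\euscr{A}(1)\modmod \euscr{A}(0))^\vee_\mathbb{R}\right)\cong \bigoplus_{j=0}^{m}\Sigma^{4j,2j}\mathbb{F}_2[\rho,\tau^2,h_0]/(\rho h_0).\]
Each summand splits as $\Sigma^{4j,2j}\mathbb{F}_2[\tau^2,h_0]\oplus \Sigma^{4j,2j}(\rho\cdot\mathbb{F}_2[\rho,\tau^2])$; the $\rho$-tower piece is $h_0$-torsion and hence $v_1$-torsion, while the remaining piece rewrites as $\Sigma^{4j,2j}H\oplus \Sigma^{4j,2j-2}H$ (via the decomposition $\mathbb{F}_2[\tau^2,h_0]=\mathbb{F}_2[\tau^4,h_0]\oplus\tau^2\cdot\mathbb{F}_2[\tau^4,h_0]$). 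This produces the $\bigoplus_{j}(\Sigma^{4j,2j}H\oplus\Sigma^{4j,2j-2}H)$ contribution to the target.

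The kernel analysis bifurcates by parity. For $k$ even the kernel is $\Sigma^{2k,k}B_0^\mathbb{R}(k/2)$, and the inductive hypothesis combined with $\alpha(k/2)=\alpha(k)$ and a reindexing of $H$-towers from kernel and cokernel produces the stated formula. For $k$ odd the kernel is $\Sigma^{2(k-1),k-1}B_0^\mathbb{R}(\tfrac{k-1}{2})\otimes B_0^\mathbb{R}(1)$; its Ext is computed by the algebraic Atiyah--Hirzebruch spectral sequence obtained by applying $\textup{Ext}_{\euscr{A}(1)^\vee_\mathbb{R}}(B_0^\mathbb{R}(\tfrac{k-1}{2})\otimes_{\mathbb{M}_2^\mathbb{R}}-)$ to the cellular filtration on $B_0^\mathbb{R}(1)$. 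By induction the $E_1$-page splits into a $Z_{\alpha((k-1)/2)}^\mathbb{R}$-summand, on which the differentials replicate those of \cref{b_0(1) powers proof} and output $Z_{\alpha(k)}^\mathbb{R}$ via $\alpha(\tfrac{k-1}{2})+1=\alpha(k)$, and $H$-tower summands, on which the differentials act only through $h_0$-multiplication and contribute further $H$-towers modulo $v_1$-torsion debris. The main obstacle is this odd-case bookkeeping: one must verify that no differential or hidden extension mixes the $Z$-summand with the $H$-tower summands, and that all surviving hidden extensions on $E_\infty$ are resolvable via \cref{bg-comod R/rho = C}, base change to $\mathbb{C}$, and linearity over $\textup{Ext}_{\euscr{A}(1)^\vee_\mathbb{R}}(\mathbb{M}_2^\mathbb{R})$, paralleling the detailed checks carried out in \cref{b_0(1) powers proof}.
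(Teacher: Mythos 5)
Your proposal is correct and follows essentially the same route as the paper's proof: induction on $k$ via the short exact sequences of \cref{ses bg}, vanishing of the connecting homomorphism modulo $v_1$-torsion, change of rings plus \cref{lemma about b1(k)} for the cokernel, and the algebraic Atiyah--Hirzebruch spectral sequence built from the cellular filtration of $B_0^\mathbb{R}(1)$ for the kernel in the odd case. The only differences are expository — you spell out the decomposition $\mathbb{F}_2[\tau^2,h_0]\cong\mathbb{F}_2[\tau^4,h_0]\oplus\tau^2\mathbb{F}_2[\tau^4,h_0]$ behind the paper's ``reindexing of $H$-towers'' and give a base-change justification for the vanishing connecting map, both of which the paper leaves implicit.
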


\begin{proof}
    We will induct on $k$ and use the short exact sequences of \cref{ses bg}. For $k=1$, this was shown in \cref{R-ksp}. We assume now the theorem is true for all $i<k$ and divide the proof into two cases: when $k$ is even, or when $k$ is odd.

    Suppose that $k$ is even. \cref{ses bg} provides us with a short exact sequence of $\euscr{A}(1)^\vee_\mathbb{R}$-comodules:
    \begin{align}
    \label{ses R-proof even}
    0 \to \Sigma^{2k, k}B_0^\mathbb{R}(\tfrac{k}{2}) \to B_0^\mathbb{R}(k) \to B_1^\mathbb{R}(\tfrac{k}{2} -1) \otimes (\euscr{A}(1) \modmod \euscr{A}(0))_\mathbb{R}^\vee \to 0.
    \end{align}
    Applying $\text{Ext}^{***}_{\euscr{A}(1)^\vee_\mathbb{R}}( -)$ gives a long exact sequence of $\text{Ext}^{***}_{\euscr{A}(1)^\vee_\mathbb{R}}(\mathbb{M}_2^\mathbb{R})$-modules. Moreover, after killing $v_1$-torsion the connecting homomorphism is trivial, giving a short exact sequence whose middle term is $\text{Ext}^{***}_{\euscr{A}(1)^\vee_\mathbb{R}}(B_0^\mathbb{R}(k))$. Therefore, the Ext group in question decomposes into the Ext groups of the kernel and cokernel of the short exact sequence (\ref{ses R-proof even}).
    By the inductive hypothesis, we know that
    \[\text{Ext}^{***}_{\euscr{A}(1)^\vee_\mathbb{R}}\left(\Sigma^{2k, k}B_0^\mathbb{R}(\tfrac{k}{2})\right) \cong \Sigma^{2k, k}\left(\Sigma^{2k-4, k-2}Z_{\mathbb{\alpha}(k/2)}^\mathbb{R} \oplus \bigoplus_{j=0}^{2k-8}(\Sigma^{4j, 2j}H \oplus \Sigma^{4j, 2j-2}H)\right).\]
    A change of rings isomorphism gives an isomorphism for the right-hand side:
    \[\text{Ext}^{***}_{\euscr{A}(1)^\vee_\mathbb{R}}\left(B_1^\mathbb{R}(\tfrac{k}{2}-1) \otimes (\euscr{A}(1) \modmod \euscr{A}(0))^\vee_\mathbb{R}\right) \cong \text{Ext}^{***}_{\euscr{A}(0)^\vee_\mathbb{R}}\left(B_1^\mathbb{R}(\tfrac{k}{2}-1)\right).\]
    The group $\text{Ext}^{***}_{\euscr{A}(0)^\vee_\mathbb{R}}(B_1^\mathbb{R}(\tfrac{k}{2}-1))$ was calculated in \cref{lemma about b1(k)}. Thus the right-hand side and left-hand side assemble to give
    \begin{align*}
        \text{Ext}^{***}_{\euscr{A}(1)^\vee_\mathbb{R}}(B_0^\mathbb{R}(k)) \cong \Sigma^{4k-4, 2k,2}Z_{\alpha(k)}^\mathbb{R} &\oplus \bigoplus_{j=0}^{2k-8}\left(\Sigma^{4j+2k, 2j+2k}H \oplus \Sigma^{4j+4k, 2j-2+2k}H\right)   \\
         & \oplus \bigoplus_{j=0}^{k/2-1}\Sigma^{4j, 2j}\mathbb{F}_2[\rho, \tau^2, h_0](\rho h_0),
    \end{align*}
    using that $\alpha(\frac{k}{2}) = \alpha(k)$. Notice that for degree reasons, any $\rho$-divisible class coming from $\text{Ext}^{***}_{\euscr{A}(0)^\vee_\mathbb{R}}(B_1^\mathbb{R}(\tfrac{k}{2}-1))$ is $v_1$-torsion. Combining and reindexing the $H$-towers in the second and third summands gives the result.

    Suppose now that $k$ is odd. \cref{ses bg} provides us with a different short exact sequence of $\euscr{A}(1)^\vee_\mathbb{R}$-comodules:
    \begin{align}
    \label{ses R-proof odd}
        0 \to \Sigma^{2(k-1), k-1}B_0^\mathbb{R}(\tfrac{k-1}{2}) \otimes B_0^\mathbb{R}(1) \to B_0^\mathbb{R}(k) \to B_1^\mathbb{R}(\tfrac{k-1}{2}-1) \otimes (\euscr{A}(1) \modmod \euscr{A}(0))_\mathbb{R}^\vee \to 0.
    \end{align}
    Again, modulo $v_1$-torsion the connecting homomorphism in the long exact sequence obtained by applying $\text{Ext}^{***}_{\euscr{A}(1)^\vee_\mathbb{R}}(-)$ is trivial, giving a short exact sequence whose middle term is $\text{Ext}^{***}_{\euscr{A}(1)^\vee_\mathbb{R}}(B_0^\mathbb{R}(k)).$ We proceed by analyzing the Ext groups of the kernel and cokernel of the short exact sequence (\ref{ses R-proof odd}). A change of rings isomorphism gives the right-hand side:
    \[\text{Ext}^{***}_{\euscr{A}(1)^\vee_\mathbb{R}}\left(B_1^\mathbb{R}(\tfrac{k-1}{2}-1) \otimes (\euscr{A}(1) \modmod \euscr{A}(0))^\vee_\mathbb{R}\right) \cong \text{Ext}^{***}_{\euscr{A}(0)^\vee_\mathbb{R}}\left(B_1^\mathbb{R}(\tfrac{k-1}{2}-1)\right).\]
    The group $\text{Ext}^{***}_{\euscr{A}(0)^\vee_\mathbb{R}}\left(B_1^\mathbb{R}(\tfrac{k-1}{2}-1)\right)$ was calculated in \cref{lemma about b1(k)}. To calculate the left-hand side, we can use the $\textbf{aAHSS}\left(B_0^\mathbb{R}(\tfrac{k-1}{2}) \otimes B_0^\mathbb{R}(1)\right)$. This spectral sequence is obtained by applying the functor $\text{Ext}^{***}_{\euscr{A}(1)^\vee_\mathbb{R}}\left(B_0^\mathbb{R}(\tfrac{k-1}{2}) \otimes -\right)$ to \eqref{filtration:aAHSS} and has signature
    \[\textup{E}_1 = \text{Ext}^{***}_{\euscr{A}(1)^\vee_\mathbb{R}}\left(B_0^\mathbb{R}(\tfrac{k-1}{2})\right) \otimes \mathbb{M}_2^\mathbb{R}\{[1], [\bar{\xi}_1], [\bar{\tau}_1]\} \implies \text{Ext}^{***}_{\euscr{A}(1)^\vee_\mathbb{R}}\left(B_0^\mathbb{R}(\tfrac{k-1}{2}) \otimes B_0^\mathbb{R}(1)\right).\]
    By the inductive hypothesis, we have an isomorphism modulo $v_1$-torsion
    \begin{align}
    \label{aAHSS in R-B0k proof}
    \text{Ext}^{***}_{\euscr{A}(1)^\vee_\mathbb{R}}\left(B_0^\mathbb{R}(\tfrac{k-1}{2})\right) \cong \Sigma^{2k-6, k-3}Z_{\alpha(k)-1}^\mathbb{R} \oplus \bigoplus_{j=0}^{2k-10}\left(\Sigma^{4j, 2j}H \oplus \Sigma^{4j, 2j-2}H\right),
    \end{align}
    using that $\alpha(\tfrac{k-1}{2}) = \alpha(k)-1$. This splitting descends to a splitting of the spectral sequence, and we may analyze each summand of (\ref{aAHSS in R-B0k proof}) individually. The left-hand summand is handled by \cref{b_0(1) powers proof}, as it is isomorphic to the $\textbf{aAHSS}\left(\Sigma^{2k-6, k-3}B_0^\mathbb{R}(1)^{\otimes \alpha(k)}\right)$, so we have a summand of $\Sigma^{2k-2, k-1}Z_{\alpha(k)}^\mathbb{R}$ in $\text{Ext}^{***}_{\euscr{A}(1)^\vee_\mathbb{R}}\left(B_0^\mathbb{R}(\tfrac{k-1}{2}) \otimes B_0^\mathbb{R}(1)\right)$. The algebraic Atiyah--Hirzebruch spectral sequence on the right-hand summand collapses on $\textup{E}_2$. The remaining classes in Adams filtration 0 are $v_1$-torsion, and so the result is isomorphic to the original right-hand summand. Thus, we have an isomorphism:
    \begin{align*}
    &\Sigma^{2(k-1), k-1}\text{Ext}^{***}_{\euscr{A}(1)^\vee_\mathbb{R}}\left(B_0^\mathbb{R}(\tfrac{k-1}{2}) \otimes B_0^\mathbb{R}(1)\right)\\ 
    &\cong\Sigma^{2(k-1), k-1}\left( \Sigma^{2k-2, k-1}Z^\mathbb{R}_{\alpha(k)} \oplus \bigoplus_{j=0}^{2k-10}\left(\Sigma^{4j, 2j}H \oplus \Sigma^{4j, 2j-2}H\right)\right).
    \end{align*}
    Assembling the right-hand side and left-hand side gives
    \begin{align*}
        \text{Ext}^{***}_{\euscr{A}(1)^\vee_\mathbb{R}}(B_0^\mathbb{R}(k)) \cong \Sigma^{4k-4,2k-2} Z^\mathbb{R}_{\alpha(k)} &\oplus\bigoplus_{j=0}^{2k-10}\left(\Sigma^{4j+2(k-1), 2j+k-1}H \oplus \Sigma^{4j+2(k-1), 2j-2+k-1}H\right) \\
        &\oplus \bigoplus_{j=0}^{(k-1)/2-1}\Sigma^{4j, 2j}\mathbb{F}_2[\rho, \tau^2, h_0]/(\rho h_0).
    \end{align*}
    As before, for degree reasons, any $\rho$-divisible class coming from $\text{Ext}^{***}_{\euscr{A}(0)^\vee_\mathbb{R}}(B_1^\mathbb{R}\left(\tfrac{k-1}{2}-1)\right)$ is $v_1$-torsion. Combining and reindexing the $H$-towers in the second and third summands gives the result.
\end{proof} 

\begin{remark}
    The summand 
    \[\bigoplus_{j=0}^{4k-8}\left(\Sigma^{4j, 2j}H \oplus \Sigma^{4j, 2j-2}H\right)\]
    appearing in this decomposition is a ghost of the failure to express these Ext groups in terms of Adams covers of kq and ksp. It was shown in \cite{Hil11} that the motivic Adams spectral sequence for $\text{H}\mathbb{Z}$ collapses at the $\textup{E}_2$-page and has signature
    \[\textup{E}_2^{s,f,w} = \text{Ext}^{s,f,w}_{\euscr{A}(0)_\mathbb{R}^\vee}(\mathbb{M}_2^\mathbb{R}) \cong \mathbb{F}_2[\rho, \tau^2, h_0]/(\rho h_0) \implies \pi_{s,w}^\mathbb{R}(\text{H}\mathbb{Z}).\]
    In the Adams covers for kq and ksp, one sees summands of $\pi_{**}^\mathbb{R}(\text{H}\mathbb{Z})$. In the context of our calculations, we only obtain summands of $\pi_{**}^\mathbb{R}(\text{H}\mathbb{Z}/\rho)$.
\end{remark}

\subsection{The ring of cooperations}
In this subsection and the next, all results are implicitly computed modulo $v_1$-torsion. We assemble our results to compute the ring of cooperations $\pi_{**}^\mathbb{R}(\textup{kq} \otimes \textup{kq})$. First, our computation in \cref{r-ext b0(k)} gives the following:
\begin{corollary}
\label{describe e2}
    The $\textup{E}_2$-page of the \textup{\textbf{mASS}}$^{\mathbb{R}}(\textup{kq} \otimes \textup{kq})$ is given by
    \[\textup{E}_2^{s,f,w} = \bigoplus_{k \geq 0}\textup{Ext}^{s,f,w}_{\euscr{A}(1)_\mathbb{R}^\vee}(\Sigma^{4k, 2k}B_0^\mathbb{R}(k)) \cong \bigoplus_{k \geq 0}\left(\Sigma^{4k-4, 2k-2}Z_{\mathbb{\alpha}(k)}^\mathbb{R} \oplus \bigoplus_{j=0}^{4k-8}\left(\Sigma^{4j, 2j}H \oplus \Sigma^{4j, 2j-2}H\right)\right).\]
\end{corollary}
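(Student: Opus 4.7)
The plan is to simply combine the two main structural results that have already been established. The first equality in the statement is immediate from \cref{e2 mass}, which identifies the $E_2$-page of the $\textup{\textbf{mASS}}(\textup{kq} \otimes \textup{kq})$ as the direct sum $\bigoplus_{k \geq 0} \textup{Ext}^{s,f,w}_{\euscr{A}(1)^\vee_\mathbb{R}}(\Sigma^{4k,2k} B_0^\mathbb{R}(k))$, so there is nothing new to prove for that step.

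For the isomorphism, I would substitute the formula from \cref{r-ext b0(k)} into each summand. That theorem gives
\[
\frac{\textup{Ext}_{\euscr{A}(1)^\vee_\mathbb{R}}(B_0^\mathbb{R}(k))}{v_1\textup{-torsion}} \cong \Sigma^{4k-4, 2k-2} Z^\mathbb{R}_{\alpha(k)} \oplus \bigoplus_{j=0}^{4k-8}\left(\Sigma^{4j, 2j}H \oplus \Sigma^{4j, 2j-2}H\right),
\]
and after applying the $\Sigma^{4k, 2k}$ shift called for in \cref{e2 mass}, I would reindex the resulting shifts of $Z^\mathbb{R}_{\alpha(k)}$ and the $H$-towers so they match the form displayed in the statement. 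Since this is purely a bookkeeping step in the grading conventions established in \cref{notation section}, the only thing to check is that the $\Sigma^{4j,2j}$ and $\Sigma^{4j, 2j-2}$ shift pattern on the $H$-towers is preserved under the overall $\Sigma^{4k,2k}$ suspension (which it is, up to a reindexing of the summation index $j$).

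There is no genuine obstacle here: this corollary is simply a packaging statement. The only subtle point worth emphasizing in the write-up is the standing convention from \cref{v1 torsion} that all computations are performed modulo $v_1$-torsion, so that the decomposition provided by \cref{r-ext b0(k)} may be substituted into \cref{e2 mass} directly. With that convention in place, the proof is a one-line assembly of the two quoted results.
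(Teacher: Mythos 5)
Your proposal is correct and matches the paper's own proof, which likewise just cites \cref{e2 mass} for the first equality and \cref{r-ext b0(k)} for the isomorphism (modulo the standing $v_1$-torsion convention). Your additional remark about tracking the $\Sigma^{4k,2k}$ suspension through the reindexing is a reasonable bookkeeping point that the paper leaves implicit.
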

\begin{proof}
    The left hand equality, which is true even in the presence of $v_1$-torsion, is the content of \cref{e2 mass}. The right hand isomorphism follows from \cref{r-ext b0(k)}.
\end{proof}

We now arrive at our main result.

\begin{thm}
\label{main}
    The \textup{\textbf{mASS}}$^\mathbb{R}(\textup{kq} \otimes \textup{kq})$ collapses on the $\textup{E}_2$-page.
\end{thm}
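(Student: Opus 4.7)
The plan is to mimic the strategy used by Culver--Quigley in the proof of \cref{C-coop mass collapse}, with additional work required to handle the richer $\rho$-torsion structure over $\mathbb{R}$. The decomposition of the $E_2$-page from \cref{describe e2} is already known modulo $v_1$-torsion, so the task reduces to ruling out all possible $d_r$-differentials in this decomposition.

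The first step is to invoke the base change functor $-\otimes\mathbb{C}: \text{SH}(\mathbb{R}) \to \text{SH}(\mathbb{C})$, which induces a map of motivic Adams spectral sequences from $\textbf{mASS}_\mathbb{R}(\textup{kq}\otimes\textup{kq})$ to $\textbf{mASS}_\mathbb{C}(\textup{kq}\otimes\textup{kq})$. As discussed in \cref{comparison with C}, at the level of $E_2$-pages this is reduction modulo $\rho$, and by \cref{bg-comod R/rho = C} it is compatible with the Brown--Gitler decomposition. Since the $\mathbb{C}$-motivic spectral sequence collapses on $E_2$ by \cref{C-coop mass collapse}, any nontrivial $\mathbb{R}$-motivic differential $d_r(x)=y$ must have either $x$ or $y$ in the $\rho$-torsion part of the $E_2$-page; equivalently, every differential between $\rho$-torsion free classes must vanish.

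The second step is to eliminate differentials involving $\rho$-torsion classes. By inspection of the modules appearing in \cref{describe e2} and their explicit descriptions (\cref{module defs}, \cref{big flag def}), the $\rho$-torsion classes are precisely those in the various $h_0$-towers $H$, the $PH$- and $J$-submodules of the big flags, and certain pieces of $\euscr{D}, \euscr{S}$. A careful check shows these classes are all $h_1$-torsion free (in fact they form families of $h_0$-towers indexed by powers of $\tau^4$ and $\rho$-cofree generators). Because $d_r$ preserves motivic weight and raises Adams filtration by $r$, differentials between $h_1$-torsion-free classes arranged in $h_0$-towers are ruled out for tridegree reasons: the source/target coweight considerations (the $4$-fold periodicity governed by $\tau^4$), combined with the spacing between the Brown--Gitler summands $\Sigma^{4k,2k}B_0^\mathbb{R}(k)$, leave no room for a differential.

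The main obstacle I anticipate is the bookkeeping in this second step: the $\mathbb{R}$-motivic $E_2$-page has many more $\rho$-torsion classes than its $\mathbb{C}$-motivic counterpart, and some of them sit inside mixed modules like the big flag $\euscr{F}_{s,f,w}$ where the interaction between $\rho$-torsion and $\rho$-torsion-free pieces is delicate. To handle this cleanly, I would further exploit the $C_2$-equivariant Betti realization $\textup{Be}^{C_2}$ from \cref{betti remark r vs c} (or real Betti realization to $\pi_*\text{Be}^\mathbb{R}(\textup{kq})$) to pin down any remaining hidden differentials, since differentials detected after Betti realization must vanish in the classical $\textbf{ASS}(\textup{bo}\otimes\textup{bo})$ by Mahowald's collapse result. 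Combining the $\otimes\mathbb{C}$ argument with a Betti-realization argument and a coweight-indexed degree analysis on each summand should suffice to conclude collapse at $E_2$.
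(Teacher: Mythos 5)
Your overall strategy---compare with the collapsed $\mathbb{C}$-motivic spectral sequence via base change and then dispose of whatever that comparison misses---is the same as the paper's, but the way you set up the dichotomy creates a genuine gap. A map into a collapsing spectral sequence only constrains the \emph{target} of a differential: if $d_r(x)=y\neq 0$, then $y$ must lie in the kernel of the comparison map on $E_2$-pages, and by the discussion in \cref{comparison with C} that kernel is the $\rho$-\emph{divisible} part of $\text{Ext}_{\euscr{A}(1)^\vee_\mathbb{R}}(\textup{H}_{**}(\textup{kq}\otimes\textup{kq}))$, not the $\rho$-torsion part. So your reduction ``every differential between $\rho$-torsion free classes must vanish'' is not what base change gives you: a differential could perfectly well hit $\rho z$ with $z$ and $\rho z$ both $\rho$-torsion free, and every infinite $\rho$-tower in the decomposition of \cref{describe e2} supplies such classes. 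Your second step therefore analyzes the wrong collection of classes, and the structural claim made about them is also false: the $\rho$-torsion classes are not all $h_1$-torsion-free $h_0$-towers --- for instance $\tau h_1$ and its $h_1$-multiples in the big diamond $\euscr{D}$ (see \cref{R-Ext(M_2) cw=1}) are killed by $\rho^2$ yet support $h_1$-multiplication. Finally, the proposed fallback via real Betti realization cannot work: by \cref{betti remark r vs c}, $\text{Be}^{\mathbb{R}}(\textup{kq})$ is not $\textup{bo}$, so there is no comparison with the classical $\textbf{ASS}(\textup{bo}\otimes\textup{bo})$ along that functor, and the $C_2$-equivariant route is left entirely undeveloped.

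The ingredient you are missing is the one that makes the paper's proof a two-line argument: the relevant torsion for this comparison is $\tau$-torsion, not $\rho$-torsion. The collapse of the $\mathbb{C}$-motivic spectral sequence (\cref{C-coop mass collapse}) is itself proved by inverting $\tau$ and comparing with Mahowald's collapse of the $\textbf{ASS}(\textup{bo}\otimes\textup{bo})$, and what the comparison rules out over $\mathbb{R}$ is any differential whose source and target are both $\tau$-free. The decisive observation is then that, by \cref{describe e2}, \emph{every} class on the $\mathbb{R}$-motivic $E_2$-page is $\tau^4$-periodic (each module in \cref{module defs} and each big flag is free over $\mathbb{F}_2[\tau^4]$), so there are no $\tau$-torsion classes at all and hence no candidates for differentials; no case analysis of $\rho$-torsion is needed. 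To repair your argument along your own lines you would have to replace ``$\rho$-torsion'' by ``lies in the kernel of the base-change map'' and then genuinely rule out differentials into the $\rho$-divisible classes --- which is exactly the work that the $\tau^4$-periodicity observation renders unnecessary.
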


\begin{proof}
    There is a base change functor
    \[-\otimes\mathbb{C}:\text{SH}(\mathbb{R}) \to \text{SH}(\mathbb{C})\]
    which induces a highly structured morphism from the $\textbf{mASS}^{\mathbb{R}}(\text{kq} \otimes \text{kq})$ to the $\textbf{mASS}^{\mathbb{C}}(\text{kq} \otimes \text{kq})$. The latter spectral sequence collapses on the $\text{E}_2$-page \cite[Corollary 3.43]{CQ21}. Complex Betti realization shows that there can be no differentials in the $\textbf{mASS}^{\mathbb{R}}(\textup{kq} \otimes \textup{kq})$ with both the source and target being $\tau$-free. However, by \cref{describe e2} we see that every class on the $\textup{E}_2$-page is $\tau^4$-periodic. Hence there can be no differentials and the spectral sequence collapses.
\end{proof}

\subsection{Application to the $\text{kq}$-resolution}
\label{section:kqres}
We conclude by examining the $\text{E}_1$-page of the kq-resolution. Recall that this spectral sequence has signature
\[\textup{E}_1^{s,f,w} = \pi_{s+f, w}^{\mathbb{R}}(\text{kq} \otimes \overline{\text{kq}}^{\otimes f}) \implies \pi_{s,w}^{\mathbb{R}}(\mathbb{S}).\]
We can determine each filtration line of the $\textup{E}_1$-page by an extension of the techniques used for the computation of $\pi_{**}^{\mathbb{R}}(\text{kq} \otimes \text{kq}).$ For each $n\geq 0$, the $\textbf{mASS}^{\mathbb{R}}(\text{kq} \otimes \overline{\text{kq}}^{\otimes n})$ takes the form
\[\text{E}^{s,f,w}_2 = \text{Ext}^{s,f,w}_{\euscr{A}^\vee_{\mathbb{R}}}(\text{H}_{**}(\text{kq} \otimes \overline{\text{kq}}^{\otimes n})) \implies \pi_{s,w}^{\mathbb{R}}(\text{kq} \otimes \overline{\text{kq}}^{\otimes n}).\]
We begin by decomposing this $\textup{E}_2$-page.

\begin{lemma}
\label{lemma:KunnethForkqRes}
    There is a K\"unneth isomorphism
    \[\textup{H}_{**}(\textup{kq} \otimes \overline{\textup{kq}}^{\otimes n}) \cong \textup{H}_{**}(\textup{kq}) \otimes \textup{H}_{**}(\overline{\textup{kq}})^{\otimes n}.\]
\end{lemma}

\begin{proof}
    We induct on $n$. The K\"unneth spectral sequence \cite{DI05} takes the form
    \[\text{E}_2 = \text{Tor}^{\mathbb{M}_2^{\mathbb{R}}}(\textup{H}_{**}(\textup{kq}), \textup{H}_{**}(\overline{\textup{kq}})) \implies \textup{H}_{**}(\textup{kq} \otimes \overline{\textup{kq}}). \] 
    Since $\text{H}_{**}(\text{kq}) = (\euscr{A} \modmod \euscr{A}(1))^\vee$ is the $\mathbb{M}_2^{\mathbb{R}}$-linear dual of a finitely-generated free $\mathbb{M}_2^{\mathbb{R}}$-module, it is also free. Thus all higher Tor terms vanish, implying that the spectral sequence collapses. 
    
    Suppose the result is true for all $i <n$. We again have a  K\"unneth spectral sequence which takes the form
    \[\text{E}_2 = \text{Tor}^{\mathbb{M}_2^{\mathbb{R}}}(\textup{H}_{**}(\textup{kq} \otimes \overline{\text{kq}}^{\otimes n-1}), \textup{H}_{**}(\overline{\textup{kq}})) \implies \textup{H}_{**}(\textup{kq} \otimes \overline{\textup{kq}}^{\otimes n}). \]
    Note that $\text{H}_{**}(\overline{\text{kq}})$ is also free over $\mathbb{M}_2^{\mathbb{R}}$, which one can see by the long exact sequence in homology associated to the defining cofiber sequence
    \[\mathbb{S} \to \text{kq} \to \overline{\text{kq}}.\]
    Thus the higher Tor terms vanish, implying that the spectral sequence collapses. This gives an isomorphism
    \[\text{H}_{**}(\text{kq} \otimes \overline{\text{kq}}^{\otimes {n-1}}) \otimes \text{H}_{**}(\overline{\text{kq}}) \cong \text{H}_{**}(\text{kq} \otimes \overline{\text{kq}}^{\otimes n}).\]
    By induction, we have a K\"unneth isomorphism on the left hand factor, finishing the proof.
\end{proof}

Note that there is an isomorphism of $\euscr{A}(1)^\vee$-comodules due to \Cref{kq bar homology}
\begin{equation}
\label{eq:homologykqBar}
    \text{H}_{**}(\overline{\text{kq}}) \cong \bigoplus_{k \geq 1}\Sigma^{4k, 2k}B_0^{\mathbb{R}}(k).
\end{equation}
We can now deduce the $\textup{E}_2$-page of the $\textbf{mASS}^{\mathbb{R}}(\text{kq} \otimes \overline{\text{kq}}^{\otimes n})$.

\begin{proposition}
\label{prop:n-lineE2}
    The $\textup{E}_2$-page of the $\textup{\textbf{mASS}}^{\mathbb{R}}(\textup{kq} \otimes \overline{\textup{kq}}^{\otimes n})$ takes the form
    \[\textup{E}_2^{s,f,w}\cong \bigoplus_{K \in \euscr{K}_n}\Sigma^{4|K|, 2|K|}\textup{Ext}^{s,f,w}_{\euscr{A}(1)^\vee_{\mathbb{R}}}(B_0^{\mathbb{R}}(K)),\]
    where $\euscr{K}_n = \{K = (k_1, \dots, k_n): k_j \geq 1 \textup{ for all } j\}$, $|K| = \sum_{j=1}^nk_j$, and $B_0^{\mathbb{R}}(K) = \bigotimes_{j=1}^nB_0^{\mathbb{R}}(k_j).$        
\end{proposition}

\begin{proof}
    By \Cref{lemma:KunnethForkqRes} and the change of rings isomorphism, we may rewrite the $\textup{E}_2$-page as
    \[\text{Ext}_{\euscr{A}^\vee_{\mathbb{R}}}^{***}(\text{H}_{**}(\text{kq}) \otimes \text{H}_{**}(\overline{\text{kq}})^{\otimes n} ) \cong \text{Ext}^{***}_{\euscr{A}(1)^\vee_{\mathbb{R}}}(\text{H}_{**}(\overline{\text{kq}})^{\otimes n}).\]
    The identification of \eqref{eq:homologykqBar} allows us to rewrite the first factor of $\text{H}_{**}(\overline{\text{kq}})$ in terms of Brown--Gitler comodules, leaving us with
    \[\bigoplus_{k_1 \geq 1}\Sigma^{4k_1, 2k_1}\text{Ext}_{\euscr{A}(1)^\vee_{\mathbb{R}}}^{***}(B_0^{\mathbb{R}}(k_1) \otimes \text{H}_{**}(\text{kq})^{\otimes n-1}).\]
    Rewriting the next factor of $\text{H}_{**}(\overline{\text{kq}})$ gives us
    \[\bigoplus_{k_1 \geq 1}\Sigma^{4k_1, 2k_1}\left(\bigoplus_{k_2 \geq 1}\Sigma^{4k_2, 2k_2} \text{Ext}_{\euscr{A}(1)^\vee_{\mathbb{R}}}^{***}(B_0^{\mathbb{R}}(k_1) \otimes B_0^{\mathbb{R}}(k_2) \otimes \text{H}_{**}(\overline{\text{kq}})^{\otimes n-2})\right),\]
    which we may rewrite as
    \[\bigoplus_{k_1, k_2 \geq 1}\Sigma^{4(k_1+k_2), 2(k_1+k_2)}\text{Ext}_{\euscr{A}(1)^\vee_{\mathbb{R}}}^{***}(B_0^{\mathbb{R}}(k_1) \otimes B_0^{\mathbb{R}}(k_2) \otimes \text{H}_{**}(\overline{\text{kq}})^{\otimes n-2}).\]
    The result follows by continuing in this manner and rewriting all factors of $\text{H}_{**}(\overline{\text{kq}})$ in terms of Brown--Gitler comodules.
\end{proof}

Note that we have described the $\text{E}_2$-page of the $\textbf{mASS}^{\mathbb{R}}(\text{kq} \otimes \overline{\text{kq}}^{\otimes n})$ as a module over $\text{Ext}^{***}_{\euscr{A}(1)^\vee_{\mathbb{R}}}(\mathbb{M}_2^\mathbb{R})$, hence as a module over the $\text{E}_2$-page of the $\textbf{mASS}^{\mathbb{R}}(\text{kq})$. 

\begin{thm}
\label{thm:n-lineDifs}
    The $\textup{\textbf{mASS}}^{\mathbb{R}}(\textup{kq} \otimes \overline{\textup{kq}}^{\otimes n})$ collapses on the $\textup{E}_2$-page.
\end{thm}

\begin{proof}
    Base change to $\mathbb{C}$ induces a highly structured morphism from the $\textbf{mASS}^{\mathbb{R}}(\text{kq} \otimes \overline{\text{kq}}^{\otimes n})$ to the $\textbf{mASS}^{\mathbb{C}}(\text{kq} \otimes \overline{\text{kq}}^{\otimes n})$. The latter spectral sequence collapses on the $\text{E}_2$-page \cite[Section 4.2]{CQ21}. Complex Betti realization shows that there can be no differentials in the $\textbf{mASS}^{\mathbb{R}}(\textup{kq} \otimes \textup{kq})$ with both the source and target being $\tau$-free. By the description of the $\text{E}_2$-page in \Cref{prop:n-lineE2} and the corresponding Ext groups in \Cref{r-ext b0(k)}, we see that every class on the $\textup{E}_2$-page is $\tau^4$-periodic. Hence there can be no differentials and the spectral sequence collapses.
\end{proof}

\begin{remark}
\label{n-line over R}
    Determining the differentials in the real kq-resolution seems to be quite difficult. By complex Betti realization, we can deduce much of the behavior of the complex $\textup{kq}$-resolution from the bo-resolution (see \cref{C-betti kq res}). However, there is information in the real $\textup{kq}$-resolution which is both $\rho$-torsion and $v_1$-periodic that is not detected by these methods. We plan to analyze the real $\textup{kq}$-resolution in future work using a combination of these methods and $C_2$-equivariant homotopy theory.
\end{remark}

\appendix

\section{Charts}
\label{charts}
In this appendix, we record charts depicting the $\text{Ext}^{***}_{\euscr{A}(1)^\vee_\mathbb{R}}(\mathbb{M}_2^\mathbb{R})$-modules described in \cref{section 4}. A black $\blacksquare$ represents $\mathbb{F}_2[\rho,\tau^4]$. A black $\bullet$ represents $\mathbb{F}_2[\tau^4]$. A vertical black line represents multiplication by $h_0$. A horizontal black line represents multiplication by $\rho$. A diagonal black line represents multiplication by $h_1$.

\begin{figure}[H]
    \begin{minipage}{.45\textwidth}
    \centering
    \includegraphics[scale=0.75]{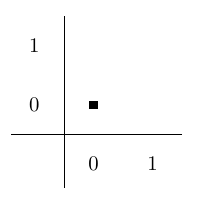}
    \caption{The module $P$.}
    \label{rho tower picture}
    \end{minipage}
    \begin{minipage}{.45\textwidth}
    \centering
        \includegraphics[scale=.75]{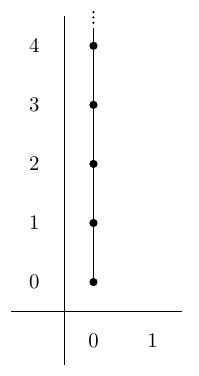}
        \caption{The module $H$.}
            \label{h0 tower picture}        
    \end{minipage}
\end{figure}

\begin{figure}[H]
    \begin{minipage}{.45\textwidth}
        \centering
        \includegraphics[scale=.75]{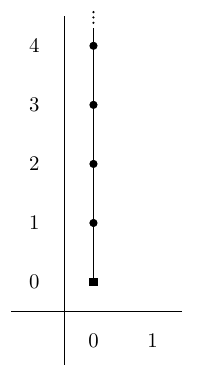}
        \caption{The module $PH$.}
            \label{rho h0 tower picture}        
    \end{minipage}
    \begin{minipage}{.45\textwidth}
            \centering
            \includegraphics[scale=.75]{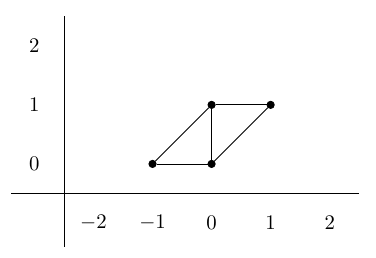}
            \caption{The module $D$.}
            \label{diamond picture}
    \end{minipage}
\end{figure}

\begin{figure}[H]
    \begin{minipage}{.45\textwidth}
            \centering
            \includegraphics[scale=.75]{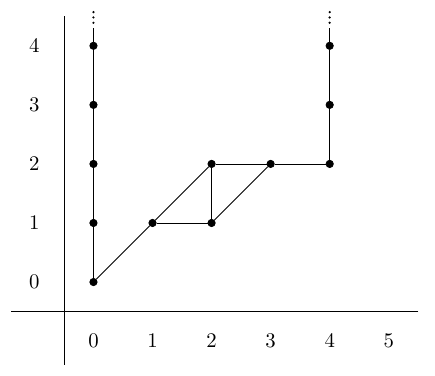}
            \caption{The module $S$.}
            \label{staircase picture}
    \end{minipage}
    \begin{minipage}{.45\textwidth}
            \centering
            \includegraphics[scale=.75]{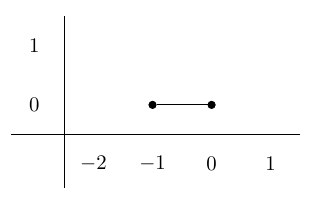}
            \caption{The module $T$.}
            \label{segment picture}
    \end{minipage}
\end{figure}
\begin{figure}[H]
    \begin{minipage}{.45\textwidth}
            \centering
            \includegraphics[scale=.75]{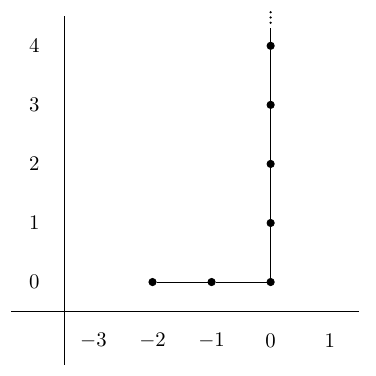}
            \caption{The module $J$.}
            \label{J-tower picture}
    \end{minipage}
    \begin{minipage}{.45\textwidth}
            \centering
            \includegraphics[scale=.75]{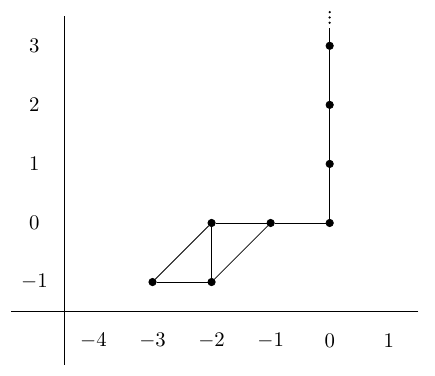}
            \caption{The module $JD$.}
            \label{JD-tower picture}
    \end{minipage}
\end{figure}

\printbibliography

\end{document}